\documentclass[a4paper,reqno,twoside]{amsart}
\usepackage[english]{babel}
\usepackage[T1]{fontenc}
\usepackage[utf8]{inputenc}
\usepackage{microtype}
\usepackage{stmaryrd}
\usepackage[dvipsnames]{xcolor}
\usepackage{subfiles}

\usepackage[textsize = footnotesize, colorinlistoftodos]{todonotes}

\usepackage{eucal}
\usepackage[shortlabels]{enumitem}
\usepackage[super]{nth}

\usepackage{amsmath}
\usepackage{amssymb}

\usepackage{amsthm}
\usepackage{amsfonts}
\usepackage{graphicx}

\usepackage{dsfont} 
\usepackage{mathtools}
\usepackage{xparse}
\usepackage{tikz-cd}
\usepackage{quiver}
\usepackage{nccmath}

\usepackage{tikz}
\usepackage{tabularx}
\newcolumntype{Y}{>{\centering\arraybackslash}X}
\newcommand{\nwidth}{4cm}
\tikzstyle{no} = [rectangle, rounded corners, 
minimum width=\nwidth, 
minimum height=1cm,
text width = \nwidth,
align=center,
draw=black]

\tikzstyle{arrowv} = [implies-,double equal sign distance,shorten >=0.15cm,shorten <=0.15cm]
\tikzstyle{arrow} = [implies-,double equal sign distance,shorten >=0.05cm,shorten <=0.05cm]

\usepackage[pdfborder={0 0 0}]{hyperref}
\hypersetup{
	colorlinks=true,
	allcolors=.,
	bookmarksdepth = 3
}
\makeatletter
\def\@tocline#1#2#3#4#5#6#7{\relax
  \ifnum #1>\c@tocdepth
  \else
    \par \addpenalty\@secpenalty\addvspace{#2}%
    \begingroup \hyphenpenalty\@M
    \@ifempty{#4}{%
      \@tempdima\csname r@tocindent\number#1\endcsname\relax
    }{%
      \@tempdima#4\relax
    }%
    \parindent\z@ \leftskip#3\relax \advance\leftskip\@tempdima\relax
    \rightskip\@pnumwidth plus4em \parfillskip-\@pnumwidth
    #5\leavevmode\hskip-\@tempdima #6\nobreak\relax
    \leaders\hbox{$\m@th\mkern4.5mu\hbox{.}\mkern4.5mu$}\hfil
    \hbox to\@pnumwidth{\@tocpagenum{#7}}\par
    \nobreak
    \endgroup
  \fi}
\renewcommand\l@subsection{\@tocline{2}{0pt}{2.5pc}{5pc}{}}
\makeatother
\usepackage[capitalize,nameinlink,nosort]{cleveref}
\crefformat{equation}{#2(#1)#3}
\Crefformat{equation}{#2(#1)#3}
\crefname{enumi}{}{}
\crefname{construction}{Construction}{Constructions}

\usepackage{aliascnt}
\let\oldtheorem\newtheorem
\RenewDocumentCommand{\newtheorem}{s m o m O{}}{%
\IfBooleanTF{#1}%
{\oldtheorem{#2}{#4}}%
{\IfNoValueTF{#3}{\oldtheorem{#2}{#4}[#5]}%
{\newaliascnt{#2}{#3}%
\oldtheorem{#2}[#2]{#4}%
\aliascntresetthe{#2}}}}

\theoremstyle{plain}
\newtheorem{theorem}{Theorem}[section]
\newtheorem{lemma}[theorem]{Lemma}
\newtheorem{proposition}[theorem]{Proposition}
\newtheorem{corollary}[theorem]{Corollary}

\newtheorem*{theorem*}{Theorem}
\newtheorem{thmint}{Theorem}
\newtheorem{corint}[thmint]{Corollary}

\theoremstyle{definition}
\newtheorem{definition}[theorem]{Definition}
\newtheorem{remark}[theorem]{Remark}

\newtheorem{notation}[theorem]{Notation}
\newtheorem*{notation*}{Notation}
\newtheorem{construction}[theorem]{Construction}

\makeatletter
\usepackage[normalem]{ulem}
\usepackage{contour}

\contourlength{0.5pt} 

\makeatletter
\DeclareRobustCommand{\myuline}[2][0pt]{%
  \ifmmode
    \uline{\hphantom{#2}\kern-#1}%
    \kern#1%
    \mathllap{\mathpalette\my@cont@{#2}}%
  \else
    \uline{\phantom{#2}\kern-#1}%
    \kern#1%
    \llap{\contour{background}{#2}}%
  \fi
}
\newcommand{\my@cont@}[2]{\contour{background}{\mbox{$\m@th#1#2$}}}
\makeatother

\usepackage{relsize}
\usepackage[bbgreekl]{mathbbol}
\usepackage{amsfonts}
\DeclareSymbolFontAlphabet{\mathbb}{AMSb} 
\DeclareSymbolFontAlphabet{\mathbbl}{bbold}

\newcommand{\id}{\mathrm{id}}
\DeclareMathOperator*{\colim}{colim}

\DeclareMathOperator{\ev}{ev}

\newcommand{\BFree}{\mathsf{BFree}}

\DeclareMathOperator{\Map}{Map}

\DeclareMathOperator{\Ar}{Ar}

\newcommand{\hide}[1]{}

\newlist{numberenum}{enumerate}{1}
\setlist[numberenum]{\upshape(\arabic*)}

\newcommand{\Fun}{\mathrm{Fun}}

\newcommand{\Cat}{\mathcal{C}\mathrm{at}}

\newcommand{\Fib}{\mathcal{F}\mathrm{ib}}
\newcommand{\PFree}{\mathsf{PFree}}

\newcommand{\op}{\mathrm{op}}
\newcommand{\coop}{\mathrm{coop}}
\newcommand{\co}{\mathrm{co}}

\newcommand{\Adj}{\mathsf{Adj}}
\newcommand{\Adjffr}{\mathsf{Adj}^{\operatorname{R}}_{\mathrm{ff}}}
\newcommand{\Adjffl}{\mathsf{Adj}^{\operatorname{L}}_{\mathrm{ff}}}

\newcommand{\Ret}{\mathsf{Ret}}

\newcommand{\soft}[1]{[\![#1]\!]}

\newcommand{\loc}{\mathrm{loc}}

\newcommand{\ddelta}{{\mathlarger{\mathbbl{\Delta}}}}

\mathchardef\mhyphen="2D

\newcommand{\bifib}{\mathrm{bifib}}
\newcommand{\lax}{\mathrm{lax}}
\newcommand{\oplax}{\mathrm{oplax}}

\newcommand{\cell}{\mathsf{cell}}
\newcommand{\intv}{\mathsf{int}}

\newcommand{\ra}{\mathrm{ra}}

\newcommand{\cA}{\mathcal{A}}
\newcommand{\cB}{\mathcal{B}}
\newcommand{\cC}{\mathcal{C}}
\newcommand{\cD}{\mathcal{D}}

\newcommand{\cI}{\mathcal{I}}

\newcommand{\cK}{\mathcal{K}}
\newcommand{\cL}{\mathcal{L}}

\newcommand{\cQ}{\mathcal{Q}}
\newcommand{\cR}{\mathcal{R}}
\newcommand{\cS}{\mathcal{S}}
\newcommand{\cT}{\mathcal{T}}

\newcommand{\cV}{\mathcal{V}}
\newcommand{\cW}{\mathcal{W}}
\newcommand{\cX}{\mathcal{X}}
\newcommand{\cY}{\mathcal{Y}}

\newcommand{\sX}{\mathcal{X}}

\makeatletter
\newcommand{\oset}[3][0ex]{%
	\mathrel{\mathop{#3}\limits^{
			\vbox to#1{\kern-2\ex@
				\hbox{$\scriptstyle#2$}\vss}}}}
\makeatother

\tikzset{
	rot90/.style={anchor=south, rotate=90, inner sep=.5mm}
} 

\renewcommand{\subset}{\subseteq}

\renewcommand{\epsilon}{\varepsilon}

\def\lra{\longrightarrow}
\def\lla{\longleftarrow}

\def\llra{\def\arraystretch{.1}\begin{array}{c} \lra \\ \lla \end{array}}

\makeatletter
\newcommand{\fixed@sra}{$\vrule height 2\fontdimen22\textfont2 width 0pt\rightarrow$}
\newcommand{\shortarrowup}[1]{%
  \mathrel{\text{\rotatebox[origin=c]{65}{\fixed@sra}}}
}
\newcommand{\shortarrowdown}[1]{%
  \mathrel{\text{\rotatebox[origin=c]{250}{\fixed@sra}}}
}
\makeatother

\newcommand{\upslash}{\!\shortarrowup{1}}

\usepackage{silence}
\let\amssection\section
\newcommand{\authorrunhead}{Fernando Abellán}
\uppercasenonmath\authorrunhead
\newcommand{\setsectionrunhead}[1]{%
	\def\sectionrunhead{\thesection.\ #1}%
	\uppercasenonmath\sectionrunhead
	\markboth{\authorrunhead}{\sectionrunhead}%
}
\RenewDocumentCommand{\section}{s o m}{%
	\IfBooleanTF{#1}
		{\amssection*{#3}}
		{\IfNoValueTF{#2}
			{\amssection{#3}\setsectionrunhead{#3}}
			{\amssection[#2]{#3}\setsectionrunhead{#2}}}%
}

\begin{document}
\title{Free bifibrations of $(\infty,2)$-categories, 2-simplicial objects and the walking adjunction}

\author{Fernando Abellán}
\address{Max Planck Institute for Mathematics, Vivatsgasse 7, 53111 Bonn, Germany}
\email{abellan@mpim-bonn.mpg.de}

\begin{abstract}
	In this work, we develop a fibrational approach to freely adjoining adjoints in an $(\infty,2)$-category. We construct the universal bifibration obtained from a cocartesian fibration of $(\infty,2)$-categories by adjoining cartesian lifts over a chosen class of $1$-morphisms in the base. We then use this construction to provide an explicit model for freely adjoining adjoints, together with a zig-zag formula for the resulting mapping $(\infty,1)$-categories. As applications, we give a model-independent proof of the universal property of the walking adjunction, providing an alternative proof of a theorem of Riehl--Verity, and establish the universal characterization of the simplex $2$-category conjectured by Dyckerhoff--Kapranov--Schechtman--Soibelman.
\end{abstract}

\maketitle
\thispagestyle{empty}
\pagestyle{headings}

\tableofcontents

\newpage
\section{Introduction}
Adjunctions are a central organizing principle in higher category theory. Beyond their practical utility, dualizable objects in monoidal $(\infty,n)$-categories and higher adjoints are intimately related to the geometry of manifolds through higher categories of cobordisms. This relationship lies at the heart of the Cobordism Hypothesis of Baez and Dolan \cite{BaezDolan1995HigherDimensionalAlgebra}, further developed in Lurie's work on extended topological field theories \cite{Lurie2009ClassificationTFT}. It is thus reasonable to expect that developing a toolbox for dealing systematically with adjunction data will prove useful while exploring such topics.

A basic question is then how adjoints can be freely adjoined to a collection of $1$-morphisms while retaining control of the resulting higher-categorical structure. In this work, we give an explicit procedure for adjoining such adjoints to $(\infty,2)$-categories using the theory of (co)cartesian\footnote{We later refer to cocartesian fibrations as $(0,1)$-fibrations, following the terminology of \cite{AbellanGagnaea2025StraighteningLaxTransformations,AHM26}.} fibrations.

Let $F \colon \cC \to \Cat_{(\infty,2)}$ be a functor of $(\infty,2)$-categories, and let $p \colon \cX \to \cC$ be the cocartesian fibration corresponding to $F$ under straightening. Given a wide locally full\footnote{Such a sub-$(\infty,2)$-category is specified by its $1$-morphisms.} sub-$(\infty,2)$-category $\cL \subset \cC$, the functor $F$ sends every $1$-morphism in $\cL$ to a left adjoint precisely when $p$ also admits cartesian lifts over $\cL$. We call such a fibration an \emph{$\cL$-bifibration}. Accordingly, we write $\Fib^{(0,1)}_{/\cC}$ for the $(\infty,3)$-category of cocartesian fibrations over $\cC$, and $\Fib_{/(\cC,\cL)}^{(0,1)\mhyphen\bifib}$ for the corresponding $(\infty,3)$-category of $\cL$-bifibrations, whose morphisms additionally preserve the cartesian lifts over $\cL$. Our first main result says that these additional cartesian lifts can be adjoined freely.

\begin{thmint}[{\cref{thm:bifib}}]\label{thm:intro-freebifibration}
	There is an adjunction of $(\infty,3)$-categories
	\[
		\BFree^{(0,1)}_{\cL}\colon
		\Fib^{(0,1)}_{/\cC}
		\llra
		\Fib_{/(\cC,\cL)}^{(0,1)\mhyphen\bifib}
		\colon U,
	\]
	where $U$ is the forgetful functor. We refer to its left adjoint $\BFree^{(0,1)}_{\cL}$ as the \emph{free $\cL$-bifibration} functor.
\end{thmint}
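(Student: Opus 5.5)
The plan is to pass through straightening, so that the existence of the left adjoint becomes a statement about functors. Under the straightening equivalence $\Fib^{(0,1)}_{/\cC}\simeq \Fun(\cC,\Cat_{(\infty,2)})$, the objects of $\Fib^{(0,1)\mhyphen\bifib}_{/(\cC,\cL)}$ correspond to functors $F$ that send every $1$-morphism of $\cL$ to a left adjoint. The extra condition on morphisms, namely that they preserve cartesian lifts over $\cL$, should translate into the requirement that the relevant lax/oplax naturality squares are \emph{right adjointable} at $\cL$-morphisms. One convenient way to encode all of this is by a localization. Let $\cC_{\cL}$ denote the $(\infty,2)$-category obtained from $\cC$ by freely adjoining a right adjoint $\ell^R$, with unit and counit, to each $\ell\in\cL$. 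If this can be constructed, then $\Fib^{(0,1)\mhyphen\bifib}_{/(\cC,\cL)}$ is identified with $\Fun(\cC_{\cL},\Cat_{(\infty,2)})$, and $U$ becomes restriction along $i\colon\cC\to\cC_{\cL}$. The left adjoint is then left Kan extension $i_!$. It is computed pointwise as a weighted colimit (oplax colimit over the comma category $\cC_{/c}$ of $\cC_\cL$), and, via unstraightening, as a concrete pushout or relative-nerve construction of total spaces.

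Since $\cC_{\cL}$ is not yet available at this stage, I would instead build the free bifibration directly on the fibration side and verify the universal property there.

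First, for $p\colon\cX\to\cC$, define $\BFree^{(0,1)}_{\cL}(p)$ as a fibration over $\cC$ whose fibre over $c$ is a category of zig-zags. These are strings $x\to\ell_1^*\cdots$ alternating cocartesian pushforwards along morphisms of $\cC$ with formal cartesian pullbacks along morphisms of $\cL$, modulo the evident composition relations. Concretely, I would realize it as the pullback $\cX\times_{\cC}\mathrm{Tw}_\cL(\cC)$ of $\cX$ against a twisted-arrow-type $(\infty,2)$-category of spans $c\leftarrow c'\xrightarrow{\ell} c''$ with $\ell\in\cL$, projected to the target.

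Second, check that this projection is a $(0,1)$-fibration which also admits cartesian lifts over $\cL$: pulling back along $\ell$ just precomposes the span. There is a unit map $\cX\to\BFree(\cX)$ given by identity spans.

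Third, prove the universal property. For a $\cL$-bifibration $\cY$, restriction along the unit should induce an equivalence of $(\infty,2)$-categories
\[\Map_{\mathrm{bifib}}(\BFree\cX,\cY)\simeq\Map_{(0,1)}(\cX,\cY).\]
The inverse sends a functor $f$ to the map taking a span to $\ell^*(g_! f(x))$, which uses the cartesian lifts in $\cY$. Naturality in both variables then assembles these equivalences into an adjunction of $(\infty,3)$-categories by the standard pointwise criterion.

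The main obstacle is the third step at the level of $2$-morphisms. The mapping objects are $(\infty,2)$-categories of lax-commuting squares over $\cC$, so one must check that the adjunction data—the Beck–Chevalley mates between cartesian lifts and cocartesian lifts—are coherently preserved. My first attempt would be to reduce to the universal case $\cX=\cC_{/c}$, the representable cocartesian fibration, and use density: every fibration is a colimit of representables, and both sides turn colimits in $\cX$ into limits. That reduces the problem to computing $\BFree(\cC_{/c})$ as the representable $\cL$-bifibration at $c$, which is a Yoneda-type statement.
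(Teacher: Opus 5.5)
Your proposal has a genuine gap, and it is already visible at the level of objects rather than at the level of $2$-morphisms and mates, where you locate the main obstacle. Pulling $\cX$ back against single (co)spans encodes only objects of the form $\ell^*g_!x$: one pushforward followed by one pullback, which is the shape your inverse formula $\ell^*(g_!f(x))$ uses. This class is not closed under further cocartesian transport. In general $h_!\ell^*g_!x$ is not of the form $(\ell')^*g'_!x$, because $\cC$ need not have the pushouts needed to compose. Even when such pushouts exist, that identification is a Beck--Chevalley condition which the free bifibration does not satisfy. So either your projection is not a $(0,1)$-fibration, or it is one only because extra relations hold, and then it is not free.

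The smallest case shows the second failure. Take $\cC=\cL=[1]$ with $f\colon 0\to 1$, and $\cX=\cC_{0/}$ (note that the representable cocartesian fibration is $\cC_{c/}$, not $\cC_{/c}$).
\begin{itemize}
\item Your fibre over $0$ is $\{\id\to f^Rf\}\simeq[1]$, and your fibre over $1$ is the single object $f$, so $f_!$ sends the unit $\id\to f^Rf$ to an equivalence.
\item Now take the bifibration $\cY\to[1]$ classifying an adjunction $F\dashv G$ and an object $a\in\cY_0$. A morphism of $\cL$-bifibrations from your model to $\cY$ extending $a$ would invert $F\eta_a\colon Fa\to FGFa$, hence also the counit $\epsilon_{Fa}$.
\item This already fails for free abelian groups.
\end{itemize}
Your model is the bifibration of the walking reflective localization $\Adjffr$. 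The free one has fibre $\Delta_+=\Adj(+,+)$ over $0$, which contains every power $(f^Rf)^n$. The density reduction does not rescue this. Corepresenting evaluation at $c$ on $\cL$-bifibrations is precisely the theorem for $\cX=\cC_{c/}$, so ``Yoneda'' applies only once such an object has been constructed.

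What is missing is threefold: alternating zig-zags of unbounded length, a precise localization, and an actual proof that transport along $\cL$ acquires right adjoints. The paper proceeds as follows.
\begin{itemize}
\item It alternately applies the free $(0,1)$-fibration and the free partial $(1,1)$-fibration relative to $\cL$ (\cref{thm:pfree}), and takes the colimit over $\mathbb{N}$ (\cref{cons:Tbifib}).
\item It inverts the comparison $1$- and $2$-morphisms between competing lifts (\cref{def:cwfinal,def:diamond}). This is the precise content of your ``modulo the evident composition relations''.
\item It shows the result is a $(0,1)$-fibration. Via \cref{prop:charactbifib}, it shows each $f_!$ with $f\in\cL$ has a right adjoint, built stagewise from cartesian transport with explicit unit, counit and triangle identities.
\item The universal property is then formal. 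Maps out of the localized colimit form a limit over $\mathbb{N}^{\op}$ of suitable full subcategories of $\Fun_{/\cC}(\cX^{(k)},\cY)$. The transition maps are equivalences by the universal property of free (partial) fibrations.
\end{itemize}
This also removes the coherence worry about mates, since each stage is already an equivalence of $(\infty,2)$-categories.

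Alternatively, your discarded first route can be completed. $\cC\soft{\cL}$ exists by the adjoint functor theorem (\cref{prop:softlocalization}), independently of any fibration model. You would then need $\Fib^{(0,1)\mhyphen\bifib}_{/(\cC,\cL)}\simeq\Fun(\cC\soft{\cL},\Cat_{(\infty,2)})$. That requires showing two things: that preserving cartesian lifts over $\cL$ means adjointable naturality squares, and that the universal property of $\cC\soft{\cL}$ holds against the $(\infty,3)$-category $\Cat_{(\infty,2)}$. Left Kan extension would then give the left adjoint. It would not give the explicit fibres that the paper later uses to compute $\cC\soft{\cL}(x,y)$.
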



The construction also has a dual, cartesian-variance form, denoted $\BFree^{(1,0)}_{\cL}$. We write $\Fib_{/(\cC,\cL)}^{(1,0)\mhyphen\bifib}$ for the corresponding $(\infty,3)$-category of $\cL$-bifibrations over $\cC$.

Let $\cC\soft{\cL}$ denote the universal $(\infty,2)$-category obtained from $\cC$ by adjoining adjoints to the morphisms of $\cL$. This object exists for formal reasons and comes equipped with a canonical \emph{essentially surjective} functor $\iota\colon\cC\to\cC\soft{\cL}$. Its universal property is expressed by a natural equivalence
\[
	\Fun(\cC\soft{\cL},\cY)
	\xrightarrow{\simeq}
	\Fun(\cC,\cY)_{\cL}
\]
for every $(\infty,2)$-category $\cY$. Here the right-hand side denotes the locally full sub-$(\infty,2)$-category whose objects are the functors carrying the morphisms of $\cL$ to left adjoints and whose $1$-morphisms are the natural transformations with adjointable naturality squares over $\cL$. Thus $\cC\soft{\cL}$ classifies functors out of $\cC$ together with the adjunction data prescribed by $\cL$.

For $c\in\cC$, let $\cC_{\upslash c}\to\cC$ denote the cartesian fibration corresponding to the representable functor $\cC(-,c)$. The free bifibration construction gives the following explicit description of the adjoint completion.

\begin{thmint}[{\cref{thm:addingadjoints}}]\label{thm:intro-adjointcompletion}
	The $(\infty,2)$-category $\cC\soft{\cL}$ is equivalent to the full sub-$(\infty,2)$-category of $\Fib_{/(\cC,\cL)}^{(1,0)\mhyphen\bifib}$ spanned by the free $\cL$-bifibrations
	\[
		\BFree^{(1,0)}_{\cL}(\cC_{\upslash c})\longrightarrow\cC,
		\qquad c\in\cC.
	\]
	In particular, for $x,y \in \cC$, we obtain an equivalence of mapping $(\infty,1)$-categories
		\[
			\cC\soft{\cL}(x,y)
			\simeq
			\BFree^{(1,0)}_{\cL}(\cC_{\upslash y})\times_{\cC}\{x\}.
		\]
\end{thmint}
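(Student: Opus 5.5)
The argument is a Yoneda argument internal to $\Fib_{/(\cC,\cL)}^{(1,0)\mhyphen\bifib}$. Let $\mathcal{A}\subset \Fib_{/(\cC,\cL)}^{(1,0)\mhyphen\bifib}$ be the full sub-$(\infty,2)$-category (taking the underlying $(\infty,2)$-category, i.e.\ discarding non-invertible $3$-morphisms) spanned by the objects $\BFree^{(1,0)}_{\cL}(\cC_{\upslash c})$. The assignment $c\mapsto \cC_{\upslash c}$ is the cartesian-variance Yoneda embedding $\cC\to\Fib^{(1,0)}_{/\cC}$. Composing it with $\BFree^{(1,0)}_{\cL}$ gives a functor $j\colon\cC\to\mathcal{A}$ that is essentially surjective by construction. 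I would show that $j$ exhibits $\mathcal{A}$ as $\cC\soft{\cL}$.

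\textbf{Step 1: mapping categories.} Applying the cartesian form of \cref{thm:intro-freebifibration}, the adjunction $\BFree^{(1,0)}_{\cL}\dashv U$, and then the fibrational Yoneda lemma, gives
\[
\mathcal{A}(j x, j y)\simeq \Fib^{(1,0)}_{/\cC}\bigl(\cC_{\upslash x},U\BFree^{(1,0)}_{\cL}(\cC_{\upslash y})\bigr)\simeq \BFree^{(1,0)}_{\cL}(\cC_{\upslash y})\times_{\cC}\{x\}.
\]
This is the second claim of the statement, provided the first is established.

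\textbf{Step 2: $\mathcal{A}$ lies in the image of the universal property.}
\begin{enumerate}[(a)]
\item First, $j$ sends each $f\colon x\to x'$ in $\cL$ to a left adjoint in $\mathcal{A}$. Under the identification of Step 1, $j(f)$ is precomposition with cocartesian transport. For an $\cL$-bifibration, cartesian lifts over $\cL$ give a right adjoint to this transport fiberwise. Since morphisms of $\Fib^{\bifib}$ preserve those lifts, the right adjoint assembles into a morphism $jx'\to jx$ in $\mathcal{A}$ together with unit and counit.
\item Second, $j$ therefore induces a functor $\Phi\colon\cC\soft{\cL}\to\mathcal{A}$ under $\cC$. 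It is essentially surjective because $j$ is.
\end{enumerate}

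\textbf{Step 3: full faithfulness (the main obstacle).} I must show that $\cC\soft{\cL}(x,y)\to\mathcal{A}(jx,jy)$ is an equivalence. My approach is to check that $\mathcal{A}$ itself satisfies the universal property: for every $\cY$, restriction along $j$ should give $\Fun(\mathcal{A},\cY)\simeq\Fun(\cC,\cY)_{\cL}$.

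For the inverse, start from $G\colon\cC\to\cY$ in $\Fun(\cC,\cY)_{\cL}$. Embed $\cY$ into presheaves via Yoneda and form the cartesian fibration $\cY_{\upslash G}$ classifying $\cY(G(-),-)$, or dually $\cY(-,G(-))$. This fibration is an $\cL$-bifibration precisely because $G$ sends $\cL$ to left adjoints, since precomposition with a left adjoint has a left adjoint. Also, adjointable naturality squares yield bifibration morphisms. The universal property of $\BFree$ then extends the Yoneda data $\cC_{\upslash c}\to\cY(G(-),G c)$ to $\mathcal{A}$, and this produces the extension of $G$.

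The delicate point is bookkeeping: verifying that this construction is functorial in $G$ and inverse to restriction. Here I would rely on the representable description from Step 1 and the $(\infty,3)$-categorical nature of the adjunction in \cref{thm:intro-freebifibration}, so that $2$-morphisms are handled automatically. Once $\mathcal{A}$ has the universal property, uniqueness gives $\mathcal{A}\simeq\cC\soft{\cL}$.
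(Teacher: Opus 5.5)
Your Steps 1 and 2 are sound, apart from a variance slip in 2(a). For the $(1,0)$-fibration $a$, precomposition with $j(f)$ on $\mathcal{A}(j(-),a)\simeq a_{(-)}$ is the \emph{cartesian} transport $f^*$. It is the extra \emph{cocartesian} lifts over $\cL$ that provide its \emph{left} adjoint, and Yoneda then makes $j(f)$ a left adjoint.

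\textbf{The gap is in Step 3.} This step carries all the content of the theorem, and you only sketch it. Your corepresentability construction does give, for each $G\in\Fun(\cC,\cY)_{\cL}$, an extension $\widetilde G\colon\mathcal{A}\to\cY$: namely the partial left adjoint of $y\mapsto$ (the bifibration classifying $\cY(G(-),y)$). To conclude that restriction $\Fun(\mathcal{A},\cY)\to\Fun(\cC,\cY)_{\cL}$ is an equivalence, you must still show three things:
\begin{itemize}
\item every $H\colon\mathcal{A}\to\cY$ is equivalent to $\widetilde{Hj}$;
\item $G\mapsto\widetilde G$ is functorial and inverse to restriction;
\item transformations and modifications between functors out of $\mathcal{A}$ are determined by, and reconstructible from, their restrictions along $j$.
\end{itemize}
None of this is automatic. $j$ is essentially surjective only on objects, while the hom-categories of $\mathcal{A}$ are much larger than those of $\cC$. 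The $(\infty,3)$-categorical adjunction $\BFree^{(1,0)}_{\cL}\dashv U$ computes the hom-categories of $\mathcal{A}$ (your Step 1), but it does not control natural transformations between functors $\mathcal{A}\to\cY$. So your claim that ``2-morphisms are handled automatically'' is unsupported. Completing your route would need an explicit argument: for instance, a comparison between $a$ and the bifibration classifying $\cY(Hj(-),Ha)$, built from the action of $H$ on hom-categories. You would then upgrade from underlying spaces to $(\infty,2)$-categories via compatibility with cotensors.

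\textbf{The idea you are missing} is that you never need to re-prove the universal property for $\mathcal{A}$. $\cC\soft{\cL}$ already exists by \cref{prop:softlocalization}. Apply its universal property once, with target $\Cat_{(\infty,1)}^{\op}$, together with straightening (\cref{thm:str}) and \cref{prop:charactbifib}. This gives $\Fun(\cC\soft{\cL}^{\op},\Cat_{(\infty,1)})\simeq(\Fun(\cC,\Cat_{(\infty,1)}^{\op})_{\cL})^{\op}$. That is a fully faithful embedding into $\Fib^{(1,0)\mhyphen\bifib}_{/(\cC,\cL)}$, whose image is the bifibrations with $(\infty,1)$-categorical fibres. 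The argument then runs as follows:
\begin{itemize}
\item Composing with the Yoneda embedding of $\cC\soft{\cL}$ gives full faithfulness for free.
\item The image of $\iota(c)$ is $\BFree^{(1,0)}_{\cL}(\cC_{\upslash c})$, because both corepresent evaluation at $c$. This is exactly your Step 1 computation.
\item Essential surjectivity of $\iota$ (\cref{lem:softesssurj}) identifies the essential image with your $\mathcal{A}$.
\end{itemize}
This is the paper's proof, and it replaces your Step 3 entirely.
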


A crucial feature of our construction is that the fibres of $\BFree^{(1,0)}_{\cL}(\cC_{\upslash c})$ can be described explicitly (see \cref{prop:fibresasexpected}). In \cref{subsec:mappingformula}, we unravel this formula when $\cC$ is an $(\infty,1)$-category.\footnote{We restrict to this case for practical reasons: it keeps the discussion concise and is sufficient for our applications.} For each $n\geq0$, let $\cC^{(n)}(x,y)$ denote the $(\infty,1)$-category of alternating zig-zags of length $n+1$ from $x$ to $y$ whose right-pointing morphisms belong to $\cL$, with morphisms given by maps of zig-zags. These assemble into a filtered diagram, and we write
\[
	\cC^{\infty}(x,y)=\colim_{n\in\mathbb N}\cC^{(n)}(x,y).
\]
There are natural classes $\cQ_{n,x}$ of maps of zig-zags, determining a class $\cQ_x$ in $\cC^{\infty}(x,y)$, which we describe in \cref{def:QnLocalizefibre}. Inverting these classes, we obtain the following formula for the mapping $(\infty,1)$-categories of the adjoint completion.

\begin{thmint}[{\cref{thm:mappingcatform}}]\label{thm:intro-mappingformula}
	Let $\cL\subset\cC$ be a wide sub-$(\infty,1)$-category. For every $x,y\in\cC$, there are natural equivalences of $(\infty,1)$-categories
	\[
		\cC\soft{\cL}(x,y)
		\simeq
		\colim_{n\in\mathbb N}\cC^{(n)}(x,y)[\cQ_{n,x}^{-1}]
		\simeq
		\cC^{\infty}(x,y)[\cQ_x^{-1}].
	\]
\end{thmint}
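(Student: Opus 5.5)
We derive the formula from \cref{thm:intro-adjointcompletion} combined with the explicit description of the fibres of free bifibrations in \cref{prop:fibresasexpected}. By \cref{thm:intro-adjointcompletion}, for $x,y\in\cC$ we have
\[
\cC\soft{\cL}(x,y)\simeq \BFree^{(1,0)}_{\cL}(\cC_{\upslash y})\times_{\cC}\{x\}.
\]
It therefore suffices to identify the fibre over $x$ of the free $\cL$-bifibration on the representable cartesian fibration $\cC_{\upslash y}\to\cC$. Because $\cC$ is an $(\infty,1)$-category, all fibres are $(\infty,1)$-categories, and the fibre of $\cC_{\upslash y}$ over $z$ is the space (or category) $\cC(z,y)$.

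The plan is to present $\BFree^{(1,0)}_{\cL}$ as a sequential colimit of partial free constructions. Stage $n$ freely adjoins cocartesian lifts over $\cL$ to the stage-$(n-1)$ fibration, then re-completes cartesian lifts by left Kan extension along the base. Each stage adds one more alternation in the zig-zag. Concretely, an object of the stage-$n$ fibre over $x$ should be a zig-zag
\[
x \leftarrow c_1 \rightarrow c_2 \leftarrow \cdots \rightarrow c_n \to y
\]
in which the right-pointing maps lie in $\cL$. This is an object of $\cC^{(n)}(x,y)$. By \cref{prop:fibresasexpected}, each free step has fibre computed by a formula that is a localization of a comma-type category; the localization inverts exactly the maps of zig-zags which become equivalences after composing cartesian lifts with cocartesian lifts. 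This class is $\cQ_{n,x}$ in \cref{def:QnLocalizefibre}. Inducting on $n$ will identify the stage-$n$ fibre with $\cC^{(n)}(x,y)[\cQ_{n,x}^{-1}]$, compatibly with the transition maps that pad a zig-zag with identities.

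Next we show that the fibre of the colimit is the colimit of the fibres. Fibres over a point commute with filtered colimits of fibrations over a fixed base, because pulling back along $\{x\}\to\cC$ preserves colimits in $\Cat_{(\infty,1)}$ over $\cC$, and filtered colimits are universal. This gives the first equivalence. The second equivalence is formal: localization is a left adjoint on marked $(\infty,1)$-categories, so it commutes with colimits. Hence
\[
\colim_n \cC^{(n)}(x,y)[\cQ_{n,x}^{-1}]\simeq (\colim_n\cC^{(n)}(x,y))[(\textstyle\bigcup_n\cQ_{n,x})^{-1}],
\]
and the right-hand side is $\cC^{\infty}(x,y)[\cQ_x^{-1}]$ by the definition of $\cQ_x$.

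The main obstacle is the inductive identification of the stage-$n$ fibre. This requires checking that the localization produced by \cref{prop:fibresasexpected} at stage $n$, when applied to the already-localized stage-$(n-1)$ fibres, agrees with a single localization of the unlocalized zig-zag category $\cC^{(n)}(x,y)$ at $\cQ_{n,x}$. I would first attack this by showing that localizations compose: a localization of a (co)cartesian family whose fibres are themselves localizations is the localization of the total family at the union of the vertical and horizontal classes. I would then verify that this union is precisely $\cQ_{n,x}$.
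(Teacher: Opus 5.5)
There is a genuine gap, at exactly the step you flag as the main obstacle. Your outer skeleton (start from \cref{thm:addingadjoints}, then take fibres and filtered colimits) matches the paper, but you misread how the free bifibration is built. In the paper nothing is localized at the finite stages. Each stage is $\PFree$ applied to the previous one, which is a plain pullback against an arrow category, so by \cref{rem:zig1} the fibre of stage $n$ over $x$ is the \emph{unlocalized} $\cC^{(n)}(x,y)$. The localization happens once, on the colimit $\cC^{\infty}_{/y}$, at the class $\cW^1$ of \cref{def:cwfinal}. That class is generated by comparison $1$-morphisms between images of earlier (co)cartesian lifts and the newly adjoined ones, closed under transition functors and transport. \cref{prop:fibresasexpected} only says that the fibre of the result is the fibre of $\cX_T$ localized at $\cW^1_c$; it gives no per-step formula, and the class it inverts is $\cW^1$, not $\cQ$. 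So your sentence identifying ``the maps of zig-zags which become equivalences after composing cartesian lifts with cocartesian lifts'' with $\cQ_{n,x}$ is the content of the theorem stated without proof. Your proposed remedy (``localizations compose; the union of vertical and horizontal classes is $\cQ_{n,x}$'') does not address it, because the issue is comparing two generating classes, not composing localizations.

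What is missing is \cref{lem:comparisonQW}: $\cW^1$ and $\cQ$ generate the same localization of $\cC^{\infty}_{/y}$. In one direction, truncate a generator of type \eqref{eq:right} at the end of its block. This gives a map over $\id_t$, under a common source, between two cocartesian lifts of $(\alpha v)u=\alpha(vu)$, hence invertible in $\BFree$; the generator is recovered from it by transport (dually for \eqref{eq:left}, using cartesian lifts with a common target). In the other direction, the comparison $1$-morphisms for $T_{k,k+2}$ are literally of the forms \eqref{eq:left} and \eqref{eq:right}. Comparisons for longer transitions factor through images of these, and transition functors and the relevant transports preserve the local forms.

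Passing to fibres also needs more than ``fibres commute with filtered colimits'', since fibres do not commute with localization in general. The paper restricts to even stages by cofinality, because these are honest cartesian fibrations while odd stages are only partially cocartesian over $\cL$. It then checks that $\cQ_{2n}$ consists of fibrewise maps stable under cartesian transport, and applies \cite[Theorem 4.7.1]{AHM26} to get $\cC^{(2n)}_{/y}[\cQ_{2n}^{-1}]\times_{\cC}\{x\}\simeq\cC^{(2n)}(x,y)[\cQ_{2n,x}^{-1}]$. Your final step (localization commutes with filtered colimits, giving the second equivalence) is correct and matches the paper.

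Minor point: your displayed zig-zag is mis-oriented. It ends with two consecutive right-pointing arrows, and its first leg points into $x$ with no adjoint available. The paper's objects read $y\leftarrow y_0\to y_1\leftarrow\cdots\leftarrow x$ from right to left.
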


Observe that localizing $\cC\soft{\cL}$ at all its $2$-morphisms yields $\cC[\cL^{-1}]$. By \cref{thm:intro-adjointcompletion}, this amounts to geometrically realizing the fibres of the corresponding free bifibrations. As a direct consequence, we obtain a description of the mapping spaces of $\cC[\cL^{-1}]$ which closely resembles the hammock localization formula of Dwyer and Kan \cite{DwyerKan1980Calculating}.

\begin{corint}[{\cref{rem:softstrongloc,cor:dwyerkanhammock}}]\label{cor:intro-hammock}
	Let $\cL\subset\cC$ be a wide sub-$(\infty,1)$-category. For every $x,y\in\cC$, there are equivalences of spaces
	\[
		\cC[\cL^{-1}](x,y)
		\simeq
		\colim_{n\in\mathbb N}\lvert\cC^{(n)}(x,y)\rvert
		\simeq
		\lvert\cC^{\infty}(x,y)\rvert.
	\]
\end{corint}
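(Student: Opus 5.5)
The plan is to deduce the corollary from \cref{thm:intro-mappingformula} by localizing $\cC\soft{\cL}$ at all of its $2$-morphisms.

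\textbf{Step 1: identify $\cC[\cL^{-1}]$ with the groupoidification of $\cC\soft{\cL}$.} Write $\cC\soft{\cL}^{\mathrm{loc}}$ for the $(\infty,1)$-category obtained by inverting every $2$-morphism of $\cC\soft{\cL}$. This is the left adjoint to the inclusion of $(\infty,1)$-categories into $(\infty,2)$-categories. Its mapping spaces are obtained by geometrically realizing the mapping $(\infty,1)$-categories, since this localization is computed hom-wise by $\lvert-\rvert$. Combining the universal property of $\cC\soft{\cL}$ with this adjunction, for every $(\infty,1)$-category $\cY$ we get
\[
\Map(\cC\soft{\cL}^{\mathrm{loc}},\cY)\simeq \Map(\cC\soft{\cL},\cY)\simeq \Map(\cC,\cY)_{\cL}^{\simeq}.
\]
In an $(\infty,1)$-category, viewed as an $(\infty,2)$-category with invertible $2$-cells, a morphism is a left adjoint exactly when it is an equivalence. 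Adjointability of the naturality squares over $\cL$ is then automatic, because the mates of invertible squares between equivalences are invertible. So the right-hand side is the space of functors inverting $\cL$, which is $\Map(\cC[\cL^{-1}],\cY)$. By Yoneda, $\cC\soft{\cL}^{\mathrm{loc}}\simeq\cC[\cL^{-1}]$ compatibly with the maps from $\cC$. Since $\iota$ is essentially surjective, this equivalence is the identity on objects.

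\textbf{Step 2: compute the mapping spaces.} Step 1 and the hom-wise description give
\[
\cC[\cL^{-1}](x,y)\simeq\lvert\cC\soft{\cL}(x,y)\rvert\simeq\bigl\lvert\colim_n \cC^{(n)}(x,y)[\cQ_{n,x}^{-1}]\bigr\rvert ,
\]
where the second equivalence is \cref{thm:intro-mappingformula}. Realization is a left adjoint, so it commutes with the filtered colimit. It also inverts localizations: $\lvert\cD[W^{-1}]\rvert\simeq\lvert\cD\rvert$, because both are the initial space under $\cD$. This yields $\colim_n\lvert\cC^{(n)}(x,y)\rvert$. Applying the same argument to the second formula of \cref{thm:intro-mappingformula} gives $\lvert\cC^\infty(x,y)\rvert$.

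\textbf{Main obstacle.} The delicate point is the claim in Step 1 that inverting all $2$-cells of an $(\infty,2)$-category realizes each hom-category. This requires checking that the hom-wise realization $\lvert-\rvert_*$ preserves the composition structure. It does, because realization preserves finite products, so we may apply it to the enriched categories. Once that is settled, the enriched category with hom-spaces $\lvert\cC\soft{\cL}(x,y)\rvert$ satisfies the universal property of $\cC\soft{\cL}^{\mathrm{loc}}$. I would cite this fact as standard rather than reprove it. The remaining manipulations are formal.
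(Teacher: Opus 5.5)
Your argument is correct and is essentially the paper's own: Step 1 is exactly \cref{rem:softstrongloc}, and the paper likewise geometrically realizes the mapping categories $\cC\soft{\cL}(x,y)$ (phrased there as realizing the fibres of $\BFree^{(1,0)}_{\cL}(\cC_{\upslash y})$, cf.\ \cref{cor:dwyerkan}), then applies \cref{thm:mappingcatform} together with the compatibility of $\lvert-\rvert$ with filtered colimits and localizations. The only difference is cosmetic: you justify the hom-wise realization by change of enrichment along the product-preserving left adjoint $\lvert-\rvert$, while the paper packages this step through its fibrational model of $\cC[\cL^{-1}]$.
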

\subsection*{Applications}
We now turn to two applications in which our description of $\cC\soft{\cL}$ can be made completely explicit.

Our first application concerns the walking adjunction. Riehl and Verity proved that every adjunction in a suitably fibrant simplicially enriched category extends to a homotopy-coherent adjunction \cite{RiehlVerity2016FormalTheoryMonads}. In the language used here, their result says that the classical $2$-category $\Adj$ represents left adjoints in any $(\infty,2)$-category. Our construction gives a model-independent proof of this universal property.

\begin{thmint}[{\cref{thm:RVmi}}]\label{thm:intro-walking-adjunction}
	Let $\Adj$ denote the walking adjunction, and let $[1]\to\Adj$ select its universal left adjoint. For every $(\infty,2)$-category $\cX$, restriction induces an equivalence of $(\infty,2)$-categories
	\[
		\Fun(\Adj,\cX)\xrightarrow{\simeq}\Fun([1],\cX)_{\mathrm{LAdj}},
	\]
	where the right-hand side denotes the locally full sub-$(\infty,2)$-category of $\Fun([1],\cX)$ whose objects are the left adjoints in $\cX$ and whose $1$-morphisms are the natural transformations with adjointable naturality squares.
\end{thmint}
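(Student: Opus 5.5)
The plan is to specialise \cref{thm:intro-adjointcompletion} and \cref{thm:intro-mappingformula} to $\cC=[1]$ and $\cL=\cC$, and then to recognise the resulting adjoint completion as $\Adj$. By the universal property of $\cC\soft{\cL}$, restriction along $\iota$ gives an equivalence
\[
\Fun([1]\soft{[1]},\cX)\xrightarrow{\simeq}\Fun([1],\cX)_{\cL},
\]
and with $\cL=[1]$ the right-hand side is exactly $\Fun([1],\cX)_{\mathrm{LAdj}}$. The theorem will therefore follow once we produce an equivalence $[1]\soft{[1]}\simeq\Adj$ under $[1]$, that is, one compatible with the two maps out of $[1]$.

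First I would compare objects. Both $[1]\soft{[1]}$ and $\Adj$ have objects $0,1$, and $\iota$ is essentially surjective. Next I would build a comparison functor $\Phi\colon[1]\soft{[1]}\to\Adj$. The inclusion $[1]\to\Adj$ selects a left adjoint, so it lies in $\Fun([1],\Adj)_{\mathrm{LAdj}}$, and the universal property then produces $\Phi$ together with a commuting triangle under $[1]$. To check that $\Phi$ is an equivalence, it remains to show that it induces equivalences on mapping $(\infty,1)$-categories. Since $\Adj$ is a $2$-category, it suffices to show that each $[1]\soft{[1]}(x,y)$ is a $1$-category and that $\Phi$ induces isomorphisms with the Schanuel--Street description. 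In that description, $\Adj(0,0)\cong\ddelta_+$ (augmented simplex category), $\Adj(1,1)\cong\ddelta_+^{\op}$ or its variant, and $\Adj(0,1)$ and $\Adj(1,0)$ are the categories of order-preserving maps $[n]\to[m]$ that preserve the top, respectively bottom, element.

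For the mapping categories I would use \cref{thm:intro-mappingformula}. A zig-zag in $[1]$ of length $n+1$ whose right-pointing maps lie in $\cL=[1]$ is a word alternating between $0$ and $1$ with prescribed endpoints. A map of zig-zags is a map of underlying posets respecting directions, which amounts to a poset-level degeneracy. The localisation at $\cQ_{n,x}$ should exactly identify those zig-zags which differ by inserting identity legs. The colimit $\cC^{\infty}(x,y)[\cQ_x^{-1}]$ should then be computed as a $1$-category equivalent to $\ddelta_+$ (for $x=y=0$) and to the corresponding variants in the other cases. Concretely, I would construct a functor from $\cC^{(n)}(x,y)$ to the candidate category that records the number of "turns", show that it inverts $\cQ_{n,x}$, and show that the induced map out of the localisation is an equivalence by exhibiting it as a localisation with a section, namely the reduced zig-zags. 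Finally I would check that composition agrees with ordinal sum. Here it would be easier to use the fibre description of \cref{prop:fibresasexpected} for the full $2$-categorical structure, or to note that $\Phi$ exists already, so only the mapping categories need to be compared.

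The main obstacle is the explicit computation of the localisation $\cC^{\infty}(x,y)[\cQ_x^{-1}]$. In particular, one has to verify that it is a $1$-category, with no higher homotopy created by the localisation, and that it matches $\ddelta_+$ compatibly with $\Phi$. For this I would try to show that $\cQ_x$ admits a calculus of fractions, or that the reduced zig-zags form a reflective subcategory in which every $\cQ_x$-map becomes invertible, so that the localisation is the reflective subcategory itself.
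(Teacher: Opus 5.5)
Your architecture matches the paper's: a comparison $[1]\soft{[1]}\to\Adj$ obtained from the universal property, essential surjectivity on objects, and a reduction to mapping categories computed by \cref{thm:intro-adjointcompletion,thm:intro-mappingformula}. The paper packages this reduction as \cref{prop:detecsoft}. The gap is the step you call the main obstacle, which is the whole content of the proof, and neither tool you propose for it can work.

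For $\cC=\cL=[1]$, every stage $\cC^{(n)}(x,y)$ and the colimit $\cC^{\infty}(x,y)$ are posets. An object is a monotone map $Z_n^{\star}\to[1]$ with prescribed values at $-1$ and $n$; constant stretches are allowed, so these are not alternating words. A morphism is a pointwise inequality $f\leq g$, not a degeneracy. Hence there is no functor $s\colon\Delta_+\to\cC^{\infty}(0,0)$ with $\pi_0\circ s\simeq\id$, because $s$ would have to identify the two maps $[0]\to[1]$. The same applies to $\Delta_{\infty}$, $\Delta_{-\infty}$ and $\Delta_{\intv}$, none of which is thin. So no subcategory of reduced zig-zags, reflective or otherwise, can be the localization: the non-trivial hom-sets of $\Adj$ are created by the localization itself.

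A calculus of fractions fails too. Under \cref{lem:posetgeom}, the class to be inverted consists of the inclusions of closed subsets of $\mathbb{R}_{\geq 0}$ that are bijective on connected components. The pair $\{2\}\subseteq[1,5]$, $\{2\}\subseteq\{2\}\cup\{4\}$ violates the left Ore condition, since any $L'$ containing $[1,5]$ puts $2$ and $4$ in the same component. The pair $\{3\}\subseteq[1,5]$, $\{2\}\subseteq[1,5]$ violates the right Ore condition, since the only subset of $\{2\}$ bijective on components is $\{2\}$ itself, which does not lie in $\{3\}$.

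Two ingredients are missing.
\begin{itemize}
\item \textbf{The correct invariant.} It is not a number of turns but the linearly ordered set $\pi_0(f^{-1}(1))$ of components of the locus where $f$ takes the value $1$. You then need that the localization generated by $\cQ$ inverts \emph{exactly} the morphisms this functor inverts (\cref{lem:pi0iso}). The generators of \cref{def:QnLocalizefibre} push a boundary of a block of $1$'s outward by two vertices. Inserting identity legs is what the transition maps do, and that is absorbed by the colimit. Every $\pi_0$-bijective inclusion factors into such boundary moves.
\item \textbf{A homotopical localization argument.} You need to show that $\pi_0\colon\cK^{\cell}_\star\to\Delta_\star$ is an $\infty$-categorical localization at $\cW_\star$. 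This is also what excludes the higher homotopy you worry about. The paper proves in \cref{prop:assemblpart1} that for every $[n]$ the fibres of $|\Fun_{\cW_\star}([n],\cK^{\cell}_\star)|\to\Fun([n],\Delta_\star)^{\simeq}$ are contractible. It does so with a filtration of each fibre that moves labelled endpoints one at a time towards those of a generic reference chain; each step is a retraction related to the identity by a natural transformation. It then concludes via \cite{MazelGee2019UniversalityRezkNerve} that the Rezk nerve of $(\cK^{\cell}_\star,\cW_\star)$ presents $\Delta_\star$.
\end{itemize}
Without an argument of this strength, your proposal does not show that the mapping categories of $[1]\soft{[1]}$ are the Schanuel--Street ones.
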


Our second application concerns the simplex $2$-category $\ddelta$. This is the full sub-$(\infty,2)$-category of $\Cat_{(\infty,1)}$ spanned by the nonempty finite ordinals, with $2$-morphisms given by pointwise inequalities between order-preserving maps.

The simplex $2$-category features in the categorified Dold--Kan correspondence of Dyckerhoff \cite[Theorem 3.4]{Dyckerhoff2021CategorifiedDoldKan}, which establishes an equivalence between the $(\infty,1)$-categories of connective chain complexes of stable $(\infty,1)$-categories and $2$-simplicial stable $(\infty,1)$-categories, i.e. functors $\cX_\bullet \colon \ddelta^\op \to \cS\mathsf{t}$\footnote{We denote by $\cS\mathsf{t}$ the $(\infty,2)$-category of stable $(\infty,1)$-categories and exact functors.}. Fundamental examples of such $2$-simplicial objects arise from variants of Waldhausen's $S_\bullet$-construction in algebraic $K$-theory.

A key property of $\ddelta$ is that its face and degeneracy maps assemble into adjunction strings
\[
	d_{i+1}\dashv s_i\dashv d_i.
\]

Dyckerhoff, Kapranov, Schechtman, and Soibelman conjectured that these adjunctions give a universal characterization of $\ddelta$ \cite[Conjecture~3.3.7]{DyckerhoffKapranovSchechtmanSoibelman2024SphericalAdjunctions}. Let $\Delta$ be the usual simplex $1$-category and write $i \colon \Delta \to \ddelta$ for the canonical inclusion.

\begin{thmint}[{\cref{thm:2simplex}}]\label{thm:intro-simplex}
	Let $\cX$ be an $(\infty,2)$-category. Then restriction along $i\colon \Delta \to \ddelta$ induces a fully faithful functor of $(\infty,2)$-categories
	\[
		i^* \colon \Fun(\ddelta,\cX) \longrightarrow \Fun(\Delta,\cX).
	\]
	Its essential image consists of those cosimplicial objects $F \colon \Delta \to \cX$ that send each invertible $2$-morphism $\id \xrightarrow{\simeq} s_i d_{i+1}$ to the unit and each $s_i d_i \xrightarrow{\simeq} \id$ to the counit of an adjunction.
\end{thmint}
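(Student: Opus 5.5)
The plan is to identify $\ddelta$ with an adjoint completion $\Delta\soft{\cL}$ of the kind described in \cref{thm:intro-adjointcompletion}, and then read off the statement from the universal property of $\cC\soft{\cL}$. Let $\cL\subset\Delta$ be the wide subcategory generated by the codegeneracies $s_i$. Each $s_i$ is a left adjoint in $\ddelta$ (with right adjoint $d_i$), and each morphism of $\cL$ is a composite of $s_i$'s, hence also a left adjoint. The universal property therefore gives a comparison functor $\Phi\colon\Delta\soft{\cL}\to\ddelta$ under $\Delta$. This functor is essentially surjective because $\iota\colon\Delta\to\Delta\soft{\cL}$ is, so it suffices to show that it is fully faithful on mapping $(\infty,1)$-categories. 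Once $\Delta\soft{\cL}\simeq\ddelta$ is established, $\Fun(\ddelta,\cX)\simeq\Fun(\Delta,\cX)_{\cL}$, which is a locally full sub-$(\infty,2)$-category of $\Fun(\Delta,\cX)$. This gives full faithfulness of $i^*$ on the appropriate level, and the essential image consists of those $F$ sending every $s_i$ to a left adjoint.

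Two further checks are needed. \textbf{First}, we must identify this essential image with the one in the statement, namely that the adjunction $s_i\dashv d_i$ is witnessed by the images of the specified invertible $2$-cells $s_id_i\simeq\id$ (counit) and $\id\simeq s_id_{i+1}$. Here I would argue that in $\ddelta$ the counit of $s_i\dashv d_i$ is the identity $s_id_i=\id$. Consequently, for any $F$ in the image, $F(d_i)$ is a right adjoint of $F(s_i)$ with counit equal to the image of this equality. Conversely, if $F$ sends these cells to units and counits, then $F(s_i)$ is a left adjoint. The adjunctions $d_{i+1}\dashv s_i$ are handled symmetrically, or are automatic from the other adjunction.

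\textbf{Second}, full faithfulness of $i^*$ as a functor of $(\infty,2)$-categories, rather than merely local fullness, requires that every natural transformation between functors in the image automatically has adjointable naturality squares at each $s_i$. This holds because the counits are invertible. The mate of the commutative naturality square for $d_i$ is then the inverse of the square for $s_i$, and hence invertible. Equivalently, since $d_i$ also lies in $\Delta$, the square for the right adjoint commutes with an invertible mate.

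The main obstacle is computing the mapping categories of $\Delta\soft{\cL}$ and matching them with $\ddelta([m],[n])$, the poset of order-preserving maps under pointwise order. I would use \cref{thm:intro-mappingformula}: $\Delta\soft{\cL}([m],[n])\simeq\colim_k\Delta^{(k)}([m],[n])[\cQ_{k,[m]}^{-1}]$, built from zig-zags whose right-pointing maps are degeneracies. The strategy is to show the filtered diagram stabilizes at $k=1$ after localization: a zig-zag $[m]\xleftarrow{}[p]\xrightarrow{s}[q]$ should collapse to an honest map once $\cQ$ is inverted. Then one identifies $\Delta^{(1)}$ localized at $\cQ$ with the poset of maps $[m]\to[n]$. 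The $2$-morphisms $f\le g$ would arise from spans through a degeneracy, using the section $d$ and the adjunction $d\dashv s$ to produce the inequality.

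Concretely, I would construct for each localized zig-zag category a reflective (or coreflective) inclusion of the poset $\ddelta([m],[n])$, sending a map to the trivial zig-zag. I would then check that the $\cQ$-maps are exactly the inverted units of this reflection, using the lexicographic/Galois-connection combinatorics of surjections in $\Delta$. I expect this combinatorial identification, together with verifying that it is compatible with composition via $\Phi$, to be the bulk of the work.
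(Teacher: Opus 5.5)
\paragraph{The gap.} The first step fails: for $\cL$ generated by the codegeneracies, $\Phi\colon\Delta\soft{\cL}\to\ddelta$ is \emph{not} an equivalence. The free adjoint completion adjoins \emph{new} right adjoints $s_i^{\ra}$, and nothing identifies them with the $d_i$.

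A concrete counterexample is $F\colon\Delta\to\Cat_{(\infty,1)}$, $[n]\mapsto[n]^{\op}$.
\begin{itemize}
\item Since $d_{i+1}\dashv s_i$, we get $s_i^{\op}\dashv d_{i+1}^{\op}$. So $F$ sends every surjection to a left adjoint, i.e.\ $F\in\Fun(\Delta,\Cat_{(\infty,1)})_{\cL}$.
\item $F$ does not extend to $\ddelta$: the $2$-morphism $d_1\le d_0$ in $\ddelta([0],[1])$ would have to go to a morphism $0\to1$ in $[1]^{\op}$, and there is none.
\item In $\Delta\soft{\cL}([0],[1])$, both $s_0^{\ra}$ and $d_0$ are sent by $\Phi$ to $d_0$. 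Yet they are not equivalent: the extension of $F$ sends them to $d_1^{\op}$ and $d_0^{\op}$, i.e.\ to the distinct objects $0,1\in[1]^{\op}$. Hence $\Phi$ is not fully faithful.
\end{itemize}
This is exactly where your ``first check'' breaks. $F(s_i)$ being a left adjoint does not force $F(d_i)$ to be its right adjoint with counit $F(s_id_i)=\id$; you only established the reverse inclusion. For the same reason your planned mapping-category computation cannot work. The generators of $\cQ$ in \cref{def:QnLocalizefibre} only encode composition of zig-zag legs. They never relate the zig-zag representing $s_0^{\ra}$ to the honest map $d_0$, so the colimit cannot collapse to $\ddelta([m],[n])$.

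\paragraph{What is missing, and how the paper proceeds.} You need a further localization identifying the free adjoints with existing cosimplicial maps.
\begin{itemize}
\item The paper takes $\cL$ generated by the $s_i$ \emph{and} the $d_{i+1}$ (the maps with $f(0)=0$). It then inverts the comparison $2$-cells $c_f\colon d_i\Rightarrow s_i^{\ra}$ and $c_g\colon d_{i+1}^{\ra}\Rightarrow s_i$, giving $\ddelta_\diamond=\Delta\soft{\cL}_{R,S}$ (\cref{def:CLRS}).
\item \cref{thm:ffaddingsections} then produces exactly the essential image in the statement, with both families of adjunctions imposed. Your claim that $d_{i+1}\dashv s_i$ is automatic would need its own proof.
\item Adjointability of transformations is automatic by \cref{BCeasy}; this part of your second check is correct.
\end{itemize}
The real work is showing that $\ddelta_\diamond\to\ddelta$ is locally an equivalence, and this must genuinely use the extra localization. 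The paper does it in four steps:
\begin{enumerate}
\item Using $d_{i+1}\dashv s_i\dashv d_i$ and full faithfulness of $d_0$, it reduces to $\Map_{\ddelta_\diamond([n],[n])}(\id,\id)\simeq *$ (\cref{lem:keyreduction2simplex}).
\item Universal adjointability arguments show that every morphism of $Q(n,n)_{\id}$ is inverted (\cref{lem:deltaisos}).
\item $Q(n,n)_{\id}$ is contractible, via the coreflective subcategory $L(n,n)_{\id}$, which has a terminal object (\cref{prop:Lcoreflect,prop:Qcontract}).
\item A hammock argument shows $|Q(n,n)_{\id}|\to\ddelta_\diamond([n],[n])$ is fully faithful.
\end{enumerate}
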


The preceding theorem will also play an important role in forthcoming joint work with Thomas Blom \cite{AbellanBlomLaxIdempotentMonads,AbellanBlomNonCoherentLaxIdempotency}, where we develop the theory of lax-idempotent monads---and, more generally, lax-idempotent algebras---in a homotopy-coherent setting.

\subsection*{Relation to the literature and folklore}
The idea of using bifibrations in order to encode adjunctions can be traced back to the school of Grothendieck under the name of \emph{bifibred} categories \cite[Exposé~VI, \S~10]{Grothendieck1971SGA1}. This point of view is also closely related to the use of fibred categories in monadic descent, going back to Bénabou and Roubaud \cite{BenabouRoubaud1970MonadesDescente}.

The particular alternating construction for the free bifibration used in this paper appears to have circulated as folklore: I learned from Thomas Blom that the free bifibration should be obtained by alternately adjoining cartesian and cocartesian lifts. According to Blom, he learned about this construction from Germán Stefanich, who had himself heard about it from Nick Rozenblyum.\footnote{It is unclear whether the lineage of this construction goes even deeper.}

Folklore aside, there is substantial literature on closely related free constructions. Dawson, Paré, and Pronk construct the $2$-category obtained from a category by freely adjoining right adjoints to all its morphisms \cite{DawsonParePronk2003AdjoiningAdjoints}. Recently, a strict categorical construction of the free bifibration was given by Clarke, Scherer, and Zeilberger \cite{ClarkeSchererZeilberger2025FreeBifibration}. At the higher-categorical level, Riva and Rovelli construct another model for freely adjoining right adjoints to an $(\infty,1)$-category using a particular zig-zag construction \cite{RivaRovelli2025ZigzagsAdjunctions}. To the best of our understanding, none of the existing literature provides sufficient control over the mapping $(\infty,1)$-categories needed for our applications; this justifies adding yet another paper on this classical question.
\subsection*{Use of Large Language Models}
Throughout the development of this paper, large language models (LLMs) were used as editorial tools to detect mathematical errors, typos and inconsistencies in notation, as well as to conduct advanced literature searches. LLMs were also used in a series of experiments exploring the combinatorics of the category $Q(n,n)_{\id}^{(k)}$ from \cref{not:Qnn}. More precisely, for a family of subcategories of $Q(n,n)_{\id}^{(k)}$, an LLM was instructed to count the morphisms to a given object $I_n^{(k)}$ for small values of $k$. I was particularly pleased to find that, for a given choice of subcategory, these experiments found precisely one morphism with target $I_n^{(k)}$. This observation ultimately led to the proof of \cref{prop:Qcontract}.
\subsection*{Acknowledgements}
I would like to thank Tobias Dyckerhoff for introducing me to the simplex $2$-category conjecture during the early stages of my PhD. Much of my work on the theory of $(\infty,2)$-categories has been indirectly motivated by the desire to develop a general toolbox for tackling questions of this kind. It is also my pleasure to thank Rune Haugseng for his persistent—and occasionally relentless—encouragement, throughout my years at the Norwegian University of Science and Technology, to find a new proof of the universal property of the walking adjunction using modern $(\infty,2)$-categorical technology. I would also like to express my gratitude to Thomas Blom. This project was motivated by our joint work on lax-idempotent algebras, and I am indebted to him for several discussions that greatly influenced this paper. Finally, I thank the Max Planck Institute for Mathematics in Bonn, where this work was carried out, for its hospitality and support.
\section{Preliminaries}
We assume that the reader is familiar with the basic language of higher category theory. The purpose of this section is primarily to fix notation and to recall the few constructions used throughout the paper.

\subsection{Generalities on $(\infty,2)$-categories}

\begin{notation}\label{not:adjointstring}
	\leavevmode\par
	\begin{itemize}
		\item We write $\Cat_{(\infty,n)}$ for the $(\infty,n+1)$-category of $(\infty,n)$-categories and $\Fun(\cC,\cD)$ for the corresponding functor $(\infty,n)$-category.
		\item For an $(\infty,2)$-category $\cC$, we write $\cC(x,y)$ for its mapping $(\infty,1)$-category. We use $\Map_{\cA}(f,g)$ for the mapping space between two objects of an $(\infty,1)$-category $\cA$.
		\item The superscripts $\op$, $\co$, and $\coop$ indicate reversal of 1-morphisms, 2-morphisms, and both, respectively.

		\item Unless otherwise indicated, $\Fun(\cC,\cD)$ is formed using natural transformations; the variants with lax or oplax transformations are denoted by $\Fun^{\lax}(\cC,\cD)$ and $\Fun^{\oplax}(\cC,\cD)$.
		\item We write $\cC^{\simeq}$ for the underlying $\infty$-groupoid (or space) of an $(\infty,2)$-category.  
		\item We write $|\cC|$ for the $\infty$-groupoid obtained by inverting every morphism, and $\cC[\cW^{-1}]$ for the localization of $\cC$ at a specified class $\cW$ of morphisms.
		\item We denote by $\iota \colon \Cat_{(\infty,1)} \hookrightarrow \Cat_{(\infty,2)}$ the canonical inclusion.
		\item We have a string of adjoints $\tau_1 \dashv \iota \dashv (-)^{\leq 1}$ of $(\infty,2)$-categories, where $\tau_1 \cC$ inverts all 2-morphisms and $\cC^{\leq 1}$ discards all non-invertible 2-morphisms. 
	\end{itemize}
\end{notation}

\begin{definition}\label{def:adjointable}
	A commutative square in an $(\infty,2)$-category
	\[
	\begin{tikzcd}
		\cA \arrow[r,"f"] \arrow[d,"u"'] & \cB \arrow[d,"v"] \\
		\cC \arrow[r,"g"'] & \cD
	\end{tikzcd}
	\]
	whose horizontal morphisms admit right adjoints $f^R$ and $g^R$ is
	\emph{adjointable} if the mate transformation
	\[
		u f^R \xrightarrow{} g^R g u f^R
		\xrightarrow{} g^R v f f^R \xrightarrow{} g^R v,
	\]
	is an equivalence, where the first and last maps are induced by the unit of $g \dashv g^R$
	and the counit of $f \dashv f^R$, respectively. If $f,g$ admit left adjoints one defines analogously the property of being an adjointable square.
\end{definition}

\begin{lemma}\label{BCeasy}
	Let $\sX$ be an $(\infty,2)$-category and
	\[\begin{tikzcd}
		x & {x'} \\
		y & {y'}
		\arrow["f",from=1-1, to=1-2]
		\arrow["i"', from=1-1, to=2-1]
		\arrow["{j}", from=1-2, to=2-2]
		\arrow["g"',from=2-1, to=2-2]
	\end{tikzcd}\]
	a commutative square in $\sX$.
	Suppose $i$ and $j$ admit left adjoints $k$ and $l$ which are reflective localizations.
	Then the Beck--Chevalley transformation
	\[\begin{tikzcd}
		x & {x'} \\
		y & {y'}
		\arrow["f", from=1-1, to=1-2]
		\arrow[shorten <= 6, shorten >= 6, Rightarrow, from=2-2, to=1-1]
		\arrow["k", from=2-1, to=1-1]
		\arrow["g"', from=2-1, to=2-2]
		\arrow["l"', from=2-2, to=1-2]
	\end{tikzcd}\]
	is invertible if and only if there exists some homotopy $lg \simeq fk$ making this square commute. 
\end{lemma}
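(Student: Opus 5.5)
The direction from left to right is immediate: if the Beck--Chevalley transformation is invertible, it is itself a homotopy $lg\simeq fk$. For the converse, the plan is to reduce invertibility of the Beck--Chevalley map $\beta\colon lg\Rightarrow fk$ to invertibility of its whiskering $\beta i$, and to check the latter using the triangle identities.

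Write $\eta\colon \id_y\Rightarrow ik$ for the unit of $k\dashv i$ and $\epsilon\colon ki\Rightarrow \id_x$ for its counit. Write $\epsilon'\colon lj\Rightarrow \id_{x'}$ for the counit of $l\dashv j$. Since $k$ and $l$ are reflective localizations, $i$ and $j$ are fully faithful, so $\epsilon$ and $\epsilon'$ are invertible. Recall that $\beta$ is the composite
\[
lg\xRightarrow{lg\eta} lgik\simeq ljfk\xRightarrow{\epsilon' fk} fk .
\]

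\textbf{Step 1: a fully faithful precomposition functor.} Precomposition turns the adjunction $k\dashv i$ into an adjunction of mapping $(\infty,1)$-categories
\[
i^*\dashv k^*,\qquad i^*\colon \sX(y,x')\to\sX(x,x'),\quad k^*\colon \sX(x,x')\to\sX(y,x').
\]
Its unit is $\varphi\eta\colon \varphi\Rightarrow\varphi ik$ and its counit is $\psi\epsilon\colon \psi ki\Rightarrow\psi$. Because $\epsilon$ is invertible, the counit is invertible, so the right adjoint $k^*$ is fully faithful.

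Consequently, on the essential image of $k^*$ the functor $i^*$ is an inverse equivalence onto $\sX(x,x')$. In particular, a $2$-morphism between objects of this essential image is invertible if and only if its whiskering with $i$ is invertible. The essential image is closed under equivalences and contains $fk=k^*f$. Given a homotopy $lg\simeq fk$, it therefore also contains $lg$.

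\textbf{Step 2: compute $\beta i$.} Whiskering with $i$ gives
\[
\beta i\colon\; lgi\xRightarrow{lg\eta i} lgiki\simeq ljfki\xRightarrow{\epsilon' fki} fki.
\]
By the triangle identity, $\eta i$ is inverse to the invertible $i\epsilon$, so $\eta i$ is invertible. The map $\epsilon' fki$ is invertible because $\epsilon'$ is. Hence $\beta i$ is invertible.

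Combining Steps 1 and 2, $\beta$ is a $2$-morphism between objects of the essential image of $k^*$ whose whiskering with $i$ is invertible, so $\beta$ is invertible.

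The main point requiring care is Step 1, namely that precomposition by the adjunction $k\dashv i$ yields an adjunction $i^*\dashv k^*$ (with the variance reversed) whose counit is invertible. This is a formal $2$-categorical fact. Once it is in place, the only role of the hypothesis is to place $lg$ in the essential image of $k^*$, and the specific homotopy $lg\simeq fk$ plays no further role.
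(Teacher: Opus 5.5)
Your proof is correct, but it is organized differently from the paper's. The paper shows directly that the first factor $lg\eta\colon lg\Rightarrow lgik$ of the Beck--Chevalley composite is invertible. Transporting along the homotopy $lg\simeq fk$ reduces this to $fk\eta$, and $k\eta$ is invertible by the triangle identity $(\epsilon k)\circ(k\eta)=\id_k$, since $\epsilon$ is invertible. You instead whisker the whole composite with $i$ and use the other triangle identity, $(i\epsilon)\circ(\eta i)=\id_i$, to see that $\beta i$ is invertible without using the homotopy at all. You then invoke conservativity of $i^*$ on the essential image of the fully faithful right adjoint $k^*$.

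The two arguments are closer than they look. The unit of your adjunction $i^*\dashv k^*$ at $lg$ is precisely $lg\eta$. Its invertibility on the essential image of $k^*$ comes from the triangle identity $(\psi\epsilon k)\circ(\psi k\eta)=\id_{\psi k}$, which is the paper's computation whiskered by $\psi$.

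The paper's route is shorter and needs no precomposition adjunction. Yours isolates the structural point that $\beta i$ is always invertible once $\epsilon$ and $\epsilon'$ are. The hypothesis $lg\simeq fk$ then serves only to place $lg$ in the essential image of $k^*$, equivalently to make $lg\eta$ invertible.
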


\begin{proof}
	The Beck--Chevalley transformation is the composite
	\[lg \xRightarrow{\eta} lgik \simeq ljfk \xRightarrow{\epsilon} fk.\]
	Since $\varepsilon$ is invertible by assumption, it suffices to show that $\eta \colon lg \Rightarrow lgik$ is invertible.
	Because of the homotopy $lg \simeq fk$, this holds if $\eta \colon fk \Rightarrow fkik$ is invertible.
	Since the 2-morphism $\eta \colon k \Rightarrow kik$ is invertible by the triangle identities, the result follows.
\end{proof}

\begin{remark}\label{rem:BCeasy}
	\cref{BCeasy} admits obvious dual statements where:
	\begin{itemize}
		\item the functors $i,j$ admit right adjoints which are coreflective localizations. 
		\item the functors $i,j$ are themselves (co)reflective localizations.
	\end{itemize} 
\end{remark}

\begin{definition}
	We say that a functor $f \colon \cC \to \cD$ of $(\infty,2)$-categories is:
	\begin{itemize}
		\item fully faithful if it induces an equivalence on mapping $(\infty,1)$-categories.
		\item locally fully faithful if it induces a fully faithful functor on mapping $(\infty,1)$-categories.
		\item $k$-essentially surjective, if it is essentially surjective on $l$-morphisms for $l \leq k$.
	\end{itemize}
\end{definition}

\begin{proposition}\label{prop:essurjcobase}
	The class of $k$-essentially surjective functors is stable under cobase change.
\end{proposition}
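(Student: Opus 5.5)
The plan is to characterize $k$-essentially surjective functors as the left class of a factorization system. Stability under cobase change then follows formally, since any class defined by a left lifting property is closed under pushouts. Concretely, recall that $\Cat_{(\infty,2)}$ is generated under colimits by the globes (cells) $C_0=[0]$, $C_1=[1]$, $C_2$ (the walking $2$-morphism), with boundary inclusions $\partial C_l \to C_l$. I would aim to show the following claim. A functor $f\colon \cC\to\cD$ is $k$-essentially surjective if and only if it has the unique left lifting property, in the $\infty$-categorical sense, against every functor $g\colon \cA\to\cB$ that is fully faithful on $l$-morphisms for $l \le k$. This means that $g$ induces an equivalence on spaces of $l$-cells with fixed boundary, in the sense that the square
\[
\Map(C_l,\cA)\to \Map(C_l,\cB)\times_{\Map(\partial C_l,\cB)}\Map(\partial C_l,\cA)
\]
is an equivalence. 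Equivalently, one can phrase this as an $(\mathrm{ess.\ surj.}, \mathrm{ff})$ factorization system truncated at level $k$, built iteratively: essentially surjective/fully faithful for $k=0$, then (essentially surjective on objects and $1$-morphisms)/(locally fully faithful), and so on.

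Rather than prove the full orthogonality statement, I would give the direct construction of the factorization. Given $f\colon\cC\to\cD$, take the image factorization $\cC\to \mathrm{Im}_k(f)\to\cD$. For $k=0$, $\mathrm{Im}_0(f)$ is the full sub-$(\infty,2)$-category on objects in the essential image. For $k=1$, one further restricts to the locally full sub-$(\infty,2)$-category on $1$-morphisms that lie, up to $2$-isomorphism, in the image. For $k=2$, it is the $(\infty,2)$-category whose mapping $(\infty,1)$-categories are the images of those of $\cC$. Then $f$ is $k$-essentially surjective exactly when $\mathrm{Im}_k(f)\to\cD$ is an equivalence. The right class, consisting of monomorphisms of the specified sub-$(\infty,2)$-category type, is closed under base change and composition. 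Left orthogonality against it follows by checking lifts cell by cell. Indeed, a subcategory inclusion $\cA\hookrightarrow\cB$ of this type is determined by a subspace of $l$-cells for $l\le k$, so a lift exists, uniquely, iff all $l$-cells of $\cD$ up to equivalence land in that subspace.

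The deduction is then formal. Consider a pushout $\cD'=\cD\sqcup_{\cC}\cC'$ along $u\colon\cC\to\cC'$, with $f$ $k$-essentially surjective, and write $f'\colon \cC'\to\cD'$. Factor $f'$ as $\cC'\to\mathrm{Im}_k(f')\hookrightarrow\cD'$. The composite $\cC\to\cD\to\cD'$ factors through $\cC'$, hence through $\mathrm{Im}_k(f')$. Orthogonality of $f$ against $\mathrm{Im}_k(f')\hookrightarrow\cD'$ gives a lift $\cD\to\mathrm{Im}_k(f')$. The universal property of the pushout then yields a section $\cD'\to \mathrm{Im}_k(f')$ of the inclusion. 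Since the inclusion is a monomorphism, it is an equivalence, so $f'$ is $k$-essentially surjective.

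The main obstacle is justifying the orthogonality in the $(\infty,2)$-setting. The delicate point is that "essentially surjective on $2$-morphisms" should mean that each mapping functor $\cC(x,y)\to\cD(fx,fy)$ is essentially surjective on morphisms. The corresponding sub-object is then not simply a subspace of cells: it must be closed under composition. This closure holds for the images of $\cC$ because images of composites are composites of images, and it is exactly what one verifies to show $\mathrm{Im}_k(f)$ is a genuine sub-$(\infty,2)$-category. I would handle this by working in complete $2$-fold Segal spaces, where sub-$(\infty,2)$-categories correspond to levelwise inclusions of path components closed under the Segal maps.
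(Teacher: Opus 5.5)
Your overall strategy is the right one: realize the $k$-essentially surjective functors as the left class of a factorization system, so that closure under cobase change becomes formal. The paper gets the same conclusion in one line by citing \cite[Theorem B.4.1]{Soerg}. But you misidentify the right class, and the argument breaks exactly where the content lies.

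First, your orthogonality criterion is indexed the wrong way. Suppose $g$ is only required to be an equivalence on relative $l$-cells for $l\le k$. For $k=0$, the functor $\emptyset\to[0]$ is then left orthogonal to every such $g$: its space of lifts is a fibre of the equivalence $\cA^{\simeq}\to\cB^{\simeq}$. Yet it is not essentially surjective. For $k=2$, every such $g$ is an equivalence, since the globes detect equivalences, so every functor would qualify. The right class should instead consist of the functors that are monomorphisms on relative $k$-cells and equivalences on relative $l$-cells for $l>k$. These are the fully faithful functors for $k=0$, the locally fully faithful ones for $k=1$, and the locally faithful ones for $k=2$. Your ``equivalently'' list agrees with this for $k=0,1$, which contradicts your displayed condition.

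Second, and more seriously, for $k\geq 1$ this right class does not consist of monomorphisms, and the image cannot be taken inside $\cD$. Your claim that images of composites are composites of images needs the preimages to be composable in $\cC$. That fails as soon as $f$ identifies inequivalent objects. Take $f\colon[1]\sqcup[1]\to[2]$ sending the copies $a_0\to a_1$ and $b_0\to b_1$ to $0\to1$ and $1\to2$. Then $\cC(a_0,b_1)=\emptyset$ while $[2](0,2)\simeq[0]$, so $f$ is not $1$-essentially surjective. Yet the sub-$(\infty,2)$-category of $[2]$ generated by the image $1$-morphisms is all of $[2]$.

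The correct middle term has objects coming from $\cC$, and its mapping categories are the essential images of those of $\cC$. Its map to $\cD$ need not be monic. For instance, $\{0,1\}\to[0]$ is locally fully faithful and has a section, but is not an equivalence. So your concluding step ``a monomorphism with a section is an equivalence'' is unavailable.

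That step can be repaired. $f'$ is a retract, in the arrow category, of $\cC'\to\mathrm{Im}_k(f')$, and $k$-essential surjectivity is closed under retracts. But the essential input remains unproved: that $k$-essentially surjective functors have contractible spaces of lifts against this non-monic right class. Your cell-by-cell argument only treats subobject inclusions. Here one must lift objects and $1$-morphisms of $\cD$ that are merely equivalent to images, and check independence of choices. For example, a $1$-morphism $h$ of $\cC$ with $f(h)$ invertible must be sent to an equivalence, which uses local full faithfulness of the right map. All of this must be done coherently, and that is precisely what the cited theorem supplies.
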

\begin{proof}
	This follows from \cite[Theorem B.4.1]{Soerg}.
\end{proof}

\begin{definition}\label{def:typeofsubcat}
	For a functor of $(\infty,2)$-categories $f \colon \cC \to \cD$, we say that:
	\begin{itemize}
		\item $f$ is \emph{wide} if it induces an equivalence on underlying spaces.
		\item $f$ is \emph{locally wide} if for every pair of objects the induced map on mapping $(\infty,1)$-categories is wide.
		\item $f$ is a \emph{wide} sub-$(\infty,2)$-category if it is wide, and for every pair of objects the induced map on mapping categories is a sub-$(\infty,1)$-category inclusion, i.e. it induces a monomorphism on mapping spaces.
		\item $f$ is a wide locally full sub-$(\infty,2)$-category if it is wide and locally fully faithful.
	\end{itemize}
\end{definition}

\subsubsection{Adjunctions}
This subsection introduces the classical $2$-categories $\Adj$, $\Adjffr$, and $\Adjffl$, encoding respectively the data of an adjunction, a fully faithful right adjoint, and a fully faithful left adjoint. In \cref{sec:walkingadjunction} we give a new proof that these classical objects satisfy the corresponding universal property in the higher categorical setting, recovering a theorem originally due to Riehl and Verity \cite{RiehlVerity2016FormalTheoryMonads}.

Concretely, $\Adj$ admits the following combinatorial description, due to Schanuel and Street \cite{SchanuelStreet1986FreeAdjunction}.

\begin{definition}[Schanuel--Street]\label{def:AdjSS}
	Write $\Delta_+$ for the augmented simplex category: the category of finite ordinals $[n] = \{0 < 1 < \cdots < n\}$ for $n\geq 0$, together with the empty ordinal $[-1]=\emptyset$, and order-preserving maps, regarded as a full sub-category of $\Cat$. Let $\Delta_{\infty}, \Delta_{-\infty},\Delta_{\intv} \subset \Delta_+$ denote the subcategories on the nonempty ordinals whose morphisms preserve, respectively, the top element, the bottom element, and both endpoints. The $2$-category $\Adj$ has two objects $-,+$ and mapping categories
	\[
		\begin{aligned}
			\Adj(+,+) &\coloneqq \Delta_+, & \Adj(-,-) &\coloneqq \Delta_{\intv}\simeq (\Delta_+)^{\op}, \\
			\Adj(-,+) &\coloneqq \Delta_{\infty} \cong \Delta_{-\infty}^{\op}, & \Adj(+,-) &\coloneqq \Delta_{-\infty} \cong \Delta_{\infty}^{\op},
		\end{aligned}
	\]
	with composition induced by the ordinal sum bifunctor $\oplus \colon \Delta_+ \times \Delta_+ \to \Delta_+$, restricted and dualized appropriately to each pair of mapping categories. See \cite[Remark 3.3.8]{RiehlVerity2016FormalTheoryMonads} for the complete description.
\end{definition}

\begin{definition}\label{def:adjffr}
	The \emph{walking fully faithful right adjoint} $\Adjffr$ is the $(\infty,2)$-category obtained from $\Adj$ by freely inverting the counit of the universal adjunction. Concretely, $\Adjffr$ is a $(2,2)$-category with two objects $-,+$ whose mapping categories are
	\[
		\Adjffr(+,+) = [1], \qquad \Adjffr(x,y) = [0] \enspace \text{ for } (x,y) \neq (+,+),
	\]
	where $[0]$ is the terminal category and $[1] = (0 \to 1)$ is the walking arrow. The composition rule is determined by endowing $[1]$ with the monoidal structure given by the maximum of two elements, together with the functor
	\[
		\Adjffr(+,-) \times \Adjffr(-,+) \longrightarrow \Adjffr(+,+)
	\]
	which sends the unique object to $1 \in [1]$.

	Dually, the \emph{walking fully faithful left adjoint} $\Adjffl$ is the $(\infty,2)$-category obtained from $\Adj$ by freely inverting the unit $\eta$, and admits a dual description to that of $\Adjffr$.
\end{definition}

\subsection{Fibrations of $(\infty,2)$-categories}
In this section, we recall the notion of (co)cartesian fibrations of $(\infty,2)$-categories. We will assume that the reader is familiar with the $(\infty,1)$-categorical counterpart of the theory (see \cite[Section 2.4]{HTT}).
\begin{definition}
	Let $p \colon \cX \to \cC$ be a functor of $(\infty,2)$-categories. We say that a morphism $u \colon x \to y$ in $\cX$ is \emph{cocartesian} if for every $z \in \cX$ the following diagram
	\[
		\begin{tikzcd}
		\cX(y,z) \arrow[r,"u^*"] \arrow[d]  & \cX(x,z) \arrow[d] \\
		\cC(py,pz) \arrow[r,"pu^*"] & \cC(px,pz)
	\end{tikzcd}
	\]
	is a pullback square. Dually, we will say that $u$ is cartesian if for every $z$ as before, the diagram
	\[
		\begin{tikzcd}
		\cX(z,x) \arrow[r,"u_*"] \arrow[d]  & \cX(z,y) \arrow[d] \\
		\cC(pz,px) \arrow[r,"pu_*"] & \cC(pz,py)
	\end{tikzcd}
	\]
	is a pullback square.
\end{definition}

\begin{definition}
	A functor $p\colon \cX \to \cC$ of $(\infty,2)$-categories is said to be \emph{cocartesian enriched} if for every $x,y \in \cX$ the induced functor on mapping $(\infty,1)$-categories $p_{x,y} \colon \cX(x,y) \to \cC(px,py)$ is a cocartesian fibration and for every triple of objects $x,y,z\in \cX$ the composition functor $\cX(x,y)\times \cX(y,z)\to \cX(x,z)$ preserves cocartesian morphisms. We dually define the notion of a \emph{cartesian enriched} functor.
\end{definition}

\begin{definition}
	A functor $p \colon \cX \to \cC$ of $(\infty,2)$-categories is said to be an $(i,j)$-fibration if
	\begin{itemize}
		\item[$(i)$] It is \emph{cocartesian} enriched if $j=0$ or \emph{cartesian} enriched if $j=1$.
		\item[$(ii)$] Given $i=0$, we have cocartesian lifts of all 1-morphisms; that is, for every $u \colon p(x)\to s$ in $\cC$ there exists a cocartesian morphism $\hat{u}\colon x \to \hat{s}$ with $p(\hat{u})=u$. Dually, if $i=1$, we have cartesian lifts of all 1-morphisms: for every $v \colon x\to p(s)$ in $\cC$ there exists a cartesian morphism $\hat{v}\colon \hat{x} \to s$ with $p(\hat{v})=v$.
	\end{itemize}
	A morphism of $(i,j)$-fibrations is a functor over $\cC$ preserving (co)cartesian 1- and 2-morphisms. For $\varepsilon=(i,j)$ denote the corresponding $(\infty,3)$-category of $\varepsilon$-fibrations as $\Fib^{\epsilon}_{/\cC}$.
\end{definition}

\begin{notation}
	We fix the notation $i$-cartesian to denote either cocartesian for $i=0$ or cartesian for $i=1$.
\end{notation}

\begin{notation}\label{not:Cepsilon}
	For an $(\infty,2)$-category we let $\cC^{\epsilon}$ denote $\cC$ if $\epsilon=(0,1)$, $\cC^{\op}$ if $\epsilon=(1,0)$, $\cC^{\co}$ if $\epsilon=(0,0)$ and $\cC^{\coop}$ for $\epsilon=(1,1)$.
\end{notation}

\begin{theorem}[{\cite{NuiStr,AbellanStern2026TwoCartesianII}}]\label{thm:str}
	There is an equivalence of $(\infty,2)$-categories
	\[
		\mathsf{St}^{\epsilon}_{\cC} \colon \Fib^{\epsilon}_{/\cC} \llra \Fun(\cC^{\epsilon},\Cat_{(\infty,2)})\colon \mathsf{Un}_{\cC}^{\epsilon},
	\]
	where $\cC^{\epsilon}$ is given as in \cref{not:Cepsilon}.
\end{theorem}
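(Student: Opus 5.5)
This result is imported from the literature rather than proved from scratch, so the plan is to reduce all four variances to a single case and then assemble the equivalence from the cited works \cite{NuiStr,AbellanStern2026TwoCartesianII}.

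\textbf{Step 1: reduce to one variance.} Since $\Fib^{\epsilon}_{/\cC}$ and $\cC^{\epsilon}$ transform compatibly under the dualities $(-)^{\op}$, $(-)^{\co}$ and $(-)^{\coop}$, it suffices to treat $\epsilon=(0,1)$. Indeed, passing to $(-)^{\op}$ on total and base $(\infty,2)$-categories exchanges cocartesian and cartesian $1$-morphisms and leaves the local variance untouched. Passing to $(-)^{\co}$ reverses $2$-morphisms, which exchanges cocartesian and cartesian enrichment and leaves the $1$-morphism lifting condition untouched. Hence $p\mapsto p^{\op},p^{\co},p^{\coop}$ gives equivalences $\Fib^{(0,1)}_{/\cC^{\op}}\simeq \Fib^{(1,1)}_{/\cC}$ and its analogues. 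On the other side, $\Fun(\cC^{\epsilon},\Cat_{(\infty,2)})$ is related to $\Fun(\cC,\Cat_{(\infty,2)})$ by precomposing with the corresponding duality and postcomposing with the involution of $\Cat_{(\infty,2)}$ that acts on fibres. Conjugating the $(0,1)$ case by these involutions gives the remaining three cases.

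\textbf{Step 2: build unstraightening for $\epsilon=(0,1)$.} Define $\mathsf{Un}$ on a functor $F\colon \cC\to \Cat_{(\infty,2)}$ as an oplax colimit (Grothendieck construction) of $F$, equivalently a pullback of a universal $(0,1)$-fibration over $\Cat_{(\infty,2)}$ along $F$. The fibration properties are checked on the universal case:
\begin{itemize}
\item the mapping categories of the total space are computed fibrewise as cocartesian fibrations of $(\infty,1)$-categories, which gives cocartesian enrichment;
\item the $1$-morphisms of the form $(u,\id)$ are cocartesian.
\end{itemize}
Naturality in $F$, including the higher transformations, shows that $\mathsf{Un}$ is a functor of $(\infty,2)$-categories, as in \cite{NuiStr}.

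\textbf{Step 3: show $\mathsf{Un}$ is an equivalence.} Essential surjectivity amounts to showing that every $(0,1)$-fibration is classified by a functor. Full faithfulness amounts to showing that natural transformations $F\Rightarrow G$ correspond to functors over $\cC$ preserving cocartesian $1$- and $2$-morphisms, and similarly one level up. The approach is to identify both sides as functors out of $\cC$ into a representing object, and to reduce via density of the generating $(\infty,2)$-categories (the globes/Gray cells) to the case where $\cC$ is a representable. In that case the statement is a fibrewise computation.

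\textbf{Main obstacle.} The hardest step is the comparison of $2$-morphisms together with the coherence of the local variance. One must check that cocartesian enrichment on mapping categories corresponds exactly to strict, rather than lax, naturality on the functor side. This is precisely where the model-independent arguments of \cite{AbellanStern2026TwoCartesianII} are required, and I would invoke them directly rather than reprove them.
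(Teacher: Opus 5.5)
The paper gives no argument for this statement: it is imported from \cite{NuiStr,AbellanStern2026TwoCartesianII}. Your eventual decision to invoke those works is therefore all the paper itself does. The sketch you build around that citation, however, gets the local variance wrong in Step~2. For $\epsilon=(0,1)$ the definition ($j=1$) requires \emph{cartesian} enrichment, and the paper uses exactly this: in the proof of \cref{prop:charactbifib} the maps $\cX(z,x)\to\cC(pz,px)$ are cartesian fibrations.

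This variance is forced by the Grothendieck construction. For $F\colon\cC\to\Cat_{(\infty,2)}$, the fibre of $\cX((c,x),(d,y))\to\cC(c,d)$ over $f$ is $F(d)(F(f)x,y)$. A $2$-morphism $\alpha\colon f\Rightarrow g$ acts by precomposition with $F(\alpha)_x\colon F(f)x\to F(g)x$, i.e.\ from the fibre over $g$ to the fibre over $f$. So the construction you describe is not cocartesian enriched. Cocartesian enrichment is instead the condition singling out the $(0,0)$ case, which classifies functors out of $\cC^{\co}$.

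Your ``main obstacle'' inherits this confusion. The local variance records how $F$ acts on $2$-morphisms ($\cC$ versus $\cC^{\co}$). Strict versus lax naturality of transformations is a different matter: it is governed by whether functors over $\cC$ preserve the $i$-cartesian $1$-morphisms, which is the subject of \cite{AbellanGagnaea2025StraighteningLaxTransformations}.

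There are two smaller points. In Step~1, the involutions $(-)^{\op}$ and $(-)^{\coop}$ of $\Cat_{(\infty,2)}$ reverse natural transformations, so they are functors $\Cat_{(\infty,2)}\to\Cat_{(\infty,2)}^{\co}$. This extra $\co$ is exactly what produces \cref{not:Cepsilon}. If $F\colon\cC\to\Cat_{(\infty,2)}$ classifies $p$, then $p^{\op}$ is a $(1,1)$-fibration over $\cC^{\op}$. Its fibres are classified by $(-)^{\op}\circ F$, which is a functor out of $(\cC^{\op})^{\coop}=\cC^{\co}$, not out of $(\cC^{\op})^{\op}=\cC$. Your phrase ``precomposing with the corresponding duality'' hides this, and it is the only non-obvious part of the reduction.

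In Step~3, reducing to generating cells by density requires that $\cC\mapsto\Fib^{\epsilon}_{/\cC}$ take colimits in $\cC$ to limits. This descent property is a substantial ingredient of the cited theorems rather than a formality, and you should name it as an input.
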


It is often convenient to work with a relative variant of the usual notion of a cocartesian fibration, where lifts are only demanded along a chosen wide subcategory of the base, rather than along every morphism.

\begin{definition}\label{def:partialcocart}
	Let $\cC_0 \subset \cC$ be a wide locally full sub-$(\infty,2)$-category. A functor $p \colon \cX \to \cC$ is a \emph{partial} $(i,j)$-fibration relative to $\cC_0$ if it is $j$-enriched and it admits $i$-cartesian lifts for every morphism of $\cC_0$. We write $\Fib^{\epsilon}_{/(\cC,\cC_0)}$ for the corresponding $(\infty,3)$-category of partial $(i,j)$-cartesian fibrations relative to $\cC_0$, whose 1-morphisms are those which preserve the $i$-cartesian morphisms over $\cC_0$ and every $j$-cartesian 2-morphism.
\end{definition}

\begin{notation}\label{not:funpartial}
	For $p \colon \cX \to \cC$ and $q \colon \cY \to \cC$ we will denote $\Fun^{\epsilon}_{/(\cC,\cC_0)}(\cX,\cY)$ the mapping $(\infty,2)$-category in $\Fib^{\epsilon}_{/(\cC,\cC_0)}$. If $\cC=\cC_0$, so that we are dealing with usual $\epsilon$-fibrations, we will use the lighter notation $\Fun^{\epsilon}_{/\cC}(\cX,\cY)$.
\end{notation}

As a direct consequence of \cite[Theorem 3.4.1]{AHM26} we obtain the following result.

\begin{proposition}\label{prop:fibfunct}
	Let $p \colon \cX \to \cC$ be a partial $(i,j)$-fibration relative to $\cC_0$. Then the induced functor 
	\[
		p_* \colon \Fun(\cA,\cX) \xrightarrow{} \Fun(\cA,\cC),
	\]
	is a partial $(i,j)$-fibration, with respect to the wide locally full sub-$(\infty,2)$-category consisting of those natural transformations whose components belong to $\cC_0$.
\end{proposition}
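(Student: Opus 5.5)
We plan to deduce this from \cite[Theorem 3.4.1]{AHM26}, which treats the non-partial case: for an $(i,j)$-fibration $p \colon \cX \to \cC$, the functor $p_* \colon \Fun(\cA,\cX)\to\Fun(\cA,\cC)$ is again an $(i,j)$-fibration. The $i$-cartesian morphisms of $\Fun(\cA,\cX)$ are exactly the natural transformations whose components are $i$-cartesian in $\cX$. The strategy is to reduce the partial case to the full one by a restriction argument. We treat $i=0$; the case $i=1$ is dual, obtained by passing to $(-)^{\op}$.

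\textbf{Step 1: enrichment condition.} This depends only on the behaviour of $p$ on mapping $(\infty,1)$-categories. The $j$-enrichment statement in \cite[Theorem 3.4.1]{AHM26} should be proved by an argument that uses only the enrichment hypothesis on $p$, not the existence of $1$-cartesian lifts. Concretely, mapping categories of $\Fun(\cA,\cX)$ are computed as ends or limits of mapping categories of $\cX$. Pointwise $j$-cartesian $2$-morphisms then provide the required $j$-cartesian lifts, and these are preserved by composition. We will check that this part of the cited proof goes through verbatim when $p$ is merely $j$-enriched.

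\textbf{Step 2: lifts over transformations in $\cC_0$.} Let $F \colon \cA \to \cX$ be a functor and $\alpha \colon pF \Rightarrow G$ a natural transformation whose components lie in $\cC_0$. A natural transformation is a functor $\cA \to \Fun([1],\cC)$, which we may equivalently view as a functor $[1]\times\cA\to\cC$. By construction this functor factors through the locally full sub-$(\infty,2)$-category $\cC' \subset \cC$ generated by $\cC_0$ together with the image of $G$ and $pF$. We want to pull $p$ back along a suitable $\cC'' \to \cC$ such that the pullback $p''$ is a genuine $(0,j)$-fibration, so that \cite[Theorem 3.4.1]{AHM26} applies.

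The simplest choice is $\cC''=\cC_0$. The pullback $p_0 \colon \cX\times_{\cC}\cC_0 \to \cC_0$ is then a $(0,j)$-fibration. Here we use that $\cC_0 \subset \cC$ is locally full, so that enrichment and cocartesian lifts restrict. However, $\alpha$ only has components in $\cC_0$, while $G$ and $pF$ need not land in $\cC_0$. To handle this, we instead use the functoriality in the form of \cref{prop:fibfunct} applied to the arrow category. Consider the partial fibration $\Fun([1],\cX)\to\Fun([1],\cC)$ restricted along the inclusion of those arrows lying in $\cC_0$. Ranging over the evaluation at $0$, this becomes an honest cocartesian fibration, since cocartesian lifts exist over $\cC_0$. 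We then form the transformation $\cA \to \Fun([1],\cX)$ of cocartesian lifts and check that it is natural.

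\textbf{Step 3: cocartesianness of the lift.} Cocartesianness of the pointwise-cocartesian transformation follows as in the full case. The mapping categories of $\Fun(\cA,\cX)$ are limits of pullback squares of mapping categories, and limits of pullbacks are pullbacks.

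We expect the main obstacle to be Step 2: making the pointwise cocartesian lifts assemble coherently into a natural transformation when only partial lifts exist. Our first attempt will be to reduce to \cite[Theorem 3.4.1]{AHM26} for the pullback of $p$ along $\ev_0 \colon \Fun([1],\cC)_{\cC_0}\to\cC$. This encodes the lifting problem as a global section problem over $\cA$ for a genuine cocartesian fibration.
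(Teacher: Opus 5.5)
\textbf{Gap in Step 2.} Step 2 carries all the content of the argument, and it does not work as written.

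First, it is circular. You appeal to ``the partial fibration $\Fun([1],\cX)\to\Fun([1],\cC)$'', which is the case $\cA=[1]$ of \cref{prop:fibfunct} itself.

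Second, the reduction you settle on fails. The base change of $p$ along $\ev_0\colon\Fun([1],\cC)_{\cC_0}\to\cC$ is in general still only a partial fibration. A morphism of $\Fun([1],\cC)_{\cC_0}$ is a square whose $\ev_0$-component is an arbitrary $1$-morphism of $\cC$; for example, the square $\id_c\to\id_{c'}$ with both horizontal sides $u\notin\cC_0$. Concretely, take $\cC=[1]$, $\cC_0$ the identities, and $\cX=\{x_0\}\sqcup\{x_1\}$ discrete over $[1]$. This $p$ is a partial fibration relative to $\cC_0$. Then $\ev_0$ is an isomorphism $\Fun([1],\cC)_{\cC_0}\cong[1]$, and the base change is $p$ itself, which has no lift over $0\to1$. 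So the non-partial theorem cannot be applied there. The coherence problem you correctly flag, assembling pointwise lifts into a natural transformation, is left unsolved.

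\textbf{A working fix.} Pull back along $\alpha\colon[1]\times\cA\to\cC$ itself rather than along $\ev_0$. Set $\cX'=\cX\times_{\cC}([1]\times\cA)$ and consider the composite $\cX'\to[1]\times\cA\to[1]$. This is a genuine $(0,j)$-fibration:
\begin{itemize}
\item the enrichment condition is automatic, since the mapping categories of $[1]$ are $[0]$ or empty;
\item a $p$-cocartesian lift of $\alpha_a\in\cC_0$ lies over $(0\to1,\id_a)$, which is cocartesian for the projection $[1]\times\cA\to[1]$, so the lift is cocartesian over $[1]$.
\end{itemize}
Evaluation at $0$ identifies cocartesian sections of $\cX'\to[1]$ with the fibre $\cX'_0$, naturally in the map $\cX'\to[1]\times\cA$ of fibrations over $[1]$. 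Hence $F$, viewed as a section of $\cX'_0\to\cA$, yields a functor $[1]\times\cA\to\cX'$ over $[1]\times\cA$ with cocartesian components. This is the required lift of $\alpha$.

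Equivalently, let $\Fun([1],\cX)^{\mathrm{cocart}}_{\cC_0}$ be the full subcategory of $\Fun([1],\cX)$ on cocartesian arrows over $\cC_0$. Check on mapping categories, using only the pullback squares that define cocartesian morphisms, that restricting $(\ev_0,p_*)$ gives an equivalence
\[
\Fun([1],\cX)^{\mathrm{cocart}}_{\cC_0}\xrightarrow{\simeq}\cX\times_{\cC}\Fun([1],\cC)_{\cC_0},
\]
and compose $(F,\alpha)$ with an inverse.

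With either fix in place, your Steps 1 and 3 go through as sketched.

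\textbf{Comparison with the paper.} The paper does none of this. It records the statement as a direct consequence of \cite[Theorem 3.4.1]{AHM26}, the same source from which it takes the notion of partial fibration. So the reduction to the non-partial case, which your proposal assumes is needed, never arises there.
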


\begin{definition}\label{def:freepartial}
	Let $p \colon \cX \to \cC$ be a functor, $\cC_0 \subset \cC$ a wide locally full sub-$(\infty,2)$-category, and $\epsilon=(i,j)$. Write
	\[
		\Ar^{\epsilon}_{\cC_0}(\cC) \subset \Ar^{\epsilon}(\cC) \coloneqq \begin{cases}
			\Fun^{\oplax}([1],\cC) & \text{if } i \neq j, \\
			\Fun^{\lax}([1],\cC) & \text{if } i = j,
		\end{cases}
	\]
	for the full subcategory spanned by those morphisms lying in $\cC_0$, with evaluation maps $\ev_0,\ev_1 \colon \Ar^{\epsilon}_{\cC_0}(\cC) \to \cC$. The \emph{free partial $\epsilon$-fibration on $p$ relative to $\cC_0$} is the functor
	\[
		\mathsf{PFree}^{\epsilon}_{\cC_0}(p)\colon \mathsf{PFree}^{\epsilon}_{\cC_0}(\cX)=\cX \times_{\cC} \Ar^{\epsilon}_{\cC_0}(\cC) \longrightarrow \Ar^{\epsilon}_{\cC_0}(\cC) \xrightarrow{\ev_{1-i}} \cC,
	\]
	where the fibre product is taken along $\ev_i$.
\end{definition}

\begin{theorem}[{\cite[Theorem 4.2.1, Observation 4.2.8]{AHM26}}]\label{thm:pfree}
	We have adjunctions of $(\infty,3)$-categories
	\[
		\PFree^{\epsilon}_{\cC_0}\colon {\Cat_{(\infty,2)}}_{/\cC} \llra \Fib^{\epsilon}_{/(\cC,\cC_0)}\colon U^{\epsilon}, 
	\]
	where the right adjoint denotes the obvious forgetful functor.
\end{theorem}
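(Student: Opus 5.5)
The plan is to treat the case $\epsilon=(0,1)$ (cocartesian lifts over $\cC_0$, cocartesian enrichment, oplax arrow category $\Ar^{\epsilon}_{\cC_0}(\cC)=\Fun^{\oplax}([1],\cC)_{\cC_0}$) in detail. The other three variances would then follow by applying $(-)^{\op}$, $(-)^{\co}$, $(-)^{\coop}$, which exchange the variances as recorded in \cref{not:Cepsilon} and exchange lax with oplax arrow categories as in \cref{def:freepartial}.

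\textbf{Step 1: the fibration and the unit.} First I would check that $\PFree^{\epsilon}_{\cC_0}(p)\colon \cX\times_{\cC}\Ar^{\epsilon}_{\cC_0}(\cC)\to\cC$ is a partial $(0,1)$-fibration relative to $\cC_0$. It suffices to do this for $\ev_1\colon \Ar^{\epsilon}_{\cC_0}(\cC)\to\cC$ and then observe that the base change along $\ev_0$ does not disturb the relevant lifts.
\begin{itemize}
\item Given an object $(u\colon c\to c')$ with $u\in\cC_0$ and a morphism $v\colon c'\to c''$ in $\cC_0$, the square with top $\id_c$ and bottom $v$ is a candidate cocartesian lift; this uses wideness and closure of $\cC_0$ under composition.
\item The enrichment condition should come from the explicit formula for mapping $(\infty,1)$-categories in oplax arrow categories, as a lax pullback of mapping categories of $\cC$. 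This is where the choice of oplax (rather than lax) is forced.
\end{itemize}
The unit $\eta\colon\cX\to\cX\times_{\cC}\Ar^{\epsilon}_{\cC_0}(\cC)$ is $x\mapsto(x,\id_{px})$, induced by the degeneracy $\cC\to\Fun^{\oplax}([1],\cC)$. It is a functor over $\cC$ because $\ev_1\circ\mathrm{deg}=\id$.

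\textbf{Step 2: the adjunction equivalence.} Next I would show that for every partial $(0,1)$-fibration $q\colon\cY\to\cC$, restriction along $\eta$,
\[
\Fun^{\epsilon}_{/(\cC,\cC_0)}(\PFree^{\epsilon}_{\cC_0}(\cX),\cY)\longrightarrow \Fun_{/\cC}(\cX,\cY),
\]
is an equivalence of $(\infty,2)$-categories. The inverse is built from \cref{prop:fibfunct} with $\cA=[1]$: the map $\Fun([1],\cY)\to\Fun([1],\cC)$ is a partial fibration. More precisely, I would use the oplax version, where one lifts an oplax square by cocartesian transport. Given $F\colon\cX\to\cY$ over $\cC$, the extension sends $(x,u)$ to the target of the cocartesian lift of $u$ at $F(x)$, i.e. $u_!F(x)$. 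Functoriality in $(x,u)$ comes from forming the free cocartesian transport of the composite
\[
\cX\times_{\cC}\Ar\xrightarrow{F\times\id}\cY\times_{\cC}\Ar
\]
inside $\Fun^{\oplax}([1],\cY)$ and then evaluating at $1$.

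To see that the two constructions are mutually inverse:
\begin{itemize}
\item Restricting the extension along $\eta$ recovers $F$, because the cocartesian lift of an identity is an equivalence.
\item Conversely, a cocartesian-preserving $G$ is determined by $G\eta$. Every object $(x,u)$ receives the morphism $(x,\id)\to(x,u)$, which is cocartesian over $u\in\cC_0$, so $G(x,u)\simeq u_!G(x,\id)$ naturally.
\end{itemize}
At the level of mapping $(\infty,1)$-categories the same argument applies, because the enriched cocartesian structure forces $2$-morphisms to be determined on the unit.

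\textbf{Step 3: promotion to $(\infty,3)$-categories, and the expected obstacle.} Finally I would promote everything to an adjunction of $(\infty,3)$-categories. The natural route is to write the equivalence above as natural in $\cX$ and $\cY$ via a unit/counit pair. The counit $\PFree(U\cY)\to\cY$ is the "cocartesian transport" functor $(y,u)\mapsto u_!y$, and it is here that the full $(\infty,3)$-naturality requires care.

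I expect the main obstacle to be the enrichment part of Steps 1 and 2: verifying that the transport functor is genuinely a $2$-functor on oplax squares, compatible with the cocartesian $2$-morphisms. I would first try to reduce this to straightening (\cref{thm:str}) applied fibrewise to the mapping fibrations $p_{x,y}$. If that fails, I would instead establish the universal property for $\ev_1$ alone, i.e. $\PFree(\id_\cC)$, as a representability statement, and deduce the general case by base change along $p$.
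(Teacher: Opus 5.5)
The paper does not prove this statement; it is imported from \cite[Theorem 4.2.1, Observation 4.2.8]{AHM26}. As a standalone argument, your outline has the right unit $x\mapsto(x,\id_{px})$ and the right candidate inverse $F\mapsto\bigl((x,u)\mapsto u_!F(x)\bigr)$, but it starts from the wrong variance. By the paper's definition of an $(i,j)$-fibration, $j=1$ means \emph{cartesian} enriched. So a $(0,1)$-fibration has cocartesian lifts of $1$-morphisms but cartesian lifts of $2$-morphisms, and morphisms in $\Fib^{(0,1)}_{/(\cC,\cC_0)}$ must preserve cartesian $2$-morphisms. Cocartesian $1$-lifts with cocartesian enrichment is the case $\epsilon=(0,0)$, for which \cref{def:freepartial} prescribes the \emph{lax} arrow category. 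Your claim that cocartesian enrichment is what forces the oplax arrow category is therefore reversed. In the conventions under which the statement holds, $\ev_1\colon\Fun^{\oplax}([1],\cC)\to\cC$ is cartesian enriched, and every later reference to cocartesian $2$-morphisms must be replaced by cartesian ones.

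The more serious problem is Step 2, which carries the entire content of the theorem. \cref{prop:fibfunct} concerns $\Fun([1],\cY)\to\Fun([1],\cC)$ with \emph{strong} natural transformations, so it only lifts commutative squares. The morphisms of $\Ar^{\epsilon}_{\cC_0}(\cC)$ are oplax squares $(f,g,\theta)$ with $\theta$ possibly non-invertible, and there is no ``oplax version'' available to invoke. Producing $(y,u)\mapsto u_!y$ as a functor of $(\infty,2)$-categories on $\cY\times_{\cC}\Ar^{\epsilon}_{\cC_0}(\cC)$ amounts to constructing the counit, so it cannot be assumed. Writing $q\colon\cY\to\cC$, what you would have to prove is that $(\ev_0,q_*)$ identifies $\cY\times_{\cC}\Ar^{\epsilon}_{\cC_0}(\cC)$ with a sub-$(\infty,2)$-category of $\Fun^{\oplax}([1],\cY)$. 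Its objects are the cocartesian lifts of morphisms of $\cC_0$, and its $1$-morphisms are the oplax squares whose $2$-cell is $q$-cartesian. On mapping categories this uses cocartesianness of $\widehat u$ to rewrite $\cY(u_!y,w_!y')$, and then cartesian lifts of $\theta$. Without restricting to cartesian $2$-cells the comparison is in general not fully faithful, and this is exactly where the enrichment variance enters.

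The same gap affects your uniqueness argument: the cocartesian morphism $(x,\id)\to(x,u)$ only determines $G$ on objects. A morphism with non-invertible $\theta$ is not a composite of images of $\eta$ and cocartesian morphisms. After precomposition with $(x,\id)\to(x,u)$, it is related to such a composite only through a $2$-morphism lying over $\theta$, and it is preservation of cartesian $2$-morphisms that pins down $G$ on it.

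Your fallbacks do not close this. Straightening the mapping fibrations $p_{x,y}$ only gives local data. And although $\PFree^{\epsilon}_{\cC_0}(\cX)$ is the base change of $\ev_0$ along $p$, the universal property concerns the structure map $\ev_1$, so it does not transfer formally from the case $\cX=\cC$. Step 3, by contrast, is formal: once restriction along $\eta_{\cX}$ is an equivalence of mapping $(\infty,2)$-categories for every $\cY$, the $(\infty,3)$-adjunction follows, since precomposition with $\eta_{\cX}$ is automatically natural in $\cY$.
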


\begin{definition}
	Let $\cC$ be an $(\infty,2)$-category and let $c \in \cC$. We denote by $\cC_{\upslash c} \to \cC$ the $(1,0)$-fibration corresponding to the representable functor on $\cC$ via \cref{thm:str}. We observe that this fibration can be identified with the free $(1,0)$-fibration on the functor $[0] \to \cC$ selecting the object $c \in \cC$.
\end{definition}

\subsection{Cofinal maps}
The theory of $(\infty,2)$-categories supports a well-developed theory of (co)limits arising from enriched $\infty$-category theory, usually referred to as the theory of weighted (co)limits \cite{Heine2024HigherAlgebraWeightedColimits}. An alternative formulation in terms of partially (op)lax (co)limits has been developed in \cite{GagnaHarpazLanari2026MarkedLimits,AHM26} and compared with the enriched theory in \cite[Section 5.2]{AHM26}. One advantage of the partially lax approach is that it admits a clean generalization of the notion of a cofinal functor. We will need only a small part of this theory, which we recall here for completeness.

\begin{definition}[{\cite[Definition 5.1.2]{AHM26}}]
	Let $F \colon \cI \to \cA$ be a functor of $(\infty,2)$-categories. We say that $a \in \cA$ is the \emph{strong limit} of $F$ if it represents the functor sending each $x \in \cA$ to the $\infty$-category of natural transformations from the constant functor on $x$ to $F$. We write $\lim_{\cI}F$ for the strong limit of $F$ whenever it exists.
\end{definition}

\begin{definition}[{\cite[Definition 5.5.7 and Proposition 5.5.8]{AHM26}}]\label{def:strongcofinal}
	A functor $f \colon \cC \to \cD$ is \emph{strongly $(0,1)$-cofinal} if, for every $(\infty,2)$-category $\cA$ and every functor $F \colon \cD \to \cA$, the limit of $F$ exists if and only if the limit of $F\circ f$ exists, and the canonical map
	\[
		\lim_{\cD}F \xrightarrow{\simeq} \lim_{\cC}(F\circ f)
	\]
	is an equivalence.
\end{definition}

\begin{proposition}[{ \cite[Theorem 5.5.13]{AHM26}}]\label{prop:strongcofinalcharact}
	Let $f \colon \cC \to \cD$ be a functor of $(\infty,2)$-categories. Then $f$ is strongly $(0,1)$-cofinal if and only if, for every $d \in \cD$, the $(\infty,2)$-category
	\[
		\cC_{\upslash d}\coloneq \cD_{\upslash d}\times_{\cD}\cC
	\]
	becomes equivalent to $[0]$ after localizing at all 2-morphisms and at those 1-morphisms represented by commutative triangles.
\end{proposition}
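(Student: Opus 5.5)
The plan is to translate strong limits into weighted limits and reduce strong $(0,1)$-cofinality to a statement about weights, which can then be computed pointwise. For a functor $F\colon \cD\to\cA$, the strong limit of $F$ is the limit weighted by the constant functor $\ast\colon\cD\to\Cat_{(\infty,1)}$ with value $[0]$, in the enriched sense of \cite{Heine2024HigherAlgebraWeightedColimits}. This uses the comparison between partially lax limits and weighted limits recalled from \cite[Section 5.2]{AHM26}. Likewise, the strong limit of $F\circ f$ is the limit of $F\circ f$ weighted by $\ast_\cC$. By the usual adjunction between restriction and enriched left Kan extension of weights, this is the limit of $F$ weighted by $f_!(\ast_\cC)$. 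Under these identifications, the canonical map $\lim_\cD F\to\lim_\cC(F\circ f)$ is induced by the comparison of weights $f_!(\ast_\cC)\to\ast_\cD$, the counit-type map adjoint to $\ast_\cC\to f^*\ast_\cD$.

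I would first prove that $f$ is strongly $(0,1)$-cofinal if and only if this comparison of weights is an equivalence. The direction ``weights equivalent $\Rightarrow$ cofinal'' is immediate, including the existence clause, since both limits are then limits with the same weight. For the converse, I would test on the universal example: take $\cA$ to be the $(\infty,2)$-category of $\Cat_{(\infty,1)}$-valued copresheaves on $\cD$, or its opposite, and let $F$ be the Yoneda embedding. In this $\cA$ all weighted limits exist, and a weighted limit of the Yoneda embedding recovers the weight itself. An equivalence of the two limits for this $F$ then forces $f_!(\ast_\cC)\simeq\ast_\cD$.

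The next step is to compute $f_!(\ast_\cC)$ pointwise. By the enriched coend formula, its value at $d$ is the colimit of the constant $[0]$-valued functor over $\cC$ weighted by $\cD(f(-),d)$. Unstraightening this weight via \cref{thm:str} turns it into the $(1,0)$-fibration $\cC_{\upslash d}=\cD_{\upslash d}\times_\cD\cC\to\cC$. A weighted colimit of a constant diagram with value $[0]$ in $\Cat_{(\infty,1)}$ is then the ``oplax colimit modulo the marked morphisms''. Concretely, it is the total $(\infty,2)$-category of the fibration, reflected into $\Cat_{(\infty,1)}$ by inverting all 2-morphisms (via $\tau_1$, as in \cref{not:adjointstring}), and further localized at the $1$-morphisms that are cartesian over $\cC$. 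The latter are precisely the $1$-morphisms of $\cC_{\upslash d}$ represented by commutative triangles, i.e. those whose $2$-cell component in $\cD$ is invertible. After this identification, the comparison map to $\ast_\cD(d)=[0]$ is the terminal map, so it is an equivalence exactly under the stated condition.

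The main obstacle is this last identification. One has to check that the enriched coend of constant data against the unstraightened weight really is the localization of $\cC_{\upslash d}$ at all $2$-morphisms and at the commutative-triangle $1$-morphisms, with the correct variance for $(1,0)$ versus $(0,1)$. I would establish it first for $\cC=[0]$ and for the walking $1$- and $2$-cells, where it reduces to the $(\infty,1)$-categorical formula that colimits of constant diagrams are classifying spaces and localizations. I would then extend to general $\cC$ using that both sides preserve colimits in the fibration $\cC_{\upslash d}\to\cC$, relying on the naturality of straightening from \cref{thm:str}.
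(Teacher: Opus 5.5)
The paper gives no proof of this statement; it is quoted verbatim from \cite[Theorem 5.5.13]{AHM26}. There is therefore no in-paper argument to compare against, and your proposal has to stand on its own. It does. You reduce strong $(0,1)$-cofinality to invertibility of the map of weights $f_!(\ast_\cC)\to\ast_\cD$, and then compute $f_!(\ast_\cC)(d)=\colim^{\cD(f(-),d)}\ast$ pointwise. This is the enriched version of the classical proof of Quillen's Theorem A, and it explains why the slices $\cC_{\upslash d}$ (rather than coslices) and exactly this localization appear. The price is that your argument runs through weighted limits \cite{Heine2024HigherAlgebraWeightedColimits} and the comparison of strong limits with conical enriched limits from \cite[Section 5.2]{AHM26}. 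The cited source instead works inside its fibrational, partially lax framework. Your universal test is fine once the variance is pinned down: take $\cA=\Fun(\cD,\Cat_{(\infty,1)})^{\op}$ and $F(d)=\cD(d,-)$. Then $\cA(x,F(-))\simeq x$ by Yoneda, so $\lim^{W}F\simeq W$ for every weight $W$, and the canonical map of limits is exactly the map of weights.

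What you flag as the main obstacle does not need a cell-by-cell reduction. That reduction would also require inputs you have not stated: a natural comparison map to test on cells, and the fact that the total space of a fibration over a colimit of bases is the colimit of the restricted total spaces. Argue by the universal property instead. Let $\mathcal{E}=\cC_{\upslash d}\to\cC$ be the unstraightening of $W=\cD(f(-),d)$, obtained by base change from $\cD_{\upslash d}$. For an $(\infty,1)$-category $y$, the definition of the weighted colimit together with \cref{thm:str} gives
\[
\Fun(\colim^{W}\ast,y)\simeq\mathrm{Nat}(W,\underline{y})\simeq\Fun^{(1,0)}_{/\cC}(\mathcal{E},\cC\times y).
\]
Three observations finish the computation:
\begin{itemize}
\item A functor over $\cC$ into $\cC\times y$ is just a functor $\mathcal{E}\to y$.
\item A $1$-morphism of $\cC\times y$ is cartesian over $\cC$ precisely when its $y$-component is invertible.
\item The condition on $2$-morphisms is vacuous, since $y$ has only invertible $2$-morphisms.
\end{itemize}
Hence $\colim^{W}\ast\simeq(\tau_1\mathcal{E})[\mathrm{cart}^{-1}]$. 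Moreover, a $1$-morphism of $\cC_{\upslash d}$ is cartesian over $\cC$ exactly when its image in $\cD_{\upslash d}$ is cartesian over $\cD$, that is, when it is a commutative triangle. This gives the required identification, with the correct variance, in one step.
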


\begin{proposition}[{\cite[Observation 5.5.5.]{AHM26}}]\label{prop:cofcobase}
	The class of strong $(0,1)$-cofinal functors is stable under cobase change.
\end{proposition}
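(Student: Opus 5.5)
We plan to verify \cref{def:strongcofinal} directly, by reducing it to a statement about categories of cones and then using the fact that $\Fun(-,\cA)$ turns pushouts of $(\infty,2)$-categories into pullbacks. Fix a pushout square
\[
\begin{tikzcd}
\cC \arrow[r,"g"] \arrow[d,"f"'] & \cC' \arrow[d,"f'"] \\
\cD \arrow[r] & \cD'=\cD\sqcup_{\cC}\cC'
\end{tikzcd}
\]
with $f$ strongly $(0,1)$-cofinal. For an $(\infty,2)$-category $\cA$, a functor $F\colon \cD\to\cA$ and an object $x\in\cA$, write $\mathrm{Cone}_{\cD}(x,F)\coloneqq \Fun(\cD,\cA)(\underline{x},F)$ for the category of natural transformations from the constant functor on $x$ to $F$.

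\textbf{Step 1: upgrade cofinality to an equivalence on cones.} We claim that for every $\cA$, every $F\colon\cD\to\cA$ and every $x\in\cA$, restriction along $f$ gives an equivalence
\[
\mathrm{Cone}_{\cD}(x,F)\xrightarrow{\simeq}\mathrm{Cone}_{\cC}(x,Ff),
\]
natural in $x$. To see this, apply the cofinality hypothesis to the target $\Cat_{(\infty,1)}$, which admits all strong limits, and to the functor $\cA(x,F(-))\colon\cD\to\Cat_{(\infty,1)}$. The strong limit of a diagram $G$ in $\Cat_{(\infty,1)}$ is $\Fun(\cD,\Cat_{(\infty,1)})(\underline{[0]},G)$. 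Applied to $G=\cA(x,F(-))$, this is identified with $\mathrm{Cone}_{\cD}(x,F)$ by the enriched Yoneda lemma, and the same identification holds over $\cC$. Under these identifications, the comparison map $\lim_{\cD}\to\lim_{\cC}$ provided by cofinality is the restriction map on cones, so it is an equivalence.

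The point needing care is checking that these identifications are compatible with restriction and natural in $x$. I expect this bookkeeping to be the main obstacle, though it should follow from naturality of Yoneda in the diagram variable.

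\textbf{Step 2: pass to the pushout.} Since $\Fun(-,\cA)$ sends pushouts to pullbacks, we have
\[
\Fun(\cD',\cA)\simeq\Fun(\cD,\cA)\times_{\Fun(\cC,\cA)}\Fun(\cC',\cA),
\]
and mapping categories in a pullback are pullbacks of mapping categories. Constant functors restrict to constant functors, so for $F\colon\cD'\to\cA$ we get
\[
\mathrm{Cone}_{\cD'}(x,F)\simeq \mathrm{Cone}_{\cD}(x,F|_{\cD})\times_{\mathrm{Cone}_{\cC}(x,F|_{\cC})}\mathrm{Cone}_{\cC'}(x,F|_{\cC'}).
\]
By Step 1 the left leg of this pullback is an equivalence. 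Equivalences are stable under base change, so restriction along $f'$ yields an equivalence $\mathrm{Cone}_{\cD'}(x,F)\simeq\mathrm{Cone}_{\cC'}(x,Ff')$, natural in $x$.

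\textbf{Step 3: conclude.} The functors $x\mapsto\mathrm{Cone}_{\cD'}(x,F)$ and $x\mapsto\mathrm{Cone}_{\cC'}(x,Ff')$ are now naturally equivalent. Hence one is representable if and only if the other is, so $\lim_{\cD'}F$ exists exactly when $\lim_{\cC'}Ff'$ does. When they exist, the canonical comparison map is the one induced by this natural equivalence, and so it is an equivalence by the Yoneda lemma. This shows $f'$ is strongly $(0,1)$-cofinal.

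As a cross-check one could instead use \cref{prop:strongcofinalcharact} and analyse $\cC'_{\upslash d}$ for $d\in\cD'$, but the argument above avoids that computation entirely.
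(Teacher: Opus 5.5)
Your proof is correct. The paper gives no argument of its own here: it imports the statement from \cite[Observation 5.5.5]{AHM26}. So there is no internal proof to match, and your proposal supplies a self-contained derivation directly from \cref{def:strongcofinal}.

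\textbf{Structure.} The structure is the right one. Steps 1 and 3 together show that strong $(0,1)$-cofinality of $f$ is equivalent to restriction inducing equivalences $\Fun(\cD,\cA)(\underline{x},F)\to\Fun(\cC,\cA)(\underline{x},Ff)$ for all $\cA$, $F$ and $x$. This reformulation is visibly stable under cobase change, because $\Fun(-,\cA)$ turns pushouts into pullbacks and mapping categories of pullbacks are pullbacks of mapping categories.

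\textbf{Why Step 1 is needed.} In Step 2 you need the cone comparison for $F|_{\cD}$, whose limit need not exist in $\cA$. Testing instead against $\Cat_{(\infty,1)}$, where all strong limits exist, is exactly what fixes this.

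\textbf{The bookkeeping.} The compatibility you worry about is harmless if you define the comparison
$\Fun(\cD,\cA)(\underline{x},F)\to\Fun(\cD,\Cat_{(\infty,1)})(\underline{[0]},\cA(x,F(-)))$
as postcomposition with $\cA(x,-)$, followed by restriction along the map $\underline{[0]}\to\underline{\cA(x,x)}$ picking out $\id_x$. This composite is manifestly compatible with precomposition along $f$. That it is an equivalence is the end formula for natural transformations, i.e.\ the agreement of strong limits with conical limits in the enriched sense (cf.\ \cite[Section 5.2]{AHM26}).

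\textbf{Comparison with the alternative.} Going through \cref{prop:strongcofinalcharact} would require controlling the slices $\cC'_{\upslash d}$ for objects $d$ of a pushout, whose mapping categories have no simple description. Your route avoids this and is the more economical argument.
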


\begin{proposition}\label{prop:adjffcofinal}
	 The functors $\Ret \to \Adjffr$ and $\Ret \to \Adjffl$ are strongly $(0,1)$-cofinal where $\Ret=[2]\coprod_{\{0 \to 2\}}[0]$ denotes the walking retraction.
\end{proposition}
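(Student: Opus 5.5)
We use the criterion of \cref{prop:strongcofinalcharact}. Write $\Ret$ as the $1$-category with objects $0,1$, morphisms $i\colon 0\to 1$, $r\colon 1\to 0$ with $ri=\id_0$, and endomorphisms $\{\id_1,e=ir\}$ of $1$. The functor $\Ret\to\Adjffr$ sends $0\mapsto -$ and $1\mapsto +$. It is bijective on objects, and on mapping categories it is the identity $[0]\to[0]$ except on $(1,1)$, where it sends $\{\id_1,e\}$ to the two objects of $\Adjffr(+,+)=[1]$. We check the direction conventions so that $\id_+\mapsto 0$ and the idempotent $e\mapsto 1$. Thus $\Adjffr$ is obtained from $\Ret$ by adjoining a single $2$-morphism $\id_+\Rightarrow e$, compatible with composition.

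For each $d\in\{-,+\}$ we must show that $\Ret_{\upslash d}=\Adjffr_{\upslash d}\times_{\Adjffr}\Ret$ becomes contractible after inverting all $2$-morphisms and all $1$-morphisms given by commutative triangles. Its fibre over $c\in\Ret$ is the category $\Adjffr(c,d)$. A $1$-morphism from $(c,x)$ to $(c',y)$ consists of $f\colon c\to c'$ in $\Ret$ together with a $2$-cell $x\Rightarrow y\circ f$ (with the variance fixed by the $(1,0)$-fibration $\Adjffr_{\upslash d}$). It is a commutative triangle exactly when this $2$-cell is invertible.

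\emph{Case $d=-$.} All the mapping categories $\Adjffr(c,-)$ are $[0]$, so every $2$-cell is an identity. Hence $\Ret_{\upslash -}\simeq\Ret$, and every $1$-morphism is a triangle. The localization is therefore $|\Ret|$. This is contractible because $0$ is an initial object of $\Ret$, as $\Ret(0,0)$ and $\Ret(0,1)$ are both points.

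\emph{Case $d=+$.} The objects are $(0,\ast)$, $(1,\id_+)$ and $(1,e)$. Here the non-triangle $1$-morphisms are exactly those whose $2$-cell involves the non-invertible arrow $\id_+\Rightarrow e$ of $[1]$. The key step is to compare each such $1$-morphism with a triangle through a $2$-morphism in the slice. For example, the morphism $(1,\id_+)\to(1,e)$ over $\id_1$ carrying the cell $\id_+\Rightarrow e$ should be related to the triangle $(1,\id_+)\to(1,e)$ over $e$ (with $e\circ e\simeq e$) by the $2$-morphism $\id_1\Rightarrow e$ of $\Adjffr$, which in the slice is a $2$-morphism. Once all $2$-morphisms are inverted, every $1$-morphism is therefore identified with a triangle. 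So the localization agrees with the groupoid completion $|\Ret_{\upslash +}^{\leq 1}|$ of the underlying $(\infty,1)$-category.

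Finally we show that $|\Ret_{\upslash +}^{\leq 1}|$ is contractible. We list its morphisms explicitly; they are few, since all fibres are posets and $\Ret$ is finite. We then exhibit a terminal (or initial) object; the natural candidate is $(1,\id_+)$, the image of the identity of $+$. Failing that, we show that $(0,\ast)\to(1,e)$ together with $(1,\id_+)\to(1,e)$ form a zig-zag with contractible realization. The main obstacle is this step, together with the previous one: keeping track of the variance of the $2$-cells in the lax slice. That variance decides which of $(1,\id_+)$ and $(1,e)$ receives the maps, and whether the $2$-morphism relating the two morphisms $(1,\id_+)\to(1,e)$ actually exists in the slice.

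The claim for $\Ret\to\Adjffl$ follows by the same computation with the roles of unit and counit exchanged, since $\Adjffl$ is obtained from $\Adjffr$ by the duality reversing $2$-morphisms, and the image of $\Ret$ is matched accordingly. Equivalently, one can repeat the two-case check verbatim, now with the non-invertible $2$-cell $e\Rightarrow\id$ on the endomorphism category.
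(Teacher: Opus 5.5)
Your overall route matches the paper's: apply \cref{prop:strongcofinalcharact} and compute the slices object by object. Your case $d=-$ is correct. The case $d=+$, however, does not go through as written.

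\textbf{The gap in the case $d=+$.} Your key step uses a $2$-morphism $\id_1\Rightarrow e$ in $\Ret_{\upslash +}=\Ret\times_{\Adjffr}{\Adjffr}_{\upslash +}$, and no such $2$-morphism exists.
\begin{itemize}
\item $2$-morphisms of the fibre product must lie over $2$-morphisms of $\Ret$. Since $\Ret$ is a $1$-category, the cell $\id_+\Rightarrow uf$ of $\Adjffr(+,+)$ is not available there. As all the $\Adjffr(c,+)$ are posets, $\Ret_{\upslash +}$ is in fact a finite $1$-category.
\item The morphism $(1,\id_+)\to(1,e)$ over $e$ is not a triangle in either variance. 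Its cell would be $\id_+\Rightarrow e\circ e\simeq e$, which is non-invertible, or $e\Rightarrow\id_+$, which does not exist.
\end{itemize}
So nothing shows that inverting triangles inverts every morphism, and the reduction to $|\Ret_{\upslash +}|$ is unjustified. Even contractibility of the groupoid completion would not, by itself, show that the localization at triangles is $[0]$.

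The last step does not close either. $(1,\id_+)$ is neither initial nor terminal. For instance, with cells $g\Rightarrow g'\circ h$ it has two endomorphisms, one over $\id_1$ and one over $e$. The zig-zag fallback is not an argument.

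\textbf{The missing idea.} It is the one you already used for $d=-$. The object $(0,\ast)$, corresponding to the unique $1$-morphism $-\to +$, is initial in $\Ret_{\upslash +}$. This holds because each $\Ret(0,c)$ is a point and the relevant $2$-cells lie in $\Adjffr(-,+)=[0]$. For the same reason, every morphism out of $(0,\ast)$ is a commutative triangle, whatever the variance you were worried about.

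This is exactly the paper's argument. Localizations preserve initial objects; the paper deduces this from the strong cofinality of $[1]\to[0]$ and \cref{prop:cofcobase}, and for $(\infty,1)$-categories it is the standard fact that localization functors are initial and final. Hence after inverting triangles, $(0,\ast)$ remains initial and every object becomes equivalent to it, so the localization is $[0]$. The non-triangle morphisms never need to be addressed.

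The same argument applies verbatim to $\Ret\to\Adjffl$. There too, both mapping categories out of the image of $0$ are trivial, so $(0,\ast)$ is again initial with only triangles leaving it.
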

\begin{proof}
	We treat the functor $\Ret \to \Adjffr$; the other case is completely dual. By \cref{prop:strongcofinalcharact}, it is enough to show that, for every $x \in \Adjffr$, the $(\infty,1)$-category
	\[
		\Ret_{\upslash x}\coloneq \Ret \times_{\Adjffr}{\Adjffr}_{\upslash x}
	\]
	localizes to $[0]$ after inverting the 1-morphisms represented by commutative triangles. For $x \in \Adjffr$ denote by $\rho_x \colon - \to x$ the unique morphism. Then it follows that $\rho_x \in \Ret_{\upslash x}$ yields an initial object and that every morphism with source $\rho_x$ is represented by a commutative triangle. Recall the well known fact that localizations are obtained by iterated pushouts along the map $[1] \to [0]$, which is easily verified to be strong $(0,1)$-cofinal. In particular, one concludes from \cref{prop:cofcobase} that localizations preserve initial objects. The claim now follows immediately.
\end{proof}

\section{Free bifibrations of $(\infty,2)$-categories}

\begin{definition}\label{def:wcart}
	Let $p\colon\cX\to\cC$ be an $(i,j)$-fibration and let $\cL\subset\cC$ be a wide locally full sub-$(\infty,2)$-category. We call $p$ an \emph{$\cL$-bifibration} if it is also a partial $(1-i,j)$-fibration relative to $\cL$. We write
	\[
		\Fib_{/(\cC,\cL)}^{\epsilon\mhyphen\bifib}
		\subset \Fib_{/\cC}^{\epsilon}
	\]
	for the resulting $(\infty,3)$-category, whose morphisms are the morphisms of $(i,j)$-fibrations that also preserve the $(1-i)$-cartesian morphisms lying over $\cL$.
\end{definition}

We write $\Fun_{/\cC}^{\cL\mhyphen\bifib}(\cX,\cY)$ for the corresponding mapping $(\infty,2)$-category. When $\cL=\cC$, we use the lighter notation $\Fib_{/\cC}^{\epsilon\mhyphen\bifib}$ and simply speak of bifibrations.

The main result of this section is the following.

\begin{theorem}\label{thm:bifib}
	There is an adjunction of $(\infty,3)$-categories
	\[
		\BFree^{\epsilon}_{\cL}\colon
		\Fib^{\epsilon}_{/\cC}
		\llra
		\Fib_{/(\cC,\cL)}^{\epsilon\mhyphen\bifib}
		\colon U,
	\]
	where $U$ is the forgetful functor.
\end{theorem}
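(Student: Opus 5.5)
The plan is to build $\BFree^{\epsilon}_{\cL}(\cX)$ as a sequential colimit that alternately adjoins $(1-i)$-cartesian lifts over $\cL$ and $i$-cartesian lifts over $\cC$, using the partial free fibration functors of \cref{thm:pfree}. Write $\epsilon=(i,j)$ and $\bar\epsilon=(1-i,j)$, and set $\cX_0=\cX$. Given an $\epsilon$-fibration $\cX_{2n}$, define
\[
	\cX_{2n+1}\coloneqq \PFree^{\bar\epsilon}_{\cL}(\cX_{2n}), \qquad \cX_{2n+2}\coloneqq \PFree^{\epsilon}_{\cC}(\cX_{2n+1}).
\]
The units of the adjunctions of \cref{thm:pfree} give the connecting maps $\cX_{k}\to\cX_{k+1}$ over $\cC$. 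I would then set $\BFree^{\epsilon}_{\cL}(\cX)\coloneqq\colim_k \cX_k$, with the colimit formed in ${\Cat_{(\infty,2)}}_{/\cC}$.

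Next I would show that the colimit is an $\cL$-bifibration. The subsequences $(\cX_{2n})$ and $(\cX_{2n+1})$ are cofinal, so the colimit is simultaneously a colimit of $\epsilon$-fibrations and of partial $\bar\epsilon$-fibrations. The key lemma is a local one. The fibres and mapping categories of a filtered colimit over $\cC$ are computed as filtered colimits. Moreover, an $i$-cartesian morphism in some $\cX_{2n}$ stays $i$-cartesian in the colimit, provided the transition maps preserve it. The needed preservation is that the unit $\cX_{2n}\to\PFree^{\epsilon}_{\cC}(\cX_{2n})$, and the next stage's map, preserve existing $i$-cartesian morphisms and $j$-cartesian $2$-morphisms. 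The cleanest route is to avoid checking this directly: replace the naive $\PFree$ at each step by the pushout
\[
	\cX_{2n+2}\coloneqq \PFree^{\epsilon}_{\cC}(\cX_{2n+1})\amalg_{\PFree^{\epsilon}_{\cC}(\cX_{2n})}\cX_{2n},
\]
taken in $\Fib^{\epsilon}_{/\cC}$. Here the map $\PFree^{\epsilon}_{\cC}(\cX_{2n})\to\cX_{2n}$ is the counit. I would make the dual replacement for the odd steps. With these definitions every transition map is a morphism in the relevant fibration category by construction. Cartesianness is a pullback condition on mapping categories, and filtered colimits in $\Cat_{(\infty,1)}$ commute with pullbacks, so $i$-cartesian lifts created at stage $2n$ remain $i$-cartesian in the colimit. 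The same holds for the $(1-i)$-lifts over $\cL$ and for the enrichment condition. The existence of lifts is then immediate, since every object of the colimit comes from some stage.

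For the universal property I would use the adjunctions of \cref{thm:pfree} inductively, which gives equivalences of $(\infty,2)$-categories for any $\cL$-bifibration $\cY$. The first step is
\[
	\Fun^{\bar\epsilon}_{/(\cC,\cL)}(\cX_{2n+1},\cY)\simeq\Fun_{/\cC}(\cX_{2n},\cY).
\]
Because of the pushout, maps out of $\cX_{2n+1}$ that agree with the given structure are then identified with maps out of $\cX_{2n}$ preserving $i$-cartesian morphisms. The tower of mapping $(\infty,2)$-categories $\Fun(\cX_k,\cY)$, restricted to functors preserving the structure present at each stage, is therefore eventually constant equal to $\Fun^{\epsilon}_{/\cC}(\cX,\cY)$. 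Passing to the limit yields
\[
	\Fun_{/\cC}^{\cL\mhyphen\bifib}(\BFree^{\epsilon}_{\cL}\cX,\cY)\simeq\Fun^{\epsilon}_{/\cC}(\cX,\cY),
\]
naturally in $\cX$ and $\cY$. This pointwise representability upgrades to an $(\infty,3)$-adjunction in the standard way, because the constructions are functorial in $\cX$ and the equivalences are natural, and this is enough to conclude \cref{thm:bifib}.

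The main obstacle is the preservation step in the second paragraph. One must verify that the pushouts and transition maps really preserve the cartesian structure already present, and that a pushout in $\Fib^{\epsilon}_{/\cC}$ exists and is computed compatibly with the underlying functors. My first attempt would be to work through straightening (\cref{thm:str}): there $\Fib^{\epsilon}_{/\cC}$ becomes a functor $(\infty,2)$-category, and pushouts are computed pointwise. For the partial fibrations I would use the explicit description $\cX\times_{\cC}\Ar^{\epsilon}_{\cC_0}(\cC)$ of \cref{def:freepartial} to check by hand that cartesian edges of $\cX$ stay cartesian.
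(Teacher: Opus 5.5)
Your route is genuinely different from the paper's, and its overall design is sound. One load-bearing step, however, is not secured.

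\emph{Comparison.} The paper keeps the naive alternating tower of \cref{cons:Tbifib}, in which the same-variance units destroy previously created lifts, and repairs this afterwards. It localizes $\cX_T$ at the comparison $1$- and $2$-morphisms of \cref{cons:fibrewisejunk} and their transports, and gets the $(0,1)$-fibration property from \cite[Theorem 4.7.1]{AHM26} (\cref{prop:new01}). It then obtains the $\cL$-bifibration property not by showing that cartesian lifts over $\cL$ survive, but by building $f^*$, the unit and the counit on fibres by hand and invoking \cref{prop:charactbifib}. You instead build the repair into each step by pushing out along counits, so that the two-step transitions are structure-preserving by construction. The bifibration property then follows formally: filtered colimits compute mapping categories and commute with the pullbacks that define cartesianness. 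The universal property comes out of the pushout squares through the same ``starred'' tower argument the paper uses. With pushouts, your first displayed equivalence holds only for $n=0$. In general it becomes a fibre product with $\Fun^{\bar\epsilon}_{/(\cC,\cL)}(\cX_{2n-1},\cY)$ over $\Fun_{/\cC}(\cX_{2n-1},\cY)$, and once the starred conditions are imposed every restriction in the tower is an equivalence. Your approach is more formal and avoids the explicit adjoint construction. The paper's approach yields the explicit fibre description of \cref{prop:fibresasexpected}, on which \cref{lem:comparisonQW}, \cref{thm:mappingcatform} and all the applications depend.

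\emph{The gap.} Your odd steps need pushouts in $\Fib^{\bar\epsilon}_{/(\cC,\cL)}$. They must be pushouts in the $(\infty,3)$-categorical sense, so that $\Fun^{\bar\epsilon}_{/(\cC,\cL)}(-,\cY)$ turns them into pullbacks of $(\infty,2)$-categories. \cref{thm:str} covers only $\Fib^{\epsilon}_{/\cC}$, not partial fibrations, so ``computed pointwise'' is unavailable there.

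Your fallback, checking in the model $\cX\times_{\cC}\Ar^{\epsilon}_{\cC_0}(\cC)$ that cartesian edges stay cartesian, does not produce a pushout. For same-variance steps the naive statement is in fact false. Take $\cX=\cC=[1]$ and $\epsilon=(0,1)$: the unit sends $0\to1$ to $(0\to0)\to(1\to1)$ in $\Ar([1])$. This edge is not $\ev_1$-cocartesian, since the cocartesian lift is $(0\to0)\to(0\to1)$. That failure is exactly why either your pushouts or the paper's comparison morphisms are needed.

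To close the gap you must show two things. First, the underlying $(\infty,1)$-category of $\Fib^{\bar\epsilon}_{/(\cC,\cL)}$ has pushouts, for instance via a presentability argument through monadicity or a model-categorical presentation. Second, these are enriched pushouts, for instance using cotensors $\Fun(K,\cY)\times_{\Fun(K,\cC)}\cC$ built from \cref{prop:fibfunct}.

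Alternatively, you could compute the pushout in ${\Cat_{(\infty,2)}}_{/\cC}$ as a localization of $\PFree^{\bar\epsilon}_{\cL}(\cX_{2n})$ at fibrewise morphisms stable under transport over $\cL$. In the $(\infty,1)$-case the counit is a localization, with section $y\mapsto(y,\id)$. But you would then need this localization to be a partial fibration again, i.e.\ a partial-fibration analogue of \cite[Theorem 4.7.1]{AHM26}. That is precisely the input the paper sidesteps by routing the bifibration property through \cref{prop:charactbifib}.
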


 We refer to the left adjoint as the \emph{free bifibration} functor. Before explaining our strategy for proving \cref{thm:bifib}, we begin with a useful characterization of bifibrations.
 
\begin{proposition}\label{prop:charactbifib}
	Let $p\colon\cX\to\cC$ be an $(i,j)$-fibration. Then $p$ is an $\cL$-bifibration precisely when the following condition holds:
	\begin{itemize}
		\item if $i=0$, then for every $u\colon x\to y$ in $\cL$, the transport functor $u_!\colon\cX_x\to\cX_y$ is a left adjoint;
		\item if $i=1$, then for every $v\colon y\to z$ in $\cL$, the transport functor $v^*\colon\cX_z\to\cX_y$ is a right adjoint.
	\end{itemize}
\end{proposition}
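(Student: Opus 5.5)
By symmetry, the plan is to treat only the case $i=0$; the case $i=1$ follows by applying it to $p^{\op}$ (or by the dual argument). Fix $u\colon x\to y$ in $\cL$, and write $u_!\colon\cX_x\to\cX_y$ for the transport functor, defined by choosing cocartesian lifts $a\to u_!a$.

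\textbf{Cartesian lifts give a right adjoint.} Suppose $b\in\cX_y$ admits a cartesian lift $\hat u\colon u^*b\to b$ of $u$. For $a\in\cX_x$, I would compute $\cX(a,b)\times_{\cC(x,y)}\{u\}$ in two ways.
\begin{itemize}
\item Cocartesianness of $a\to u_!a$ identifies this fibre with $\cX(u_!a,b)\times_{\cC(y,y)}\{\id_y\}\simeq\cX_y(u_!a,b)$.
\item Cartesianness of $\hat u$ identifies it with $\cX(a,u^*b)\times_{\cC(x,x)}\{\id_x\}\simeq\cX_x(a,u^*b)$.
\end{itemize}
Both identifications come from the pullback squares in the definition of (co)cartesian morphisms, taken on fibres over the relevant points of the base mapping categories. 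The result is an equivalence
\[
	\cX_y(u_!a,b)\simeq\cX_x(a,u^*b)
\]
of $(\infty,1)$-categories, natural in $a$. Hence $u^*b$ corepresents the right adjoint to $u_!$ at $b$. If every $b$ has such a lift, $u_!$ is a left adjoint.

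\textbf{A right adjoint gives cartesian lifts.} Conversely, suppose $u_!\dashv g$. For $b\in\cX_y$, take the counit $u_!gb\to b$ and precompose with the cocartesian morphism $gb\to u_!gb$. This yields a morphism $\hat u\colon gb\to b$ over $u$, and I would show it is cartesian.

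For $z\in\cX$ with $w=p(z)$, I must show the square relating $\cX(z,gb)\to\cX(z,b)$ over $\cC(w,x)\to\cC(w,y)$ is a pullback. Since $p_{z,-}$ is a cocartesian fibration on mapping categories (enrichment), I would check it fibrewise over each $f\colon w\to x$. I would also check that $\hat u\circ-$ preserves cocartesian $2$-morphisms, which follows from the compatibility of composition with cocartesian morphisms.

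The fibre over $f$ is $\cX_x(f_!z,gb)$, and the fibre over $uf$ is $\cX_y((uf)_!z,b)\simeq\cX_y(u_!f_!z,b)$. The map between them is the adjunction equivalence, once one identifies that post-composing with $\hat u$ corresponds to applying $u_!$ and then the counit. For this identification I would use functoriality of transport, i.e.\ straightening (\cref{thm:str}), which gives $(uf)_!\simeq u_!f_!$.

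\textbf{Main obstacle.} The delicate point is making the fibrewise computation honest at the $(\infty,2)$-level. One must verify that a map of cocartesian fibrations over $\cC(w,x)$, namely the induced functor from $\cX(z,gb)$ to the pullback, is an equivalence as soon as it is an equivalence on fibres. This requires that the map preserves cocartesian morphisms, which in turn uses that composition preserves cocartesian $2$-cells; the fibrewise criterion then applies. A cleaner route would be to straighten $p$ to $F\colon\cC\to\Cat_{(\infty,2)}$ and invoke the known $(\infty,1)$-style statement for each mapping category.
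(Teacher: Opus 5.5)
Your proposal is correct, and for the converse it has the same skeleton as the paper. You check cartesianness fibrewise over $f\in\cC(w,x)$ and use a cocartesian lift $z\to f_!z$ to replace $\cX(z,-)_f$ by a mapping category in the fibre over $x$.

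The difference is in what you prove versus what the paper cites. The paper invokes \cite[Corollary 4.1.3]{AbellanGagnaea2025StraighteningLaxTransformations} for two things: the forward direction, and the production of \emph{locally} cartesian lifts over $\cL$ from the adjunction. It then only shows that local cartesianness upgrades to cartesianness. For this it uses a commutative square whose horizontal maps are the cocartesian identifications, so 2-out-of-3 finishes without ever making the fibre map explicit.

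You instead build the lift by hand as $\epsilon_b\circ\kappa_{gb}$ and identify the fibre map with $\alpha\mapsto\epsilon_b\circ u_!(\alpha)$. This amounts to proving directly that this lift is locally cartesian. For the forward direction you give the classical two-way computation of $\cX(a,b)\times_{\cC(x,y)}\{u\}$, which is the standard direct argument.

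Your route is self-contained. The price is that you must handle the coherence points the citation hides:
\begin{itemize}
\item the equivalence $\cX_y(u_!a,b)\simeq\cX_x(a,u^*b)$ must be natural in $a$ as $\Cat_{(\infty,1)}$-valued presheaves, so that pointwise representability assembles into an $(\infty,2)$-adjunction;
\item post-composition with $\hat u$ must be identified with the adjunction map at the level of functors, which follows from how $u_!$ is defined on mapping categories.
\end{itemize}

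Some minor remarks:
\begin{itemize}
\item $(uf)_!z\simeq u_!f_!z$ needs only that composites of cocartesian morphisms are cocartesian (pasting pullback squares), not straightening.
\item You are right that the fibrewise criterion needs $\hat u\circ-$ to preserve (co)cartesian $2$-morphisms; the paper leaves this implicit.
\item Working with cocartesian enrichment rather than the paper's $(0,1)$ is immaterial.
\item $u^*b$ \emph{represents}, rather than corepresents, $\cX_y(u_!-,b)$.
\end{itemize}
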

\begin{proof}
	If $p$ is an $\cL$-bifibration, the claim follows from \cite[Corollary 4.1.3]{AbellanGagnaea2025StraighteningLaxTransformations}. Conversely, we may assume that $\epsilon=(0,1)$. The same result gives locally cartesian lifts over the morphisms of $\cL$; it remains to show that they are cartesian.

	Let $\widehat{v}\colon x\to y$ be locally cartesian and let $p(\widehat{v})=v$. For every $z\in\cX$, we must show that
	\[
	\begin{tikzcd}
		\cX(z,x) \arrow[r,"\widehat{v}_*"] \arrow[d] & \cX(z,y) \arrow[d] \\
		\cC(pz,px) \arrow[r,"v_*"'] & \cC(pz,py)
	\end{tikzcd}
	\]
	is a pullback. Since the vertical maps are cartesian fibrations, this may be checked fibrewise. Given $u\in\cC(pz,px)$, choose a cocartesian lift $\widehat{u}\colon z\to\widehat x$. We obtain a commutative square
	\[
	\begin{tikzcd}
		\cX(\widehat x,x)_{\id_{px}} \arrow[r,"\widehat u^*"] \arrow[d,"\widehat v_*"']
		& \cX(z,x)_u \arrow[d,"\widehat{v}_*"] \\
		\cX(\widehat x,y)_{v} \arrow[r,"\widehat u^*"']
		& \cX(z,y)_{v\circ u}.
	\end{tikzcd}
	\]
	The horizontal maps are equivalences because $\widehat u$ is cocartesian, and the left vertical map is an equivalence because $\widehat v$ is locally cartesian. Hence the right vertical map is an equivalence by 2-out-of-3.
\end{proof}

\begin{construction}\label{cons:Tbifib}
	Let $p\colon\cX\to\cC$ be an $(i,j)$-fibration. We define functors $p^{(n)}\colon\cX^{(n)}\to\cC$ inductively. Set $p^{(0)}=p$. If $n$ is odd, let $p^{(n)}$ be the free partial $(1-i,j)$-fibration relative to $\cL$ on $p^{(n-1)}$; if $n>0$ is even, let $p^{(n)}$ be the free $(i,j)$-fibration on $p^{(n-1)}$.

	The units of the corresponding adjunctions assemble these functors into a diagram
	\[
		T\colon\mathbb N\longrightarrow {\Cat_{(\infty,2)}}_{/\cC}.
	\]
	We write $p_T\colon\cX_T\to\cC$ for its colimit.
\end{construction}

\begin{notation}\label{not:Tkell}
	For $k\leq\ell$, write
	\[
		T_{k,\ell}\colon\cX^{(k)}\longrightarrow\cX^{(\ell)}
	\]
	for the transition functor of \cref{cons:Tbifib}. We write simply $T$ when the indices are clear from the context.
\end{notation}

We will construct the free $\cL$-bifibration as a localization of $\cX_{T}$ (cf. \cref{cons:Tbifib}) at a certain collection of 1 and 2-morphisms which we introduce below.


\begin{construction}\label{cons:fibrewisejunk}
	Fix $0\leq k\leq m$. The source and target of
	\[
		T_{2k,2m}\colon\cX^{(2k)}\longrightarrow\cX^{(2m)}
	\]
	are $(i,j)$-fibrations, while the source and target of $T_{2k+1,2m+1}$ are partial $(1-i,j)$-fibrations relative to $\cL$. We describe the even case for $(i,j)=(0,1)$; the remaining cases are dual.

	Given a cocartesian morphism $e\colon x^{2k}\to y^{2k}$, factor $T_{2k,2m}(e)$ as a cocartesian morphism $\widetilde e$ followed by a fibrewise morphism $w_e$:
	\[
	\begin{tikzcd}
		& y^{2m} \arrow[dr,"w_e"] & \\
		T_{2k,2m}(x^{2k}) \arrow[ur,"\widetilde e"]
			\arrow[rr,"T_{2k,2m}(e)"']
		&& T_{2k,2m}(y^{2k}).
	\end{tikzcd}
	\]
	Here $\widetilde e$ is a cocartesian lift of $p^{(2k)}(e)$ and $w_e$ lies over the identity of its target. A totally analogous construction of $w_e$ can be performed in the odd case for cartesian $1$-morphisms living over $\cL$.

	A similar construction applies one categorical level higher. For $0\leq\ell\leq2m$, given a cartesian $2$-morphism $\varphi\colon f\to g$ between parallel $1$-morphisms $f,g\colon a\to b$ in $\cX^{(\ell)}$, factor $T_{\ell,2m}(\varphi)$ as
	\[
		T_{\ell,2m}(f)
		\xrightarrow{\alpha_\varphi} f^{2m}
		\xrightarrow{\widetilde\varphi} T_{\ell,2m}(g),
	\]
	where $\widetilde\varphi$ is cartesian and $\alpha_\varphi$ lies in
	\[
		\cX^{(2m)}(T_{\ell,2m}(a),T_{\ell,2m}(b))_{p(f)}.
	\]
	Choosing a cocartesian lift $T_{\ell,2m}(a)\to\widehat b$ over $p(f)$ gives an equivalence
	\[
		\cX^{(2m)}_{p(b)}(\widehat b,T_{\ell,2m}(b))
		\xrightarrow{\simeq}
		\cX^{(2m)}(T_{\ell,2m}(a),T_{\ell,2m}(b))_{p(f)},
	\]
	under which $\alpha_\varphi$ corresponds to a $2$-morphism in the fibre over $p(b)$.

\end{construction}

\begin{definition}\label{def:comparison}
	We call the fibrewise $r$-morphisms of \cref{cons:fibrewisejunk} \emph{comparison $r$-morphisms}.
\end{definition}

\begin{definition}\label{def:cwclean}
	For each $k\geq0$, consider the fibrewise comparison morphisms of \cref{cons:fibrewisejunk} associated to the transition functors
	\[
		T_{\ell,k}\colon\cX^{(\ell)}\longrightarrow\cX^{(k)},
		\qquad 0\leq\ell<k.
	\]
	We define a cospan $\cV_k^1 \xrightarrow{}\cX^{(k)} \xleftarrow{} \cV_k^2$, where (cf. \cref{def:typeofsubcat}):
	\begin{itemize}
		\item $\cV_k^1$ is the smallest wide locally full sub-$(\infty,2)$-category containing the comparison $1$-morphisms for $\ell\equiv k\pmod 2$;
		\item for even $k$, $\cV_k^2$ is the smallest wide and locally wide sub-$(\infty,2)$-category containing the comparison $2$-morphisms for every $\ell<k$; for odd $k$, it is the wide and locally wide sub-$(\infty,2)$-category containing only invertible $2$-morphisms.
	\end{itemize}
\end{definition}

For the rest of this section we will assume without loss of generality that $(i,j)=(0,1)$. The remaining variances follow by formally dual arguments.

\begin{definition}\label{def:cwfinal}
	 For $r=1,2$, set $\cW_0^r=\cV_0^r$. Assuming that $\cW_k^r$ has been defined for $k\geq0$, define:
	\begin{itemize}
		\item $\cW_{k+1}^{1}$ to be the smallest wide locally full sub-$(\infty,2)$-category of $\cX^{(k+1)}$ containing $\cV^{1}_{k+1}$ and $T_{k,k+1}(\cW_{k}^{1})$ and whose fibrewise $1$-morphisms are stable under cocartesian transport when $k+1$ is even, and under cartesian transport along morphisms in $\cL$ when $k+1$ is odd.
		\item $\cW_{k+1}^{2}$ to be the smallest wide and locally wide sub-$(\infty,2)$-category of $\cX^{(k+1)}$ containing $\cV^{2}_{k+1}$ and $T_{k,k+1}(\cW_{k}^{2})$ and whose fibrewise $2$-morphisms are stable under cocartesian transport when $k+1$ is even, and under cartesian transport along morphisms in $\cL$ when $k+1$ is odd.
	\end{itemize}
	For $r=1,2$, taking the corresponding subcategories of $\cX_T$ generated by the images of the $\cW_k^r$ for $k\geq0$, we obtain a cospan $\cW^1 \xrightarrow{} \cX_T \xleftarrow{} \cW^2$, where the left leg is a wide locally full inclusion and the right leg is a wide and locally wide inclusion.
\end{definition}

\begin{definition}\label{def:diamond}
	Given $p\colon \cX_T \to \cC$ as before, let $p_{\diamond} \colon \cX_{\diamond} \to \cC$ denote the functor obtained by inverting every $1$-morphism belonging to $\cW^1$ and every $2$-morphism belonging to $\cW^{2}$ (see \cref{def:cwfinal}). 
\end{definition}

\begin{notation}\label{not:diamondnot}
	We fix some notation for the remainder of the section:
	\begin{itemize}
		\item Let $\cX^{(k)}_{\diamond}$ denote the localization of $\cX^{(k)}$ at every $1$-morphism belonging to $\cW_{k}^{1}$ and every $2$-morphism belonging to $\cW_{k}^{2}$.
		\item For $c\in\cC$, $k\geq0$, and $r=1,2$, write $\cW_{k,c}^{r}=\cW_k^r\times_{\cC}\{c\}$. Taking the corresponding sub-$(\infty,2)$-categories generated by their images for all $k\geq0$, we obtain a cospan
	\[
		\cW_c^1 \xrightarrow{} \cX_T\times_{\cC}\{c\} \xleftarrow{} \cW_c^2,
	\]
	   where the left leg is a wide and locally full sub-$(\infty,2)$-category and the right leg is a wide locally wide sub-$(\infty,2)$-category.
		\item We denote by $\cX^{(k)}_{\diamond,c}$ the localization of $\cX^{(k)}_c=\cX^{(k)}\times_{\cC}\{c\}$ at the $1$-morphisms of $\cW_{k,c}^{1}$ and the $2$-morphisms of $\cW_{k,c}^{2}$.
	\end{itemize}
	
\end{notation}

\begin{proposition}\label{prop:new01}
	The functor $p_{\diamond} \colon \cX_{\diamond} \to \cC$ is a $(0,1)$-fibration and every morphism $\cX^{(2k)} \to \cX_{\diamond}$ is a morphism of $(0,1)$-fibrations for $k\geq 0$.
\end{proposition}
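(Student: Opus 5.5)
The plan is to exhibit cocartesian lifts in $\cX_\diamond$ directly, and to get local cartesianness of the mapping functors by comparing the localization with the colimit over $\mathbb N$ of the even stages.

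\emph{Step 1: pass to a cofinal system.} The even indices $\{2k\}\subset\mathbb N$ are cofinal, so
\[
\cX_T\simeq\colim_k\cX^{(2k)},\qquad \cX_\diamond\simeq\colim_k\cX^{(2k)}_\diamond .
\]
The second identification holds because localization commutes with filtered colimits, and because $T_{2k,2k+2}$ sends $\cW^r_{2k}$ into $\cW^r_{2k+2}$ by \cref{def:cwfinal}. Each $\cX^{(2k)}$ is a $(0,1)$-fibration over $\cC$, and each $T_{2k,2m}$ is a functor over $\cC$, although not a morphism of fibrations.

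\emph{Step 2: cocartesian lifts.} Let $x\in\cX_\diamond$ and $u\colon p(x)\to c$. Represent $x$ by an object of $\cX^{(2k)}$, take a cocartesian lift $e\colon x\to y$ there, and consider its image in $\cX_\diamond$. We must check that this image stays cocartesian, i.e.
\[
\cX_\diamond(y,z)\to\cX_\diamond(x,z)\times_{\cC(px,pz)}\cC(py,pz)
\]
is an equivalence. The key input is \cref{cons:fibrewisejunk}: for $m\ge k$, $T_{2k,2m}(e)=w_e\circ\widetilde e$ with $\widetilde e$ cocartesian in $\cX^{(2m)}$ and $w_e\in\cV^1_{2m}\subset\cW^1_{2m}$. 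Hence in $\cX^{(2m)}_\diamond$ the image of $e$ agrees, up to the inverted $w_e$, with the image of a cocartesian edge at every later stage. So it suffices to prove a \emph{stagewise claim}: the localization $\cX^{(2m)}\to\cX^{(2m)}_\diamond$ preserves cocartesian edges in a way compatible with the transitions.

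\emph{Step 3: the stagewise claim.} I would argue via straightening (\cref{thm:str}). The collections $\cW^1_{2m}$ and $\cW^2_{2m}$ are fibrewise, meaning they lie over identities (in $\cC$, respectively in the mapping categories). They are also stable under cocartesian transport, by the closure conditions in \cref{def:cwfinal}. Consequently they correspond to a natural family of classes in the fibres $\cX^{(2m)}_c$, and inverting them is a fibrewise localization of the straightened functor $\cC\to\Cat_{(\infty,2)}$. The unstraightening of the pointwise localized functor receives a map from $\cX^{(2m)}$ which is a localization at exactly these morphisms. So $\cX^{(2m)}_\diamond\to\cC$ is a $(0,1)$-fibration and $\cX^{(2m)}\to\cX^{(2m)}_\diamond$ is a morphism of fibrations.

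The subtle point is the $2$-morphisms. The closure condition for $\cW^2$ is phrased after the identification $\cX^{(2m)}_{p(b)}(\widehat b,-)\simeq\cX^{(2m)}(a,-)_{p(f)}$, and we need this to match the fibrewise $2$-cells. We must also check that cartesian $2$-morphisms, whose enrichment is cartesian since $j=1$, survive the localization. I would handle this by noting that $\alpha_\varphi$ is precisely the defect making $T(\varphi)$ non-cartesian, so that after inversion $T(\varphi)$ becomes cartesian.

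\emph{Step 4: conclude.} Using Step 1, $\cX_\diamond$ is the colimit of the fibrations $\cX^{(2m)}_\diamond$. The transition maps between them preserve cocartesian $1$-morphisms: a cocartesian $e$ maps to $w_e\circ\widetilde e$, which is cocartesian once $w_e$ is invertible. They preserve cartesian $2$-morphisms by Step 3. A filtered colimit of $(0,1)$-fibrations along morphisms of fibrations is again a $(0,1)$-fibration, with each leg a morphism of fibrations, because mapping categories and their pullback squares commute with filtered colimits. This gives both claims, since the composite $\cX^{(2k)}\to\cX^{(2k)}_\diamond\to\cX_\diamond$ is the map in the statement.

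I expect the main obstacle to be Step 3. Concretely, one has to show that the localization of the total $(\infty,2)$-category at fibrewise, transport-stable classes coincides with the unstraightening of the fibrewise localizations, especially for the $2$-cells.
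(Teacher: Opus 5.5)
Your proposal is correct and follows essentially the paper's proof. The paper argues in the same steps: each $\cX^{(2k)}_\diamond\to\cC$ is a $(0,1)$-fibration because the fibrewise classes $\cW^r_{2k}$ are stable under cocartesian transport; then $\cX_\diamond\simeq\colim_k\cX^{(2k)}_\diamond$ by cofinality of the even integers; and the transition maps are morphisms of fibrations because the comparison morphisms $w_e$ and $\alpha_\varphi$ are inverted. The step you flag as the main obstacle (Step 3: localizing at fibrewise, transport-stable $1$- and $2$-morphisms is compatible with the fibration structure) is exactly what the paper imports from \cite[Theorem 4.7.1]{AHM26} rather than reproving via straightening.
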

\begin{proof}
	 Since the fibrewise morphisms in $\cW_{2k}^{r}$ are stable under the cocartesian transport, it follows from \cite[Theorem 4.7.1]{AHM26} that the corresponding functors $\cX^{(2k)}_\diamond \to \cC$ are $(0,1)$-fibrations for $k\geq 0$ (cf. \cref{not:diamondnot}). By construction we have that $T_{k,k+1}(\cW_{k}^{r})\subset \cW_{k+1}^{r}$ which shows that we have functors $\cX_{\diamond}^{(k)} \to \cX_{\diamond}^{(k+1)}$ over $\cC$. We conclude that 
	\[
		\cX_{\diamond}\xrightarrow{\simeq}\colim_{k \in \mathbb{N}}\cX_{\diamond}^{(k)} \xrightarrow{\simeq}\colim_{k \in \mathbb{N}}\cX_{\diamond}^{(2k)},
	\]
	where the final step uses cofinality of the even integers.
	To finish the proof we must show that each $\cX_{\diamond}^{(2k)} \to \cX_{\diamond}^{(2k+2)}$ defines a morphism of $(0,1)$-fibrations. This is clear since $\cW^{r}_{2k+2}$ contains the relevant comparison morphisms of \cref{cons:fibrewisejunk}.
\end{proof}

\begin{remark}\label{prop:fibresasexpected}
	 Let $\cX^{(k)}_{\diamond,c}$ be as in \cref{not:diamondnot}. Since filtered colimits commute with finite limits of $(\infty,2)$-categories, we have
	\[
		\cX_{\diamond}\times_{\cC}\{c\} \simeq \colim_{k\in\mathbb N}\bigl(\cX_{\diamond}^{(2k)}\times_{\cC}\{c\}\bigr)\simeq \colim_{k\in\mathbb N}\cX^{(2k)}_{\diamond,c} \simeq \bigl(\cX_{T}\times_{\cC}\{c\}\bigr)_{\diamond}.
	\]

	The second equivalence follows from \cite[Theorem 4.7.1]{AHM26}, and the last from compatibility of localizations with filtered colimits and cofinality of the even integers. As above, the final $(\infty,2)$-category is obtained from the fibre $\cX_{T}\times_{\cC}\{c\}$, by inverting the $1$-morphisms of $\cW_c^1$ and the $2$-morphisms of $\cW_c^2$.
\end{remark}

By \cref{prop:charactbifib}, showing that $\cX_\diamond$ is an $\cL$-bifibration amounts to proving that cocartesian transport along every morphism in $\cL$ admits a right adjoint. To construct these adjoints, we will exploit the description of the fibres of $\cX_{\diamond}$ from \cref{prop:fibresasexpected}.

Let us fix $f\colon a\to b$ in $\cL$. We now construct a right adjoint to the cocartesian transport functor
\begin{equation}\label{eq:transp}
	f_!\colon\cX_{\diamond}\times_{\cC}\{a\}\longrightarrow\cX_{\diamond}\times_{\cC}\{b\}
\end{equation}

This will be achieved by a level-wise construction.

\begin{construction}\label{cons:pushL}
	For $k\geq0$, set
	\[
		\tau_k=
		\begin{cases}
			k & k\text{ even},\\
			k+1 & k\text{ odd}.
		\end{cases}
	\]
	Define $f_L^{(k)}\colon\cX_a^{(k)}\to\cX_b^{(\tau_k)}$ to be cocartesian transport when $k$ is even, and set
	\[
		f_L^{(k)}=f_L^{(k+1)}\circ T_{k,k+1}
	\]
	when $k$ is odd.
\end{construction}

\begin{definition}\label{def:cartpush}
	For $k\geq0$, set
	\[
		\mu_k=
		\begin{cases}
			k & k\text{ odd},\\
			k+1 & k\text{ even}.
		\end{cases}
	\]
	Define $f_R^{(k)}\colon\cX_b^{(k)}\to\cX_a^{(\mu_k)}$ to be cartesian transport when $k$ is odd, and set
	\[
		f_R^{(k)}=f_R^{(k+1)}\circ T_{k,k+1}
	\]
	when $k$ is even.
\end{definition}

\begin{remark}\label{rem:laxlycommuting}
	For $k\leq r$, the functors $f_L^{(k)}$ fit into laxly commutative squares
	\[
	\begin{tikzcd}
		\cX_a^{(k)} \arrow[r] \arrow[d,"f_L^{(k)}"']
		& \cX_a^{(r)} \arrow[d,"f_L^{(r)}"] \\
		\cX_b^{(\tau_k)} \arrow[r]
		& \cX_b^{(\tau_r)},
		\arrow[between={0.3}{0.8},Leftarrow,from=2-1,to=1-2]
	\end{tikzcd}
	\]
	while the $f_R^{(k)}$ fit into the dual oplaxly commutative squares
	\[
	\begin{tikzcd}
		\cX_b^{(k)} \arrow[r] \arrow[d,"f_R^{(k)}"']
		& \cX_b^{(r)} \arrow[d,"f_R^{(r)}"] \\
		\cX_a^{(\mu_k)} \arrow[r]
		& \cX_a^{(\mu_r)}.
		\arrow[between={0.2}{0.8},Leftarrow,from=1-2,to=2-1]
	\end{tikzcd}
	\]
	It is enough to consider $r=k+1$. One parity gives a strictly commutative square by definition; in the other, the desired transformation is the lax or oplax transformation on fibres corresponding to $T_{k,k+2}$, by \cite[Theorem 3.5.4]{AbellanGagnaea2025StraighteningLaxTransformations}.
\end{remark}

\begin{proposition}\label{prop:identifiedf!}
	After passage to localizations, the functors $f_L^{(k)}$ induce the cocartesian transport functor $f_! \colon \cX_{\diamond}\times_{\cC}\{a\}\longrightarrow\cX_{\diamond}\times_{\cC}\{b\}.$
\end{proposition}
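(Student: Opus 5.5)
The plan is to check three things in turn: the functors $f_L^{(k)}$ descend to the localized fibres, they assemble into a functor on the colimit of \cref{prop:fibresasexpected}, and that functor is the cocartesian transport of $p_\diamond$. Since the even integers are cofinal in $\mathbb N$, it is enough to work with $k$ even. There $f_L^{(2k)}\colon \cX^{(2k)}_a\to\cX^{(2k)}_b$ is honest cocartesian transport for the $(0,1)$-fibration $p^{(2k)}$. The odd levels only enter through $f_L^{(2k+1)}=f_L^{(2k+2)}\circ T_{2k+1,2k+2}$, so they are handled by the even case together with $T_{2k+1,2k+2}(\cW^r_{2k+1})\subset\cW^r_{2k+2}$.

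\emph{Step 1: descent to localizations.} We show that $f_L^{(2k)}$ carries $\cW^1_{2k,a}$ into $\cW^1_{2k,b}$ and $\cW^2_{2k,a}$ into $\cW^2_{2k,b}$. This is built into \cref{def:cwfinal}: at even levels the fibrewise $1$- and $2$-morphisms of $\cW^r_{2k}$ are closed under cocartesian transport. Hence $f_L^{(2k)}$ induces a functor $\cX^{(2k)}_{\diamond,a}\to\cX^{(2k)}_{\diamond,b}$. By \cite[Theorem 4.7.1]{AHM26}, as in the proof of \cref{prop:new01}, this induced functor is precisely cocartesian transport along $f$ for the $(0,1)$-fibration $\cX^{(2k)}_\diamond\to\cC$.

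\emph{Step 2: compatibility in $k$.} By \cref{rem:laxlycommuting}, the square comparing $T\circ f_L^{(2k)}$ with $f_L^{(2k+2)}\circ T$ commutes only up to a lax transformation. Unwinding \cite[Theorem 3.5.4]{AbellanGagnaea2025StraighteningLaxTransformations}, the component of this transformation at $x\in\cX^{(2k)}_a$ is the fibrewise morphism $w_e$ of \cref{cons:fibrewisejunk}. Here $e$ is the cocartesian lift of $f$ at $x$, and $w_e$ is obtained by factoring $T_{2k,2k+2}(e)$ into a cocartesian morphism followed by a fibrewise one. These comparison $1$-morphisms lie in $\cV^1_{2k+2}\subset\cW^1_{2k+2}$, so after localization the transformation becomes invertible and the square commutes.

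Equivalently, and more cleanly, \cref{prop:new01} says that $\cX^{(2k)}_\diamond\to\cX^{(2k+2)}_\diamond$ is a morphism of $(0,1)$-fibrations. Such a morphism preserves cocartesian lifts and therefore commutes with cocartesian transport on fibres, naturally. Since morphisms of $(0,1)$-fibrations compose and transport is functorial under them via \cref{thm:str}, we obtain a natural transformation of $\mathbb N$-indexed diagrams
\[
\bigl(\cX^{(2k)}_{\diamond,a}\bigr)_{k}\longrightarrow \bigl(\cX^{(2k)}_{\diamond,b}\bigr)_{k}
\]
whose components are the descended $f_L^{(2k)}$.

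\emph{Step 3: identification with $f_!$.} Pass to colimits and use the equivalence $\cX_\diamond\times_\cC\{c\}\simeq\colim_k \cX^{(2k)}_{\diamond,c}$ of \cref{prop:fibresasexpected} for $c=a,b$. Each structure map $\cX^{(2k)}_\diamond\to\cX_\diamond$ is a morphism of $(0,1)$-fibrations by \cref{prop:new01}, so it intertwines transport along $f$. The colimit of the $f_L^{(2k)}$ therefore agrees with $f_!$ after restriction along every $\cX^{(2k)}_{\diamond,a}\to\cX_\diamond\times_\cC\{a\}$, compatibly in $k$. By the universal property of the colimit it equals $f_!$.

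The main obstacle is the coherence in Step 2. One must ensure that the strictification of the lax squares after localization is the one induced by the fibration morphisms, and not merely some invertible $2$-cell. I would address this by never working with the lax squares directly. Instead I would phrase everything through \cref{thm:str}: the localized tower becomes a diagram $\mathbb N\to\Fib^{(0,1)}_{/\cC}\simeq\Fun(\cC,\Cat_{(\infty,2)})$, and evaluating the straightened functors at $f$ yields the required natural transformation coherently. The lax-square description in \cref{rem:laxlycommuting} is then only needed to identify the resulting $2$-cells with the comparison morphisms, which is what makes them invertible.
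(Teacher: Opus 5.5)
Your proposal is correct and follows essentially the same route as the paper: descend the $f_L^{(k)}$ stagewise using the transport-stability built into \cref{def:cwfinal}, and restrict to even levels by cofinality. Then use \cref{prop:new01} to get squares commuting with $f_!$ at each stage, and conclude via the colimit description of the fibres in \cref{prop:fibresasexpected}; your Step 2 (straightening the tower and identifying the lax $2$-cells with comparison morphisms) only makes explicit a coherence point the paper leaves implicit.
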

\begin{proof}
	We make use of \cref{not:diamondnot} throughout the proof.
	 By construction (see \cref{def:cwfinal}), the functors $f_L^{(k)}$ descend to the stagewise localizations. We claim that the induced functor
	\[
		\colim_{k \in \mathbb{N}}\cX_{\diamond,a}^{(k)} \longrightarrow\colim_{k \in \mathbb{N}}\cX_{\diamond,b}^{(\tau_k)}
	\]
	can be identified with $f_!$. By cofinality, we may restrict to the even integers. By \cref{prop:new01}, we have a commutative diagram
	\[
		\begin{tikzcd}
			\cX^{(2k)}_{\diamond,a} \arrow[r] \arrow[d] & \cX^{(2k)}_{\diamond,b} \arrow[d] \\
			\cX_{\diamond}\times_{\cC}\{a\} \arrow[r,"f_!"] & \cX_{\diamond}\times_{\cC}\{b\}.
		\end{tikzcd}
	\]
	By \cref{prop:fibresasexpected}, the vertical functors become equivalences after passage to filtered colimits, proving the claim.
\end{proof}

\begin{proposition}\label{prop:f^*descends}
	After passage to the stagewise localizations, the functors $f_R^{(k)}$ induce a functor $f^*\colon\cX_{\diamond}\times_{\cC}\{b\}\longrightarrow\cX_{\diamond}\times_{\cC}\{a\}.$
\end{proposition}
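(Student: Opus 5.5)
We must show two things: each $f_R^{(k)}$ carries the generators of $\cW^r_{k,b}$ into $\cW^r_{\mu_k,a}$, so that it descends to $\cX^{(k)}_{\diamond,b}\to\cX^{(\mu_k)}_{\diamond,a}$; and the oplax squares of \cref{rem:laxlycommuting} become strictly commutative after localization, so that the stagewise functors assemble into a functor of colimits. By \cref{prop:fibresasexpected}, these colimits are $\cX_\diamond\times_\cC\{b\}$ and $\cX_\diamond\times_\cC\{a\}$. I will treat the cases $k$ odd and $k$ even separately. Since $f_R^{(k)}=f_R^{(k+1)}\circ T_{k,k+1}$ for $k$ even, and $T_{k,k+1}(\cW^r_k)\subset\cW^r_{k+1}$ by \cref{def:cwfinal}, the even case reduces to the odd case.

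For $k$ odd, $f_R^{(k)}$ is cartesian transport along $f\in\cL$ in the partial $(1,1)$-fibration $\cX^{(k)}$. The construction of $\cW^r_k$ in \cref{def:cwfinal} explicitly requires its fibrewise $r$-morphisms to be stable under cartesian transport along morphisms of $\cL$ at odd stages. Hence $f_R^{(k)}$ maps $\cW^r_{k,b}$ into $\cW^r_{k,a}$. This is the odd analogue of the argument used in \cref{prop:new01}, and one could also invoke the partial version of \cite[Theorem 4.7.1]{AHM26}, which shows that $\cX^{(k)}_\diamond\to\cC$ is again a partial $(1,1)$-fibration relative to $\cL$ and that transport descends.

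It remains to check compatibility with the transition maps. One parity of squares commutes strictly by definition. The other square, comparing $f_R^{(k)}$ with $f_R^{(k+2)}\circ T_{k,k+2}$ for $k$ odd, commutes only up to the oplax natural transformation of \cref{rem:laxlycommuting}. That transformation arises from \cite[Theorem 3.5.4]{AbellanGagnaea2025StraighteningLaxTransformations} as the comparison between $T_{k,k+2}$ applied to a cartesian lift and the cartesian lift taken in $\cX^{(k+2)}$. Its components are therefore precisely the comparison $1$-morphisms $w_e$ of \cref{cons:fibrewisejunk} for cartesian $e$ over $f\in\cL$ with $\ell=k\equiv k+2\pmod 2$. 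These lie in $\cV^1_{k+2}\subset\cW^1_{k+2}$.

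The corresponding $2$-cells of the oplax transformation (its naturality data) must also be inverted. I expect these to be comparison $2$-morphisms in $\cV^2$, or images of such under the transition maps. Since these lie in a $\cV^2$ at the relevant even stage after applying $T$, they lie in $\cW^2$, and the oplax transformation becomes invertible after localization.

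The main obstacle is this last identification: verifying that the oplax transformation of \cite[Theorem 3.5.4]{AbellanGagnaea2025StraighteningLaxTransformations} has components, and coherence $2$-cells, lying in the localized classes, so that it becomes a natural equivalence. Once the squares commute after localization, functoriality of colimits over $\mathbb N$ (using cofinality of the odd integers, matching $\mu_k$) yields
\[
f^*\colon\colim_k\cX^{(k)}_{\diamond,b}\to\colim_k\cX^{(\mu_k)}_{\diamond,a}.
\]
By \cref{prop:fibresasexpected} this is the desired functor $f^*$. I would first try to handle the coherence by noting that a transformation whose $1$-cell components are inverted in the target localization factors through the localization of $\Fun([1],-)$, which reduces everything to the componentwise check.
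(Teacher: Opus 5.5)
Your proposal is correct and follows essentially the same route as the paper: cartesian transport at odd stages descends because $\cW^r_k$ is stable under cartesian transport along $\cL$, and the filler of each oplax square in \cref{rem:laxlycommuting} has comparison $1$-morphisms in $\cV^1_{k+2}$ as components, so the squares commute after localization and one concludes by cofinality of the odd integers and \cref{prop:fibresasexpected}. Your worry about the naturality $2$-cells is unnecessary: each filler is a genuine natural transformation between functors of fibres, with naturality cells invertible by the universal property of cartesian lifts, so invertibility reduces to the componentwise check you fall back on, which is exactly what the paper uses.
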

\begin{proof}
    By the construction of $\cW_{k}^{r}$ the cartesian transport functor descends to a map (see \cref{not:diamondnot})
	\[
		\cX_{\diamond,b}^{(2k+1)} \to \cX_{\diamond,a}^{(2k+1)}.
	\]
	Moreover, since the components of the comparison transformation in the oplax-commutative square from \cref{rem:laxlycommuting} are inverted by the localization we obtain a morphism after passage to  filtered colimits
	\[
		\colim_{k \in \mathbb{N}}\cX_{\diamond,b}^{(2k+1)} \to \colim_{k \in \mathbb{N}}\cX_{\diamond,a}^{(2k+1)}.
	\]
	Using cofinality of the odd integers, the claim now follows from \cref{prop:fibresasexpected}.
\end{proof}

We next construct the unit and counit. We give the construction of the unit in even degrees; the remaining cases are dual.

\begin{construction}\label{cons:eta}
	Let $H\colon\cX_a^{(2k)}\times[1]\to\cX^{(2k)}$ be the natural transformation whose components are cocartesian lifts of $f\colon a\to b$. Apply $T_{2k,2k+1}$ and extend the resulting transformation to
	\[
		\Phi\colon\cX_a^{(2k)}\times[2]\longrightarrow\cX^{(2k+1)}
	\]
	so that its restriction to $\{0<2\}$ is $T_{2k,2k+1}\circ H$ and its restriction to $\{1<2\}$ consists of cartesian morphisms; the extension exists by \cref{prop:fibfunct}. Restriction to $\{0<1\}$ factors through the fibre over $a$ and defines
	\[
		\eta^{(2k)}\colon
		T_{2k,2k+1}
		\longrightarrow
		f_R^{(2k)}f_L^{(2k)}.
	\]
	The odd-degree units and the counits
	\[
		\epsilon^{(k)}\colon
		f_L^{(\mu_k)}f_R^{(k)}\longrightarrow T_{k,\tau_{\mu_k}}
	\]
	are defined dually.
\end{construction}

\begin{proposition}\label{prop:etadescends}
	After passage to localizations, the transformations $\eta^{(k)}$ and $\epsilon^{(k)}$ descend to natural transformations
	\[
		\eta\colon\id\longrightarrow f^*f_!,
		\qquad
		\epsilon\colon f_!f^*\longrightarrow\id.
	\]
\end{proposition}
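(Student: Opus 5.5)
The plan is to mirror the proofs of \cref{prop:identifiedf!} and \cref{prop:f^*descends}: show that the levelwise transformations $\eta^{(k)}$ (and dually $\epsilon^{(k)}$) are compatible with the transition functors, at least after localization. They then assemble into a natural transformation between the induced functors on colimits. I treat $\eta$ in even degrees; the odd degrees and $\epsilon$ are formally dual.

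\textbf{Step 1: levelwise descent.} I first check that each $\eta^{(2k)}\colon T_{2k,2k+1}\to f_R^{(2k)}f_L^{(2k)}$ descends to the stagewise localizations $\cX^{(2k)}_{\diamond,a}\to \cX^{(2k+1)}_{\diamond,a}$.
\begin{itemize}
\item Source and target functors descend by \cref{prop:identifiedf!} and \cref{prop:f^*descends}, together with $T_{k,k+1}(\cW_k^r)\subset \cW_{k+1}^r$.
\item A natural transformation between functors out of a localization is simply a transformation between the restricted functors. So no further condition is needed at this stage.
\end{itemize}

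\textbf{Step 2: compatibility with transitions.} I compare $\eta^{(2k)}$ with $\eta^{(2k+2)}$. Whiskering with $T_{2k,2k+2}$ and the lax and oplax squares of \cref{rem:laxlycommuting} gives a diagram of transformations
\[
	T_{2k+1,2k+3}T_{2k,2k+1}\Rightarrow T_{2k+1,2k+3}f_R^{(2k)}f_L^{(2k)}\Rightarrow f_R^{(2k+2)}f_L^{(2k+2)}T_{2k,2k+2}.
\]
I must show this composite agrees with $\eta^{(2k+2)}T_{2k,2k+2}$ up to $2$-morphisms inverted in $\cW^2$, with the comparison $1$-morphisms inverted in $\cW^1$.

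To do this, I return to the defining triangle $\Phi$ of \cref{cons:eta}. Applying $T_{2k+1,2k+3}$ to $\Phi$ gives a triangle whose long edge is the image of a cocartesian lift and whose right edge is the image of a cartesian lift. These factor, as in \cref{cons:fibrewisejunk}, through a genuine cocartesian lift and a genuine cartesian lift in $\cX^{(2k+3)}$, followed or preceded by comparison $1$-morphisms. The universal properties of (co)cartesian morphisms in $\cX^{(2k+3)}$ (using \cref{prop:fibfunct} to work naturally in families over $\cX_a^{(2k)}$) then identify the two triangles up to these comparison morphisms. Cocartesian and cartesian factorizations are essentially unique, so the relevant $2$-cells are the comparison $2$-morphisms of \cref{def:comparison}, and these lie in $\cW^2$. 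After localization they become invertible, so the squares commute.

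\textbf{Step 3: passage to the colimit.} With coherent compatibility in hand, the $\eta^{(2k)}$ define a natural transformation of functors $\mathbb N\to$ (functor categories between localized fibres). Taking filtered colimits and using cofinality of the even integers together with \cref{prop:fibresasexpected} yields $\eta\colon \id\to f^*f_!$. Here $T_{2k,2k+1}$ induces the identity on $\colim_k \cX^{(k)}_{\diamond,a}$.

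\textbf{Main obstacle.} The hard part is Step 2 beyond the level of homotopy: a transformation on the colimit needs compatibilities coherent in all higher degrees, not only squares commuting up to a $2$-cell. I would avoid this by making the construction functorial from the outset. The idea is to define $\Phi$ as a functor out of the colimit-level object, obtained by applying \cref{prop:fibfunct} to $\cX_T$ restricted to the fibre over $a$. Cocartesian and cartesian lifts in $\cX_\diamond$ exist by \cref{prop:new01}, at least the cocartesian ones, and locally cartesian ones over $\cL$ exist on each odd stage and persist. The levelwise pieces then need only be checked to agree with this single transformation, which is a pointwise statement handled by the argument of Step 2.
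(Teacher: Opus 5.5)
Your Steps 1--3 are essentially the paper's proof. You reduce, via \cref{prop:fibresasexpected}, to compatibility with the transition functors, and obtain that compatibility from the squares of \cref{rem:laxlycommuting} together with essential uniqueness of cartesian factorizations in $\Fun(\cX_a^{(2k)},\cX^{(2k+3)})$. Two points should be corrected.

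First, Step 2 is simpler than you make it, and your justification of it is off.
\begin{itemize}
\item The whiskered comparisons do not compose as you display them. The lax square gives $f_R^{(2k+2)}f_L^{(2k+2)}T_{2k,2k+2}\Rightarrow f_R^{(2k+2)}T_{2k,2k+2}f_L^{(2k)}$, and the oplax square gives $T_{2k+1,2k+3}f_R^{(2k)}f_L^{(2k)}\Rightarrow f_R^{(2k+2)}T_{2k,2k+2}f_L^{(2k)}$. What you actually get is a square from $T_{2k+1,2k+3}T_{2k,2k+1}$ to $f_R^{(2k+2)}T_{2k,2k+2}f_L^{(2k)}$.
\item Both legs of this square are fibrewise factors of the single transformation $T_{2k,2k+3}\circ H$ through the same cartesian morphism. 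Hence the square already commutes before localization, up to an invertible $2$-cell.
\item Essential uniqueness gives invertibility, not membership in $\cW^2$. The comparison $2$-morphisms of \cref{cons:fibrewisejunk} come from images of cartesian $2$-morphisms and play no role here. Localization is needed only to invert the comparison $1$-morphisms that are the components of the lax and oplax cells.
\end{itemize}

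Second, drop the remedy in your final paragraph.
\begin{itemize}
\item \cref{prop:fibfunct} requires a (partial) fibration, and $\cX_T\to\cC$ is in general not one; that is why one passes to $\cX_\diamond$.
\item Persistence of cartesian lifts over $\cL$ into $\cX_\diamond$ is not available at this point. In the paper it is deduced in \cref{prop:isbifib} from the adjunction that the present proposition helps construct, so invoking it here is circular unless you prove it independently.
\end{itemize}
The paper itself goes no further than the compatibility check with the transition functors.
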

\begin{proof}
	By \cref{prop:fibresasexpected}, it is enough to verify compatibility with the transition functors. We treat $\eta^{(2k)}$; the other cases are dual. The lax and oplax squares of \cref{rem:laxlycommuting} give a diagram
	\[\begin{tikzcd}
	& {T_{2k+1,2k+3}\circ T_{2k,2k+1}} & \\
	{T_{2k+1,2k+3}\circ f_{R}^{(2k)}\circ f_L^{(2k)}} && {f_R^{(2k+2)}f_{L}^{(2k+2)}\circ T_{2k,2k+2}} \\
	& {f_R^{(2k+2)}\circ T_{2k,2k+2}\circ f_L^{(2k)}}
	\arrow[from=1-2, to=2-1]
	\arrow[from=1-2, to=2-3]
	\arrow[from=2-1, to=3-2]
	\arrow[from=2-3, to=3-2]
\end{tikzcd}\]
	Both composites are obtained from the same transformation
	\[
		T_{2k,2k+3}\circ H\colon
		\cX_a^{(2k)}\times[1]\longrightarrow\cX^{(2k+3)}
	\]
	by factoring it along a cartesian morphism in
	$\Fun(\cX_a^{(2k)},\cX^{(2k+3)})$. Such a cartesian factorization is unique up to a contractible space of choices, so the two composites agree. 
\end{proof}

\begin{proposition}\label{prop:isbifib}
	The functor $p_\diamond \colon \cX_\diamond \to \cC$ is an $\cL$-bifibration and each morphism $\cX^{(2k+1)} \to \cX_{\diamond}$ is a morphism of partial $(1,1)$-fibrations relative to $\cL$ for $k\geq 0$.
\end{proposition}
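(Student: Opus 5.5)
By \cref{prop:charactbifib} and \cref{prop:new01}, the first claim reduces to showing that for each $f\colon a\to b$ in $\cL$ the cocartesian transport $f_!$ of \eqref{eq:transp} is a left adjoint. I will show that $f^*$ from \cref{prop:f^*descends} is its right adjoint, with unit and counit the transformations $\eta$ and $\epsilon$ of \cref{prop:etadescends}; this leaves only the triangle identities. Here $f_!$ is identified with the colimit of the $f_L^{(k)}$ via \cref{prop:identifiedf!}.

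For the triangle identities I work levelwise. The composite $f_L\eta$ followed by $\epsilon f_L$ is represented at stage $2k$ by a transformation between functors $\cX_a^{(2k)}\to\cX_b^{(2k+2)}$. Unwinding \cref{cons:eta}, it comes from a $2$-simplex-shaped diagram in $\cX^{(2k+2)}$. This diagram consists of a cocartesian lift of $f$, then a cartesian lift over $f$ (in $\cX^{(2k+1)}$), then again a cocartesian lift. Using the universal properties of (co)cartesian morphisms, as in the uniqueness argument of \cref{prop:etadescends}, I identify the triangle composite with a comparison $1$-morphism in the sense of \cref{cons:fibrewisejunk}. Concretely, it is the fibrewise factor $w_e$ obtained by comparing $T_{2k,2k+2}$ of a cocartesian morphism with the cocartesian lift in $\cX^{(2k+2)}$, postcomposed with $T$ applied to the original transformation. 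Such morphisms lie in $\cV^1_{2k+2}\subset\cW^1_{2k+2}$ and hence become invertible in $\cX_\diamond$; moreover they represent the identity after identification of $T_{2k,2k+2}$ with the colimit inclusion. The other triangle identity is dual, using odd stages and cartesian transport along $\cL$. Since invertibility of a composite on fibres can be checked on the colimit of \cref{prop:fibresasexpected}, the adjunction holds up to the invertible comparison.

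This argument as stated only shows that the triangle composites are invertible, which is weaker than being the identity. To close the gap, I will use the standard fact that if the triangle composites of $(\eta,\epsilon)$ are equivalences, then $f_!\dashv f^*$ still holds, for instance by modifying $\epsilon$ or by checking that $\eta$ induces equivalences on mapping categories. So I only need invertibility, which is exactly what the localization provides.

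For the second claim, take a cartesian morphism $e$ over $\cL$ in $\cX^{(2k+1)}$ and consider its image in $\cX_\diamond$. Mapping further to $\cX^{(2m+1)}$, it factors as a cartesian lift followed by a comparison morphism $w_e\in\cV^1_{2m+1}$, which is inverted. So its image is equivalent to the image of a cartesian morphism at arbitrarily high odd stage. Cartesianness in $\cX_\diamond$ is then verified as in the proof of \cref{prop:charactbifib}: on fibres it corresponds to the counit-type equivalence $f_!f^*\to\id$ evaluated appropriately, which is exactly what the odd stages compute by \cref{prop:f^*descends}. The same argument applies to cartesian $2$-morphisms, using the comparison $2$-morphisms in $\cW^2$.

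The main obstacle is identifying the triangle composites with comparison morphisms. This requires tracking coherently how the chosen cocartesian and cartesian lifts in \cref{cons:eta} compose across stages, and it is where \cite[Theorem 3.5.4]{AbellanGagnaea2025StraighteningLaxTransformations} and the stability of $\cW$ under transport in \cref{def:cwfinal} must be used carefully.
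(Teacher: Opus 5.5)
Your proposal is correct and follows the paper's own argument. The paper also checks the triangle identities stagewise. After rewriting $f_L^{(2k+1)}=f_L^{(2k+2)}T_{2k+1,2k+2}$, it identifies the triangle composite with the comparison transformation $f_L^{(2k+2)}T_{2k,2k+2}\to T_{2k,2k+2}f_L^{(2k)}$, whose components are comparison morphisms and so are inverted in $\cX_\diamond$. It then treats cartesian $2$-morphisms through the comparison $2$-morphisms they produce in $\cX^{(2k+2)}$. Your remark that invertible triangle composites already give $f_!\dashv f^*$ spells out a step the paper leaves implicit. One wording fix in your second part: the counit is not an equivalence. What you actually use is that the image of a cartesian lift is a cocartesian morphism followed by a component of $\epsilon$, so it is locally cartesian and hence cartesian.
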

\begin{proof}
	 By \cref{prop:identifiedf!,prop:f^*descends,prop:etadescends}, we have functors
	\[
		f_!\colon \cX_{\diamond}\times_{\cC} \{a\} \llra
		\cX_{\diamond}\times_{\cC} \{b\}\colon f^*
	\]
	and transformations $\eta\colon\id\to f^*f_!$ and $\epsilon\colon f_!f^*\to\id$. It remains to verify the triangular identities. By \cref{prop:fibresasexpected}, it is enough to do so stagewise. For instance, the composite
	\[
		f_L^{(2k+1)}T
		\longrightarrow
		f_L^{(2k+1)}f_R^{(2k)}f_L^{(2k)}
		\longrightarrow
		Tf_L^{(2k)}
	\]
	becomes, after using $f_L^{(2k+1)}=f_L^{(2k+2)}T_{2k+1,2k+2}$, the comparison transformation
	\[
		f_L^{(2k+2)}T_{2k,2k+2}
		\longrightarrow
		T_{2k,2k+2}f_L^{(2k)}.
	\]
	Its components are comparison morphisms and hence become invertible after localization. The other triangular identity is dual. Thus $f_!\dashv f^*$, and \cref{prop:charactbifib} shows that $\cX_\diamond\to\cC$ is an $\cL$-bifibration. By construction the maps $\cX^{(2k+1)} \to \cX_{\diamond}$ preserve cartesian $1$-morphisms over $\cL$. Given a cartesian 2-morphism in $\cX^{(2k+1)}$ we obtain a comparison 2-morphism in $\cX^{(2k+2)}$ (cf. \cref{def:cwclean}) which becomes invertible in $\cX_{\diamond}$. Hence our functor is a morphism of partial $(1,1)$-fibrations relative to $\cL$.
\end{proof}

\medskip
\begin{proof}[Proof of \cref{thm:bifib}]
	We consider $(i,j)=(0,1)$ without loss of generality. Consider the canonical morphism over $\cC$, $\eta_{\cX}\colon \cX \to \cX_{\diamond}$. To prove the statement we need to show that restriction along $\eta_{\cX}$ gives an equivalence of $(\infty,2)$-categories
	\[
		\eta_{\cX}^* \colon \Fun_{/\cC}^{\cL\mhyphen\bifib}(\cX_{\diamond},\cY) \xrightarrow{\simeq}\Fun^{(0,1)}_{/\cC}(\cX,\cY),
	\]
	for every $\cL$-bifibration $\cY \to \cC$. Recall that $\Fun_{/\cC}^{\cL\mhyphen\bifib}(\cX_{\diamond},\cY)$ sits fully faithfully inside the $(\infty,2)$-category $\Fun_{/\cC}(\cX_{\diamond},\cY)$ of all functors over $\cC$. By the universal property of localization, we obtain a fully faithful functor $\omega_{\cX} \colon \Fun_{/\cC}^{\cL\mhyphen\bifib}(\cX_{\diamond},\cY) \hookrightarrow \lim_{\mathbb{N}^\op}\Fun_{/\cC}(\cX^{(k)},\cY) \simeq \Fun_{/\cC}(\cX_T,\cY)$. We claim that its essential image consists of those functors $\cX_T\to\cY$ whose restriction to $\cX^{(k)}$ is a morphism of $(0,1)$-fibrations for even $k$ and of partial $(1,1)$-fibrations relative to $\cL$ for odd $k$.

	One direction follows from \cref{prop:new01,prop:isbifib}. Conversely, let $\varphi\colon\cX_T\to\cY$ satisfy this property. Since its restrictions preserve the relevant lifts, $\varphi$ inverts the comparison $r$-morphisms generating $\cV_k^r$. Compatibility with transport then shows that it also inverts the $r$-morphisms of $\cW_k^r$, for $r=1,2$. Thus $\varphi$ descends to $\cX_\diamond$. One checks easily that the descended functor is a morphism of $\cL$-bifibrations.

	For $k\geq0$, write $\Fun^{(0,1)}_{/\cC}(\cX^{(2k)},\cY)^{\star}$ and $\Fun^{(1,1)}_{/(\cC,\cL)}(\cX^{(2k+1)},\cY)^{\star}$ for the full sub-$(\infty,2)$-categories on those functors whose restrictions to every earlier stage preserve the relevant (co)cartesian $r$-morphisms, according to its parity. By the previous discussion, the limit of these $(\infty,2)$-categories over $\mathbb N^\op$ is identified with $\Fun_{/\cC}^{\cL\mhyphen\bifib}(\cX_{\diamond},\cY)$. By the universal property of the free (partial) fibration from \cref{thm:pfree}, restriction gives equivalences, for $k\geq1$,
	\[
		\Fun^{(0,1)}_{/\cC}(\cX^{(2k)},\cY)^{\star} \xrightarrow{\simeq}\Fun^{(1,1)}_{/(\cC,\cL)}(\cX^{(2k-1)},\cY)^{\star},
	\]
	and similarly from stage $2k+1$ to stage $2k$ for $k\geq0$. Piecing this together shows that $\Fun_{/\cC}^{\cL\mhyphen\bifib}(\cX_{\diamond},\cY) \simeq \Fun^{(0,1)}_{/\cC}(\cX,\cY)$, where this equivalence is induced by $\eta_\cX^*$ and thus natural in $\cY$.
\end{proof}

\section{Freely adding adjoints to an $(\infty,2)$-category}

\begin{definition}\label{def:leftlocalization1}
	Let $\cL \subset \cC$ be a wide locally full sub-$(\infty,2)$-category inclusion and consider the functor
	\[
		\Fun(\cC,-)_{\cL} \colon \Cat_{(\infty,2)} \to \Cat_{(\infty,2)},
	\]
	sending each $(\infty,2)$-category $\cX$ to the locally full sub-$(\infty,2)$-category of $\Fun(\cC,\cX)$ consisting of:
	\begin{itemize}
		\item Those functors $F \colon \cC \to \cX$ sending each 1-morphism in $\cL$ to a left adjoint in $\cX$.
		\item Those natural transformations $F \to G$ such that for every $f \colon a \to b$ in $\cL$ the naturality square
		\[
			\begin{tikzcd}
				F(a) \arrow[d] \arrow[r,"F(f)"] & F(b) \arrow[d] \\
				G(a) \arrow[r,"G(f)"] & G(b)
			\end{tikzcd}
		\]
		is horizontally adjointable (\cref{def:adjointable}).
	\end{itemize}
\end{definition}

\begin{proposition}\label{prop:softlocalization}
	Let $\cL\subset \cC$ as in \cref{def:leftlocalization1}. Then the functor $\Fun(\cC,-)_{\cL}$ is corepresentable by an $(\infty,2)$-category $\cC\soft{\cL}$.
\end{proposition}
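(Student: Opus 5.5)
The plan is to build $\cC\soft{\cL}$ as an explicit iterated pushout of $\cC$ along copies of the walking adjunction, and then identify the corepresented functor with $\Fun(\cC,-)_{\cL}$. I will assume, as a black box, that $\Adj$ corepresents left adjoints in the sense of \cref{thm:intro-walking-adjunction}: restriction along $[1]\to\Adj$ identifies $\Fun(\Adj,\cX)$ with $\Fun([1],\cX)_{\mathrm{LAdj}}$. Since the paper proves \cref{thm:intro-walking-adjunction} later, possibly using $\cC\soft{\cL}$, I must avoid circularity. If needed I will instead use only the weaker and classical fact that the space of left adjoint structures on a morphism is either empty or contractible. Then $\Map(\Adj,\cX)\to\Map([1],\cX)$ is a monomorphism onto left adjoints, and the natural transformation level is handled separately as below.

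Step 1 (object level). Let $S$ denote the space of $1$-morphisms of $\cL$. Form the pushout in $\Cat_{(\infty,2)}$
\[
\cC' \;=\; \cC\coprod_{\coprod_{S}[1]}\coprod_{S}\Adj .
\]
Mapping out of this pushout, $\Map(\cC',\cX)$ is the subspace of $\Map(\cC,\cX)$ consisting of functors sending every morphism of $\cL$ to a left adjoint. This uses only that being a left adjoint is a property of a morphism.

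Step 2 (upgrading to $(\infty,2)$-categories of functors). Because $\Cat_{(\infty,2)}$ is cartesian closed, corepresentability of the functor $\Fun(\cC,-)_{\cL}$ reduces to corepresenting its underlying space-valued functor $\cX\mapsto \Fun(\cC,\cX)_{\cL}^{\simeq}$. One must also check compatibility: $\Fun(\cC,\cX)_{\cL}$ evaluated against $[1]$ and $[2]$, namely $\Fun([n],\Fun(\cC,\cX)_{\cL})$, must agree with $\Fun(\cC,\Fun([n],\cX))_{\cL}$ as a subcategory. The key lemma is therefore the following. A functor $\cC\to\Fun([1],\cX)$, equivalently a natural transformation $F\to G$, sends $f\in\cL$ to a left adjoint in $\Fun([1],\cX)$ if and only if $F(f)$ and $G(f)$ are left adjoints and the naturality square is adjointable. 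This is the classical fact that adjunctions in an arrow $(\infty,2)$-category with strict commuting squares correspond to mate-invertible squares. For the locally full part, the $2$-morphisms (modifications) impose no condition, and $\Fun([1],\cX)_{\cL}$ on $2$-cells matches automatically. With the lemma, $\Fun(\cC,-)_{\cL}$ is identified with $\Fun(\cC',-)$, since both sides have the same underlying spaces after applying $\Fun([n],-)$ naturally in $n$, and $(\infty,2)$-categories are determined by such data (for instance via complete Segal $\Theta_2$-spaces).

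Step 3. Finally I would check that the result is natural in $\cX$ and set $\cC\soft{\cL}=\cC'$. Essential surjectivity of $\iota$ follows from \cref{prop:essurjcobase}, since $\coprod[1]\to\coprod\Adj$ is bijective on objects.

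The main obstacle is the lemma in Step 2: identifying left adjoints in $\Fun([1],\cX)$ with adjointable squares, coherently. I would prove it by reduction to the homotopy $2$-category, because being a left adjoint is detected there. In $h_2\Fun([1],\cX)$ a morphism is a square together with an invertible $2$-cell. A right adjoint, if one exists, has components $F(f)^R$ and $G(f)^R$, and its square structure is forced to be the mate. The square structure must be invertible precisely when the square is adjointable.
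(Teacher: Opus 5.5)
There is a genuine gap, and it is exactly where you suspected circularity. Your whole argument rests on one claim: restriction along $[1]\to\Adj$ induces a monomorphism $\Map(\Adj,\cX)\to\Map([1],\cX)$ whose image is the left adjoints. That claim is the space-level content of \cref{thm:RVmi}. In this paper, \cref{thm:RVmi} is proved through \cref{prop:detecsoft}, that is, through $\cC\soft{\cL}$ and its free-bifibration model (\cref{thm:addingadjoints}). Both presuppose the proposition you are proving.

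Your fallback does not break the loop. Knowing that the space of adjunction structures on $f$ is empty or contractible says nothing about the fibre of $\Map(\Adj,\cX)\to\Map([1],\cX)$ over $f$, unless you already know that this fibre \emph{is} that space. The hom-categories of $\Adj$ are $\Delta_+$, $\Delta_{\infty}$, $\Delta_{-\infty}$ and $\Delta_{\intv}$, so a functor out of $\Adj$ carries coherence data in every dimension. Showing that all of it is determined by the left adjoint is precisely the Riehl--Verity theorem. You could import that theorem from the quasi-categorically enriched setting, together with a model comparison and cofibrancy of $\Adj$. That would give a valid standalone proof. Here, however, it is circular in substance, because this proposition is the starting point of the paper's independent, model-independent proof of that theorem.

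What Steps 1--3 do establish is a correct reduction of the general case to the universal case $\cC=\cL=[1]$. If some $W$ corepresents $\Fun([1],-)_{\mathrm{LAdj}}$, then $\cC\coprod_{S\times[1]}(S\times W)$ corepresents $\Fun(\cC,-)_{\cL}$. Your Step 2 lemma (adjoints in $\Fun(\cA,\cX)$ are pointwise adjoints with adjointable naturality squares) handles the upgrade from spaces to $(\infty,2)$-categories. For that upgrade you must also test against the walking $2$-cell, not only the $[n]$, since $\Map([n],-)$ only sees underlying $(\infty,1)$-categories.

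The missing ingredient is the existence of $W$ without knowing that it is $\Adj$. The paper supplies this by a soft argument, uniformly in $(\cC,\cL)$:
\begin{itemize}
\item $\Fun(\cC,-)_{\cL}$ preserves limits.
\item It preserves $\kappa$-filtered colimits once $\cC$ and $\cL$ are $\kappa$-compact.
\item So the adjoint functor theorem gives a left adjoint on underlying $(\infty,1)$-categories.
\item Preservation of cotensors, which is the same input as your Step 2 lemma, upgrades this to an adjunction of $(\infty,3)$-categories, whose left adjoint evaluated at $[0]$ is $\cC\soft{\cL}$.
\end{itemize}
This requires no knowledge of $\Adj$. Your explicit pushout presentation is then recovered afterwards, as \cref{cor:esssurj}, once \cref{thm:RVmi} has been proved.
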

\begin{proof}
	To show our statement it suffices to prove $\Fun(\cC,-)_{\cL}$ is a right adjoint of $(\infty,3)$-categories, since the value of the left adjoint at the terminal category $[0]$ will then corepresent this functor. First, we will show that $\Fun(\cC,-)_{\cL}$ admits a left adjoint when viewed as a functor on underlying $(\infty,1)$-categories. Since both the domain and codomain are given by presentable $(\infty,1)$-categories, invoking Lurie's adjoint functor theorem, \cite[Corollary 5.5.2.9]{HTT}, it will be enough to verify that our functor preserves limits and $\kappa$-filtered colimits for some regular cardinal $\kappa$. The claim regarding filtered colimits follows easily by picking some regular cardinal $\kappa$ such that $\cC$ and $\cL$ are both $\kappa$-compact in (underlying $(\infty,1)$-category of) $\Cat_{(\infty,2)}$. The claim regarding limits follows by direct inspection. To finish the proof one observes that $\Fun(\cC,-)_{\cL}$ preserves cotensors by any $(\infty,2)$-category due to \cite[Corollary 4.2.12]{AbellanGagnaea2025StraighteningLaxTransformations} which allows us to upgrade the adjunction to an adjunction of $(\infty,3)$-categories.
\end{proof}

\begin{remark}\label{rem:softstrongloc}
	Suppose that $\cC$ is an $(\infty,1)$-category. By design, we have that localization of $\cC\soft{\cL}$ at all 2-morphisms yields $\cC[\cL^{-1}]$. 
\end{remark}

\begin{lemma}\label{lem:softesssurj}
	The canonical functor $\iota \colon \cC \to \cC\soft{\cL}$ is essentially surjective.
\end{lemma}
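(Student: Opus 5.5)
We prove the lemma by showing that $\iota$ factors through its essential image without losing the universal property, so that this image must already be all of $\cC\soft{\cL}$. Let $\cD \subset \cC\soft{\cL}$ be the full sub-$(\infty,2)$-category spanned by the objects in the essential image of $\iota$, and let $j \colon \cD \hookrightarrow \cC\soft{\cL}$ be the inclusion. By construction $\iota$ factors as $\iota = j \circ \iota'$ with $\iota' \colon \cC \to \cD$. We will construct a retraction $r \colon \cC\soft{\cL} \to \cD$ with $r j \simeq \id_{\cD}$. It then follows that $j$ is essentially surjective, and hence so is $\iota$.

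\textbf{Step 1: $\iota'$ lies in $\Fun(\cC,\cD)_{\cL}$.} By the defining property in \cref{prop:softlocalization}, the universal functor $\iota$ lies in $\Fun(\cC,\cC\soft{\cL})_{\cL}$, so $\iota(f)$ is a left adjoint for each $f$ in $\cL$. The right adjoint $\iota(f)^R$ and the unit and counit are $1$- and $2$-morphisms between objects of $\cD$. Since $\cD$ is full, the whole adjunction lies in $\cD$. Hence $\iota'$ sends $\cL$ to left adjoints in $\cD$, that is, $\iota' \in \Fun(\cC,\cD)_{\cL}$.

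\textbf{Step 2: the retraction.} Corepresentability provides a functor $r \colon \cC\soft{\cL} \to \cD$ with $r\iota \simeq \iota'$. Then $j r \colon \cC\soft{\cL} \to \cC\soft{\cL}$ satisfies $j r \iota \simeq j\iota' = \iota$. Since $\iota$ also corresponds to $\id$, the equivalence $\Fun(\cC\soft{\cL},\cC\soft{\cL}) \simeq \Fun(\cC,\cC\soft{\cL})_{\cL}$, restricted to underlying spaces of objects, gives $j r \simeq \id$. Consequently every object $x \simeq j(r(x))$ lies in the image of $j$, so $\cD = \cC\soft{\cL}$.

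\textbf{Main obstacle.} The delicate point is Step 1. One must make sure that being a left adjoint is detected inside a full sub-$(\infty,2)$-category, since an adjunction is determined by data in the homotopy $2$-category on the objects involved. The second thing to check is that $\Fun(\cC,-)_{\cL}$ is compatible with postcomposition by $j$, so that the factorization $j r \iota \simeq \iota$ really takes place in $\Fun(\cC,\cC\soft{\cL})_{\cL}$. For this, note that $j$ is a functor, so it preserves adjunctions and carries $\iota'$ to $\iota$. Both points are routine. An alternative route would present $\cC\soft{\cL}$ as an iterated pushout of $\cC$ along the inclusion of $[1]$ into $\Adj$ and then use \cref{prop:essurjcobase}. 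However, this requires the walking adjunction result, which comes later in the paper, so the retraction argument is preferable here.
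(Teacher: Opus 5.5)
Your proposal is correct and is essentially the paper's own proof: factor $\iota$ through the full sub-$(\infty,2)$-category on its essential image, use the universal property of $\cC\soft{\cL}$ to produce $r$, and deduce $jr\simeq\id$ by checking after precomposition with $\iota$. The only slip is in your opening paragraph, where you announce $rj\simeq\id_{\cD}$; what you need, and what Step 2 actually proves, is $jr\simeq\id$.
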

\begin{proof}
	Let $\beta \colon \cX \subset \cC\soft{\cL}$ denote the essential image of $\iota$. Then the induced functor $j \colon \cC \to \cX$ sends each morphism in $\cL$ to a left adjoint and thus one obtains a factorization $ \alpha \colon  \cC\soft{\cL} \to \cX$ such that $\alpha \circ \iota=j$. We claim that $\beta \circ \alpha=\id$, but this can be checked after precomposition with $\iota$ and thus we get $\beta \circ \alpha \circ \iota= \beta \circ j=\iota$ which shows the claim.  We conclude that $\beta$ is essentially surjective as desired.
\end{proof}

\begin{proposition}\label{prop:1dimnfibres}
	Let $p \colon \cX \to \cC$ be an $(i,j)$-fibration whose fibres are $(\infty,1)$-categories. Then $\BFree^{\epsilon}_{\cL}(p)\colon \BFree^{\epsilon}_{\cL}(\cX) \to \cC$ is also fibred in $(\infty,1)$-categories.
\end{proposition}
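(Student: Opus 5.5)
We use the explicit model $\BFree^{\epsilon}_{\cL}(\cX)\simeq\cX_{\diamond}$ from the proof of \cref{thm:bifib} and work with the fibres. By \cref{prop:fibresasexpected},
\[
	\cX_{\diamond}\times_{\cC}\{c\}\simeq\colim_{k\in\mathbb N}\cX^{(2k)}_{\diamond,c},
\]
and a filtered colimit of $(\infty,1)$-categories in $\Cat_{(\infty,2)}$ is again an $(\infty,1)$-category, because $\iota$ is a left adjoint and so preserves colimits. The same holds for localizations: $\iota$ commutes with inverting classes of $1$- and $2$-morphisms, and inverting $2$-morphisms in an $(\infty,1)$-category changes nothing. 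It therefore suffices to show, by induction on $k$, that every $p^{(k)}\colon\cX^{(k)}\to\cC$ has fibres which are $(\infty,1)$-categories. Then $\cX^{(k)}_{\diamond,c}$, a localization of $\cX^{(k)}_c$, is an $(\infty,1)$-category as well. As in the rest of the section, we take $\epsilon=(0,1)$; the other variances are dual.

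The base case $k=0$ is the hypothesis on $p$. For the inductive step, the fibre of $p^{(k+1)}$ over $c$ is
\[
	\cX^{(k)}\times_{\cC}\Ar^{\epsilon'}_{\cC'}(\cC)\times_{\cC}\{c\},
\]
where the first fibre product is taken along $\ev_{i'}$ and the fibre over $c$ along $\ev_{1-i'}$, with $\cC'=\cL$ or $\cC$ depending on parity, by \cref{def:freepartial}. We compute its mapping $(\infty,1)$-categories. A $2$-morphism in this fibre consists of three pieces of data: a $2$-morphism in $\cX^{(k)}$, a $2$-morphism in the (op)lax arrow category, and compatibility data, with the endpoint component over $c$ forced to be the identity. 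By \cref{thm:str}, or directly from the enrichment, the mapping categories of $p^{(k)}$ are (co)cartesian fibrations over those of $\cC$, and their fibres are the fibres' mapping categories, which are $\infty$-groupoids by induction. The mapping categories of the (op)lax slice $\Ar\times_{\cC}\{c\}$ over $\cC$ are described by standard results on (op)lax arrow categories (cf. the formula for $\cC_{\upslash c}$): their $2$-morphisms are pairs of $2$-morphisms in $\cC$ glued along a square whose $c$-component is trivial.

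The key step, and the one I expect to be the main obstacle, is checking that after imposing that the $c$-component is the identity, every $2$-morphism in the fibre is invertible. Directly, this fails: $\cC_{\upslash c}$ has non-invertible $2$-morphisms coming from $2$-cells of $\cC$. The correct route is therefore not to claim that $\cX^{(k)}$ itself is fibred in $(\infty,1)$-categories, but to use the localization. The generators $\cV^2_k$ of $\cW^2_k$ are precisely the comparison $2$-morphisms of \cref{cons:fibrewisejunk}. For this, I would show that every fibrewise $2$-morphism in $\cX^{(k)}_c$ factors as a composite of $T$-images of fibrewise $2$-morphisms at earlier stages, which are invertible by induction after localization, and comparison $2$-morphisms and their transports. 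The factorization is the cartesian–fibrewise factorization in the mapping categories of the free fibration: a $2$-cell $(\alpha,\beta)$ in $\cX^{(k-1)}\times_{\cC}\Ar$ splits into a part lying in $\cX^{(k-1)}$ and a part that is a cartesian $2$-cell in the arrow coordinate. By construction, the latter becomes a comparison $2$-morphism at the next even stage. For odd stages, \cref{def:cwclean} only contains invertible $2$-morphisms, so we pass to the next even stage using cofinality, exactly as in \cref{prop:fibresasexpected}.

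Given that all fibrewise $2$-morphisms of $\cX^{(2k)}_c$ become invertible in $\cX^{(2k+2)}_{\diamond,c}$, the colimit of fibres has only invertible $2$-morphisms, so it is an $(\infty,1)$-category. This is the claim.
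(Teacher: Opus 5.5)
\textbf{There is a genuine gap, and it is exactly the step you flag.} Your opening reduction is false. As you notice yourself, $\cX^{(1)}_c$ already has non-invertible $2$-morphisms coming from $2$-cells of $\cC$. So everything rests on showing that every fibrewise $2$-morphism of $\cX^{(2k)}_c$ is inverted in $\cX_\diamond$, and the factorization you propose does not give this.

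A fibrewise $2$-morphism of $\cX^{(k)}_c$ lies over the identity of $\id_c$, so its fibrewise/cartesian factorization for $p^{(k)}$ is trivial. If you instead factor its $\cX^{(k-1)}$-component for $p^{(k-1)}$, two things go wrong.
\begin{enumerate}
\item The fibrewise piece lies over $\id_{p(f)}$ for a typically non-identity $1$-morphism $p(f)$. It is therefore not a $2$-morphism of any fibre $\cX^{(k-1)}_{c'}$, and your induction hypothesis does not apply. For $k$ even, $\cX^{(k-1)}$ only has cartesian lifts over $\cL$, so you cannot even transport it into a fibre unless $p(f)\in\cL$.
\item The comparison $2$-morphism attached to the cartesian piece $\alpha_1$ is the fibrewise part of $T_{k-1,k+1}(\alpha_1)$. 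That is a $2$-cell between $1$-morphisms out of $T_{k-1,k+1}(x)$, not the image under $T_{k,k+1}$ of your $2$-cell.
\end{enumerate}

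A concrete case: take $\cX=\cC$, $p=\id$ and $\epsilon=(0,1)$. A non-invertible $\alpha\colon g\Rightarrow g'$ in $\cC(d,d')$ gives a non-invertible fibrewise $2$-morphism between maps $(d,u)\to(d',u')$ of $\cX^{(1)}_c$. Its image in $\cX^{(2)}_c$ is neither a comparison $2$-morphism nor a transport of one. Comparisons coming from $\ell=1$ have invertible $\cX$-component. Those coming from $\ell=0$ relate $1$-morphisms out of $T_{0,2}(d)$, not out of $T_{1,2}(d,u)$.

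One way to see that this $2$-morphism is nevertheless inverted is to use that $\cX_\diamond$ is already an $\cL$-bifibration. The images of $(d,u)$ and $(d',u')$ are cartesian transports $u^{*}\bar d$ and $u'^{*}\bar d'$, so the relevant mapping category in $\cX_\diamond\times_{\cC}\{c\}$ can be rewritten as $(\cX_\diamond\times_{\cC}\{d'\})(u'_!u^{*}\bar d,\bar d')$. There one combines comparisons from stages $0$ and $1$ by two-out-of-three. Nothing of this kind is in your sketch, and carrying such an argument out uniformly in $k$ is the actual content you would need.

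The paper avoids the explicit model entirely. Since $\tau_1$ is a functor of $(\infty,2)$-categories, it preserves adjunctions, so by \cref{prop:charactbifib} applying it fibrewise yields an $\cL$-bifibration $\tau_1^{\epsilon}\BFree^{\epsilon}_{\cL}(\cX)$. Now fix any $\cL$-bifibration $\cY$. Three inputs identify the spaces of maps of $\cL$-bifibrations out of both $\tau_1^{\epsilon}\BFree^{\epsilon}_{\cL}(\cX)$ and $\BFree^{\epsilon}_{\cL}(\cX)$ with $\Fun^{\epsilon}_{/\cC}(\cX,\cY)^{\simeq}$, naturally in $\cY$:
\begin{itemize}
\item the fibrewise adjunctions $\tau_1\dashv\iota\dashv(-)^{\leq 1}$;
\item the universal property of $\BFree^{\epsilon}_{\cL}$, applied with target $\cY^{\leq 1,\epsilon}$;
\item the hypothesis that $\cX$ has $(\infty,1)$-categorical fibres.
\end{itemize}
The Yoneda lemma then gives $\BFree^{\epsilon}_{\cL}(\cX)\simeq\tau_1^{\epsilon}\BFree^{\epsilon}_{\cL}(\cX)$.
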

\begin{proof}
   Recall that $\tau_1 \colon \Cat_{(\infty,2)} \to \Cat_{(\infty,1)}$ (cf. \cref{not:adjointstring}) is a left adjoint of $(\infty,2)$-categories and so in particular, it preserves adjunctions.
	
   Write $\tau^{\epsilon}_1 \BFree^{\epsilon}_{\cL}(\cX) \to \cC$, for the $\epsilon$-fibration obtained by fibre-wise applying the functor $\tau_1$. Since $\tau_1$ is a functor of $(\infty,2)$-categories, it follows that $\tau^{\epsilon}_1 \BFree^{\epsilon}_{\cL}(\cX) \to \cC$ is a $\cL$-bifibration. Let us now fix some $\cL$-bifibration $\cY \to \cC$. We have  natural equivalences of $\infty$-groupoids
	\[
		\Fun_{/\cC}^{\cL\mhyphen\bifib}(\tau_1^{\epsilon}\BFree^{\epsilon}_{\cL}(\cX),\cY)^{\simeq}\simeq \Fun_{/\cC}^{\cL\mhyphen\bifib}(\tau_1^{\epsilon}\BFree^{\epsilon}_{\cL}(\cX),\cY^{\leq 1,\epsilon})^{\simeq} 
	\]
	\[
		\xrightarrow{\simeq}\Fun_{/\cC}^{\cL\mhyphen\bifib}(\BFree^{\epsilon}_{\cL}(\cX),\cY^{\leq 1,\epsilon})^{\simeq}\simeq \Fun_{/\cC}^{\epsilon}(\cX,\cY^{\leq 1,\epsilon})^{\simeq} \simeq \Fun_{/\cC}^{\epsilon}(\cX,\cY)^{\simeq}
	\]
	where $\cY^{\leq 1,\epsilon} \to \cC$ is the $\cL$-bifibration obtained by applying the functor $(-)^{\leq 1}$ in a fibre-wise manner. Note that by definition we also have a natural equivalence
	\[
		\Fun_{/\cC}^{\epsilon}(\cX,\cY)^{\simeq} \simeq \Fun_{/\cC}^{\cL\mhyphen\bifib}(\BFree^{\epsilon}_\cL(\cX),\cY)^{\simeq}.
	\]
     The Yoneda lemma now yields an equivalence of $\cL$-bifibrations, $\BFree^{\epsilon}_\cL(\cX)\simeq \tau^{\epsilon}_1 \BFree^{\epsilon}_{\cL}(\cX)$ thus establishing the claim.
\end{proof}

We now arrive at the main theorem of this section.

\begin{theorem}\label{thm:addingadjoints}
	The $(\infty,2)$-category $\cC\soft{\cL}$ is equivalent to the full sub-$(\infty,2)$-category of $\Fib^{(1,0)\mhyphen\bifib}_{/(\cC,\cL)}$ spanned by the objects $\BFree^{(1,0)}_{\cL}(\cC_{\upslash c}) \to \cC$, for $c\in \cC$.
\end{theorem}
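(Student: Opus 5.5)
Let $\cD \subset \Fib^{(1,0)\mhyphen\bifib}_{/(\cC,\cL)}$ be the full sub-$(\infty,2)$-category spanned by the $\BFree^{(1,0)}_{\cL}(\cC_{\upslash c})$. The plan is to produce a functor $\Phi\colon\cC\to\cD$ that sends $\cL$ to left adjoints. By \cref{prop:softlocalization} it then factors through $\iota\colon\cC\to\cC\soft{\cL}$, and we show the induced functor $\overline\Phi\colon\cC\soft{\cL}\to\cD$ is an equivalence.

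\textbf{Step 1: the functor $\Phi$.} The $(1,0)$-fibrations $\cC_{\upslash c}$ assemble into a Yoneda embedding $\cC\to\Fib^{(1,0)}_{/\cC}$, fully faithful by \cref{thm:str}. Composing with $\BFree^{(1,0)}_{\cL}$ from \cref{thm:bifib} gives $\Phi$.

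\textbf{Step 2: $\Phi$ sends $\cL$ to left adjoints.} For $f\colon a\to b$ in $\cL$, the left adjoint $\BFree^{(1,0)}_{\cL}$ preserves adjunctions, so it suffices to see that $\cC_{\upslash a}\to\cC_{\upslash b}$ becomes a left adjoint after freely adding cocartesian lifts over $\cL$. By Yoneda, maps $\BFree(\cC_{\upslash c})\to\cY$ correspond to objects of the fibre $\cY_c$, and $f$ acts by the cartesian transport $f^*\colon\cY_b\to\cY_a$. When $\cY$ is an $\cL$-bifibration, $f^*$ has a left adjoint $f_!$ by \cref{prop:charactbifib}, so precomposition with $\Phi(f)$ is a right adjoint naturally in $\cY$. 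The right adjoint of $\Phi(f)$ is then $\BFree(\cC_{\upslash b})\to\BFree(\cC_{\upslash a})$ classifying the cocartesian lift of the universal object along $f$. This yields the factorization $\overline\Phi$, essentially surjective by construction.

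\textbf{Step 3: full faithfulness.} Fix $x$. By the Yoneda-type identification above and \cref{thm:bifib},
\[
\cD(\Phi x,\Phi y)\simeq \Fun^{\cL\mhyphen\bifib}_{/\cC}(\BFree(\cC_{\upslash x}),\BFree(\cC_{\upslash y}))\simeq \BFree^{(1,0)}_{\cL}(\cC_{\upslash y})\times_{\cC}\{x\}.
\]
We must show $\overline\Phi$ induces $\cC\soft{\cL}(x,y)\simeq \cD(\Phi x,\Phi y)$. I would argue representably. For each $\cY$, compare $\Fun(\cD,\cY)$ with $\Fun(\cC,\cY)_{\cL}$: given $F\colon\cC\to\cY$ sending $\cL$ to left adjoints, unstraighten $\cY(-,F(\cdot))$ or, more cleanly, use the contravariant functor $\cY(F-,z)$. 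Its $(1,0)$-unstraightening is an $\cL$-bifibration because $\cY(F(f),z)$ has a left adjoint when $F(f)$ is a left adjoint. This gives an inverse to restriction along $\Phi$.

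Concretely, I would show that the composite $\cD\to\Fib^{(1,0)\mhyphen\bifib}_{/(\cC,\cL)}$ is the universal recipient. Fix a functor $G\colon\cC\soft{\cL}\to\Cat_{(\infty,1)}$ of the form $\cC\soft{\cL}(-,y)$ restricted along $\iota$. Its unstraightening is an $\cL$-bifibration $\cE_y\to\cC$ receiving $\cC_{\upslash y}\to\cE_y$ from $\id_y$. By \cref{thm:bifib} this extends to a map $\BFree(\cC_{\upslash y})\to\cE_y$, giving a map on fibres over $x$,
\[
\cD(\Phi x,\Phi y)\to\cC\soft{\cL}(x,y).
\]
Conversely, $\overline\Phi$ gives a map the other way. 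One composite is the identity by the universal property of $\BFree$, checked on the generator $\id_y$. The other is checked by the universal property of $\cC\soft{\cL}$, since both are maps of $\cL$-bifibrations determined by their value on $\id_y$. Here \cref{prop:1dimnfibres} ensures fibres are $(\infty,1)$-categories, so everything lives in $\Cat_{(\infty,1)}$-valued functors.

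\textbf{Main obstacle.} The hard step is showing that $y\mapsto\cE_y$ is itself given by an $\cL$-bifibration-valued functor natural enough to invoke \cref{thm:bifib}. Equivalently, we must check that $\cC\soft{\cL}(-,y)\circ\iota$ sends $\cL$ to right adjoints, so that its unstraightening is an $\cL$-bifibration, and that the resulting comparison is natural in $y$ and compatible with composition. I would first handle this using that the representable $\cC\soft{\cL}(-,y)$ turns the left adjoints $\iota(f)$ into right adjoints, then deduce naturality from \cref{lem:softesssurj}.
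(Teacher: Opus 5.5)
Your argument is correct, but it is organised differently from the paper's. The paper never builds a functor out of $\cC$ by hand. It composes the Yoneda embedding of $\cC\soft{\cL}$ with the equivalence $\Fun(\cC\soft{\cL}^{\op},\Cat_{(\infty,1)})\simeq(\Fun(\cC,\Cat_{(\infty,1)}^{\op})_{\cL})^{\op}$ given by the universal property. Via straightening it identifies the latter with the $\cL$-bifibrations with $(\infty,1)$-categorical fibres. It then notes that the image of $\iota(c)$ is $\BFree^{(1,0)}_{\cL}(\cC_{\upslash c})$, since both corepresent evaluation at $c$, and finishes with \cref{lem:softesssurj}. Full faithfulness is thus inherited from Yoneda, and nothing about adjoints needs checking.

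You start instead from the composite of $\BFree^{(1,0)}_{\cL}$ with the Yoneda embedding of $\cC$, so you must do two extra things. First, you prove by hand that it sends $\cL$ to left adjoints. This needs the fact that the pointwise adjunctions $f_!\dashv f^*$ assemble into an adjunction of representables, which holds because morphisms of $\cL$-bifibrations make the relevant squares adjointable. Second, you prove full faithfulness by exhibiting $\Psi_y$ as an inverse. The decisive input there is that a map of $\cL$-bifibrations out of $\mathcal{E}_y$ is determined by the image of $\id_y$; this is exactly the paper's fully faithful embedding of presheaves on $\cC\soft{\cL}$. Your conclusion that $\Psi_y$ is an equivalence is likewise the paper's identification of the representables. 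So the ingredients coincide and your route costs an extra step. In exchange it describes the added right adjoint $\Phi(f)^{\mathrm{ra}}$ explicitly, as classifying the cocartesian transport of the universal object.

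Some smaller points:
\begin{itemize}
\item For the second composite you need the map $\mathcal{E}_y\to\BFree^{(1,0)}_{\cL}(\cC_{\upslash y})$ induced by $\overline\Phi$ to preserve cocartesian lifts over $\cL$. This holds because restriction along $\iota$ lands in adjointable transformations.
\item Your ``main obstacle'' is not really one. For fixed $y$, the bifibration property of $\mathcal{E}_y$ follows at once from $\iota(f)$ being a left adjoint. No naturality in $y$ is needed to check that the already-constructed $\overline\Phi$ is fully faithful one mapping category at a time, with \cref{lem:softesssurj} covering the remaining objects.
\item The ``representable'' paragraph comparing $\Fun(\cD,\cY)$ with $\Fun(\cC,\cY)_{\cL}$ is not needed and can be dropped.
\end{itemize}
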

\begin{proof}
	Let us consider the Yoneda embedding
	\[
		\cC\soft{\cL} \to \Fun(\cC\soft{\cL}^\op,\Cat_{(\infty,1)}),
	\]
	together with the chain of equivalences
	 \[
		\Fun(\cC\soft{\cL}^\op,\Cat_{(\infty,1)}) \simeq \Fun(\cC\soft{\cL},\Cat_{(\infty,1)}^\op)^{\op} \simeq (\Fun(\cC,\Cat_{(\infty,1)}^{\op})_{\cL})^\op.
	 \]
	 From these we conclude that $\Fun(\cC\soft{\cL}^\op,\Cat_{(\infty,1)}) \hookrightarrow \Fib^{(1,0)\mhyphen\bifib}_{/(\cC,\cL)}$ is fully-faithful with essential image given by those $\cL$-bifibrations with $(\infty,1)$-categorical fibres.

	 Let $\iota \colon \cC \to \cC\soft{\cL}$ denote the canonical functor. Then the image of $\iota(c) \in \cC\soft{\cL}$ in $ \Fib^{(1,0)\mhyphen\bifib}_{/(\cC,\cL)}$ is identified with $\BFree^{(1,0)}_{\cL}(\cC_{\upslash c})$: indeed, the Yoneda lemma together with the universal property of the free bifibration shows that mapping out of both objects represents the functor which evaluates at $c$, so the two must agree. By \cref{lem:softesssurj}, $\cC \to \cC\soft{\cL}$ is essentially surjective, which completes the proof.
\end{proof}

\begin{definition}\label{def:Bfreeloc}
	Let $\cL \subset \cC$. Given $\BFree^{(1,0)}_{\cL}(\cC_{\upslash c}) \to \cC$, we define $\BFree^{(1,0)\mhyphen\loc}_{\cL}(\cC_{\upslash c}) \to \cC$ by localizing those fibre-wise morphisms (and their cartesian transports) that represent the units and counits of the adjunctions associated to the morphisms in $\cL$.
\end{definition}

\begin{corollary}\label{cor:dwyerkan}
	 Then the $(\infty,2)$-category $\cC[\cL^{-1}]$ is equivalent to the full sub-$(\infty,2)$-category of $\Fib^{(1,0)\mhyphen\bifib}_{/(\cC,\cL)}$ spanned by the objects $\BFree^{(1,0)\mhyphen\loc}_{\cL}(\cC_{\upslash c}) \to \cC$, for $c\in \cC$ (see \cref{def:Bfreeloc}).
\end{corollary}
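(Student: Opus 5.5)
The proof runs parallel to that of \cref{thm:addingadjoints}, with $\cC\soft{\cL}$ replaced by its localization at all $2$-morphisms. Here $\cC$ is an $(\infty,1)$-category, as in \cref{rem:softstrongloc}, so $\tau_1(\cC\soft{\cL})\simeq\cC[\cL^{-1}]$.

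First, consider the Yoneda embedding
\[
	\cC[\cL^{-1}]\hookrightarrow\Fun(\cC[\cL^{-1}]^{\op},\Cat_{(\infty,1)}).
\]
Precomposition with the localization $\cC\soft{\cL}\to\cC[\cL^{-1}]$ identifies the target with the full sub-$(\infty,2)$-category of $\Fun(\cC\soft{\cL}^{\op},\Cat_{(\infty,1)})$ on functors that invert all $2$-morphisms. This inclusion is fully faithful because $\tau_1\dashv\iota$. In the proof of \cref{thm:addingadjoints} we embedded $\Fun(\cC\soft{\cL}^{\op},\Cat_{(\infty,1)})$ fully faithfully into $\Fib^{(1,0)\mhyphen\bifib}_{/(\cC,\cL)}$. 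Under that embedding, a functor $\cC\soft{\cL}^{\op}\to\Cat_{(\infty,1)}$ corresponds to an $\cL$-bifibration with $(\infty,1)$-categorical fibres. It inverts all $2$-morphisms exactly when the unit and counit transformations of each adjunction $f^*\dashv f_!$ (transport along $f\in\cL$) are invertible, i.e.\ when each such adjunction is an adjoint equivalence. Since $\cC\soft{\cL}$ is generated by $\cC$ together with the adjunction data, the unit and counit $2$-cells, composed with $1$-morphisms, generate all $2$-morphisms. Inverting them therefore inverts everything.

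Next, I identify the image of $c\in\cC$. By \cref{thm:addingadjoints}, the representable $\cC\soft{\cL}(-,c)$ corresponds to $\BFree^{(1,0)}_{\cL}(\cC_{\upslash c})$. The representable $\cC[\cL^{-1}](-,c)$ is obtained from it by applying the left adjoint of the inclusion of $2$-morphism-inverting functors. Concretely, this localizes $\cC\soft{\cL}(x,c)$ fibrewise at the $2$-morphisms, which amounts to the following. In each fibre of $\BFree^{(1,0)}_{\cL}(\cC_{\upslash c})$ we invert the morphisms representing units and counits, and we close this class under cartesian transport so that the result is still a $(1,0)$-fibration. This is exactly $\BFree^{(1,0)\mhyphen\loc}_{\cL}(\cC_{\upslash c})$ of \cref{def:Bfreeloc}. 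That the fibrewise localization stays a fibration, and computes the reflection, follows from \cite[Theorem 4.7.1]{AHM26}, as in \cref{prop:new01}. Finally, essential surjectivity of $\cC\to\cC[\cL^{-1}]$ follows from \cref{lem:softesssurj}, and shows that these objects exhaust the image.

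The main obstacle is the middle step: showing that localizing at the explicitly described units and counits (with their transports) gives the $2$-morphism-inverted representable, rather than something coarser or finer. I would argue by Yoneda. For any $\cL$-bifibration $\cY$ whose transports along $\cL$ are equivalences, maps out of $\BFree^{(1,0)\mhyphen\loc}_{\cL}(\cC_{\upslash c})$ agree with maps out of $\BFree^{(1,0)}_{\cL}(\cC_{\upslash c})$, since $\cY$ automatically inverts the units and counits. By \cref{thm:bifib}, both then agree with $\cY_c$, which is the value at $c$ of the corresponding functor on $\cC[\cL^{-1}]$.
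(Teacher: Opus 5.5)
Your architecture is the paper's: identify $\Fun(\cC[\cL^{-1}]^{\op},\Cat_{(\infty,1)})$ with the $\cL$-bifibrations with $(\infty,1)$-categorical fibres whose cartesian and cocartesian morphisms over $\cL$ coincide, apply Yoneda, match representables with $\BFree^{(1,0)\mhyphen\loc}_{\cL}(\cC_{\upslash c})$, and conclude by essential surjectivity. The paper only asserts the matching of representables, and your justification of that step is where the argument fails.

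Under $\BFree^{(1,0)}_{\cL}(\cC_{\upslash c})\times_{\cC}\{x\}\simeq\cC\soft{\cL}(x,c)$, the unit and counit components are $\alpha\eta_f$ and $\beta\epsilon_f$. Cartesian transport is precomposition with morphisms of $\cC$. So your class contains only whiskerings $\alpha\theta k$ with $k\in\cC$. It is not stable under the cocartesian transports $-\circ f^{\ra}$, and inverting it need not invert all $2$-morphisms.

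For instance, let $\cC$ be free on $x\xrightarrow{h}x'$ and $x\xrightarrow{k}z\xrightarrow{g}z'$, with $\cL$ generated by $h,g$ and $c=z$. No non-identity morphism of $\cC$ leaves $x'$, so over $x'$ your class consists of the $\beta\epsilon_h$ alone. Take $F\colon\cC\to\Cat_{(\infty,1)}$ with $F(x)=F(z)=[1]$, $F(x')=F(z')=[0]$ and $F(k)$ constant at $0$. It sends $h,g$ to left adjoints, hence extends to $\cC\soft{\cL}$. The induced functor $\cC\soft{\cL}(x',z)\to\Fun([0],[1])$ inverts every $\beta\epsilon_h$, since the right adjoint of $[1]\to[0]$ is fully faithful. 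But it sends $\eta_g k h^{\ra}$ to $0\to1$.

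Thus the localized fibre over $x'$ is not a groupoid, whereas $\cC[\cL^{-1}](x',z)\simeq\ast$. In particular $h_!$ does not descend: it sends the class member $\eta_g k$ to $\eta_g k h^{\ra}$. Your closing Yoneda argument does not repair this. It only shows that the localization maps correctly into bifibrations with invertible transports. Concluding that it \emph{is} the representable requires it to be such a bifibration itself, which is exactly what fails here.

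The repair is to close the class under cocartesian transport along $\cL$ as well, i.e.\ to invert all whiskerings $\alpha\theta m$ with $m$ an arbitrary $1$-morphism of $\cC\soft{\cL}$. Then both $f_!$ and $f^*$ descend and the unit and counit become invertible. By \cref{prop:charactbifib} and \cite[Theorem 4.7.1]{AHM26}, the result is an $\cL$-bifibration whose transports along $\cL$ are equivalences. Its fibres are $\cC\soft{\cL}(x,c)[U^{-1}]\simeq\cC[\cL^{-1}](x,c)$, for two reasons: localizing at a whiskering-closed class $U$ of $2$-morphisms localizes the mapping categories, and $\cC\soft{\cL}[U^{-1}]$ has the universal property of $\cC[\cL^{-1}]$.

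For $\cC$ an $(\infty,1)$-category this is fibrewise geometric realization, which is how the paper uses $\BFree^{(1,0)\mhyphen\loc}_{\cL}$ afterwards. So \cref{def:Bfreeloc} must be read with this closure; the paper's proof is silent on the point.

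Note also that the paper's argument does not need $\cC$ to be an $(\infty,1)$-category, since only units and counits must be inverted. Your passage through $\tau_1(\cC\soft{\cL})$, and the claim that units and counits generate all $2$-morphisms, can therefore be dropped.
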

\begin{proof}
	We define $\Fib^{(1,0)\mhyphen\mathrm{loc}}_{/(\cC,\cL)}$ to be the full sub-$(\infty,2)$-category of $\Fib^{(1,0)\mhyphen\bifib}_{/(\cC,\cL)}$ spanned by those $\cL$-bifibrations with the following property:
	\begin{itemize}
		\item A 1-morphism over $\cL$ is cartesian if and only if it is cocartesian.
	\end{itemize}
	Invoking \cref{thm:addingadjoints}, we obtain a fully faithful functor of $(\infty,2)$-categories 
	\[
		\Fun(\cC[\cL^{-1}]^{\op},\Cat_{(\infty,1)})\hookrightarrow\Fib^{(1,0)\mhyphen\mathrm{loc}}_{/(\cC,\cL)} ,
	\]
	 with essential image given by those fibrations with $(\infty,1)$-categorical fibres. Invoking the Yoneda embedding, we conclude that $\cC[\cL^{-1}] \hookrightarrow{} \Fib^{(1,0)\mhyphen\mathrm{loc}}_{/(\cC,\cL)}$ lands in the full sub-$(\infty,2)$-category generated by the various $\BFree^{(1,0)\mhyphen\loc}_{\cL}(\cC_{\upslash c})$. The result follows since $\cC \to \cC[\cL^{-1}]$ is essentially surjective.
\end{proof}

\subsection{A formula for the mapping $(\infty,1)$-category}\label{subsec:mappingformula}
Recall that by design, for $x,y \in \cC\soft{\cL}$ there is an equivalence
\[
	\cC\soft{\cL}(x,y) \simeq \BFree^{(1,0)}_{\cL}(\cC_{\upslash y})\times_{\cC}\{x\}.
\]
Since the free $\cL$-bifibration is given by an explicit construction, this equivalence yields a concrete formula for the mapping $(\infty,1)$-categories of $\cC\soft{\cL}$. In  this section we unravel this formula when $\cC$ and $\cL$ are $(\infty,1)$-categories. The same method applies in general, but this case displays the essential features of the formula more transparently and is sufficient for the applications to come.

\begin{definition}\label{def:zigzag}
	For $n \geq 0$, let $Z_{n}$ denote the poset with objects $\{0,1,\dots,n\}$ whose only non-identity morphisms are, for each $1 \leq i \leq n$, a single morphism between $i-1$ and $i$: namely $i-1 \to i$ if $i$ is odd, and $i \to i-1$ if $i$ is even. In other words, $Z_n$ is an alternating zig-zag of length $n$, starting with $0\to1$ when $n\geq1$.
\end{definition}

\begin{remark}\label{rem:zig1}
	Let $\cL, \cC$ be $(\infty,1)$-categories and recall the dual of \cref{cons:Tbifib}. We observe that for every $n\geq 1$ we have a pullback diagram
	\[
		\begin{tikzcd}
			\cX^{(n)} \arrow[d] \arrow[r] & \Fun_{\cL}(Z_n,\cC) \arrow[d,"\ev_0"] \\
			\cX \arrow[r,"p"] & \cC.
		\end{tikzcd}
	\]
	where $\Fun_{\cL}(Z_n,\cC)$ denotes the full sub-$(\infty,1)$-category on those functors sending the right-pointing morphisms in $Z_n$ to $\cL$. The functor $p^{(n)} \colon \cX^{(n)} \to \cC$ is induced by $\ev_n \colon \Fun(Z_n,\cC) \to \cC$.
\end{remark}

\begin{remark}\label{rem:zig2}
	For every $n \geq 0$ there is a functor $t_n \colon Z_{n+1} \to Z_n$, collapsing the morphism between $n$ and $n+1$ down to the object $n$. Restricting along $t_n$ then gives a functor $t_n^* \colon \Fun_{\cL}(Z_n,\cC) \to \Fun_{\cL}(Z_{n+1},\cC)$, which is compatible with the projections $\ev_0$ to $\cC$. It follows that the functor $\cX^{(n)} \to \cX^{(n+1)}$ from \cref{cons:Tbifib} is simply the pullback of $t_n^*$ along $p$.
\end{remark}

\begin{definition}
	Let $Z_{n}^{\star}=\{-1 \xleftarrow{}0\} \coprod_{\{0\}} Z_n$ and denote by $\Fun_{\cL}(Z_n^{\star},\cC)$ the pullback of the cospan $\Fun(Z_n^{\star},\cC) \xrightarrow{}{\Fun(Z_n,\cC)}\xleftarrow{} \Fun_{\cL}(Z_n,\cC)$.
\end{definition}

Applying the preceding description to the source projection $\cC_{/y}\to\cC$, we obtain a pullback diagram
 \[
	\begin{tikzcd}
		{\cC_{/y}^{(n)}} \arrow[r] \arrow[d] & \Fun_{\cL}(Z_n^{\star},\cC) \arrow[d,"\ev_{-1}"] \\
		{[0]} \arrow[r,"y"] & \cC
	\end{tikzcd}
 \]
 \begin{definition}
	For $x\in\cC$, let $\cC^{(n)}(x,y)$ denote the fibre of $\cC_{/y}^{(n)}\to\cC$ over $x$. Its objects are alternating zig-zags of length $n+1$ from $x$ to $y$ (note that we are reading the zig-zag from right to left) whose right-pointing morphisms belong to $\cL$, and its morphisms are maps of such zig-zags. A morphism is depicted schematically below for even $n\geq4$, with both endpoints fixed:
 \[\begin{tikzcd}
	{y} & {y_0} & {y_1} & \cdots & {y_{n-1}} & {x} \\
	& {z_0} & {z_1} & \cdots & {z_{n-1}}
	\arrow[from=1-2, to=1-1]
	\arrow[from=1-2, to=1-3]
	\arrow[from=1-2, to=2-2]
	\arrow[from=1-3, to=2-3]
	\arrow[from=1-4, to=1-3]
	\arrow[from=1-4, to=2-4]
	\arrow[from=1-4, to=1-5]
	\arrow[from=1-6, to=1-5]
	\arrow[from=1-5, to=2-5]
	\arrow[from=2-2, to=1-1]
	\arrow[from=2-2, to=2-3]
	\arrow[from=2-4, to=2-3]
	\arrow[from=2-4, to=2-5]
	\arrow[from=1-6, to=2-5]
\end{tikzcd}\]
 \end{definition}

 \begin{notation}\label{not:cinfty}
	We denote $\cC^{\infty}_{/y}= \colim_{\mathbb{N}}\cC^{(n)}_{/y}$ and  $\cC^{\infty}(x,y)= \colim_{\mathbb{N}}\cC^{(n)}(x,y)$.
 \end{notation}

\begin{definition}\label{def:QnLocalizefibre}
	For each $n\geq0$, let $\cQ_n$ be the wide sub-$(\infty,1)$-category of $\cC^{(n)}_{/y}$ generated by the maps of zig-zags
	\[
		H\colon Z_n^{\star}\times[1]\longrightarrow\cC
	\]
	with the following property. There is a block of four consecutive vertices such that every component of $H$ outside this block is an equivalence. According to the orientation of the corresponding sub-zig-zag, the restriction of $H$ to this block has one of the following two forms:
	\begin{equation}\label{eq:right}
		\begin{tikzcd}
	x & s & s & t \\
	x & z & z & t
	\arrow["u", from=1-1, to=1-2]
	\arrow["\simeq", swap, from=1-1, to=2-1]
	\arrow["v", from=1-2, to=2-2]
	\arrow["\simeq"', from=1-3, to=1-2]
	\arrow["\alpha v", from=1-3, to=1-4]
	\arrow["v"', from=1-3, to=2-3]
	\arrow[swap, "\simeq ",from=1-4, to=2-4]
	\arrow["{v u }"', from=2-1, to=2-2]
	\arrow["\simeq"', from=2-3, to=2-2]
	\arrow["\alpha", from=2-3, to=2-4]
\end{tikzcd}
	\end{equation}

	For the opposite orientation, it has the form
	\begin{equation}\label{eq:left}
		\begin{tikzcd}
	c & a & a & \ell \\
	c & b & b & \ell{.}
	\arrow[swap,"\simeq ", from=1-1, to=2-1]
	\arrow["{\beta\alpha}"', from=1-2, to=1-1]
	\arrow["\simeq", from=1-2, to=1-3]
	\arrow["\alpha"', from=1-2, to=2-2]
	\arrow["\alpha"', from=1-3, to=2-3]
	\arrow["\gamma"', from=1-4, to=1-3]
	\arrow[swap, "\simeq ", from=1-4, to=2-4]
	\arrow["\beta"', from=2-2, to=2-1]
	\arrow["\simeq", from=2-2, to=2-3]
	\arrow["{ \alpha \gamma}", from=2-4, to=2-3]
\end{tikzcd}
	\end{equation}

 We denote by $\cQ$ the wide sub-$(\infty,1)$-category generated by the union of the various $\cQ_n$'s in $\cC^{\infty}_{/y}$ and by $\cQ_{n,x}$ and $\cQ_x$ the corresponding fibres  over $x \in \cC$.
\end{definition}

\begin{lemma}\label{lem:comparisonQW}
	There is an equivalence of $(\infty,1)$-categories over $\cC$
	\[
		\cC_{/y}^{\infty}[\cQ^{-1}]
		\simeq \BFree^{(1,0)}_{\cL}(\cC_{/y}).
	\]
\end{lemma}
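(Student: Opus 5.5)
The plan is to compare the general construction $\cX_\diamond$ of the free bifibration, specialized to $p=\cC_{/y}\to\cC$ in the $(1,0)$-variance, with the zig-zag model of \cref{rem:zig1,rem:zig2}. By the dual of \cref{cons:Tbifib}, the stages $\cX^{(n)}$ are iterated free (partial) fibrations. Since $\cC$ and $\cL$ are $(\infty,1)$-categories, $\Ar^{\epsilon}_{\cC_0}(\cC)$ is the ordinary arrow category restricted to the relevant class. Hence $\cX^{(n)}\simeq\cC^{(n)}_{/y}$, and the transition functors are restriction along $t_n$. Taking colimits gives $\cX_T\simeq\cC^{\infty}_{/y}$ over $\cC$. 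By \cref{prop:1dimnfibres}, the free bifibration has $(\infty,1)$-categorical fibres. In this setting all $2$-morphisms of $\cX_T$ are invertible, so $\cW^2$ imposes nothing (or only invertible $2$-cells). Therefore $\BFree^{(1,0)}_{\cL}(\cC_{/y})=\cX_\diamond\simeq\cC^\infty_{/y}[(\cW^1)^{-1}]$, and it suffices to show that $\cW^1$ and $\cQ$ generate the same localization, i.e.\ that each class is inverted in the localization at the other.

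First I identify the comparison $1$-morphisms of \cref{cons:fibrewisejunk} in zig-zag terms. Take a cartesian morphism in $\cX^{(\ell)}$ (respectively a cocartesian morphism over $\cL$), viewed as a zig-zag extended by a morphism at the free end. Push it to stage $k\equiv\ell \pmod 2$ by inserting identities. Factoring it as a (co)cartesian lift followed by a fibrewise map yields exactly a map of zig-zags in which two consecutive new vertices are degenerate, as in \eqref{eq:right} and \eqref{eq:left}. So each generator of $\cV^1_k$ is a generator of $\cQ_k$ whose block sits at the end of the zig-zag. Next I show that stability under (co)cartesian transport in \cref{def:cwfinal} amounts to extending a map of zig-zags by further vertices on which it is an equivalence. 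Transport in the free fibration appends an arrow and extends the map by identities, and $T$ inserts degenerate vertices. Iterating these two operations moves the four-vertex block to an arbitrary position. This gives $\cW^1\subseteq\cQ$, up to closure under composition, and conversely every generator of $\cQ_n$ is obtained from some comparison morphism by such transports and transitions, possibly after postcomposing with the equivalence-type reindexings relating $\cQ_n$ and $\cQ_{n+2}$.

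The main obstacle is this last converse: checking that an arbitrary block of form \eqref{eq:right} or \eqref{eq:left} in the interior of the zig-zag genuinely lies in the wide subcategory generated by transports of comparison maps, rather than only in its saturation. For this I would try induction on the distance of the block from the free end $x$. The inductive step expresses the block at distance $d+1$ as the cartesian transport, along the last arrow of the zig-zag, of a block at distance $d$. One must verify that the transported map has identity components outside the block and the prescribed shape inside it. If exact membership fails, it is enough to show that both localizations agree, using 2-out-of-3 together with the fact that the $T$-images of equivalent zig-zags become identified in the colimit. Finally, localizations commute with the filtered colimit and with the fibre over $x$ (cf.\ \cref{prop:fibresasexpected}), so the equivalence is one over $\cC$.
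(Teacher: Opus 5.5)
Your reduction (identifying $\cX_T$ with $\cC^{\infty}_{/y}$, discarding $\cW^2$, and comparing the localizations at $\cW^1$ and at $\cQ$) matches the paper. So does your argument that the $\cQ$-localization inverts $\cW^1$. One small correction there: comparisons for $T_{\ell,k}$ with $k-\ell>2$ are not single blocks. They are composites of $T$-images of the basic comparisons for $T_{m,m+2}$.

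The gap is in the other direction. You claim every generator of $\cQ_n$ is obtained from a comparison morphism by transports and transitions. This fails for generators of type \cref{eq:right} with $v\notin\cL$. Such generators are allowed, because \cref{def:QnLocalizefibre} only requires $u$, $vu$, $\alpha v$, $\alpha$ to lie in $\cL$. Odd-stage comparison morphisms arise only from cocartesian morphisms over $\cL$: the cocartesian lift of $v$ used to factor $T_{k,k+2}(e)$ ends in the arrow $s\xrightarrow{v}z$, which is an object of $\cX^{(k+2)}$ only if $v\in\cL$. Neither $T$ nor transport changes the middle vertical components of a block, so every block of type \cref{eq:right} you produce has $v\in\cL$. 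Such an $\omega$ need not lie in $\cW^1$ even up to composition. For example, take $\cC$ to be the linear order $x<s<z<t$ and $\cL$ all morphisms other than $s\to z$. By contrast, the obstacle you single out, interior blocks, is harmless. Applying $T$ and then cocartesian transport over $\cL$ or cartesian transport, according to parity, appends arrows with identity components.

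What is missing is an actual reason these $\omega$ are inverted. Your remark about $T$-images of equivalent zig-zags does not provide one. The paper avoids generator membership and uses the bifibration structure of $\cX_\diamond$ instead:
\begin{enumerate}
\item The maps $\cX^{(2k+1)}\to\cX_\diamond$ preserve cocartesian morphisms over $\cL$.
\item Truncate $\omega$ at the end of its block. Its source and target then both receive, from the image of the common initial segment, cocartesian lifts of $(\alpha v)u=\alpha(vu)$.
\item The image of the truncation is the comparison between these two lifts, hence invertible.
\item $\omega$ is recovered from the truncation by transport.
\end{enumerate}
Alternatively, you can make your 2-out-of-3 fallback precise. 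Let $A,B$ be the source and target of $\omega$, and let $C$ be obtained by replacing the block by $x\xrightarrow{\alpha vu}t\xleftarrow{\id}t\xrightarrow{\id}t$. The maps $A\to C$ and $B\to C$ have block components $(\id,\alpha v,\alpha v,\id)$ and $(\id,\alpha,\alpha,\id)$. Since $\alpha v,\alpha\in\cL$, both are transported comparison morphisms. Finally, $A\to C$ is the composite of $\omega$ with $B\to C$, so 2-out-of-3 shows $\omega$ is inverted.
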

\begin{proof}
	Recall that the construction of the free bifibration identifies $\BFree^{(1,0)}_{\cL}(\cC_{/y})$ with $(\cC_{/y}^{\infty})_{\diamond}$ (cf. the dual of \cref{def:diamond}). Since every stage is an $(\infty,1)$-category, we can identify $(\cC_{/y}^{\infty})_{\diamond}$ with the localization of $\cC_{/y}^{\infty}$ at the $1$-morphisms belonging to $\cW^{1}$ (cf. \cref{def:cwfinal}). Our task is thus to show that $\cW^{1}$ and $\cQ$ generate the same localization.
	
	We first show that each morphism $\omega\in\cQ$ becomes invertible in $\BFree^{(1,0)}_{\cL}(\cC_{/y})$. It suffices to treat the generators. Let $\omega$ be of type \cref{eq:right}, and let $\widehat\omega$ be the map of zig-zags obtained by truncating $\omega$ at the end of the corresponding block. Denote by $\tau$ the map obtained by truncating at the beginning of the block. By construction, $\tau$ is an equivalence, so we may identify its source and target with a single zig-zag $Z$. Under this identification, the image of $\widehat\omega$ in the free bifibration compares two cocartesian lifts of $(\alpha v)u=\alpha(vu)$ with the same source, namely the image of $Z$. Hence this comparison is invertible. The image of $\omega$ in $\BFree^{(1,0)}_{\cL}(\cC_{/y})$ is obtained from that of $\widehat\omega$ by successive cartesian and cocartesian transport functors, and is therefore also invertible. The case of \cref{eq:left} is analogous, using cartesian lifts with the same target.

	For the converse, note that the comparison $1$-morphisms for $T_{k,k+2}$ (cf. \cref{cons:fibrewisejunk}) have the form \cref{eq:left} when $k$ is even and \cref{eq:right} when $k$ is odd, and hence belong to the corresponding graphical classes. Comparisons for longer transitions factor into images of these basic comparisons under the transition functors. The functors $T_{\ell,k}$ preserve our generators for $\cQ$ since they only append identities. Moreover, cartesian transport in even degrees and cocartesian transport over $\cL$ in odd degrees also preserve the two local forms from \cref{def:QnLocalizefibre}. This shows that every morphism in $\cW^{1}$ becomes invertible after localization at $\cQ$, as desired.
\end{proof}

\begin{theorem}\label{thm:mappingcatform}
	Let $\cL\subset\cC$ be a wide sub-$(\infty,1)$-category. Then, for every $x,y\in\cC\soft{\cL}$, there are equivalences of $(\infty,1)$-categories
	\[
		\cC\soft{\cL}(x,y)
		\simeq \colim_{n\in\mathbb{N}}\cC^{(n)}(x,y)[\cQ_{n,x}^{-1}]
		\simeq \cC^{\infty}(x,y)[\cQ_x^{-1}].
	\]
\end{theorem}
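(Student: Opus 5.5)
The plan is to combine \cref{thm:addingadjoints}, \cref{lem:comparisonQW} and the fibrewise description of localizations from \cref{prop:fibresasexpected}.

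First, by \cref{thm:addingadjoints} (more precisely, the equivalence recorded at the start of \cref{subsec:mappingformula}) we have
\[
\cC\soft{\cL}(x,y)\simeq \BFree^{(1,0)}_{\cL}(\cC_{\upslash y})\times_{\cC}\{x\}.
\]
Since $\cC$ is an $(\infty,1)$-category, $\cC_{\upslash y}$ is the right fibration $\cC_{/y}\to\cC$. By \cref{prop:1dimnfibres}, the free bifibration has $(\infty,1)$-categorical fibres. By \cref{lem:comparisonQW}, it is identified over $\cC$ with $\cC^{\infty}_{/y}[\cQ^{-1}]$. So it remains to compute the fibre of $\cC^{\infty}_{/y}[\cQ^{-1}]\to\cC$ over $x$.

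For the second equivalence of the statement I would not pass through the global localization. Instead I would use \cref{prop:fibresasexpected}: the fibre of the free bifibration is $(\cX_T\times_{\cC}\{x\})_\diamond$, the fibre $\cC^{\infty}(x,y)$ localized at $\cW^1_x$. Here I use that filtered colimits commute with the pullback along $\{x\}\to\cC$, so that $\cC^{\infty}_{/y}\times_\cC\{x\}\simeq\cC^\infty(x,y)$. I then rerun the argument of \cref{lem:comparisonQW} fibrewise to show that $\cW^1_x$ and $\cQ_x$ generate the same localization of $\cC^{\infty}(x,y)$.

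Both generating classes consist of fibrewise morphisms. The generators of $\cQ$ in \cref{eq:right} and \cref{eq:left} fix both endpoints of the zig-zag, and the comparison morphisms of \cref{cons:fibrewisejunk} lie over identities. Hence the two inclusions proved in that lemma restrict to fibres. In one direction, each generator of $\cQ_x$ is a comparison between two (co)cartesian lifts with the same source or target, transported fibrewise. In the other direction, the basic comparisons for $T_{k,k+2}$ are of the forms \cref{eq:left} and \cref{eq:right}, and transports and transition functors preserve these forms. This gives $\cC\soft{\cL}(x,y)\simeq\cC^\infty(x,y)[\cQ_x^{-1}]$.

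For the first equivalence, localization is a left adjoint and so commutes with colimits of pairs (category, class of maps). Since $\cC^{\infty}(x,y)=\colim_n\cC^{(n)}(x,y)$ and $\cQ_x$ is generated by the images of the $\cQ_{n,x}$, we get
\[
\colim_n \cC^{(n)}(x,y)[\cQ_{n,x}^{-1}]\simeq \cC^\infty(x,y)[\cQ_x^{-1}].
\]
The transition maps do descend, because $t_n^*$ only appends identities and so sends $\cQ_{n,x}$ into $\cQ_{n+1,x}$.

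The main obstacle is the fibrewise comparison. In \cref{lem:comparisonQW}, invertibility of a generator of $\cQ$ was deduced in the total category, using cocartesian transport that leaves the fibre. To stay inside $\cC^\infty(x,y)$, I would handle this in one of two ways:
\begin{itemize}
\item argue that a fibrewise morphism is inverted in the fibre of a localized bifibration iff it is inverted in the total space, by invoking \cite[Theorem 4.7.1]{AHM26} as in \cref{prop:fibresasexpected};
\item or directly check that the transport-closure clauses in \cref{def:cwfinal} are already generated by fibrewise $\cQ$-type squares, which holds because $\cQ$ is defined fibrewise and is visibly stable under the fibre transports.
\end{itemize}
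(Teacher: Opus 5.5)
Your argument is correct, but it is organized differently from the paper's.

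The paper keeps the global identification $\BFree^{(1,0)}_{\cL}(\cC_{/y})\simeq\cC^{\infty}_{/y}[\cQ^{-1}]$ from \cref{lem:comparisonQW} and computes the fibre of the right-hand side. It checks that $\cQ_{2n}$ is stable under cartesian transport in the even stages. It then applies \cite[Theorem 4.7.1]{AHM26} to identify the fibres of $\cC^{(2n)}_{/y}[\cQ_{2n}^{-1}]\to\cC$ with $\cC^{(2n)}(x,y)[\cQ_{2n,x}^{-1}]$, and commutes fibres with the filtered colimit. So the stagewise formula $\colim_n\cC^{(n)}(x,y)[\cQ_{n,x}^{-1}]$ comes first, and $\cC^{\infty}(x,y)[\cQ_x^{-1}]$ follows by commuting localization with filtered colimits.

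You take fibres first, via \cref{prop:fibresasexpected}, where transport-stability is built into \cref{def:cwfinal}. You then show that $\cW^1_x$ and $\cQ_x$ generate the same localization of $\cC^{\infty}(x,y)$, and obtain the stagewise formula last. This avoids a second application of \cite[Theorem 4.7.1]{AHM26}, but the combinatorial input is the same. Showing that $\cW^1_x$ lies in the saturation of $\cQ_x$ requires two facts. First, the relevant transports must send generators of $\cQ$ to generators of $\cQ$. Second, the comparisons for $T_{\ell,k}$ must factor through the basic comparisons for $T_{k-2,k}$ and their images under transition functors. The first fact is exactly the stability the paper verifies. The second is a step you only gesture at.

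The obstacle you flag is not a real one. The fibre $\BFree^{(1,0)}_{\cL}(\cC_{/y})\times_{\cC}\{x\}$ is a pullback over $[0]$, so a morphism in it is invertible if and only if its image in the total space is. The first half of the proof of \cref{lem:comparisonQW}, combined with \cref{prop:fibresasexpected}, therefore already shows that $\cQ_x$ is inverted in $\cC^{\infty}(x,y)[(\cW^1_x)^{-1}]$. Your option (a) suffices, and needs no fresh invocation of \cite[Theorem 4.7.1]{AHM26} beyond what \cref{prop:fibresasexpected} already packages. Option (b) really concerns the opposite inclusion.
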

\begin{proof}
	By construction, the transition functors $\cC^{(n)}_{/y}\to\cC^{(n+1)}_{/y}$ restrict to functors $\cQ_n\to\cQ_{n+1}$. Compatibility of localization with filtered colimits and cofinality of the even integers therefore give
	\[
		\cC_{/y}^{\infty}[\cQ^{-1}]
		\simeq \colim_{n\in\mathbb N}\cC^{(2n)}_{/y}[\cQ_{2n}^{-1}].
	\]

	Combining \cref{lem:comparisonQW} with the description of mapping categories in terms of the free bifibration, we obtain
	\[
		\cC\soft{\cL}(x,y)
		\simeq \cC_{/y}^{\infty}[\cQ^{-1}]\times_{\cC}\{x\}.
	\]
	We claim that $\cQ_{2n}$ is stable under the cartesian transport: indeed, the transport of a generating morphism has equivalence components outside the same distinguished block and retains one of the two local forms in \cref{def:QnLocalizefibre}. Hence \cite[Theorem 4.7.1]{AHM26}, together with the fact that filtered colimits commute with fibres, gives
	\[
	\begin{aligned}
		\cC\soft{\cL}(x,y)
		&\simeq \colim_n\bigl(\cC_{/y}^{(2n)}[\cQ_{2n}^{-1}]\times_{\cC}\{x\}\bigr) \\
		&\simeq \colim_n\cC^{(2n)}(x,y)[\cQ_{2n,x}^{-1}] \\
		&\simeq \colim_n\cC^{(n)}(x,y)[\cQ_{n,x}^{-1}],
	\end{aligned}
	\]
	where the last equivalence again follows from cofinality. Finally, compatibility of localization with filtered colimits yields
	\[
		\colim_n\cC^{(n)}(x,y)[\cQ_{n,x}^{-1}]
		\simeq \cC^{\infty}(x,y)[\cQ_x^{-1}],
	\]
	which proves the claim.
\end{proof}

When $\cC$ is an $(\infty,1)$-category, the fibration $\BFree^{(1,0)\mhyphen\loc}_{\cL}(\cC_{\upslash c}) \to \cC$ can be computed by geometrically realizing the fibres of $\BFree^{(1,0)}_{\cL}(\cC_{\upslash c}) \to \cC$. Combined with the preceding theorem, this gives the following explicit description of the mapping spaces in $\cC[\cL^{-1}]$. The resulting formula closely resembles the hammock localization introduced by Dwyer and Kan in \cite{DwyerKan1980Calculating}.

\begin{corollary}\label{cor:dwyerkanhammock}
	Let $\cL \subset \cC$ be a wide sub-$(\infty,1)$-category inclusion. Then for every $x,y \in \cC[\cL^{-1}]$ we have equivalences of $\infty$-groupoids
	\[
		\cC[\cL^{-1}](x,y)\simeq  \colim_{\mathbb{N}} |\cC^{(n)}(x,y)| \simeq |\cC^{\infty}(x,y)|.
	\]
\end{corollary}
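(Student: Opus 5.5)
The plan is to pass from $\cC\soft{\cL}$ to $\cC[\cL^{-1}]$ by inverting all $2$-morphisms, and to compute the effect of this on mapping categories using \cref{thm:mappingcatform}.

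The first step is to identify the mapping spaces. By \cref{rem:softstrongloc}, localizing $\cC\soft{\cL}$ at all $2$-morphisms gives $\cC[\cL^{-1}]$; in other words $\tau_1(\cC\soft{\cL})\simeq\cC[\cL^{-1}]$. I will argue that for $x,y\in\cC$ this yields
\[
\cC[\cL^{-1}](x,y)\simeq|\cC\soft{\cL}(x,y)|.
\]
The cleanest route goes through \cref{cor:dwyerkan} together with \cref{thm:addingadjoints}. The object $y$ corresponds to $\BFree^{(1,0)\mhyphen\loc}_{\cL}(\cC_{\upslash y})$. When $\cC$ is an $(\infty,1)$-category, its fibres are obtained by geometrically realizing the fibres of $\BFree^{(1,0)}_{\cL}(\cC_{\upslash y})$: applying $|-|$ fibrewise produces an $\cL$-bifibration whose unit and counit are invertible, and it is universal among such. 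The mapping space is then the fibre over $x$, which is $|\BFree^{(1,0)}_{\cL}(\cC_{\upslash y})\times_\cC\{x\}|\simeq|\cC\soft{\cL}(x,y)|$. The key check is this universality. It comes down to the observation that a $(1,0)$-fibration with groupoidal fibres over $\cC$ that is an $\cL$-bifibration automatically has cartesian equal to cocartesian over $\cL$. Indeed, an adjunction between $\infty$-groupoids is an equivalence, so the transports over $\cL$ are invertible. Conversely, the localized fibration of \cref{def:Bfreeloc} has groupoidal fibres because every fibrewise morphism of $\BFree$ is generated by units and counits together with transports of images from $\cC_{\upslash y}$.

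The second step substitutes \cref{thm:mappingcatform}:
\[
|\cC\soft{\cL}(x,y)|\simeq\Bigl|\colim_n\cC^{(n)}(x,y)[\cQ_{n,x}^{-1}]\Bigr|\simeq\colim_n\bigl|\cC^{(n)}(x,y)[\cQ_{n,x}^{-1}]\bigr|,
\]
which uses that $|-|$ is a left adjoint and so commutes with colimits. Next, $|\cA[\cW^{-1}]|\simeq|\cA|$ for any class of morphisms $\cW$, since localizing at $\cW$ and then at everything is the same as localizing at everything. This gives $\colim_n|\cC^{(n)}(x,y)|$. Applying the same two facts to $\cC^\infty(x,y)=\colim_n\cC^{(n)}(x,y)$ gives the last equivalence, $\colim_n|\cC^{(n)}(x,y)|\simeq|\cC^\infty(x,y)|$.

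I expect the main obstacle to be the first step, namely justifying rigorously that inverting $2$-morphisms in $\cC\soft{\cL}$ realizes the mapping categories, that is, that $\tau_1$ acts on mapping $(\infty,1)$-categories by geometric realization. This fails in general, since composites can create new relations. Here I would sidestep the issue through the fibrational model above, where the Yoneda description reduces the question to fibres of the localized free bifibration. That in turn reduces to checking that fibrewise realization preserves the $(1,0)$-fibration structure, which holds by straightening (\cref{thm:str}) composed with $|-|$.
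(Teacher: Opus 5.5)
Your proposal is correct and follows essentially the paper's own route: identify $\cC[\cL^{-1}](x,y)$ with the fibre over $x$ of $\BFree^{(1,0)\mhyphen\loc}_{\cL}(\cC_{\upslash y})$ via \cref{rem:softstrongloc,cor:dwyerkan}, compute that fibre as the realization of the corresponding fibre of the free $\cL$-bifibration, and then combine \cref{thm:mappingcatform} with $|\cA[\cW^{-1}]|\simeq|\cA|$ and the fact that $|-|$ commutes with filtered colimits. One aside is mistaken, though it does not affect your argument: $\tau_1$ \emph{does} act on mapping $(\infty,1)$-categories by geometric realization in general, because $|-|$ from $\Cat_{(\infty,1)}$ to spaces preserves finite products and $\tau_1$ is the induced change of enrichment, so \cref{rem:softstrongloc} alone already gives $\cC[\cL^{-1}](x,y)\simeq|\cC\soft{\cL}(x,y)|$ without the fibrational detour.
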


\subsection{Turning section pairs into reflective localizations}
In this subsection we will address a particular case of how to freely add adjoints, namely, we will freely enforce a section pair to become a (co)reflective localization.

\begin{notation}\label{not:Ret}
	We denote by $\Ret= [2] \coprod_{\{0 \to 2\}} [0]$ the walking retraction, and denote its generating 1-morphisms by $s,r$, subject to $r \circ s =\id$.

	Restriction along $s,r \colon [1] \to \Ret$ induces evaluation functors
	\[
		\ev_s,\ \ev_r \colon \Fun(\Ret,\cC) \longrightarrow \Fun([1],\cC).
	\]
	A functor $f \colon \Ret \to \cC$ is thus the datum of a \emph{section pair} in $\cC$, and we abbreviate
	\[
		s_f \coloneqq \ev_s(f), \qquad r_f \coloneqq \ev_r(f),
	\]
	so that $s_f$ and $r_f$ are 1-morphisms of $\cC$ equipped with an equivalence $r_f \circ s_f \simeq \id$.
\end{notation}

\begin{definition}\label{def:marked-morphisms}
	Let $\cC$ be an $(\infty,2)$-category and let $R,S$ be subspaces of $\Fun(\Ret,\cC)^{\simeq}$. We denote by
	\begin{itemize}
		\item $\cL(R) \subseteq \cC$ the locally full sub-$(\infty,2)$-category generated by the 1-morphisms $r_f$ for $f \in R$;
		\item $\cL(S) \subseteq \cC$ the locally full sub-$(\infty,2)$-category generated by the 1-morphisms $s_g$ for $g \in S$,
	\end{itemize}
	and we set $\cL$ to be the wide locally full sub-$(\infty,2)$-category generated by $\cL(R) \cup \cL(S)$. We write $u^{\mathrm{ra}}$ for the right adjoint of $u \in \cL$ in $\cC\soft{\cL}$, and $\eta_u \colon \id \Rightarrow u^{\mathrm{ra}} \circ u$, $\epsilon_u \colon u \circ u^{\mathrm{ra}} \Rightarrow \id$ for the unit and the counit of the adjunction $u \dashv u^{\mathrm{ra}}$.
\end{definition}

\begin{construction}\label{cons:comparison}
	Let $f \in R$. The equivalence $r_f \circ s_f \simeq \id$ is a candidate counit for an adjunction $r_f \dashv s_f$, and hence induces a comparison 2-morphism in $\cC\soft{\cL}$
	\[
		c_f \colon s_f \xrightarrow{\ \eta_{r_f} \ast s_f\ } r_f^{\mathrm{ra}} \circ r_f \circ s_f \simeq r_f^{\mathrm{ra}} .
	\]
	Dually, let $g \in S$. The equivalence $r_g \circ s_g \simeq \id$ is a candidate unit for an adjunction $s_g \dashv r_g$, and hence induces a comparison 2-morphism in $\cC\soft{\cL}$
	\[
		c_g \colon s_g^{\mathrm{ra}} \simeq r_g \circ s_g \circ s_g^{\mathrm{ra}} \xrightarrow{\ r_g \ast \epsilon_{s_g}\ } r_g .
	\]
\end{construction}

\begin{definition}\label{def:CLRS}
	In the situation of \Cref{def:marked-morphisms}, we denote by $\cC\soft{\cL}_{R,S}$ the localization of $\cC\soft{\cL}$ at the collection of 2-morphisms
	\[
		\{\, c_f \,\}_{f \in R} \ \cup \ \{\, c_g \,\}_{g \in S}
	\]
	of \Cref{cons:comparison}.
\end{definition}

\begin{remark}
	By construction, in $\cC\soft{\cL}_{R,S}$ the freely added right adjoint of $r_f$ is identified with $s_f$ for every $f \in R$, so that $r_f \dashv s_f$ with counit the given equivalence $r_f \circ s_f \simeq \id$; that is, $r_f$ becomes a reflective localization with right adjoint $s_f$. Dually, for every $g \in S$ the freely added right adjoint of $s_g$ is identified with $r_g$, so that $s_g \dashv r_g$ with unit the given equivalence $\id \simeq r_g \circ s_g$; that is, $r_g$ becomes a coreflective localization with left adjoint $s_g$.
\end{remark}

\begin{theorem}\label{thm:ffaddingsections}
	Let $\cC$ be an $(\infty,2)$-category and let $\cC\soft{\cL}_{R,S}$ be as in \cref{def:CLRS}. For every $(\infty,2)$-category $\cX$, restriction along the canonical map $\cC \to \cC\soft{\cL}_{R,S}$ yields a fully faithful functor
	\[
		\Fun(\cC\soft{\cL}_{R,S},\cX) \xrightarrow{} \Fun(\cC,\cX),
	\]
	whose essential image consists of those functors $F \colon \cC \to \cX$ sending each $r_f \circ s_f \xrightarrow{\simeq} \id$ in $R$ to the counit of an adjunction, and each $\id \xrightarrow{\simeq} r_g \circ s_g $ in $S$ to the unit of an adjunction.
\end{theorem}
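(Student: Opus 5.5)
The proof factors restriction along $\cC \to \cC\soft{\cL}_{R,S}$ as
\[
	\Fun(\cC\soft{\cL}_{R,S},\cX) \longrightarrow \Fun(\cC\soft{\cL},\cX) \xrightarrow{\simeq} \Fun(\cC,\cX)_{\cL} \hookrightarrow \Fun(\cC,\cX).
\]
The middle equivalence is the universal property of \cref{prop:softlocalization}. The first functor is restriction along a localization at a class of 2-morphisms, so it is fully faithful. Its essential image consists of the functors $G\colon\cC\soft{\cL}\to\cX$ inverting every $c_f$ and every $c_g$; this is the universal property of localization applied to the cotensored $(\infty,2)$-category $\Fun(-,\cX)$. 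Thus the composite of the first two functors is fully faithful, with image the subcategory of $\Fun(\cC,\cX)_{\cL}$ on functors $F$ for which the mate-type 2-morphisms $c_f$ and $c_g$ become invertible. The last inclusion is only locally full, so the real content is twofold: on this subcategory, the inclusion into $\Fun(\cC,\cX)$ is fully faithful, and its image is the one stated.

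\textbf{Objects.} An $F$ extends to $\cC\soft{\cL}$ exactly when every $F(u)$, $u\in\cL$, is a left adjoint. The condition that $c_f$ is inverted says that $F(s_f)$ is the right adjoint of $F(r_f)$, with counit the image of $r_f s_f\simeq\id$. Conversely, if $F(r_f s_f)\simeq\id$ is the counit of some adjunction $F(r_f)\dashv F(s_f)$, then $F(r_f)$ is a left adjoint and the induced $c_f$ is the canonical comparison between two right adjoints, hence invertible. The case of $g\in S$ is dual: $F(s_g)\dashv F(r_g)$, and $F(r_g)$ has left adjoint $F(s_g)$, so $F(r_g)$ is a left adjoint only through this structure. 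This is the subtle point: the statement demands that $F(r_g)$ be a left adjoint, but the hypothesis only provides that it is a right adjoint. Two fixes are possible. One is to check whether the paper's $\cL(S)$ generates only the $s_g$; then only $F(s_g)$ needs a right adjoint, which is $F(r_g)$, and this is fine. Since $\cL$ is generated by the $r_f$ and the $s_g$, the object-level identification goes through, using closure of left adjoints under composition.

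\textbf{Morphisms.} It remains to show that every natural transformation $F\to G$ between such functors has adjointable naturality squares over $\cL$. Since adjointability is stable under composition of squares, it suffices to check the generators $r_f$ and $s_g$. For $r_f$, the naturality square for $F(r_f)$ has horizontal maps that are reflective localizations: the left adjoints $F(r_f), G(r_f)$ have fully faithful right adjoints $F(s_f), G(s_f)$, with the counit invertible. By the dual version of \cref{BCeasy} (\cref{rem:BCeasy}), the mate is invertible if and only if the square built from $s_f$ commutes up to some homotopy. It does, because $s_f$ is a 1-morphism of $\cC$ and the transformation is natural. The $s_g$ case is the analogous coreflective dual. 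Hence the inclusion is full on 1-morphisms; on 2-morphisms it is full because both sides are locally full in $\Fun(\cC,\cX)$.

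\textbf{Main obstacle.} I expect the hardest step to be the rigorous identification of ``$c_f$ inverted'' with ``the given equivalence is a counit.'' This requires computing the image of $c_f$ under $F$ as the mate of $\id\simeq F(r_f)F(s_f)$ with respect to the adjunction $F(r_f)\dashv F(r_f)^{\mathrm{ra}}$, and then showing that invertibility of this mate produces an adjunction with precisely this counit, coherently. My first approach would be to reduce to $\cX$ a $2$-category-like situation via the homotopy $2$-category: whether a 2-morphism is invertible, and whether a 2-morphism is a counit, can both be checked there. Checking there keeps the argument from running into higher coherences.
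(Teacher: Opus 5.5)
Your proposal is correct and follows the paper's proof: the paper also uses the universal properties of the localization and of $\cC\soft{\cL}$ to get a locally fully faithful restriction with the stated objects, subject to an adjointability condition on 1-morphisms, and then shows via \cref{BCeasy} and \cref{rem:BCeasy} that this condition is automatic, just as in your morphism step. Your worry about $F(r_g)$ needing to be a left adjoint does not arise, since \cref{def:marked-morphisms} generates $\cL(S)$ by the $s_g$ alone, which is the first resolution you propose.
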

\begin{proof}
	The universal property of $\cC\soft{\cL}_{R,S}$ implies that the restriction functor is locally fully-faithful, with objects as in the statement, but also requiring 1-morphisms to be adjointable natural transformations. By \cref{BCeasy,rem:BCeasy} this final condition is superfluous and thus the result.
\end{proof}

\begin{definition}\label{def:representableRS}
	Let $\cC$, $R$, and $S$ be as in \cref{def:marked-morphisms}. For $c \in \cC$, define $\BFree^{(1,0)}_{R,S}(\cC_{\upslash c}) \to \cC$ as the localization of $\BFree^{(1,0)}_{\cL}(\cC_{\upslash c}) \to \cC$ at the fibre-wise morphisms (and their cartesian transports) corresponding to the comparison maps of \cref{cons:comparison}.
\end{definition}

The proof of the next theorem is completely analogous to that of \cref{thm:addingadjoints}, and we therefore omit it.

\begin{theorem}\label{thm:fibmodelRS}
	Let $\cC$ be an $(\infty,2)$-category and let $\cC\soft{\cL}_{R,S}$ be as in \cref{def:CLRS}. Then $\cC\soft{\cL}_{R,S}$ is equivalent to the full sub-$(\infty,2)$-category of $\Fib^{(1,0)\mhyphen\bifib}_{/(\cC,\cL)}$ spanned by the objects $\BFree^{(1,0)}_{R,S}(\cC_{\upslash c}) \to \cC$, for $c \in \cC$.
\end{theorem}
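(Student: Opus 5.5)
The plan is to mirror the proof of \cref{thm:addingadjoints}, replacing the universal property of $\cC\soft{\cL}$ by the one of $\cC\soft{\cL}_{R,S}$ from \cref{thm:ffaddingsections}.

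\textbf{Step 1: Yoneda embedding into bifibrations.} Start from the Yoneda embedding $\cC\soft{\cL}_{R,S} \hookrightarrow \Fun(\cC\soft{\cL}_{R,S}^{\op},\Cat_{(\infty,1)})$. Rewrite the target as $\Fun(\cC\soft{\cL}_{R,S},\Cat_{(\infty,1)}^{\op})^{\op}$. Since $\cC\soft{\cL}_{R,S}$ is a localization of $\cC\soft{\cL}$ at $2$-morphisms, the latter is the full sub-$(\infty,2)$-category of $\Fun(\cC\soft{\cL},\Cat_{(\infty,1)}^{\op})$ on functors inverting the $c_f,c_g$. By the argument of \cref{thm:addingadjoints}, this gives a fully faithful functor into $\Fib^{(1,0)\mhyphen\bifib}_{/(\cC,\cL)}$. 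Its essential image consists of the $\cL$-bifibrations with $(\infty,1)$-categorical fibres for which the comparison $2$-morphisms become invertible. Concretely, this means that the fibrewise morphisms expressing the mates of the given retraction data are invertible. For $f\in R$, the induced comparison $s_f^*\Rightarrow (r_f^*)^{\mathrm{ra}}$ between transports must be an equivalence, and dually for $g\in S$.

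\textbf{Step 2: identifying the representables.} We identify the image of $\iota(c)$ with $\BFree^{(1,0)}_{R,S}(\cC_{\upslash c})$ by comparing mapping $(\infty,2)$-categories out of both into any bifibration $\cY$ in the essential image above.
\begin{itemize}
\item By Yoneda, maps out of the image of $\iota(c)$ are evaluation at $c$.
\item By \cref{def:representableRS}, maps out of $\BFree^{(1,0)}_{R,S}(\cC_{\upslash c})$ are maps of $\cL$-bifibrations out of $\BFree^{(1,0)}_{\cL}(\cC_{\upslash c})$ that invert the specified fibrewise morphisms and their cartesian transports.
\end{itemize}
The key point is that this inversion condition is automatic when $\cY$ lies in the essential image. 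A map of $\cL$-bifibrations preserves cartesian and cocartesian lifts over $\cL$, so it carries units and counits of the fibrewise transport adjunctions to those of $\cY$. Hence the images of the comparison maps are exactly the comparison maps in $\cY$, which are invertible by assumption. So restriction is an equivalence onto $\Fun^{\cL\mhyphen\bifib}_{/\cC}(\BFree^{(1,0)}_{\cL}(\cC_{\upslash c}),\cY)$, which by \cref{thm:bifib} and the free fibration property of $\cC_{\upslash c}$ is evaluation at $c$. We must also check that $\BFree^{(1,0)}_{R,S}(\cC_{\upslash c})$ itself lies in the essential image: that it is still an $\cL$-bifibration with $(\infty,1)$-fibres. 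For this, invoke \cite[Theorem 4.7.1]{AHM26}, since the localized class is stable under cartesian transport by design, together with \cref{prop:1dimnfibres}. Cocartesian lifts over $\cL$ survive because localization preserves adjunctions.

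\textbf{Step 3: essential surjectivity.} Conclude via essential surjectivity of $\cC\to\cC\soft{\cL}_{R,S}$. This follows from \cref{lem:softesssurj} composed with the localization at $2$-morphisms, which is bijective on objects.

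The main obstacle is Step 2's claim that the fibrewise comparison morphisms in $\BFree^{(1,0)}_{\cL}(\cC_{\upslash c})$ are precisely what the $2$-morphisms $c_f,c_g$ become under Yoneda. Equivalently, the localization of the fibration computes the fibration of the localized representable. I would verify this by tracing $c_f$ through the equivalence $\cC\soft{\cL}(x,c)\simeq \BFree^{(1,0)}_{\cL}(\cC_{\upslash c})\times_\cC\{x\}$, using that whiskering by $\eta_{r_f}$ corresponds to the fibrewise unit of the transport adjunction from \cref{cons:eta}.
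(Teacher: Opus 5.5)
Your proposal is correct and is the argument the paper has in mind: the paper omits the proof as ``completely analogous'' to that of \cref{thm:addingadjoints}, and your Steps 1--3 are that analogy with the universal property of $\cC\soft{\cL}_{R,S}$ substituted in. There is one point where ``localization preserves adjunctions'' is not a sufficient justification, and a small notational slip.

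The localized class is only closed under cartesian transport, so you still have to show that cocartesian transport over $\cL$ (precomposition with $u^{\mathrm{ra}}$) descends. This follows from the interchange law
\[
(\beta c_f)\circ(\phi s_f)=(\phi\, r_f^{\mathrm{ra}})\circ(\alpha c_f)
\quad\text{for }\phi\colon\alpha\Rightarrow\beta,
\]
together with its analogue for $c_g$ and $r_g$. These identities show, by two-out-of-three, that $\phi\, r_f^{\mathrm{ra}}$ and $\phi\, s_g^{\mathrm{ra}}$ are inverted.

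Also, because of the $\op$, the target of the comparison is $(r_f^{\mathrm{ra}})^*$, which is the \emph{left} adjoint of $r_f^*$. It is not $(r_f^*)^{\mathrm{ra}}$.
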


\section{Applications}

\subsection{The walking adjunction}\label{sec:walkingadjunction}
The goal of this section is to describe $\cC\soft{\cL}$ when $\cL=\cC=[1]$. This recovers a celebrated theorem of Riehl and Verity \cite{RiehlVerity2016FormalTheoryMonads}, which asserts that every left adjoint in an $(\infty,2)$-category extends, essentially uniquely, to a functor from the walking adjunction. In our framework, this takes the following form:

\begin{theorem}\label{thm:RVmi}
	Let $\cC=\cL=[1]$, and let $\varphi \colon [1] \to \Adj$ be the functor selecting the universal left adjoint. Then $\Adj$ corepresents the functor $\Fun(\cC,-)_{\cL}$ of \cref{def:leftlocalization1}, with the equivalence induced by restriction along $\varphi$.
\end{theorem}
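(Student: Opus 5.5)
The plan is to produce a comparison functor $\Psi\colon [1]\soft{[1]}\to\Adj$ and show it is an equivalence. We check essential surjectivity directly and full faithfulness on mapping $(\infty,1)$-categories, using the explicit formula of \cref{thm:mappingcatform}.

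\emph{Constructing $\Psi$.} Since $\Adj$ is a strict $2$-category in which $\varphi$ is a left adjoint, $\varphi$ lies in $\Fun([1],\Adj)_{\cL}$. The universal property of \cref{prop:softlocalization} then gives $\Psi\colon[1]\soft{[1]}\to\Adj$ with $\Psi\circ\iota\simeq\varphi$. Both sides have the two objects $0,1$, respectively $+,-$. By \cref{lem:softesssurj}, $\iota$ is essentially surjective, and $\varphi$ is bijective on objects, so $\Psi$ is essentially surjective. Once $\Psi$ is an equivalence, restriction along $\varphi$ induces the equivalence $\Fun(\Adj,-)\simeq\Fun([1],-)_{\cL}$, because $\Psi^*$ composed with the corepresenting equivalence for $[1]\soft{[1]}$ is restriction along $\Psi\circ\iota\simeq\varphi$.

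\emph{Full faithfulness.} For $x,y\in\{0,1\}$, \cref{thm:mappingcatform} gives
\[
[1]\soft{[1]}(x,y)\simeq[1]^{\infty}(x,y)[\cQ_x^{-1}].
\]
Here $[1]^{(n)}(x,y)$ is the poset of alternating zig-zags in $[1]$ from $x$ to $y$, with maps of zig-zags as morphisms. An object is a $0/1$-word subject to monotonicity along each leg, so these are finite posets. I would define an explicit functor $\pi_n\colon[1]^{(n)}(x,y)\to\Adj(x,y)$. Each leg $0\to1$ that is not an identity contributes a copy of the left adjoint $\varphi$, and each backward leg $1\leftarrow 0$ contributes its right adjoint; $\pi_n$ records the resulting word in $\Delta_+$, $\Delta_{\pm\infty}$ or $\Delta_{\intv}$. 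Concretely, the ordinal is given by the number of alternations $0\to1\leftarrow0$ (respectively $1\leftarrow0\to1$). Maps of zig-zags that raise or lower an intermediate vertex go to the corresponding units and counits, that is, to face or degeneracy maps. I would then check three things: that the $\pi_n$ are compatible with the transitions $t_n^*$ of \cref{rem:zig2}, that they send the generators \cref{eq:right,eq:left} of $\cQ_{n,x}$ to identities (those generators only merge consecutive legs through an equivalence), and that the functor they induce on $[1]^{\infty}(x,y)[\cQ_x^{-1}]$ agrees with the map induced by $\Psi$. The last point should follow because both are determined on the images of the unit and counit, by \cref{cons:eta}.

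\emph{Main obstacle.} The key step is to show that $\colim_n\pi_n$ exhibits $\Adj(x,y)$ as the localization of $[1]^{\infty}(x,y)$ at $\cQ_x$. I would argue via a Quillen Theorem~A/Dwyer--Kan criterion for localizations: it suffices that $\pi=\colim_n\pi_n$ is essentially surjective and that, for each object $I$ of $\Adj(x,y)$ (say $[m]$), the relevant comma category $\pi_{/I}$, localized at $\cQ$, is weakly contractible, with $\cQ$ exactly the class of morphisms inverted by $\pi$. My first attempt is to find, at each sufficiently large stage $n$, a canonical "reduced" zig-zag $I^{(k)}_n$ representing $[m]$. This is the normal form in which every non-identity leg alternates. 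I would then show that, inside the subcategory of the comma category consisting of $\cQ$-maps, $I^{(k)}_n$ is terminal; equivalently, that every object admits exactly one $\cQ$-map to it. This would give contractibility stage-wise, compatibly with the transition maps, and filtered colimits then preserve contractibility. This unique-morphism count is the genuinely combinatorial heart of the argument, and it is where I expect most of the work to lie.

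Finally, the case $(x,y)=(1,1)$ must give the augmented category $\Delta_+$, including the empty ordinal. The empty ordinal corresponds to the constant zig-zag at $1$ (the identity $1$-morphism), and composition is ordinal sum, which corresponds to concatenation of zig-zags. Checking that $\Psi$ respects composition is automatic, since $\Psi$ is a functor.
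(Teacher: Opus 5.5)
Your outer reduction (constructing $\Psi$, checking essential surjectivity, and using \cref{thm:mappingcatform} to reduce to showing that $\colim_n\pi_n$ exhibits $\Adj(x,y)$ as $[1]^{\infty}(x,y)[\cQ_x^{-1}]$) is sound. It is essentially the paper's reduction through \cref{prop:detecsoft}. The gap is in the step you call the main obstacle, and it has two parts.

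First, the criterion you invoke does not detect localizations. Since $|\cA[W^{-1}]|\simeq|\cA|$, asking that $\pi_{/I}$ \emph{localized at} $\cQ$ be weakly contractible is just the hypothesis of Quillen's Theorem~A, which controls classifying spaces only. For instance, take the functor from the span $\{c\leftarrow a\to c'\}$ to $[1]$ sending $a\mapsto 0$ and $c,c'\mapsto 1$. It is essentially surjective, its comma categories are contractible, it inverts only identities, and it is not a localization. You cannot fall back on the fibrewise criterion for (co)cartesian fibrations either: $\pi_0$ is neither kind of fibration, since components can be created or split in many incomparable ways.

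Second, the terminal object you hope for does not exist. A $\cQ$-map goes to an identity in $\Delta_\star$, so in the comma category only objects lying over $I$ via the identity can map to $(I^{(k)}_n,\id)$. Even within the fibre over $I$ there is no terminal object once $I$ has two elements. In the model $D(0,0)\simeq\cK^{\cell}_+$ of \cref{lem:posetgeom}, at a stage where admissible endpoints are $\leq N$, the subsets $[1,k]\cup[k+1,N]$ for $1\leq k<N$ are pairwise incomparable maximal elements of that fibre. In the colimit there are no maximal elements at all.

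The unique-map count you have in mind is that of \cref{prop:Qcontract}. It suffices there only because $\ddelta(n,n)$ is a poset, so one only needs $\Map(\id,\id)\simeq\ast$. In $\Adj$ the mapping categories $\Delta_+$, $\Delta_{\infty}$, $\Delta_{-\infty}$, $\Delta_{\intv}$ have many morphisms. Contractibility of fibres says nothing about mapping spaces between distinct objects of the localization.

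What is missing is a criterion that sees chains of morphisms, not just objects, together with a contractibility argument that does not rely on terminal objects. The paper uses the Rezk nerve. By \cite[Theorem 3.8]{MazelGee2019UniversalityRezkNerve}, it suffices that for every $[n]$ and every chain $\sigma\colon[n]\to\Delta_\star$, the poset $\cR_\sigma$ is weakly contractible (\cref{prop:assemblpart1}). Here $\cR_\sigma$ consists of lifts of $\sigma$ to $\cK^{\cell}_\star$, with morphisms the pointwise $\cW_\star$-transformations. The case $n=0$ is your fibre contractibility; the cases $n\geq 1$ are what pin down the morphisms of $\Delta_\star$.

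Because the $\cR_\sigma$ have no terminal objects, contractibility is proved by deformation. One fixes a generic lift $\vec P$ with pairwise distinct endpoints. One then moves the endpoints of an arbitrary lift to those of $\vec P$ one at a time, each step being a retraction related to the identity by a natural transformation. You would also need to prove, rather than assume, that every morphism inverted by $\pi$ becomes invertible after localizing at $\cQ$ (\cref{lem:pi0iso}).

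A minor point on variance: with $\varphi(0)=+$, the pair $(1,1)$ yields $\Adj(-,-)=\Delta_{\intv}\simeq\Delta_+^{\op}$. It is $(0,0)$ that yields $\Delta_+$, with the empty ordinal given by the constant zig-zag at $0$.
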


\begin{corollary}\label{cor:esssurj}
	The $(\infty,2)$-category $\cC\soft{\cL}$ can be obtained as an iterated pushout along the morphism $[1] \to \Adj$. 
\end{corollary}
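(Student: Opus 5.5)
The plan is to verify that an explicit pushout has the universal property of \cref{prop:softlocalization}. Let $S$ be a set of $1$-morphisms of $\cL$ that generates $\cL$ as a wide locally full sub-$(\infty,2)$-category. The most economical choice is one representative of each point of $\pi_0$ of the mapping spaces of $\cL^{\leq 1}$, which is a set up to the usual size conventions. Form the pushout in $\Cat_{(\infty,2)}$
\[
	P \coloneqq \cC \textstyle\coprod_{\coprod_{u\in S}[1]} \coprod_{u \in S}\Adj ,
\]
where the left map classifies the morphisms $u \in S$ and the right map is $\coprod_{u\in S}\varphi$. A pushout along a coproduct is the same as an iterated (possibly transfinite) pushout along the single map $\varphi\colon[1]\to\Adj$, attaching one copy of $\Adj$ for each $u\in S$. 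It therefore suffices to show $P\simeq \cC\soft{\cL}$ under $\cC$.

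First step: apply $\Fun(-,\cX)$ for an arbitrary $(\infty,2)$-category $\cX$. The internal hom $\Fun(-,\cX)$ sends colimits in $\Cat_{(\infty,2)}$ to limits of $(\infty,2)$-categories. This gives
\[
	\Fun(P,\cX) \simeq \Fun(\cC,\cX)\times_{\prod_{u\in S}\Fun([1],\cX)}\prod_{u\in S}\Fun(\Adj,\cX).
\]
By \cref{thm:RVmi}, each restriction $\varphi^*\colon\Fun(\Adj,\cX)\to\Fun([1],\cX)$ is fully faithful onto $\Fun([1],\cX)_{\mathrm{LAdj}}$, which is a locally full sub-$(\infty,2)$-category. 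Pullbacks of locally full inclusions are again locally full inclusions. Hence $\Fun(P,\cX)\to\Fun(\cC,\cX)$ is the inclusion of the locally full sub-$(\infty,2)$-category on the following data: functors $F$ with $F(u)$ a left adjoint for all $u\in S$, and transformations whose naturality squares at each $u\in S$ are adjointable.

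Second step: identify this with $\Fun(\cC,\cX)_{\cL}$ of \cref{def:leftlocalization1}. This is the only point requiring an argument, namely that the conditions on the generators $S$ imply the conditions on all of $\cL$. For objects, equivalences are left adjoints and composites of left adjoints are left adjoints, so every $1$-morphism of $\cL$ is sent to a left adjoint. Since $\cL$ is locally full, there is no condition on $2$-morphisms. For transformations, naturality squares at identities are trivially adjointable. Horizontal pasting of adjointable squares is adjointable, because the mate of a pasted square is the vertical composite of the mates. So the squares at composites of generators are adjointable. Adjointability is also invariant under replacing a morphism by an equivalent (2-isomorphic) one. Hence the two subcategories of $\Fun(\cC,\cX)$ coincide.

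These identifications are natural in $\cX$, so $P$ corepresents $\Fun(\cC,-)_{\cL}$. By uniqueness of corepresenting objects, $P\simeq\cC\soft{\cL}$ compatibly with the maps from $\cC$. The main obstacle is the compatibility of mates with pasting in the $(\infty,2)$-setting; I would handle it by reducing to the homotopy $2$-category of $\cX$, where both adjointness and invertibility of the mate are detected, and where the classical pasting lemma for mates applies.
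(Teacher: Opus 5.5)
Your proposal is correct and matches the argument the paper leaves implicit: the paper states this corollary without proof as an immediate consequence of \cref{thm:RVmi}, namely that gluing a copy of $\Adj$ along $[1]\to\Adj$ onto each generating $1$-morphism of $\cL$ yields an $(\infty,2)$-category corepresenting $\Fun(\cC,-)_{\cL}$. Your verification that conditions on generators suffice (closure of left adjoints under composition, of adjointable squares under pasting, and invariance under $2$-isomorphism) fills in exactly the details the paper omits; if your representatives are only chosen up to equivalence of source and target, also note that naturality squares along equivalences are automatically adjointable.
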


\begin{proposition}\label{prop:detecsoft}
	Let $\cL \subset \cC$ with $\cC$ an $(\infty,1)$-category, and let $\varphi \colon \cC \to \cD$ be a functor of $(\infty,2)$-categories. Suppose the following conditions are met:
	\begin{itemize}
		\item $\varphi$ is essentially surjective.
		\item $\varphi$ sends every morphism in $\cL$ to a left adjoint in $\cD$.
		\item For every $c \in \cC$ and $\varphi(c)\simeq d$, the canonical square
		\[
			\begin{tikzcd}
				\BFree^{(1,0)}_{\cL}(\cC_{\upslash c}) \arrow[d] \arrow[r] & \cD_{ \upslash d} \arrow[d] \\
				\cC \arrow[r,"\varphi"] & \cD
			\end{tikzcd}
		\]
		is a pullback.
	\end{itemize}
	Then the induced functor $\cC\soft{\cL} \to \cD$ is an equivalence of $(\infty,2)$-categories.
\end{proposition}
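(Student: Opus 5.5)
The induced functor $\Phi\colon \cC\soft{\cL}\to\cD$ is the one provided by \cref{prop:softlocalization}, since $\varphi$ sends $\cL$ to left adjoints. We will check that it is essentially surjective and fully faithful. Essential surjectivity is immediate: $\Phi\circ\iota\simeq\varphi$ and $\varphi$ is essentially surjective by hypothesis. Full faithfulness is the real content. By \cref{lem:softesssurj} every object of $\cC\soft{\cL}$ is of the form $\iota(x)$, so it suffices to show that for all $x,c\in\cC$ the functor
\[
	\cC\soft{\cL}(x,c)\longrightarrow \cD(\varphi x,\varphi c)
\]
is an equivalence of $(\infty,1)$-categories.

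We will compute both sides as fibres over $x\in\cC$. By \cref{thm:addingadjoints}, the left side is $\BFree^{(1,0)}_{\cL}(\cC_{\upslash c})\times_{\cC}\{x\}$. The right side is the fibre of $\cD_{\upslash d}\to\cD$ over $\varphi(x)$, with $d=\varphi(c)$. Pulling back along $\varphi$ identifies this with the fibre over $x$ of $\cC\times_{\cD}\cD_{\upslash d}\to\cC$. The pullback hypothesis then identifies the two fibres, and it remains to check that this identification is the map induced by $\Phi$.

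For that compatibility, we use that $\varphi^*\cD_{\upslash d}\to\cC$ is the $(1,0)$-fibration classifying $\cD(\varphi(-),d)$. This is an $\cL$-bifibration: $\varphi$ sends $\cL$ to left adjoints, so the transport functors are right adjoints, and \cref{prop:charactbifib} applies. Under the Yoneda-type identification in the proof of \cref{thm:addingadjoints}, the representable $\cC\soft{\cL}(-,c)$ corresponds to $\BFree^{(1,0)}_{\cL}(\cC_{\upslash c})$. The functor $\Phi$ induces a natural transformation
\[
	\cC\soft{\cL}(-,c)\to \cD(\Phi(-),\Phi c)=\cD(\varphi(-),d)
\]
of functors on $\cC\soft{\cL}^{\op}$. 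Restricting to $\cC$ and unstraightening gives a morphism of $\cL$-bifibrations
\[
	\BFree^{(1,0)}_{\cL}(\cC_{\upslash c})\to\varphi^*\cD_{\upslash d}.
\]
By the universal property of \cref{thm:bifib}, this morphism is determined by its restriction to $\cC_{\upslash c}$, where it sends $\id_c$ to $\id_d$. Hence it agrees with the canonical map of the square in the statement. Since that square is a pullback, this morphism is an equivalence over $\cC$, so it is an equivalence on every fibre.

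The main obstacle is this last identification. We must make sure that the \enquote{canonical square} of the statement is the same map as the one obtained from $\Phi$ by straightening, including its naturality and cartesian-morphism preservation. Our plan is to define the canonical map via the universal property of the free bifibration applied to $\cC_{\upslash c}\to\varphi^*\cD_{\upslash d}$, and then to invoke uniqueness in \cref{thm:bifib} for morphisms out of $\BFree$. The hypothesis that $\cC$ is an $(\infty,1)$-category ensures, via \cref{prop:1dimnfibres}, that all fibres are $(\infty,1)$-categories. This places us inside the essential image described in the proof of \cref{thm:addingadjoints}, so no extra $2$-categorical data is lost.
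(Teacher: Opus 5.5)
Your argument is correct and is essentially the paper's proof. Essential surjectivity follows from $\Phi\circ\iota\simeq\varphi$. Full faithfulness follows because the pullback square says that restricting $\cD(-,d)$ along $\Phi$ recovers the representable $\cC\soft{\cL}(-,c)$; the paper states this in one line, and you additionally check, via the universal property of $\BFree$, that the identification is the one induced by $\Phi$.

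One small remark: your appeal to \cref{prop:1dimnfibres} and to $\cC$ being an $(\infty,1)$-category is not needed. The fibres of $\cC_{\upslash c}$ are mapping $(\infty,1)$-categories in any case.
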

\begin{proof}
	Since the composite $\cC \to \cC\soft{\cL} \xrightarrow{\overline{\varphi}} \cD$ recovers $\varphi$, which is essentially surjective by assumption, it follows that $\overline{\varphi}$ is also essentially surjective. The final condition implies that restriction along $\overline{\varphi}$ preserves representable functors giving the result.
\end{proof}

To prove \cref{thm:RVmi}, we verify that $\varphi \colon [1] \to \Adj$ satisfies the conditions of \cref{prop:detecsoft}; establishing this will occupy the remainder of this section.

\begin{notation}
	For $j \in [1]$, let $\pi \colon [1]_{/j} \to [1]$ denote the canonical projection, and let $D_j \to [1]$ be the filtered colimit of the (dual of the) functor $T$ of \cref{cons:Tbifib}, taken relative to $\pi$. Write $D(i,j)$ for the fibre of $D_j$ over $i \in [1]$.
\end{notation}

\begin{remark}\label{rem:overlinemu}
	Observe that the canonical commutative square,
	\[
		\begin{tikzcd}
			{[1]_{/j}} \arrow[d] \arrow[r] & \Adj_{\upslash \varphi(j)} \arrow[d] \\
			{[1]} \arrow[r,"\varphi"] & \Adj
		\end{tikzcd}
	\]
	induces a morphism $\overline{\mu}_j \colon D_j \to \varphi^* \Adj_{\upslash \varphi(j)}$. Passing to fibres we obtain for every $i,j \in [1]$ a functor
	\[
		\overline{\mu}_{i,j} \colon D(i,j) \to \Adj(\varphi(i),\varphi(j)).
	\]
\end{remark}

\begin{construction}\label{cons:mucombinatorial}
	For $n \geq 0$, we consider the following pullback diagram
	\[
		\begin{tikzcd}
			D(i,j)^{(n)}  \arrow[r] \arrow[d] & \Fun(Z_n^{\star},[1]) \arrow[d,"\ev_{-1} \times \ev_n"] \\
			{[0]} \arrow[r,"j \times i "] & {[1]\times [1]}
		\end{tikzcd}
	\]
	with $\colim_{n \geq 0}D(i,j)^{(n)} \simeq D(i,j)$. We will construct functors $\overline{\mu}^{(n)}_{i,j} \colon D(i,j)^{(n)} \to \Adj(\varphi(i),\varphi(j))$, whose filtered colimit recovers $\overline{\mu}_{i,j}$.

	For every $f \colon Z_n^{\star} \to [1]$ we form the pullback along $i \colon \{1\} \to [1]$ and denote the resulting sub-poset by $f^{-1}(1)$; write $\pi_0(f^{-1}(1))$ for its set of connected components. Since the underlying graph of $Z_n^{\star}$ is a path, its pairwise disjoint connected subgraphs are linearly ordered by their position along it; applied to the components of $f^{-1}(1)$, this endows $\pi_0(f^{-1}(1))$ with a linear order, which we read off from the initial vertex $-1$. Moreover, since $i$ is a cocartesian fibration, the assignment $f \mapsto f^{-1}(1)$ is functorial with respect to natural transformations, and one checks that the induced maps on connected components are monotone. We have thus constructed a functor $\overline{\mu}_{i,j}^{(n)}$ valued in $\Delta_{+}$. Moreover, we have that:
	\begin{itemize}
		\item if $i=1$, $j=0$ this map factors through $\Delta_{\infty}$.
		\item if $i=0$ and $j=1$ this map factors through $\Delta_{-\infty}$.
		\item if $i=j=1$ this map factors through $\Delta_{\mathsf{int}}$.
	\end{itemize}
\end{construction}

The functor $\overline{\mu}_{i,j}$ descends to the localization associated to $\BFree^{(1,0)}_{[1]}([1]_{/j})\times_{[1]}\{i\}$, yielding a functor
\begin{equation}\label{eq:mu}
	\mu_{i,j} \colon \BFree^{(1,0)}_{[1]}([1]_{/j})\times_{[1]}\{i\} \to \Adj(\varphi(i),\varphi(j)).
\end{equation}
Our goal is to show that $\mu_{i,j}$ is an equivalence. 

\begin{lemma}\label{lem:pi0iso}
	Let $u$ be a morphism in $D(i,j)$ such that $\overline{\mu}_{i,j}(u)$ is invertible. Then the image of $u$ in $\BFree^{(1,0)}_{[1]}([1]_{/j})\times_{[1]}\{i\}$ is invertible. 
\end{lemma}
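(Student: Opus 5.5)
The approach is to reduce $u$, after stabilization, to a composite of generators of $\cQ$ and of morphisms that are already known to become invertible. Represent $u$ at some finite stage as a map of zig-zags $f\Rightarrow g$ in $D(i,j)^{(n)}$, i.e. a natural transformation between functors $Z_n^{\star}\to[1]$ with fixed endpoints; this is possible because $D(i,j)\simeq\colim_n D(i,j)^{(n)}$ and a morphism in a filtered colimit of $(\infty,1)$-categories is represented at a finite stage. Since $[1]$ is a poset, $u$ is just the pointwise inequality $f\leq g$, so $f^{-1}(1)\subseteq g^{-1}(1)$. By \cref{cons:mucombinatorial}, $\overline{\mu}_{i,j}(u)$ is the induced monotone map $\pi_0(f^{-1}(1))\to\pi_0(g^{-1}(1))$. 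It is invertible in $\Delta_+$ (respectively $\Delta_{\pm\infty}$, $\Delta_{\intv}$) if and only if it is a bijection. Concretely, the hypothesis says that every connected component of $g^{-1}(1)$ contains exactly one connected component of $f^{-1}(1)$. Hence $g$ is obtained from $f$ by enlarging each $1$-block of $f$ to the left and/or right, without creating new blocks and without merging blocks.

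Next I would factor $u$ into elementary steps. Each step enlarges a single $1$-block of the current zig-zag at one of its two boundaries, keeping all other vertices fixed. Since the image of a composite is invertible as soon as each factor's image is, it suffices to treat such an elementary enlargement. Here I would read off the generators of \cref{def:QnLocalizefibre} in the case $\cC=\cL=[1]$. In \cref{eq:right}, the only nontrivial instance with $v\colon s\to z$ equal to $0\to1$ forces $x=0$ and $t=1$, with top row $(0,0,0,1)$ and bottom row $(0,1,1,1)$. So this generator pushes the left boundary of a $1$-block outward by two vertices, across a pair of the form $\to\leftarrow$. Dually, \cref{eq:left} with $\alpha\colon a\to b$ equal to $0\to1$ pushes a boundary across a pair $\leftarrow\to$. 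Reflecting the diagrams treats right-hand boundaries in the same way. Consequently, any enlargement of a $1$-block by an even number of vertices factors as a composite of generators of $\cQ$, with all components outside the moved block being identities.

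The main obstacle will be the parity issue: enlarging a block by an odd number of vertices, in particular at the two ends of the zig-zag. The endpoints $-1$ and $n$ are pinned to $j$ and $i$, which is exactly why the factorizations through $\Delta_{\infty}$, $\Delta_{-\infty}$ and $\Delta_{\intv}$ occur. For these steps I would use the transition functors of \cref{rem:zig2}, which insert an identity edge and can therefore shift parity. My first attempt would be to show that, after applying $t_n^*$ (and possibly $t_{n+1}^*$) to both $f$ and $g$, an odd enlargement becomes an even one modulo maps of the following kind: maps that differ from an identity only by inserting constant segments, and hence are images under $T_{\ell,k}$ of equivalences or comparison $1$-morphisms. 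Such maps lie in $\cW^1$, which by \cref{lem:comparisonQW} becomes invertible in $\BFree^{(1,0)}_{[1]}([1]_{/j})$. If this direct approach gets stuck at the endpoints, the fallback is to use the fact from the proof of \cref{lem:comparisonQW} that comparison morphisms for $T_{k,k+2}$ are themselves of the forms \cref{eq:right} and \cref{eq:left}, together with 2-out-of-3 in the localization.

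Finally, combining these steps: each elementary factor becomes invertible in the localization $\cC^{\infty}_{/j}[\cQ^{-1}]\simeq\BFree^{(1,0)}_{[1]}([1]_{/j})$ of \cref{lem:comparisonQW}. Since $u$ is a composite of such factors, its image in the fibre $\BFree^{(1,0)}_{[1]}([1]_{/j})\times_{[1]}\{i\}$ is invertible, which is the claim of the lemma.
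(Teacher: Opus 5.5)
Your main line is the paper's argument. You pass to a finite-stage representative, use that invertibility of $\overline{\mu}_{i,j}(u)$ means $u_0^{-1}(1)\subseteq u_1^{-1}(1)$ is a bijection on components, and push each boundary of a block $J$ outward inside its component $I$ two vertices at a time, using the specialized forms of \cref{eq:right} and \cref{eq:left}.

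The gap is in the step you call the main obstacle. You leave ``odd enlargements'' unresolved, and the repair you sketch would not work. The map $t_n^*$ only repeats the last vertex of the zig-zag, so applying it to source and target cannot change the relative parity of an interior boundary. Nothing among the transition functors or comparison morphisms inserts constant segments in the middle of a zig-zag either. The fallback via 2-out-of-3 is too vague to count as an argument.

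The missing idea is that this case never occurs. For a functor $Z_n^{\star}\to[1]$, an edge $a\to b$ forces $f(a)\leq f(b)$, so every edge joining a $0$-vertex to a $1$-vertex points into the $1$-block. In $Z_n^{\star}$ the edge between $m-1$ and $m$ points right if and only if $m$ is odd. Hence every left boundary $p$ (with $f(p-1)=0$, $f(p)=1$) and every right boundary $q$ (with $f(q)=1$, $f(q+1)=0$) sits at an odd position. So whenever a boundary of $J$ differs from the corresponding boundary of $I$, their distance is even.

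At the two ends nothing moves at all. The values at $-1$ and $n$ are pinned to $j$ and $i$ in both source and target, so if $I$ contains an endpoint, the unique component $J\subseteq I$ contains it too.

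No reflection is needed either: \cref{eq:right} moves left boundaries and \cref{eq:left} moves right boundaries, matching the two possible orientations of a boundary edge. With this parity remark added, your even-step factorization covers every $u$, and the proof goes through exactly as in the paper.
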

\begin{proof}
	Specializing the generators of \cref{def:QnLocalizefibre}, we see that the localization is generated by morphisms with the following local forms:
	\[
		\begin{tikzcd}
	0 & 0 & 0 & 1 \\
	0 & 1 & 1 & 1{,}
	\arrow[ from=1-1, to=1-2]
	\arrow["\simeq", swap, from=1-1, to=2-1]
	\arrow[ from=1-2, to=2-2]
	\arrow["\simeq"', from=1-3, to=1-2]
	\arrow[ from=1-3, to=1-4]
	\arrow[ from=1-3, to=2-3]
	\arrow[swap, "\simeq ",from=1-4, to=2-4]
	\arrow[ from=2-1, to=2-2]
	\arrow["\simeq"', from=2-3, to=2-2]
	\arrow[ from=2-3, to=2-4]
\end{tikzcd} \quad \quad  	\begin{tikzcd}
	1 & 0 & 0 & 0 \\
	1 & 1 & 1 & 0{.}
	\arrow[swap,"\simeq ", from=1-1, to=2-1]
	\arrow[ from=1-2, to=1-1]
	\arrow["\simeq", from=1-2, to=1-3]
	\arrow[ from=1-2, to=2-2]
	\arrow[ from=1-3, to=2-3]
	\arrow[ from=1-4, to=1-3]
	\arrow[swap, "\simeq ", from=1-4, to=2-4]
	\arrow[ from=2-2, to=2-1]
	\arrow["\simeq", from=2-2, to=2-3]
	\arrow[ from=2-4, to=2-3]
\end{tikzcd}
	\]
	Let $u_0$ and $u_1$ denote the source and target zig-zags of $u$, respectively. Since $u$ is pointwise given by morphisms $0\to1$ or identities, we have an inclusion $u_0^{-1}(1)\subseteq u_1^{-1}(1)$. Moreover, the invertibility of $\overline{\mu}_{i,j}(u)$ implies that this inclusion induces a bijection on connected components. Thus every component $I$ of $u_1^{-1}(1)$ contains a unique component $J$ of $u_0^{-1}(1)$. If $u_1$ is constant, this bijection together with the endpoint conditions forces $u_0=u_1$, so there is nothing to prove.

	We may therefore work independently on each pair $J\subseteq I$. Whenever a boundary of $J$ differs from the corresponding boundary of $I$, the alternating orientation of the zig-zag forces the distance between them to be even. Moving that boundary outward two vertices at a time expresses the inclusion $J\subseteq I$ as a composite of morphisms having one of the two local forms displayed above. The right-boundary and endpoint cases are obtained in the same way, or dually by reversing the zig-zag. Performing this construction for every component factors $u$ as a composite of generators of the localization, and hence its image is invertible.
\end{proof}

The remaining input is a concrete geometric model for $\overline{\mu}_{i,j}$.

\begin{definition}\label{def:Kabs}
	Let $\cK$ be the set of closed subsets of $\mathbb{R}_{\geq 0}$ with finitely many connected components, ordered by inclusion. Each non-empty proper subset $L \in \cK$ is a finite disjoint union of compact intervals (possibly singletons) and at most one closed ray $[a,\infty)$. We denote by $\cK^{\cell}$ the subposet consisting of those objects whose finite endpoints belong to $\mathbb{N}$.
\end{definition}

\begin{definition}\label{def:Ksepinfty}
	We define several subposets of $\cK^{\cell}$ (see \cref{def:Kabs}) as follows:
	\begin{itemize}
		\item We denote by $\cK^{\cell}_+$ the subposet on those objects $L$ such that $0 \notin L$ and $L$ contains no closed ray.
		\item We denote by $\cK^{\cell}_{-\infty}$ the subposet on those objects $L$ such that $0 \in L$ and $L$ contains no closed ray.
		\item We denote by $\cK^{\cell}_{\infty}$ the subposet on those objects $L$ such that $0 \notin L$ and $L$ contains a closed ray.
		\item We denote by $\cK^{\cell}_{\intv}$ the subposet on those objects $L$ such that $0 \in L$ and $L$ contains a closed ray.
	\end{itemize}
\end{definition}

\begin{lemma}\label{lem:posetgeom}
	We have the following equivalences of posets: $D(1,0)\simeq \cK^{\cell}_{\infty}$, $D(0,1)=\cK^{\cell}_{-\infty}$, $D(0,0) \simeq \cK^{\cell}_+$ and $D(1,1)=\cK^{\cell}_{\intv}$.
\end{lemma}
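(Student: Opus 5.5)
The plan is to make the objects of $D(i,j)$ completely explicit and read off a subset of $\mathbb{R}_{\geq 0}$ from each. By \cref{cons:mucombinatorial}, an object of $D(i,j)^{(n)}$ is a functor $f\colon Z_n^{\star}\to[1]$ with $f(-1)=j$ and $f(n)=i$. Since $[1]$ is a poset, $D(i,j)^{(n)}$ is a poset, ordered pointwise. Because a filtered colimit of posets along injective, order-embedding transition maps is again a poset, $D(i,j)$ is also a poset. The transition maps are restriction along $t_n$ of \cref{rem:zig2}, which duplicate the last vertex. Rather than working at the $x$-end of the zig-zag, I will reindex from the $y$-end, that is from the vertex $-1$, which is fixed. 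Then each $D(i,j)^{(n)}$ embeds into the union, and the union is the set of functors $f\colon Z_\infty^{\star}\to[1]$ that are eventually constant with value $i$. Here $Z_\infty^\star$ is the infinite alternating zig-zag starting at $-1$.

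Next I would identify such an $f$ with a subset of $\mathbb{R}_{\geq 0}$. Label the vertex $-1$ by $0$ and the remaining vertices $0,1,2,\dots$ by positive positions, rescaled so that the vertices where arrows point inward correspond to integer points. Send $f$ to the union of the closed cells spanned by the vertices with $f=1$: a vertex on its own gives a point, and a connected run gives an interval. The functoriality constraint $f(a)\le f(b)$ for each arrow $a\to b$ means that a component of $f^{-1}(1)$ can only begin and end at vertices that are sources of the alternating arrows. Hence, after rescaling, every finite endpoint lies in $\mathbb{N}$, which is exactly the cell condition of \cref{def:Kabs}.

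The four cases then follow from the endpoint data:
\begin{itemize}
\item $0\in L$ if and only if $j=f(-1)=1$;
\item $L$ contains a closed ray if and only if $f$ is eventually $1$, that is, if and only if $i=1$;
\item finitely many components is automatic, since $f$ is eventually constant.
\end{itemize}
This gives the four subposets of \cref{def:Ksepinfty}. Pointwise order on functors corresponds to inclusion of the associated subsets, so the assignment is an order embedding.

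The main work, and the step where I expect the main obstacle, is making the bookkeeping of the rescaling precise:
\begin{enumerate}
\item Check that every cell-subset with the prescribed $0$/ray behaviour arises from some $f$, so that the map is surjective. This amounts to showing that each interval with integer endpoints, and each singleton, is realised by a run of $1$'s compatible with the arrow directions.
\item Show that the filtered colimit really is the union, so that the colimit identification is correct. Since the transitions append at the $x$-end, I would first re-express each $D(i,j)^{(n)}$ via the symmetric convention in which zig-zags of different lengths are compared from the $y$-end, using that the endpoint $n$ is constant with value $i$.
\end{enumerate}

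In case the naive scale does not make singletons and intervals match the parity of source vertices, I will first fix the parity convention by hand on $Z_3^{\star}$. The goal there is to choose the scaling, possibly by a factor of $1/2$, so that cells correspond exactly to integer-endpoint intervals.
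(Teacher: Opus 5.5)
Your proposal is correct and is essentially the paper's argument. The paper places vertex $v$ of $Z_n^{\star}$ at $(v+1)/2$, sends each run of $1$'s to the interval between its extreme positions (and, when $i=1$, the final run to a ray), and checks compatibility with the transition maps $t_n^*$. Your passage to eventually constant functors on the infinite zig-zag produces the ray automatically and lets you treat all four cases at once.

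There is one slip to fix. Since $f(a)\le f(b)$ for every arrow $a\to b$, a $1$ at a source forces $1$ at both adjacent targets. So runs of $1$'s begin and end at targets (your ``inward'' vertices $-1,1,3,\dots$), not at sources. This is exactly why your scaling, which is the paper's $v\mapsto (v+1)/2$, puts every finite endpoint in $\mathbb{N}$, and no further rescaling is needed.
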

\begin{proof}
	We only verify the case of $D(1,1)$, the remaining cases being similar. Given a zig-zag $f \colon Z_n^{\star} \to [1]$ with $f(-1)=f(n)=1$, assign to each $j \in Z_n^{\star}$ the point $(j+1)/2 \in \mathbb{R}_{\geq 0}$. To produce an element of $\cK^{\cell}_{\intv}$, map each connected component of $1$'s to the interval determined by its endpoints, with one exception: writing $j_0$ for the smallest index with $f(k)=1$ for all $j_0 \leq k \leq n$, we instead associate $[(j_0+1)/2,\infty)$ to that final connected component. This assignment yields a map $D(1,1)^{(n)} \to \cK^{\cell}_{\intv}$ compatible with the transition maps; passing to filtered colimits, we obtain a map $D(1,1) \to \cK^{\cell}_{\intv}$, which is easily verified to be an isomorphism.
\end{proof}

\begin{figure}[ht]
\centering
\begin{tikzcd}[row sep=1.5cm]
\begin{tikzpicture}[
    baseline=(current bounding box.center),
    x=0.72cm,
    y=0.85cm,
    one/.style={
        circle,
        draw=none,
        fill=BrickRed,
        fill opacity=0.10,
        text opacity=1,
        text=BrickRed,
        minimum size=4.8mm,
        inner sep=0pt,
        font=\small\bfseries
    },
    zero/.style={
        text=black!78,
        inner sep=1pt,
        font=\small\bfseries
    },
    ordinary edge/.style={
        ->,
        thick,
        black!65,
        shorten <=3pt,
        shorten >=3pt
    },
    red edge/.style={
        ->,
        very thick,
        BrickRed,
        shorten <=2pt,
        shorten >=2pt
    }
]
    \node[one]  (v0)  at (0,1)  {$1$};
    \node[zero] (v1)  at (1,0)  {$0$};
    \node[zero] (v2)  at (2,1)  {$0$};
    \node[zero] (v3)  at (3,0)  {$0$};
    \node[one]  (v4)  at (4,1)  {$1$};
    \node[zero] (v5)  at (5,0)  {$0$};
    \node[zero] (v6)  at (6,1)  {$0$};
    \node[zero] (v7)  at (7,0)  {$0$};
    \node[one]  (v8)  at (8,1)  {$1$};
    \node[one]  (v9)  at (9,0)  {$1$};
    \node[one]  (v10) at (10,1) {$1$};

    \draw[ordinary edge] (v1) -- (v0);
    \draw[ordinary edge] (v1) -- (v2);
    \draw[ordinary edge] (v3) -- (v2);
    \draw[ordinary edge] (v3) -- (v4);
    \draw[ordinary edge] (v5) -- (v4);
    \draw[ordinary edge] (v5) -- (v6);
    \draw[ordinary edge] (v7) -- (v6);
    \draw[ordinary edge] (v7) -- (v8);

    \draw[red edge] (v9) -- (v8);
    \draw[red edge] (v9) -- (v10);
\end{tikzpicture}
\arrow[
    d,
    black!68,
    thick,
    shorten <=7pt,
    shorten >=7pt,
    "{j\longmapsto (j+1)/2}"',
    font=\small
]
\\
\begin{tikzpicture}[
    baseline=(current bounding box.center),
    x=1.35cm,
    y=0.7cm
]
    \draw[->, very thick, black!68]
        (-0.25,0) -- (5.75,0)
        node[right, text=black!78] {$\mathbb R_{\geq 0}$};

    \foreach \x in {0,...,5} {
        \draw[black!60] (\x,0.10) -- (\x,-0.10);
        \node[below=4pt, text=black!75, font=\scriptsize]
            at (\x,0) {$\x$};
    }

    \fill[BrickRed] (0,0) circle (3pt);
    \fill[BrickRed] (2,0) circle (3pt);
    \draw[BrickRed, line width=5pt, line cap=round]
        (4,0) -- (5.55,0);

    \node[above=7pt, text=BrickRed, font=\small]
        at (0,0) {$\{0\}$};
    \node[above=7pt, text=BrickRed, font=\small]
        at (2,0) {$\{2\}$};
    \node[above=7pt, text=BrickRed, font=\small]
        at (4.75,0) {$[4,\infty)$};
\end{tikzpicture}
\end{tikzcd}

\caption{An object of $D(1,1)$ and its corresponding element
$\{0\}\cup\{2\}\cup[4,\infty)$ of $\cK^{\cell}_{\intv}$.
The connected components of vertices labelled $1$ determine the red
components of the cellular subset.}
\label{fig:zigzag-cellular-subset}
\end{figure}

\begin{remark}\label{rem:posetgeom}
	Under the identifications of \cref{lem:posetgeom}, the map $\overline{\mu}_{i,j}$ is identified with the functor $\pi_0$ sending $L \subseteq \mathbb{R}_{\geq 0}$ to its set of connected components, ordered from left to right.
\end{remark}

\begin{definition}
	Let $\star \in \{+, -\infty,\infty, \intv \}$ and let $\pi_0 \colon \cK_{\star}^{\cell}\to \Delta_{\star}$ be the functor of \cref{rem:posetgeom}. Denote by $\cW_{\star} \subset \cK_{\star}^{\cell}$ the wide subposet on those morphisms which are inverted by $\pi_0$. 
\end{definition}

\begin{proposition}\label{prop:assemblpart1}
	Let $\star \in \{+, -\infty,\infty, \intv \}$. Then for every $[n] \in \Delta$, postcomposition with $\pi_0$ induces an equivalence
	\[
		|\Fun_{\cW_\star}([n],\cK_\star^{\cell})| \xrightarrow{\simeq} \Fun([n],\Delta_\star)^{\simeq}.
	\]
	Here the source denotes the geometric realization of the wide subposet of $\Fun([n],\cK_\star^{\cell})$ whose morphisms are the natural transformations pointwise in $\cW_{\star}$.
\end{proposition}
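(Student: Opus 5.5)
The plan is to show that the source of the map splits as a disjoint union indexed by the target, with each summand contractible. Since $\Delta_\star$ has no nontrivial automorphisms, $\Fun([n],\Delta_\star)^{\simeq}$ is a discrete set, namely the set of chains $\sigma=(\sigma_0\to\cdots\to\sigma_n)$ in $\Delta_\star$. Write $\Fun_{\cW_\star}([n],\cK^{\cell}_\star)$ for the wide subposet in the statement.

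\textbf{Step 1: decomposition.} A pointwise-$\cW_\star$ natural transformation $L_\bullet\subseteq L'_\bullet$ has $\pi_0(L_k)\to\pi_0(L'_k)$ bijective for every $k$. Hence $\pi_0\circ L_\bullet$ and $\pi_0\circ L'_\bullet$ are identified as chains in $\Delta_\star$. It follows that
\[
	\Fun_{\cW_\star}([n],\cK^{\cell}_\star)\simeq\coprod_{\sigma}F_\sigma ,
\]
where $F_\sigma$ is the poset of chains $L_0\subseteq\cdots\subseteq L_n$ with $\pi_0(L_\bullet)=\sigma$, ordered by pointwise inclusions that induce bijections on components. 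Geometric realization commutes with coproducts, so it suffices to show that each $F_\sigma$ is nonempty and weakly contractible.

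\textbf{Step 2: nonemptiness.} This amounts to realizing any chain of monotone maps $\sigma$ geometrically. I would construct $L_n$ first, taking one component for each element of $\sigma_n$: unit-length cellular intervals placed far apart, with the last one a ray in the cases $\star=\infty,\intv$ and the first one containing $0$ in the cases $\star=-\infty,\intv$. Then, descending in $k$, I would place inside each component $I$ of $L_{k+1}$ one disjoint cellular interval for every element of the fibre of $\sigma_k\to\sigma_{k+1}$ over $I$. Before each step I rescale everything by a large integer so that enough room is available. The endpoint conditions defining the four subposets $\cK^{\cell}_\star$ are arranged so that the required top- or bottom-preservation of the maps in $\Delta_\star$ is exactly what allows this placement. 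An empty fibre simply means that $I$ receives nothing.

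\textbf{Step 3: contractibility.} I would induct on $n$ using the projection $F_\sigma\to F_{\sigma_n}$, $L_\bullet\mapsto L_n$, which is a cocartesian-type projection of posets. Its fibre over $L_n$ is a product, over the components $I$ of $L_n$, of posets of the same shape as $F$ but with $\mathbb R_{\geq0}$ replaced by the interval $I$ and with a shorter chain. By Quillen's Theorem A, it then suffices to treat the case $n=0$: for a fixed ordinal, the poset of cellular subsets of a given interval with that component count and the prescribed endpoint behaviour, under component-bijective inclusions, is contractible.

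For this base case I would write down an explicit zigzag of natural transformations from the identity to a constant functor. First, the functor $\lambda$ shrinks each bounded component to its left endpoint, which gives $\lambda\leq\id$; the ray and the component containing $0$ are kept as they are. On configurations of points, I would then use the monotone "fill towards the standard configuration" functor. This sends each point $a_i$ to the cellular interval between $a_i$ and the $i$-th point of a fixed, sufficiently sparse standard configuration, after first rescaling, and this rescaling is achieved by a zigzag through the subdivision $\mathbb N\subset\tfrac12\mathbb N$ reindexed. These interval pieces are disjoint after rescaling, so all the maps involved lie in $\cW_\star$.

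The main obstacle is making Step 3 genuinely natural. Componentwise filling can merge components, and the rescaling needed to prevent this is not an inclusion. My first attempt would be to work with the colimit over the rescalings $\times 2^m$, that is, to pass to dyadic endpoints, where $F_\sigma$ becomes a filtered union, and then check that realization is unchanged by cofinality. If that fails, the alternative is to identify $|F_\sigma|$ with a configuration space of ordered disjoint intervals, which is convex and hence contractible.
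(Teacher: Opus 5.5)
\textbf{There is a genuine gap, in Step 3.} Your Step 1 is exactly the paper's reduction: the target is discrete, so it suffices that each fibre $F_\sigma$ (the paper's $\cR_\sigma$) is contractible. Your Step 2 supplies the nonemptiness that the paper only asserts. Step 3 is the actual content of the proposition, and it does not go through as written, for two reasons.

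\emph{The Quillen A reduction fails because its fibres can be empty.} Endpoints lie in $\mathbb{N}$, so a component $[a,b]$ of $L_n$ can contain at most $b-a+1$ components of $L_{n-1}$. For instance, take $\star=+$ and $\sigma=([1]\to[0])$; then the fibre of $F_\sigma\to F_{\sigma_n}$ over $L_1=\{3\}$ is empty. To repair this you would have to restrict the base to the upward-closed subposet of admissible $L_n$, which carries lower bounds on component lengths. The ``$n=0$ case'' you would then need is not the poset you describe. Moreover, the fibre factors live in bounded ambient intervals whose ends may be touched, so the induction hypothesis would have to be set up for a larger family of posets.

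\emph{Your base-case contraction is not functorial.} Shrinking each bounded component to its left endpoint does not preserve component-bijective inclusions. For example, $[2,3]\subseteq[1,3]$ is sent to $\{2\}$ and $\{1\}$, which are incomparable, so $\lambda\leq\id$ is not a natural transformation on the poset. The filling step merges components, as you note yourself. Neither fallback closes this. Passing to dyadic endpoints requires showing that the doubling map $L\mapsto 2L$ induces an equivalence on realizations, which is not a cofinality statement. Identifying $|F_\sigma|$ with a convex configuration space restates what has to be proved.

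\textbf{The missing idea is the paper's one-endpoint-at-a-time retraction.} Fix a generic $\vec P\in\cR_\sigma$ whose labelled endpoints are pairwise distinct, with consecutive levels sharing no endpoint apart from the forced $0$ and $\infty$. Then move the endpoints of an arbitrary $\vec K$ towards those of $\vec P$ one at a time:
\begin{enumerate}
\item In increasing order of the $\vec P$-positions, replace each endpoint by the minimum of itself and the corresponding endpoint of $\vec P$.
\item Then do the same with maxima, in decreasing order.
\end{enumerate}

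Each single move $r_s$ has three properties:
\begin{enumerate}
\item It is monotone, since $\min$ or $\max$ with a constant respects the order, in which left endpoints decrease and right endpoints increase.
\item It is a retraction onto the next stage of the filtration.
\item It comes with a natural transformation $\id\to\iota_s r_s$ or $\iota_s r_s\to\id$, because only one interval grows or only one shrinks.
\end{enumerate}
The ordering dictated by $\vec P$ is exactly what preserves nesting and component bijectivity without any rescaling. For instance, $b(K)^{\alpha-1}_j\leq b(P)^{\alpha-1}_j<a(P)^{\alpha}_j$ prevents merging. The final stage is $\{\vec P\}$, so $\cR_\sigma$ is contractible uniformly in $n$, with no induction on $n$ and no appeal to Theorem A.
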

\begin{proof}
	Since the target is discrete, it will be enough to show that the poset $\cR_{\sigma}$ defined by the pullback square
	\[
		\begin{tikzcd}
			\cR_{\sigma} \arrow[r] \arrow[d] & \Fun_{\cW_\star}({[n]},\cK_\star^{\cell}) \arrow[d] \\
			{[0]} \arrow[r,"\sigma"] & \Fun([n],\Delta_\star)^{\simeq}
		\end{tikzcd}
	\]
	is contractible for every $\sigma$. Write $\sigma(j)=[\ell_j]$ and $u_j=\sigma(j\to j+1)$. An object $\vec K\in\cR_\sigma$ is a chain $K_0\subseteq\cdots\subseteq K_n$ where each level has connected components $K_j^\alpha$ indexed by $\alpha\in[\ell_j]$, with $K_j^\alpha\subseteq K_{j+1}^{u_j(\alpha)}$.

    Suppose that $\star=+$. Then if $\sigma$ is constant on the initial object it follows that $\cR_\sigma=[0]$ which is contractible. More generally, let $i \in [n]$ be the smallest index such that $\sigma(i)\neq \emptyset$ and denote by $\widehat{\sigma}=\sigma_{|[i,n]}$. Then it follows that $\cR_{\sigma}\simeq \cR_{\widehat{\sigma}}$ so we may assume without loss of generality that $\sigma$ does not hit the initial object. A similar argument shows the dual statement for $\star=\intv$ and the terminal object.

	For every $\sigma$, we can find an object $\vec P\in\cR_\sigma$ such that:
\begin{itemize}
    \item Each component $P_j^\alpha$ has distinct endpoints.
    \item $P_j^\alpha$ and $P_{j+1}^{u_j(\alpha)}$ have no common endpoint, except for $0$ on the first component when $\star\in\{-\infty,\intv\}$, and $\infty$ on the last component when $\star\in\{\infty,\intv\}$.
\end{itemize}
Here an unbounded component is regarded as having right endpoint $\infty$. 

Write $K_j^\alpha=[a(K)_j^\alpha,b(K)_j^\alpha]$, where the endpoints take values in $\mathbb{N}\cup\{\infty\}$. For $\vec K\in\cR_\sigma$, let $I(\vec K)$ be the set of labelled endpoints $a(K)_j^\alpha,b(K)_j^\alpha$, for $0\leq j\leq n$ and $\alpha\in[\ell_j]$, omitting the left-endpoint labels forced to be $0$ for $\star \in \{-\infty,\intv\}$ and the right-endpoint labels forced to be $\infty$ for $\star \in \{\infty,\intv\}$. These are regarded as distinct labels even when their numerical values coincide. By our choice of $\vec P$, the values of the endpoints in $I(\vec P)$ are pairwise distinct and hence inherit a linear order from $\mathbb{R}_{\geq 0}$. By construction we have a canonical isomorphism $I(\vec{P})\simeq I(\vec{K})$ sending $a(P)^{\alpha}_j \mapsto a(K)^{\alpha}_j$ and similarly for the right endpoints, which we use to transfer the linear order of $I(\vec{P})$ to any other $I(\vec{K})$. Note that the order on $I(\vec{K})$ is also induced from $\mathbb{R}_{\geq 0}$ and that we use $I(\vec{P})$ only to disambiguate when two endpoints coincide.

For $i \in I(\vec{P})$, denote by $\cR_\sigma^{\leq i}$ the full subposet on those chains such that for every $j \leq i$, the endpoint labelled by $j$ in $I(\vec{K})$ is less than or equal to the corresponding endpoint in $I(\vec{P})$. This yields a filtration
\[
	\cL_\sigma=\cR_{\sigma}^{\leq \max(I(\vec{P}))} \subseteq \cdots\subseteq \cR_{\sigma}^{\leq \min(I(\vec{P}))} \subseteq \cR_\sigma.
\]
We will show that each step induces a homotopy equivalence on geometric realizations.

Numbering the elements of $I(\vec{K})$ increasingly from $1$ to $N$, set $\cR_\sigma^{\leq 0}=\cR_\sigma$. Let $\vec K\in\cR_\sigma^{\leq s}$ for $0\leq s<N$, and consider the $(s+1)$-st endpoint $x(K)_j^\alpha\in I(\vec K)$, where $x\in\{a,b\}$.

We claim that replacing this endpoint with $\min(x(K)_j^\alpha,x(P)_j^\alpha)$, leaving all other endpoints unchanged, yields an object $r_s(\vec{K})\in\cR_\sigma^{\leq s+1}$. We assume that $x=a$, the other case being analogous. Moreover, we can assume that $a(P)_j^\alpha < a(K)_j^\alpha$ since otherwise the claim is trivially true. We make the following observations
\begin{itemize}
	\item If $x=a$ we have that $K_j \subset r_s(\vec{K})_j$ since we are possibly expanding one interval.
	\item For $0\leq i\leq n$ with $i \neq j$, we have that $r_s(\vec K)_i=K_i$. In particular, if $0\leq i<j$ then $r_s(\vec K)_i \subset r_s(\vec K)_j$, and if $j+1\leq i<n$, we have that $r_s(\vec K)_i \subset r_s(\vec K)_{i+1}$.
\end{itemize} 
If $j<n$, it remains to check that $r_s(\vec{K})_j \subset r_s(\vec{K})_{j+1}$; for $j=n$, there is no further nesting condition. In the former case, this amounts to showing that
\[
	[a(P)_j^\alpha,b(K)_j^{\alpha}] \subset [a(K)_{j+1}^{u_j(\alpha)},b(K)_{j+1}^{u_j(\alpha)}].
\]
By construction we have that $a(P)_{j+1}^{u_j(\alpha)} \leq a(P)_{j}^{\alpha}$ which implies by our assumptions that $ a(K)_{j+1}^{u_j(\alpha)} \leq  a(P)_{j+1}^{u_j(\alpha)}$. Therefore we have
\[
	[a(P)_{j}^{\alpha},b(K)_j^{\alpha}]\subset [a(P)_{j+1}^{u_j(\alpha)},b(K)_{j+1}^{u_j(\alpha)}]\subseteq [a(K)_{j+1}^{u_j(\alpha)},b(K)_{j+1}^{u_j(\alpha)}],
\]
as desired. To conclude that $r_s(\vec{K})\in \cR_\sigma$ we must show that the canonical map $K_j \subset r_s(\vec{K})_j$ is an equivalence on connected components. For $\alpha=0$, there is no preceding component. For $\alpha>0$, the only possible obstruction is that $[a(P)_{j}^{\alpha},b(K)_j^{\alpha}]$ intersects $[a(K)_j^{\alpha-1},b(K)_j^{\alpha-1}]$. Note that since $b(K)_{j}^{\alpha-1} \leq b(P)_{j}^{\alpha-1}<a(P)_{j}^{\alpha}$ this cannot happen.

 By monotonicity of the minimum it follows that the assignment above defines a functor
\[
	r_{s}\colon \cR_{\sigma}^{\leq s}  \to \cR_{\sigma}^{\leq s+1},
\]
which is a retraction of the canonical inclusion $\iota_s$. Moreover, according to whether $x=a$ or $x=b$, the corresponding interval expands or shrinks, giving a natural transformation $\id\to\iota_s\circ r_s$ or $\iota_s\circ r_s\to\id$, respectively, and hence the desired homotopy equivalence.

To finish the proof, for $1\leq j\leq N+1$ we define a dual filtration $\cL_\sigma^{\geq j}\subseteq\cL_\sigma$ by requiring, for every $j\leq s\leq N$, the $s$-th endpoint in $I(\vec K)$ to be greater than or equal to its counterpart in $I(\vec P)$. Thus $\cL_\sigma^{\geq N+1}=\cL_\sigma$, and since the reverse inequalities already hold in $\cL_\sigma$, we have $\cL_\sigma^{\geq1}=\{\vec P\}$. Replacing minima with maxima and treating endpoints in decreasing order, the same argument shows that each inclusion induces a homotopy equivalence on geometric realizations. Thus $\cR_\sigma$ is contractible, as required.
\end{proof}
\medskip

\begin{proof}[Proof of \cref{thm:RVmi}]

  We need to verify that the third condition in \cref{prop:detecsoft} holds. Combining \cref{thm:mappingcatform,rem:overlinemu,lem:pi0iso}, it will be enough to show that the map
  \[
 	\overline{\mu}_{i,j} \colon D(i,j) \to \Adj(\varphi(i),\varphi(j)),
   \]
   is a localization at those morphisms inverted by $\overline{\mu}_{i,j}$. \cref{lem:posetgeom} identifies $D(i,j)$ with $\cK^{\cell}_\star$ for $\star \in \{+,-\infty,\infty,\intv\}$, under which $\overline{\mu}_{i,j}$ becomes $\pi_0 \colon \cK^{\cell}_\star \to \Delta_\star$ by \cref{rem:posetgeom}.

   By \cref{prop:assemblpart1}, $\pi_0$ induces a degreewise equivalence from the Rezk nerve of $(\cK^{\cell}_\star,\cW_\star)$ to the complete Segal nerve of $\Delta_\star$. By \cite[Theorem 3.8]{MazelGee2019UniversalityRezkNerve}, the complete Segal space associated to the former presents $\cK^{\cell}_\star[\cW_\star^{-1}]$. Thus $\pi_0$ is the required localization, and each $\mu_{i,j}$ is an equivalence.
\end{proof}

\begin{corollary}\label{cor:adfjffuniv}
	Let $\cC=\Ret$, $R=\{\id \colon \Ret \to \Ret\}$ and $S=\emptyset$. Then $\cC\soft{\cL}_{R,S} \simeq \Adjffr$. Dually, if $R=\emptyset$ and $S=\{\id \colon \Ret \to \Ret\}$, we have $\cC\soft{\cL}_{R,S}=\Adjffl$.
\end{corollary}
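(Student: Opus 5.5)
We compare universal properties and never compute $\cC\soft{\cL}_{R,S}$ directly. Consider the case $R=\{\id\}$, $S=\emptyset$; the other case is the formal dual, obtained by reversing $2$-morphisms. Here $\cL=\cL(R)$ is generated by $r$. By \cref{thm:ffaddingsections}, restriction along $\Ret\to\cC\soft{\cL}_{R,S}$ identifies $\Fun(\cC\soft{\cL}_{R,S},\cX)$ with the full sub-$(\infty,2)$-category of $\Fun(\Ret,\cX)$ on those section pairs whose equivalence $r\circ s\simeq\id$ is the counit of an adjunction $r\dashv s$. Equivalently, $r$ is a reflective localization with right adjoint $s$, so $s$ is a fully faithful right adjoint. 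It therefore suffices to show that the functor $\Ret\to\Adjffr$ induces a fully faithful restriction $\Fun(\Adjffr,\cX)\to\Fun(\Ret,\cX)$ with exactly the same essential image. The Yoneda lemma then gives $\cC\soft{\cL}_{R,S}\simeq\Adjffr$ compatibly with the maps from $\Ret$.

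\textbf{Step 1: universal property of $\Adjffr$.} By \cref{thm:RVmi}, $\Adj$ corepresents left adjoints, with $1$-morphisms the adjointable transformations. By \cref{def:adjffr}, $\Adjffr$ is the localization of $\Adj$ at the counit. Hence $\Fun(\Adjffr,\cX)$ is the locally full sub-$(\infty,2)$-category of $\Fun([1],\cX)_{\mathrm{LAdj}}$ on those left adjoints whose counit is invertible, i.e.\ reflective localizations $r$, with adjointable squares as morphisms. Objects are pinned down here up to a contractible choice of adjunction data. By \cref{BCeasy} and \cref{rem:BCeasy}, adjointability is automatic for squares between reflective localizations once the square commutes, since the mate is then invertible. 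So $\Fun(\Adjffr,\cX)$ is the full sub-$(\infty,2)$-category of $\Fun([1],\cX)$ on the reflective localizations.

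\textbf{Step 2: comparison via $\Ret$.} The functor $\Ret\to\Adjffr$ sends $s$ to the fully faithful right adjoint and $r$ to the left adjoint. Restriction to $\Ret$ followed by restriction along $r\colon[1]\to\Ret$ recovers the description in Step 1. We must therefore check that the data of $s$ and the homotopy $rs\simeq\id$ are uniquely determined by $r$ in this situation. This is where \cref{prop:adjffcofinal} enters. Strong $(0,1)$-cofinality of $\Ret\to\Adjffr$, together with the fact that the functor categories can be expressed through strong limits over these indexing categories, shows that restriction along $\Ret\to\Adjffr$ is fully faithful on functor $(\infty,2)$-categories. Concretely, both functor categories embed into cotensors, and cofinality identifies the relevant limits, giving local full faithfulness. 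The essential image then consists of section pairs extending to $\Adjffr$, which are exactly those with $rs\simeq\id$ a counit of $r\dashv s$. This matches \cref{thm:ffaddingsections}.

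\textbf{Main obstacle.} The delicate step is turning \cref{prop:adjffcofinal}, which is a statement about limits, into full faithfulness of $\Fun(\Adjffr,\cX)\to\Fun(\Ret,\cX)$. My first attempt would be to apply the characterization to the functor $\Adjffr\to\Fun(\Fun(\Adjffr,\cX),\cX)$ given by evaluation, or more directly to use the Yoneda embedding: represent mapping categories in $\Fun(-,\cX)$ as strong limits (ends) indexed by the twisted arrow category, and check that cofinality is preserved there. If this becomes too technical, a fallback is to bypass cofinality entirely. One would use \cref{thm:fibmodelRS} with \cref{prop:detecsoft}-style reasoning: compute the fibres of $\BFree^{(1,0)}_{R,S}(\Ret_{\upslash c})$ by localizing the explicit Schanuel--Street-type description of $\Adj$ at the counit, and match them with the mapping categories $[1]$ and $[0]$ of $\Adjffr$.
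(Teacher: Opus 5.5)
There is a genuine gap. The central step of your argument fails: strong $(0,1)$-cofinality of $\Ret\to\Adjffr$ (\cref{prop:adjffcofinal}) does not imply that restriction $\Fun(\Adjffr,\cX)\to\Fun(\Ret,\cX)$ is fully faithful. Cofinality compares limits of diagrams indexed by the two categories; it says nothing about maps between diagrams. For instance, $\{0\}\hookrightarrow[1]$ is strongly $(0,1)$-cofinal, since $0$ is initial and so $\lim_{[1]}F\simeq F(0)$. Yet evaluation $\Fun([1],\cX)\to\cX$ is not fully faithful. So no end or twisted-arrow argument can extract full faithfulness from \cref{prop:adjffcofinal} alone. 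The paper uses that proposition only in \cref{cor:extendeddelta}, to see that $\emptyset$ stays initial.

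You correctly isolate what is needed: $s$ and $rs\simeq\id$ must be determined by $r$. That comes from the contractibility of adjunction data, not from cofinality. Comparing universal properties via \cref{thm:ffaddingsections} and \cref{thm:RVmi} then identifies $\cC\soft{\cL}_{R,S}$ directly with the localization of $\Adj$ at the counit $[1]\to[0]$ in $\Adj(-,-)=\Delta_{\intv}$. This is the paper's first step, and it needs no functor $\Ret\to\Adjffr$ at all.

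Two further problems remain. First, the last sentence of Step 1 is false. In its dual form, \cref{BCeasy} makes the mate $as_1\Rightarrow s_2b$ of a square $br_1\simeq r_2a$ invertible only if \emph{some} homotopy $as_1\simeq s_2b$ exists, and a square in $\Fun([1],\cX)$ provides none. Take $r_1=a=\id_{[1]}$ and $r_2=b\colon[1]\to[0]$, where the right adjoint of $r_2$ selects $1$. This is a commuting square between reflective localizations whose mate $\id_{[1]}\Rightarrow\mathrm{const}_1$ is not invertible. So $\Fun(\Adjffr,\cX)$ is only locally full in $\Fun([1],\cX)$. In \cref{thm:ffaddingsections} the required homotopy comes from the $s$-naturality squares, which exist in $\Fun(\Ret,\cX)$.

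Second, you read \cref{def:adjffr} as asserting that the explicit $(2,2)$-category with mapping categories $[1]$ and $[0]$ \emph{is} the localization of $\Adj$ at the counit. That is exactly what the paper's proof has to check. The paper extends $[1]\to\Adjffr$ to $\Adj\to\Adjffr$ by \cref{thm:RVmi}. It then verifies that the induced functors $\Delta_+\to[1]$ and $\Delta_{\intv},\Delta_{\pm\infty}\to[0]$ on mapping categories are localizations at the 2-morphisms generated by the counit. Your fallback via the fibres of $\BFree^{(1,0)}_{R,S}(\Ret_{\upslash c})$ aims at the same computation, but it is heavier and left unexecuted.
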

\begin{proof}
	We treat the case $R=\{\id \colon \Ret \to \Ret\}$ and $S=\emptyset$; the other case is analogous. By \cref{thm:RVmi}, we may identify $\cC\soft{\cL}_{R,S}$ with the localization of $\Adj$ at the 2-morphism $[1] \to [0]$ in $\Adj(-,-)=\Delta_{\intv}$. The inclusion $[1] \to \Adjffr$ selecting the universal left adjoint extends essentially uniquely, again by \cref{thm:RVmi}, to a functor $\Adj \to \Adjffr$. It is a routine computation --where one checks that this map induces localizations at the level of mapping categories-- to verify that this functor exhibits $\Adjffr$ as the desired localization.
\end{proof}

\subsection{The universal property of the simplex 2-category}

\begin{definition}\label{def:ddelta}
	The \emph{simplex 2-category} $\ddelta$ is the full sub-$(\infty,2)$-category of $\Cat_{(\infty,1)}$ spanned by the nonempty finite linearly ordered sets $[n] = \{0 < 1 < \cdots < n\}$, $n \geq 0$, regarded as posets. Explicitly, $\ddelta$ has objects $[n]$ for $n \geq 0$; a 1-morphism $[n] \to [m]$ is an order-preserving map; and, given order-preserving maps $f,g \colon [n] \to [m]$, there is a (unique) 2-morphism $f \rightarrow g$ if and only if $f(i) \leq g(i)$ for every $i \in [n]$. In particular, each mapping category $\ddelta([n],[m])$ is a poset, so that $\ddelta$ is a $(2,2)$-category.
\end{definition}

\begin{notation}\label{not:inequality}
	Since $\ddelta$ is a $(2,2)$-category enriched in posets, we write $f \leq g$ for a 2-morphism $f \to g$ in $\ddelta$.
\end{notation}

\begin{remark}\label{rem:generators}
	For $n \geq 0$ and $0 \leq i \leq n$, write $d_i,d_{i+1} \colon [n] \to [n+1]$ for the order-preserving injections skipping the values $i$ and $i+1$ respectively, and $s_i \colon [n+1] \to [n]$ for the order-preserving surjection repeating the value $i$. These maps assemble into a string of adjunctions
	\[
		d_{i+1} \dashv s_i \dashv d_i
	\]
	in $\ddelta$: that is, $s_i$ is simultaneously right adjoint to $d_{i+1}$ and left adjoint to $d_i$, where the simplicial identities $\id \xrightarrow{\simeq} s_i d_{i+1}$ and $s_i d_i \xrightarrow{\simeq} \id$ witness the unit and counit of the respective adjunctions.
\end{remark}


\begin{definition}\label{def:representingdelta}
	For $n \geq 0$ and $0 \leq i \leq n$ (\cref{rem:generators}), let $R \subset \Fun(\Ret,\Delta)^{\simeq}$ be the subspace of those section pairs $f$ (\cref{not:Ret}) with $s_f = d_i$ and $r_f = s_i$, and let $S \subset \Fun(\Ret,\Delta)^{\simeq}$ be the subspace of those section pairs $g$  with $s_g = d_{i+1}$ and $r_g = s_i$ . We set
	\[
		\ddelta_{\diamond} \coloneqq \Delta\soft{\cL}_{R,S},
	\]
	as in \cref{def:CLRS}.
\end{definition}

\begin{remark}\label{rem:Lintrinsinc}
	Note that by construction the class $\cL\subset \Delta$ from \cref{def:representingdelta} can be characterised as those $1$-morphisms  $f \colon [n]\to [m]$ such that $f(0)=0$.
\end{remark}

\begin{remark}\label{rem:representingdelta}
	By \cref{thm:ffaddingsections} applied with $\cC=\Delta$, for every $(\infty,2)$-category $\cX$, restriction along $\Delta \to \ddelta_{\diamond}$ identifies $\Fun(\ddelta_{\diamond},\cX)$ with the full sub-$(\infty,2)$-category of $\Fun(\Delta,\cX)$ on those functors $D$ sending each invertible 2-morphism $\id \xrightarrow{\simeq} s_id_{i+1}$ to the unit, and each $s_id_i \xrightarrow{\simeq} \id$ to the counit, of an adjunction.
\end{remark}
The universal property of $\ddelta_{\diamond}$ yields a canonical morphism,
	\begin{equation}
		\varphi \colon \ddelta_{\diamond} \to \ddelta.
	\end{equation}
We can now state the main result of this section.
\begin{theorem}\label{thm:2simplex}
	The functor $\varphi \colon \ddelta_{\diamond} \to \ddelta$ is an equivalence of $(\infty,2)$-categories.
\end{theorem}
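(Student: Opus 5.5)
We follow the strategy used for \cref{thm:RVmi}, with the role of \cref{thm:addingadjoints} now played by \cref{thm:fibmodelRS}. The functor $\varphi$ is essentially surjective, since the composite $\Delta \to \ddelta_{\diamond} \to \ddelta$ is the canonical inclusion and hence bijective on objects. It therefore suffices to show that $\varphi$ is fully faithful, i.e.\ that for all $[n],[m]$ the induced functor $\ddelta_{\diamond}([n],[m]) \to \ddelta([n],[m])$ is an equivalence of $(\infty,1)$-categories. By \cref{thm:fibmodelRS}, the source is the fibre over $[n]$ of $\BFree^{(1,0)}_{R,S}(\Delta_{/[m]})$. Adapting \cref{thm:mappingcatform}, this fibre is the localization of $\Delta^{\infty}([n],[m])$ at $\cQ_{[n]}$ together with the comparison morphisms of \cref{cons:comparison} and their cartesian transports. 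Here $\cL$ consists of the maps with $f(0)=0$ (\cref{rem:Lintrinsinc}). We define a functor from zig-zags to $\ddelta([n],[m])$ in the spirit of \cref{cons:mucombinatorial}: a zig-zag $[m] \leftarrow y_0 \to y_1 \leftarrow \cdots \to [n]$, with right-pointing maps in $\cL$, is sent to the composite in $\ddelta$ in which each $\cL$-map $g$ is replaced by its right adjoint $g^{R}(k)=\max\{j : g(j)\le k\}$. This is well defined because $g(0)=0$. Maps of zig-zags become inequalities by the mate calculus, and the functor descends to the localization because $\ddelta$ realises the required adjunctions and sends the comparison $2$-morphisms to identities.

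Next we reduce to a statement of the form ``a functor of posets is a localization''. The analogue of \cref{lem:pi0iso} should say that a morphism of zig-zags is inverted in $\ddelta_{\diamond}([n],[m])$ whenever its image is an equality. It then remains to prove that the comparison functor $D([n],[m]) \to \ddelta([n],[m])$ is a localization at the morphisms it inverts. Following the proof of \cref{thm:RVmi}, we will show that for each $[k]$ and each chain $\sigma$ in $\ddelta([n],[m])$, the fibre poset $\cR_\sigma$ of $\Fun_{\cW}([k],D([n],[m]))$ over $\sigma$ is contractible. Then \cite[Theorem 3.8]{MazelGee2019UniversalityRezkNerve} identifies the Rezk nerve of the localization with the nerve of the poset $\ddelta([n],[m])$.

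Proving this contractibility is the main obstacle. The zig-zags now live in $\Delta$ rather than in $[1]$, so there is no direct analogue of the cellular model $\cK^{\cell}$ of \cref{lem:posetgeom}. We would attack it in two steps.
\begin{enumerate}
\item Use the $R,S$-localizations to simplify zig-zags. Factoring every map of $\Delta$ into degeneracies and faces, and using $d_{i+1}\dashv s_i\dashv d_i$ with the invertibility of the comparison maps, we expect every zig-zag to be equivalent, inside the localization, to a length-one zig-zag $[m]\xleftarrow{h}[n]$ together with $\cL$-data that has been collapsed away.
\item Cut the problem down to a manageable shape. Since an order-preserving map $[n]\to[m]$ is determined by its values, $\ddelta([n],[m])$ is a product-like poset of monotone sequences. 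We would therefore try to reduce, via the functoriality of the localization in $[n]$ and $[m]$, to a contractibility statement for each coordinate $k\in[n]$.
\end{enumerate}
The per-coordinate statement should be the case $\Delta_{\intv}$-type computation already established in \cref{prop:assemblpart1}, whose endpoint-retraction filtration we would reuse, now with the interval endpoints recording the values $f(k)$. If this reduction fails, a fallback is to establish a Dyckerhoff-type presentation of $\ddelta$ by generators and relations. That route uses \cref{cor:esssurj}, and \cref{cor:adfjffuniv} for the fully faithful adjoints, to write $\ddelta_\diamond$ as an iterated pushout of copies of $\Adjffr$ and $\Adjffl$ along $\Ret$. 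One would then compare with a cofinality argument (\cref{prop:adjffcofinal,prop:cofcobase}).
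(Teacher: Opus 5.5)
There is a genuine gap. Your set-up is fine: essential surjectivity, the reduction to full faithfulness on mapping $(\infty,1)$-categories, the description of $\ddelta_\diamond([n],[m])$ as a localization of zig-zags via \cref{thm:fibmodelRS}, and the comparison functor to $\ddelta([n],[m])$. But the argument stops exactly where the content of the theorem lies, and two steps are missing.

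\textbf{The analogue of \cref{lem:pi0iso}.} You need that every morphism of zig-zags lying over an identity of $\ddelta([n],[m])$ is inverted. For the walking adjunction this was a direct manipulation of the generators $\cQ$ of $[1]$-valued zig-zags. Here it has to be derived from the inverted $R,S$-comparisons, and you give no argument for how they force it. The paper needs real work for this, and proves it only over $\id_{[n]}$ (\cref{lem:deltaisos}). There the coreflection $M(X)\to X$ of \cref{prop:Lcoreflect} and the maps to the identity zig-zag are shown to be inverted, by factoring each $\ell_s$ as a surjection followed by an injection and applying \cref{BCeasy,rem:BCeasy}.

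\textbf{Contractibility of $\cR_\sigma$.} You need this for every chain $\sigma$ in every $\ddelta([n],[m])$. You flag it as the main obstacle, and none of your routes establishes it.
\begin{itemize}
\item Your step (1) is a statement inside the localization. It amounts to essential surjectivity of $\Delta\to\ddelta_\diamond$ on $1$-morphisms, i.e.\ \cref{lem:esssurj1morph}. It therefore says nothing about spaces of $2$-morphisms, and it cannot be used to prove contractibility of the unlocalized posets $\cR_\sigma$, which are exactly the input needed to identify the localization.
\item Your step (2) has no mechanism. $\ddelta([n],[m])$ is a poset of monotone maps, not a product over coordinates. The zig-zag categories do not split coordinatewise either, and \cref{prop:assemblpart1} concerns $[1]$-valued zig-zags.
\item Your fallback is circular: a generators-and-relations presentation of $\ddelta$ is essentially the theorem itself. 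Moreover, strong $(0,1)$-cofinality controls limits, not mapping categories; the paper uses it only for \cref{cor:extendeddelta}.
\end{itemize}

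\textbf{The missing idea} is that you never need a statement for arbitrary $f$, let alone for arbitrary chains $\sigma$. Inside $\ddelta_\diamond$, the adjunctions $d_{i+1}\dashv s_i\dashv d_i$ and the full faithfulness of $d_0$ let you trade any pair $f\le g$ either for a pair with smaller indices or for $(\id_{[n]},\id_{[n]})$. An induction on $([n],[m])$ then reduces full faithfulness of $\varphi$ to the single statement $\Map_{\ddelta_\diamond([n],[n])}(\id,\id)\simeq *$ (\cref{lem:keyreduction2simplex}). That space is controlled by the fibre $Q(n,n)_{\id}$ alone:
\begin{itemize}
\item $Q(n,n)_{\id}$ is weakly contractible, because the full subcategory $L(n,n)^{(k)}_{\id}$ of zig-zags with $c_r\dashv a_r$ is coreflective and has the identity zig-zag as terminal object (\cref{prop:Lcoreflect,prop:Qcontract}).
\item All its morphisms are inverted (\cref{lem:deltaisos}).
\item Since $\ddelta([n],[n])$ is a poset, a hammock argument shows that the zig-zags computing mapping spaces between objects over $\id$ never leave $Q(n,n)_{\id}$.
\end{itemize}
This replaces your global localization statement by a single fibre and an order-theoretic terminal-object argument.
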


\begin{remark}\label{rem:redremark}
	We note that the composite functor $\Delta \to \ddelta_\diamond \xrightarrow{\varphi} \ddelta,$
induces an equivalence after passage to underlying $(\infty,1)$-categories. This implies that $\varphi$ is essentially surjective on objects and 1-morphisms. Therefore, for the proof of the theorem, we will only need to show that $\varphi$ induces a fully faithful functor on mapping $(\infty,1)$-categories.
\end{remark}

\begin{lemma}\label{lem:esssurj1morph}
	The functor $j \colon \Delta \to \ddelta_{\diamond}$ is essentially surjective on objects and 1-morphisms.
\end{lemma}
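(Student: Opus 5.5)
Objects are easy: $j$ is essentially surjective because $\iota\colon\Delta\to\Delta\soft{\cL}$ is (\cref{lem:softesssurj}) and $\Delta\soft{\cL}\to\ddelta_\diamond$ is a localization at $2$-morphisms, hence bijective on objects. For $1$-morphisms, my plan is to reduce, by closure under composition, to showing that the freely added right adjoints $u^{\ra}$ for $u\in\cL$ lie in the essential image of $j$ on $1$-morphisms.

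For the reduction, observe that in $\Delta\soft{\cL}$ every $1$-morphism is, up to equivalence, a composite of $1$-morphisms of the form $\iota(g)$ and $u^{\ra}$ with $u\in\cL$. One way to see this is via \cref{thm:mappingcatform}: objects of $\Delta^{(n)}(x,y)$ are zig-zags whose backward legs lie in $\Delta$ and whose forward legs lie in $\cL$, and such a zig-zag represents the composite of the backward maps with the right adjoints of the forward maps. Alternatively, one can argue directly that the locally full sub-$(\infty,2)$-category on such composites contains $\iota(\Delta)$, contains adjoints of $\iota(\cL)$, and hence satisfies the universal property. Localization at $2$-morphisms does not create new $1$-morphisms up to equivalence, so the same generation statement holds in $\ddelta_\diamond$. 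Since the essential image of $j$ on $1$-morphisms is closed under composition, it therefore suffices to treat the generators $u^{\ra}$.

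Next I would reduce to elementary maps. By \cref{rem:Lintrinsinc}, $\cL$ consists of the maps $f\colon[n]\to[m]$ with $f(0)=0$. Such a map factors as a composite of degeneracies $s_i$ and cofaces $d_k$ with $k\geq 1$, and all of these preserve $0$. Taking right adjoints reverses composites, since adjoints compose, so it is enough to check the cases $u=s_i$ and $u=d_k$ with $k\geq1$. For $u=s_i$, the datum $g\in S$ with $s_g=d_{i+1}$ and $r_g=s_i$ yields, in $\ddelta_\diamond$, an adjunction $d_{i+1}\dashv s_i$ with invertible unit. I am not yet sure this identifies $s_i^{\ra}$ with $d_i$. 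The datum $f\in R$ gives $s_i\dashv d_i$ with counit $s_id_i\simeq\id$: indeed, the localization at $c_f$ makes $d_i\simeq s_i^{\ra}$ by \cref{def:CLRS} and the remark following it, so $s_i^{\ra}\simeq j(d_i)$. For $u=d_k$ with $k\geq 1$, write $k=i+1$. The datum $g\in S$ with $s_g=d_{i+1}$ and $r_g=s_i$ identifies $d_{i+1}^{\ra}\simeq s_i=j(s_i)$, again via the localization at $c_g$.

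I expect the main obstacle to be the first reduction, namely justifying rigorously that every $1$-morphism of $\Delta\soft{\cL}$ decomposes as a composite of $\iota$-images and freely added right adjoints. I would attempt this via the universal property, checking that the sub-$(\infty,2)$-category generated by these composites already corepresents $\Fun(\Delta,-)_\cL$, along the lines of the argument in \cref{lem:softesssurj}, with the zig-zag model of \cref{thm:addingadjoints} as a backup.
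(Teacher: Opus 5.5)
Your proof is correct, but it takes a different route from the paper's. The paper's argument is structural. By \cref{thm:ffaddingsections} and \cref{cor:adfjffuniv}, $\ddelta_\diamond$ is an iterated pushout of $\Delta$ along copies of $\Ret\to\Adjffr$ and $\Ret\to\Adjffl$. These maps are bijective on objects and surjective on $1$-morphisms, and \cref{prop:essurjcobase} says this property survives cobase change.

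You argue generator by generator instead. First, $1$-morphisms of $\Delta\soft{\cL}$ are composites of $\iota$-images and freely added right adjoints. Second, passing to $\ddelta_\diamond$ adds no new $1$-morphisms. Third, the inverted comparison cells give $s_i^{\ra}\simeq j(d_i)$ (from the $R$-datum; your aside about the $S$-datum is beside the point there) and $d_{i+1}^{\ra}\simeq j(s_i)$.

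The trade-off is as follows. The paper's proof is two lines but depends on \cref{cor:adfjffuniv}, and hence on the walking adjunction theorem \cref{thm:RVmi}. Yours avoids that dependence and produces an explicit $\Delta$-representative for every $1$-morphism. The price is that the generation step needs justification. Three remarks on this.

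First, your claim that inverting $2$-morphisms creates no new $1$-morphisms is itself most cleanly justified by \cref{prop:essurjcobase}. Inverting a $2$-cell is a pushout along the map from the walking $2$-cell to $[1]$, which is bijective on objects and surjective on $1$-morphisms.

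Second, in your zig-zag argument, a zig-zag must be identified with the composite of its backward legs and the right adjoints of its forward legs; this is the description the paper uses in \cref{rem:univadjointable}. It holds because the comparison to $\Delta\soft{\cL}_{\upslash \iota(y)}$ is a morphism of $\cL$-bifibrations, and a zig-zag is an iterated (co)cartesian transport of $\id_y$. Cocartesian transport along $u\in\cL$ there is precomposition with $u^{\ra}$, since $(u^{\ra})^{*}\dashv u^{*}$.

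Third, in your universal-property alternative modelled on \cref{lem:softesssurj}, you should check one thing. The components at objects $\iota(c)$ of the equivalence $\beta\alpha\simeq\id$ come from equivalences inside the subcategory. Hence conjugating by them does not leave it, and that is what lets you conclude every $1$-morphism is equivalent to one of your composites.
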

\begin{proof}
	This follows from \cref{prop:essurjcobase} and the fact that $\ddelta_{\diamond}$ is obtained as an iterated pushout along the morphisms $\Ret \to \Adjffr$ and $\Ret \to \Adjffl$.
\end{proof}

\begin{lemma}\label{lem:keyreduction2simplex}
	Suppose that for every $n\geq 0$ we have that $\Map_{\ddelta_{\diamond}(n,n)}(\id_n,\id_n)\simeq *$. Then $\varphi$ is fully-faithful.
\end{lemma}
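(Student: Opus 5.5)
The plan is to reduce fully faithfulness of $\varphi$ to a statement about mapping spaces in the hom-$(\infty,1)$-categories of $\ddelta_{\diamond}$, and then to reduce every such mapping space to an endomorphism space of an identity by transposing along adjunctions.

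\textbf{Step 1: reduction to mapping spaces.} By \cref{rem:redremark} and \cref{lem:esssurj1morph}, the functor $\ddelta_{\diamond}([n],[m])\to\ddelta([n],[m])$ is essentially surjective for all $n,m$. Its target is the poset of order-preserving maps under pointwise $\leq$. Hence it suffices to show that for $1$-morphisms $f,g\colon[n]\to[m]$ (which we may take in the image of $\Delta$) the space $\Map_{\ddelta_{\diamond}([n],[m])}(f,g)$ is contractible whenever $f\leq g$. If the space is nonempty, then applying $\varphi$ already forces $f\leq g$, so emptiness in the case $f\not\leq g$ comes for free.

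\textbf{Step 2: supply of adjunctions.} By \cref{thm:ffaddingsections} and \cref{rem:representingdelta}, $\ddelta_{\diamond}$ contains the adjunction strings $d_{i+1}\dashv s_i\dashv d_i$ with the simplicial identities as unit and counit. Consequently:
\begin{itemize}
\item every degeneracy has both adjoints;
\item every inner face $d_i$ with $1\leq i\leq n$ has left adjoint $s_{i-1}$ and right adjoint $s_i$;
\item the extreme face $d_0$ has a left adjoint;
\item the extreme face $d_{n+1}$ has a right adjoint.
\end{itemize}
Whiskering gives natural equivalences
\[
\Map(u a,b)\simeq\Map(a,u^{\ra}b),\qquad \Map(a,vb)\simeq\Map(v^{L}a,b),
\]
valid whenever the relevant adjoint exists.

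\textbf{Step 3: transposition to an endomorphism space.} Factor $f=e\circ \sigma$ with $\sigma$ a composite of degeneracies and $e$ a composite of faces. I would transpose the degeneracies of $f$ across, and also the faces of $f$ that have right adjoints. Symmetrically, I would peel faces and degeneracies off $g$ using left adjoints. The order of peeling should be chosen, via the simplicial identities, so that any occurrence of $d_0$ ends up on the $g$ side and any occurrence of $d_{n+1}$ on the $f$ side. The aim is an equivalence $\Map(f,g)\simeq\Map(\id_k,h)$ for some $k$ and some $h\colon[k]\to[k]$ with $\id\leq h$.

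\textbf{Step 4: the main obstacle.} The step I expect to be hardest is passing from $\Map(\id_k,h)$ to the hypothesis $\Map(\id_k,\id_k)\simeq *$. I would first write an inflationary $h$ as a composite of maps of the form $d_{i}s_{i-1}$ (or $d_{i+1}s_i$). Each such composite carries the unit of one of the adjunctions above, and $\Map(\id,d_{i+1}s_i)\simeq\Map(s_i,s_i)$ by transposition. Iterating this reduces the problem to endomorphism spaces $\Map(s,s)$ of degeneracies. These in turn reduce to $\Map(\id,\id)$: since $s$ has a section that is fully faithful in the appropriate sense (the unit or counit being invertible), one gets $\Map(s,s)\simeq\Map(\id,d s)\simeq\Map(\id,\id)$.

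The delicate points are the extreme faces $d_0$ and $d_{n+1}$, which have adjoints on only one side, and the bookkeeping of which transposition to apply first. If the direct factorization argument becomes unwieldy, I would fall back on an induction on $n+m$, at each step using a single face or degeneracy together with its available adjoint to lower the size of the indices.
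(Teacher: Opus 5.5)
Your overall plan (Step~1, then transposing faces and degeneracies across $d_{i+1}\dashv s_i\dashv d_i$ down to $\Map(\id,\id)$) is the paper's. However, the argument you give for the step you yourself call the crux does not work as written.

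\begin{itemize}
\item In Step~2 the adjoints of an inner face are swapped. The correct string is $s_i\dashv d_i\dashv s_{i-1}$, i.e.\ $d_i$ has left adjoint $s_i$ and right adjoint $s_{i-1}$.
\item Consequently $d_{i+1}s_i\le\id$ is the \emph{counit} of $d_{i+1}\dashv s_i$, not an inflationary map carrying a unit. Indeed $d_{i+1}s_i(i+1)=i$, so $\Map_{\ddelta}(\id,d_{i+1}s_i)=\emptyset$. Hence $\Map(\id,d_{i+1}s_i)\simeq\Map(s_i,s_i)$ is false, since the right side contains the identity of $s_i$.
\item The inflationary idempotents are $d_is_i$. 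With these, one transposition gives $\Map(\id,d_{i_1}s_{i_1}h')\simeq\Map(s_{i_1},s_{i_1}h')$. That is not an endomorphism space, and you give no argument that ``iterating'' ever lands in one.
\item The final step is misstated. Transposing via $d_{i+1}\dashv s_i$ gives $\Map(s_i,s_i)\simeq\Map(\id,s_id_{i+1})=\Map(\id,\id)$, not $\Map(\id,ds)$.
\item In Step~3, a factor $d_0$ of $f$ (i.e.\ $f(0)>0$) cannot be moved by any choice of ordering, because $d_0$ has no right adjoint. You must use the inhabitedness $f\le g$ to get $g(0)>0$, hence $g=d_0\hat g$. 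Then cancel using that $d_0$ is fully faithful: the counit $s_0d_0\simeq\id$ of $s_0\dashv d_0$ is invertible.
\end{itemize}

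The repair is the induction on $n+m$ that you mention only as a fallback. This is exactly the paper's proof, and it avoids decomposing inflationary maps altogether. Peel only from $f$:
\begin{itemize}
\item if $f=d_{i+1}\hat f$, then $\Map(f,g)\simeq\Map(\hat f,s_ig)$;
\item if $f=d_0\hat f$, then $\Map(f,g)\simeq\Map(\hat f,\hat g)$ as above;
\item if $f=\hat fs_i$, then $\Map(f,g)\simeq\Map(\hat f,gd_{i+1})$.
\end{itemize}
If none of these applies, $f$ is bijective, so $f=\id_n$. Now, if $g$ is injective then $g=\id_n$ and the hypothesis applies. Otherwise $g=\hat gs_i$, and $s_i\dashv d_i$ gives $\Map(\id_n,\hat gs_i)\simeq\Map(d_i,\hat g)$ in $\ddelta_{\diamond}(n-1,n)$, which has smaller $n+m$.

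Each transposition also holds in $\ddelta$, compatibly with $\varphi$, so inhabitedness of the corresponding space in $\ddelta$ propagates through the induction. The one-sided obstruction from the last face on the $g$ side that worried you never arises, since faces are never peeled off the left of $g$.
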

\begin{proof}
	As explained in \cref{rem:redremark}, the question reduces to showing that $\varphi$ yields a fully-faithful functor on mapping categories. Invoking \cref{lem:esssurj1morph}, it will be enough to show that for 1-morphisms $f,g \in \Delta$, the mapping spaces $\Map_{\ddelta_{\diamond}(n,m)}(f,g)$ are contractible whenever $\Map_{\ddelta(n,m)}(f,g)$ is inhabited.

	To achieve this, we work inductively. Before setting up the general induction, we make three observations:
	\begin{enumerate}
		\item Suppose that $f=d_{i+1}\hat{f}$. Then, using the adjunction $d_{i+1}\dashv s_i$, we obtain an equivalence of mapping spaces
	\[
		\Map_{\ddelta_{\diamond}(n,m)}(f,g) \simeq \Map_{\ddelta_{\diamond}(n,m-1)}(\hat{f},s_ig).
	\]
	   \item If $f=d_0\hat{f}$, it follows that $0<f(0)\leq g(0)$, which shows that $g=d_0 \hat{g}$. Since $d_0$ is a fully-faithful 1-morphism, we obtain an equivalence of mapping spaces
	\[
		\Map_{\ddelta_{\diamond}(n,m-1)}(\hat{f},\hat{g})\xrightarrow{\simeq}\Map_{\ddelta_{\diamond}(n,m)}(f,g).
	\]
	 \item If $f=\hat{f}s_i$, then we can use the adjunction $d_{i+1}\dashv s_i$ to produce an equivalence of mapping spaces
	 \[
		\Map_{\ddelta_{\diamond}(n,m)}(f,g)\xrightarrow{\simeq}\Map_{\ddelta_{\diamond}(n-1,m)}(\hat{f},gd_{i+1}).
	 \]
	\end{enumerate}
	We are ready to prove the claim by induction on the pair $([n],[m])$. Note that the case $n=m=0$ is covered by the assumptions in the statement. If $f$ satisfies any of the conditions (1)--(3) above, the claim follows from the induction hypothesis. Otherwise, we must have $f=\id_n$. If $g$ is injective, then $g=\id_n$, and the claim follows from our assumptions. Otherwise, we must have $g=\hat{g}s_i$. Since we have an adjunction $s_i \dashv d_i$, we obtain an equivalence of mapping spaces
	\[
		\Map_{\ddelta_{\diamond}(n,n)}(\id_n,g) \xrightarrow{\simeq}\Map_{\ddelta_{\diamond}(n-1,n)}(d_i,\hat{g}).
	\]
	The claim now follows from the induction hypothesis.
\end{proof}

\begin{notation}\label{not:Qnninfty}
	Let $Q/n \to \Delta$ denote the filtered colimit of (the dual of) the functor $T$ in \cref{cons:Tbifib}. For every $[m]\in \Delta$ denote its fibre by $Q(m,n)$. Given $f \in \ddelta(m,n)$ we denote by $Q(m,n)_f$ the fibre over $f$ of the canonical functor $Q(m,n) \to \ddelta(m,n)$.
\end{notation}

\begin{remark}\label{rem:univadjointable}
	An important step in our proof will be to show that every morphism in $Q(n,n)_{\id}$ becomes invertible in $\BFree^{(1,0)}_{R,S}(\Delta_{/n})\times_\Delta\{n\}$ (cf. \cref{def:representableRS}). An object represented by a map $Z_k^\star\to\Delta$ determines a morphism in $\ddelta_\diamond$ by reading the zig-zag from right to left, replacing each right-pointing morphism by its right adjoint, and composing. Likewise, if $H\colon Z_k^\star\times[1]\to\Delta$ represents a morphism, the induced 2-morphism in $\ddelta_\diamond$ is obtained by taking the mates of the naturality squares along the right-pointing edges and pasting them with the remaining squares.

	We will make use of this description as it will allow us to argue that certain morphisms $H$ become invertible in $\BFree^{(1,0)}_{R,S}(\Delta_{/n})\times_\Delta\{n\}$ for purely diagrammatic reasons often in the form of verifying adjointability of commutative squares.  In particular, \cref{BCeasy,rem:BCeasy} often make these adjointability checks immediate. 
\end{remark}

\begin{notation}\label{not:Qnn}
	We denote by $Q(n,n)^{(k)}_{\id}$ the $k$-th stage of the filtered colimit defining $Q(n,n)_{\id}$. Its objects are zig-zags
	\[
		X_0 \xleftarrow{\ \ell_1\ } X_1 \xrightarrow{\ \ell_2\ } X_2
		\xleftarrow{\ \ell_3\ } X_3 \xrightarrow{\ \ell_4\ } \cdots X_{k+1}
	\]
	in $\Delta$ -- so $\ell_i \colon X_i \to X_{i-1}$ for $i$ odd and $\ell_i \colon X_{i-1} \to X_i$ for $i$ even -- subject to:
	\begin{itemize}
		\item $X_0=X_{k+1}=[n]$;
		\item each even-indexed $\ell_i$ lies in $\cL$, and we write $\ell_{i}^{\ra}$ for its right adjoint;
		\item setting $\varphi_i \coloneqq \ell_i$ for $i$ odd and $\varphi_i \coloneqq \ell_i^{\ra}$ for $i$ even, so that each $\varphi_i \colon X_i \to X_{i-1}$, we have that 
		\[
			\varphi_1 \circ \varphi_2 \circ \cdots \circ \varphi_{k+1}=\id_{[n]}
		\]
		holds.
	\end{itemize}
\end{notation}

\begin{definition}
	For $X \in Q(n,n)^{(k)}_{\id}$ we let $c_r= \varphi_1 \circ \cdots \circ \varphi_r \colon X_r \to [n]$ and dually $a_r=\varphi_{r+1}\circ \cdots \circ \varphi_{k+1} \colon [n] \to X_r$. Note that by construction we have that $c_r \circ a_r = \id_{[n]}$. We set the convention $c_0=a_0=\id_{[n]}$.
\end{definition}

\begin{definition}
	Let $\rho^k_n \colon L(n,n)_{\id}^{(k)} \hookrightarrow Q(n,n)_{\id}^{(k)}$ denote the full sub-category on those objects $X$ such that for every $0 \leq r \leq k+1$ we have that $c_r \dashv a_r$, or equivalently, such that $\id_{X_{r}} \leq a_r \circ c_r$.
\end{definition}

\begin{lemma}\label{lem:restrict}
Let $X \in Q(n,n)^{(k)}_{\id}$ and denote by $M(X_r)=\{x \in X_r | \enspace x \leq a_rc_r(x)\}$. Then:
\begin{enumerate}
  \item[(a)] if $r$ is odd, $\ell_r$ restricts to a map $M(X_r) \to M(X_{r-1})$;
  \item[(b)] if $r$ is even, $\ell_r$ restricts to a map $M(X_{r-1}) \to M(X_r)$;
  \item[(c)] for every $r$, $\varphi_r$ restricts to a map $M(X_r) \to M(X_{r-1})$;
  \item[(d)] if $r$ is even, the map induced by $\ell_r$ in (b) lies in $\cL$.
\end{enumerate}
\end{lemma}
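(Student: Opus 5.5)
The whole lemma reduces to two identities relating $a_r,c_r$ at consecutive levels, together with monotonicity of the maps in $\Delta$ and the adjunction inequalities for $\ell_r \dashv \ell_r^{\ra}$ when $r$ is even. By definition $c_r=c_{r-1}\circ\varphi_r$ and $a_{r-1}=\varphi_r\circ a_r$. For $r$ even, $\ell_r\in\cL$ has right adjoint $\varphi_r=\ell_r^{\ra}$, so
\[
	\ell_r\varphi_r\leq \id_{X_r}, \qquad \id_{X_{r-1}}\leq \varphi_r\ell_r .
\]
Both identities and both inequalities will be used without further comment.

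For (c), take $x\in M(X_r)$, so $x\leq a_rc_r(x)$. Applying the monotone map $\varphi_r$ gives
\[
	\varphi_r(x)\leq \varphi_r a_r c_r(x)=a_{r-1}c_{r-1}(\varphi_r(x)),
\]
using the two identities. Hence $\varphi_r(x)\in M(X_{r-1})$. Part (a) is the special case $r$ odd, where $\varphi_r=\ell_r$.

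For (b), let $r$ be even and $y\in M(X_{r-1})$, so $y\leq a_{r-1}c_{r-1}(y)=\varphi_r a_r c_{r-1}(y)$. Applying $\ell_r$ and then the counit inequality gives $\ell_r(y)\leq a_rc_{r-1}(y)$. Now compute
\[
	c_r(\ell_r(y))=c_{r-1}\varphi_r\ell_r(y),
\]
and since $\varphi_r\ell_r\geq\id$, this is $\geq c_{r-1}(y)$. Applying $a_r$ yields $a_rc_{r-1}(y)\leq a_rc_r\ell_r(y)$. Combining, $\ell_r(y)\leq a_rc_r(\ell_r(y))$, so $\ell_r(y)\in M(X_r)$.

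For (d), $M(X_r)$ is a subposet of a finite linear order, hence a finite linear order itself, provided it is nonempty. It is nonempty because $a_r(0)\in M(X_r)$: indeed $a_rc_ra_r=a_r$ since $c_ra_r=\id$. By Remark~\ref{rem:Lintrinsinc}, membership in $\cL$ means preserving the minimum, so it suffices to check that $\ell_r$ sends $\min M(X_{r-1})$ to $\min M(X_r)$.

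I expect (d) to be the main obstacle, because it requires identifying the minimum of $M(X_r)$. I would first try to show $\min M(X_r)=0$, i.e.\ $0\in M(X_r)$, which is automatic since $0\leq a_rc_r(0)$. The same holds for $M(X_{r-1})$. Since $\ell_r\in\cL$ gives $\ell_r(0)=0$, the restricted map sends $0$ to $0$ and therefore lies in $\cL$. If the intended convention instead identifies $M(X_r)$ with $[m]$ through its induced order, the same computation still applies, because the minimum element is $0$ in both posets.
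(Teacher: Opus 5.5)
Your proposal is correct. Parts (a)--(c) are the same computations as in the paper, and your chain of inequalities for (b) matches the paper's. The ordering differs slightly: you prove (c) uniformly and then specialise to (a), whereas the paper proves (a) first and cites it for the odd case of (c).

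The only genuine divergence is in (d).
\begin{itemize}
\item The paper observes that, by (b) and (c), both $\ell_r$ and $\ell_r^{\ra}$ restrict to $M(X_{r-1})$ and $M(X_r)$. The unit and counit inequalities therefore restrict as well, so the restricted $\ell_r$ is a left adjoint. Hence it preserves the bottom element and lies in $\cL$ by \cref{rem:Lintrinsinc}.
\item You instead note that the bottom element $0$ of $X_r$ trivially satisfies $0\leq a_rc_r(0)$, so $\min M(X_r)=0$ for every $r$. You then use $\ell_r(0)=0$, which holds because $\ell_r\in\cL$.
\end{itemize}

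Your route is more elementary and does not need (c) at all. The paper's route yields one extra fact that the lemma does not state but which is used immediately afterwards in \cref{prop:Lcoreflect}: the right adjoint of the restricted $\ell_r$ is the restriction of $\ell_r^{\ra}$. This is what guarantees that the composite of the $\varphi_r$'s for $M(X)$ is still $\id_{[n]}$, i.e.\ that $M(X)\in Q(n,n)^{(k)}_{\id}$. If you use your version of (d), you should add this remark; it follows at once from your (b) and (c), since adjunction inequalities restrict to subposets preserved by both maps.
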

\begin{proof}
Throughout we use $c_r = c_{r-1}\varphi_r$ and $a_{r-1} = \varphi_r a_r$, which
hold for every $r$ by construction. We proceed case by case
\begin{itemize}
	\item[$(a)$] Let $x \in M(X_r)$, so $x \le a_rc_r(x)$. Since $r$ is odd,
$\varphi_r = \ell_r$, and applying the monotone map $\ell_r$ gives
\[
  \ell_r(x) \;\le\; \ell_r a_r c_r(x) \;=\; a_{r-1} c_{r-1}\ell_r(x),
\]
which is precisely the statement $\ell_r(x) \in M(X_{r-1})$.
\item[$(b)$] Let $x \in M(X_{r-1})$, so $x \le a_{r-1}c_{r-1}(x)$. Since $r$ is even, $\varphi_r = \ell_r^{\ra}$, $a_{r-1} = \ell_r^{\ra}a_r$ and
$c_r = c_{r-1}\ell_r^{\ra}$. Therefore
\[
	\begin{aligned}
		\ell_r(x)
		&\leq \ell_r a_{r-1}c_{r-1}(x) \\
		&= \ell_r\ell_r^{\ra}a_rc_{r-1}(x) \\
		&\leq a_rc_{r-1}(x)
		 \leq a_rc_{r-1}\ell_r^{\ra}\ell_r(x) \\
		&= a_rc_r\ell_r(x),
	\end{aligned}
\]
where the second inequality is the counit $\ell_r\ell_r^{\ra} \le \id$ and the
third is the unit $\id \le \ell_r^{\ra}\ell_r$. So $\ell_r(x) \in M(X_r)$.
\item[$(c)$] The case of $r$ odd follows from $(a)$. If $r$ is even we note that for $x \in M(X_r)$ $\ell_r^{\ra}(x) \leq \ell_r^{\ra}a_rc_r(x)$ and since
   \[
	a_{r-1}c_{r-1}\ell_r^{\ra}(x)=a_{r-1} c_r(x)=\ell_r^{\ra} a_r c_r(x),
   \]
   the conclusion holds.
\item[$(d)$] The previous point shows that the adjunction $\ell_r \dashv \ell_r^{\ra}$ restricts to $M(X_{r-1})$ and $M(X_r)$ and so the assertion follows from \cref{rem:Lintrinsinc}.\qedhere
\end{itemize}
\end{proof}

\begin{proposition}\label{prop:Lcoreflect}
	There exists an adjunction $\rho^k_n \colon L(n,n)_{\id}^{(k)} \llra Q(n,n)_{\id}^{(k)} \colon M$ for every $k,n \geq 0$.
\end{proposition}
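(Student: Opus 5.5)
We construct $M$ explicitly from \cref{lem:restrict} and show that $\rho^k_n$ is left adjoint to it, exhibiting $L(n,n)^{(k)}_{\id}$ as a coreflective subcategory. Since $Q(n,n)^{(k)}_{\id}$ is a poset (all objects are zig-zags in $\Delta$, and a morphism between zig-zags with fixed endpoints $[n]$ that covers the identity is at most unique), it suffices to give an order-preserving map $M$ with $M(X)\le X$ and $M(X)=X$ whenever $X\in L(n,n)^{(k)}_{\id}$. Equivalently, for $Y\in L$, the relation $Y\le X$ holds if and only if $Y\le M(X)$.

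\textbf{Construction of $M$.} Given $X$, set $M(X)_r\coloneqq M(X_r)\subseteq X_r$. This is nonempty because $a_r(0)$, or indeed any element of the image of $a_r$, lies in it: $a_rc_ra_r=a_r$. It is a finite linear order, so it is an object of $\Delta$. By \cref{lem:restrict}(a),(b) the maps $\ell_r$ restrict, and by (d) the even ones still lie in $\cL$. By (c), and since the adjunction $\ell_r\dashv\ell_r^{\ra}$ restricts, the right adjoints of the restricted maps are the restrictions of $\varphi_r$. The endpoints are $M([n])=[n]$, since $a_0=c_0=\id$. The composite $\varphi_1\cdots\varphi_{k+1}$ is still the identity. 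Hence $M(X)\in Q(n,n)^{(k)}_{\id}$, and the inclusions $M(X_r)\hookrightarrow X_r$ assemble into a morphism of zig-zags $M(X)\to X$. The right-pointing squares commute strictly because the restrictions are honest restrictions.

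\textbf{$M(X)$ lies in $L$.} The restricted $a_r$ and $c_r$ are restrictions of the original ones, so the condition $x\le a_rc_r(x)$ holds on $M(X_r)$ by definition. Moreover, $M(X)=X$ exactly when $X\in L$.

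\textbf{Universal property.} Let $Y\in L$ with a morphism $h\colon Y\to X$, given by components $h_r\colon Y_r\to X_r$ forming commuting squares with the $\ell$'s and fixing the endpoints. We must show that $h_r$ lands in $M(X_r)$. This is the main obstacle, since it requires $h$ to be compatible with the $a_r,c_r$ built from right adjoints. The plan is as follows.
\begin{itemize}
\item Naturality gives $c^X_rh_r=c^Y_r$. For odd steps this holds on the nose. For even steps it follows from the mate of the commuting square $h_r\ell^Y_r=\ell^X_rh_{r-1}$, which gives $\ell^{X,\ra}_rh_r\le h_{r-1}\ell^{Y,\ra}_r$. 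So one first obtains the inequality $c^X_rh_r\le c^Y_r$, and then must check equality using the endpoints.
\item Similarly, compare $h_ra^Y_r$ with $a^X_r$. Then for $y\in Y_r$, using $y\le a^Y_rc^Y_r(y)$, we get $h_r(y)\le h_ra^Y_rc^Y_r(y)$, and we want this to be $\le a^X_rc^X_rh_r(y)$.
\end{itemize}
Getting the inequalities to point in the correct direction is the delicate part. I would handle it by induction on $r$ from both ends, using the identities $c_r=c_{r-1}\varphi_r$ and $a_{r-1}=\varphi_ra_r$ together with the mate inequalities. If the inequalities only close up in one direction, I would instead invoke $c_ra_r=\id$ to pin $h_ra^Y_r=a^X_r$.

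Once the factorization exists, it is unique, since we are in a poset. The adjunction $\rho^k_n\dashv M$ then follows, with counit the inclusion $M(X)\to X$ and the unit an identity.
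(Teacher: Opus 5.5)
Your construction of $M(X)$ via \cref{lem:restrict}, your check that $M(X)\in L(n,n)^{(k)}_{\id}$, and your use of the inclusion $M(X)\hookrightarrow X$ as counit all agree with the paper. The gap is the step that actually carries the proposition: showing that every $h\colon Y\to X$ with $Y\in L(n,n)^{(k)}_{\id}$ lands in $M(X)$. You leave this as a plan, and the one concrete inequality you state points the wrong way. For $r$ even, the mate of $h_r\ell^Y_r=\ell^X_rh_{r-1}$ is
\[
h_{r-1}\ell^{Y,\ra}_r\le \ell^{X,\ra}_r\ell^X_rh_{r-1}\ell^{Y,\ra}_r=\ell^{X,\ra}_rh_r\ell^Y_r\ell^{Y,\ra}_r\le \ell^{X,\ra}_rh_r ,
\]
that is, $h_{r-1}\varphi^Y_r\le\varphi^X_rh_r$, with equality for $r$ odd. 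It is not $\ell^{X,\ra}_rh_r\le h_{r-1}\ell^{Y,\ra}_r$. Using your direction, every inequality in the chain you want to close is reversed, so the argument cannot be completed. Using the correct direction, induction upward from $c_0=\id$ gives $c^Y_r\le c^X_rh_r$, and induction downward from $a_{k+1}=\id$ gives $h_ra^Y_r\le a^X_r$. Therefore
\[
h_r(y)\le h_ra^Y_rc^Y_r(y)\le a^X_rc^Y_r(y)\le a^X_rc^X_rh_r(y),
\]
which is exactly the paper's argument. No equalities are needed. Your fallback $h_ra^Y_r=a^X_r$ is in fact false. Take $n=0$, $k=1$, $X=([0]\leftarrow[1]\rightarrow[0])$, so that $a^X_1=\ell_2^{\ra}$ picks $1$. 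Then the map $h\colon I^{(1)}_0\to X$ with $h_1$ picking $0$ is a morphism, but $h_1a^{I}_1=h_1\neq a^X_1$.

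Separately, $Q(n,n)^{(k)}_{\id}$ is not a poset. For $n=0$ and $k=1$, its objects are the zig-zags $[0]\leftarrow[m]\rightarrow[0]$, and every map $[m]\to[m']$ is a morphism, so this category is $\Delta$. This is why the paper has to argue uniqueness separately in \cref{prop:Qcontract}. Your reduction to an order-preserving map $M$, and your justification of uniqueness ``since we are in a poset'', therefore do not apply. Uniqueness of the factorization instead follows from the injectivity of each $M(X_r)\hookrightarrow X_r$. Functoriality of $M$ then follows from the universal property.
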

\begin{proof}
	Let $X \in Q(n,n)_{\id}^{(k)}$. \cref{lem:restrict} shows that the various $M(X_r)$ assemble into an object $M(X)\in Q(n,n)^{(k)}$. Since $a_{r}c_ra_r=a_r$, it follows that each $a_r$ always factors through $M(X_r)$ which shows that $M(X)\in Q(n,n)^{(k)}_{\id}$. It is immediate from our definitions that $M(X)$ factors through $L(n,n)^{(k)}_\id$. To conclude the proof we will show that, given $Y \in L(n,n)_{\id}^{(k)}$, each morphism $f \colon Y \to X$ factors uniquely through $M(X)$. Passing to mates it is clear that $f_{r-1} \varphi_{r}^Y \leq \varphi_r^X f_r$. In particular, this shows that
	\[
		f_r(y) \leq f_r a_r^{Y}c_r^{Y}(y) \leq a_r^{X}c_r^{X}f_r(y). \qedhere
	\]
	
\end{proof}

\begin{proposition}\label{prop:Qcontract}
	For every $k \geq 1$, the category $L(n,n)_{\id}^{(k)}$ admits a terminal object.
\end{proposition}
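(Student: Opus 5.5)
The candidate terminal object is the constant zig-zag $I_n^{(k)}$: every $X_r=[n]$ and every $\ell_r=\id_{[n]}$. It clearly lies in $Q(n,n)^{(k)}_{\id}$, with $c_r=a_r=\id$, so $c_r\dashv a_r$ holds and $I_n^{(k)}\in L(n,n)^{(k)}_{\id}$. The plan is to produce, for every $Y\in L(n,n)^{(k)}_{\id}$, a morphism $Y\to I_n^{(k)}$ with components $f_r=c_r^Y$, and then to show that any morphism $Y\to I_n^{(k)}$ must have these components.

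\textbf{Existence.} A morphism $f\colon Y\to I_n^{(k)}$ amounts to maps $f_r\colon Y_r\to[n]$ with $f_0=f_{k+1}=\id$ and $f_{r-1}\ell_r=f_r$ for odd $r$, $f_r\ell_r=f_{r-1}$ for even $r$. The endpoint conditions hold because $c_0=\id$ and $c_{k+1}=\varphi_1\cdots\varphi_{k+1}=\id$. For odd $r$ the relation is exactly $c_r=c_{r-1}\varphi_r=c_{r-1}\ell_r$. For even $r$ we must show $c_r\ell_r=c_{r-1}$, and this is where the hypothesis defining $L$ enters. Composing the adjunctions $\ell_r\dashv\ell_r^{\ra}$ and $c_r\dashv a_r$ gives $c_r\ell_r\dashv\ell_r^{\ra}a_r=a_{r-1}$. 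Since $Y\in L$, we also have $c_{r-1}\dashv a_{r-1}$. Left adjoints are unique, and in the poset-enriched $\Delta$ this means equal, so $c_r\ell_r=c_{r-1}$. (For $k=1$ this is the observation that $\ell_1$ and $\ell_2$ are both left adjoint to $\ell_2^{\ra}$, hence coincide.)

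\textbf{Uniqueness.} This is the step I expect to be the main obstacle, because the relations at even $r$ do not determine $f_r$ from $f_{r-1}$ directly. I would pin down each $f_r$ by two inequalities coming from opposite ends of the zig-zag.
\begin{enumerate}
\item[(i)] \emph{Lower bound.} Passing to mates along the right-pointing edges, as in the proof of \cref{prop:Lcoreflect}, gives $f_{r-1}\varphi_r^Y\le f_r$ for all $r$, with equality for odd $r$. Starting from $f_0=\id$ and inducting from the left, this yields $c_r\le f_r$.
\item[(ii)] \emph{Upper bound.} I would show $f_r a_r=\id_{[n]}$ by descending induction from $f_{k+1}a_{k+1}=\id$, using $a_{r-1}=\varphi_r a_r$. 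For odd $r$ one has $f_{r-1}a_{r-1}=f_{r-1}\ell_r a_r=f_r a_r$. For even $r$ one has
\[
f_{r-1}a_{r-1}=f_r\ell_r\ell_r^{\ra}a_r\le f_r a_r ,
\]
while (i) gives $f_{r-1}a_{r-1}\ge c_{r-1}a_{r-1}=\id$. The delicate point, which I would check carefully, is getting the reverse inequality for $f_r a_r$. For this I would combine the mate inequality with the counit $\ell_r\ell_r^{\ra}\le\id$ and the equality $c_r\ell_r=c_{r-1}$ established above.
\end{enumerate}
Once $f_r a_r=\id$ is known, the unit of $c_r\dashv a_r$ gives $f_r\le f_r a_r c_r=c_r$. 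Combined with (i), this forces $f_r=c_r$. Hence the morphism constructed in the existence step is the unique morphism $Y\to I_n^{(k)}$, and $I_n^{(k)}$ is terminal in $L(n,n)^{(k)}_{\id}$.
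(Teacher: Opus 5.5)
Your proof is correct and follows the paper's argument: you use the same candidate $I_n^{(k)}$ with components $c_r$, the same lower bound $c_r\le f_r$ from the mate inequality $f_{r-1}\varphi_r\le f_r$ and forward induction, and the same descending-induction upper bound using $\id\le a_rc_r$, while your derivation of $c_r\ell_r=c_{r-1}$ from uniqueness of left adjoints to $a_{r-1}$ is a slicker version of the paper's two-inequality computation. The ``delicate point'' you flag in (ii) is not actually open: in the descending induction $f_ra_r=\id$ is the hypothesis, so your displayed chain already gives $\id\le f_{r-1}a_{r-1}\le f_ra_r=\id$, and no further input (in particular not $c_r\ell_r=c_{r-1}$) is needed.
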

\begin{proof}
	
	Let $I_n^{(k)}$ denote the identity zig-zag on $\id_{[n]}$, and fix $X \in L(n,n)_{\id}^{(k)}$. We claim that the morphisms $c_r$, $0\leq r \leq k+1$, assemble into a morphism $c \colon X \to I_n^{(k)}$. Recall that $c_r=c_{r-1}\varphi_r$ for every $r$. If $r$ is odd, then $\varphi_r=\ell_r$ and the required compatibility $c_{r-1}\ell_r=c_r$ is immediate. Suppose instead that $r$ is even, so we must show $c_r\ell_r=c_{r-1}$. From $c_r=c_{r-1}\varphi_r$ we get $c_r\ell_r=c_{r-1}\varphi_r\ell_r$, and the adjunction $\ell_r \dashv \varphi_r$ gives $c_{r-1} \leq c_r \ell_r$. For the reverse inequality, we use that $X \in L(n,n)_{\id}^{(k)}$, i.e.\ that $x \leq a_{r-1}c_{r-1}(x)$ for every $x \in X_{r-1}$:
	\[
		\begin{aligned}
			c_r \ell_r(x) = c_{r-1}\varphi_{r} \ell_r(x)
			&\leq c_{r-1}\varphi_{r} \ell_r a_{r-1}c_{r-1}(x) \\
			&= c_{r-1}\varphi_{r} \ell_r \varphi_r a_{r}c_{r-1}(x) \\
			&= c_{r-1}\varphi_r a_r c_{r-1}(x) \\
			&= c_{r-1}a_{r-1}c_{r-1}(x) \\
			&= c_{r-1}(x).
		\end{aligned}
	\]
	Combined with $c_{r-1} \leq c_r\ell_r$ above, this gives $c_r\ell_r=c_{r-1}$, as desired.

	We now show that $c$ is the unique such morphism: let $h \colon X \to I_n^{(k)}$ be another morphism, with components $h_r$, $0 \leq r \leq k+1$. First, we verify that $c_r \leq h_r $. To this end it will be important to recall the following inequality
	\begin{equation}
		h_{r-1}\varphi_r \leq h_r
	\end{equation}
	which follows easily from the calculus of mates. In particular we see working inductively with base case $c_0=h_0=\id_{[n]}$ that 
	\[
		c_{r}= c_{r-1}\varphi_r \leq h_{r-1}\varphi_r \leq h_r
	\]
	where the first inequality uses the induction hypothesis.
	To see that $ h_r \leq c_r$, we will use induction starting from $r=k+1$ where we have $h_{k+1}=c_{k+1}=\id_{[n]}$. We now use that $X=M(X)$ to obtain
	\[
		h_{r-1} \leq h_{r-1} a_{r-1} c_{r-1} = h_{r-1} \varphi_r a_{r}c_{r-1} \leq  h_r a_{r} c_{r-1} \leq c_r a_{r}c_{r-1}=c_{r-1}.
	\]
	The result now follows.
\end{proof}

\begin{lemma}\label{lem:deltaisos}
	Every morphism in $Q(n,n)_{\id}$ becomes invertible in $\BFree^{(1,0)}_{R,S}(\Delta_{/n})\times_\Delta \{n\}$ (cf. \cref{def:representableRS}).
\end{lemma}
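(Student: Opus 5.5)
Since $Q(n,n)_{\id}=\colim_k Q(n,n)^{(k)}_{\id}$ and every morphism lives at some finite stage, it suffices to show that for each $k\geq 1$ every morphism of $Q(n,n)^{(k)}_{\id}$ is inverted by the composite
\[
	\theta_k\colon Q(n,n)^{(k)}_{\id}\to \BFree^{(1,0)}_{R,S}(\Delta_{/n})\times_\Delta\{n\}.
\]
(Stage $0$ is trivial, and stage $k$ maps into stage $k+1$.) By \cref{prop:Lcoreflect} and \cref{prop:Qcontract}, $Q(n,n)^{(k)}_{\id}$ has a terminal object. Indeed, $M$ is right adjoint to the inclusion $\rho^k_n$, and right adjoints preserve terminal objects, so $\rho^k_n(I^{(k)}_n)=I^{(k)}_n$ is terminal in $Q(n,n)^{(k)}_{\id}$. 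Given a morphism $u\colon X\to Y$, uniqueness of maps into the terminal object gives $t_Y\circ u=t_X$, where $t_Z\colon Z\to I^{(k)}_n$ denotes the unique map. By two-out-of-three, it therefore suffices to prove that $\theta_k(t_X)$ is invertible for every $X$.

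I would factor $t_X$ as the counit $\rho M(X)\to X$ followed by the canonical map $c\colon M(X)\to I^{(k)}_n$ of \cref{prop:Qcontract}, and show that each factor is inverted.

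For the counit $M(X)\to X$, whose components are the inclusions $M(X_r)\subseteq X_r$, I would argue via \cref{rem:univadjointable}. Each inclusion $M(X_r)\hookrightarrow X_r$ is the image of the idempotent $a_rc_r$ restricted to elements above themselves, and it should be a coreflective inclusion in $\Delta$ whose adjoint lies in the class used to define $R,S$. The naturality squares along the right-pointing edges are then adjointable by \cref{BCeasy} and \cref{rem:BCeasy}, using \cref{lem:restrict}(d). So the induced 2-morphism in $\ddelta_\diamond$ is a pasting of invertible mates and identities, hence invertible. This identifies the image of the map in the fibre with an equivalence.

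For $c\colon Y\to I^{(k)}_n$ with $Y\in L(n,n)^{(k)}_{\id}$, I would use that each $c_r\dashv a_r$ with $c_ra_r=\id$, so $c_r$ is a reflection with fully faithful right adjoint $a_r$. In $\ddelta_\diamond$ these become the reflective localizations forced by the $R$-localization. The squares
\[
	c_{r-1}\varphi_r=c_r,\qquad c_r\ell_r=c_{r-1}
\]
then satisfy the hypotheses of \cref{BCeasy}, because their vertical maps are reflective localizations and the squares commute on the nose. Hence all mates are invertible, and the pasted 2-morphism is invertible. Alternatively, I could reduce to generators of $\cQ$ together with the comparison maps of \cref{cons:comparison}, by factoring $c$ levelwise in the spirit of \cref{lem:pi0iso}.

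The main obstacle is making rigorous the passage from ``the induced 2-morphism in $\ddelta_\diamond$ is invertible'' to ``the morphism is invertible in the localized fibre.'' This requires knowing that the functor from the fibre to $\ddelta_\diamond(n,n)$ of \cref{thm:fibmodelRS} is conservative on these maps. Since \cref{thm:fibmodelRS} gives an equivalence between the fibre and the mapping category, I would invoke it to identify the two, so that the diagrammatic computation of \cref{rem:univadjointable} suffices. The remaining delicate point is checking that $a_r,c_r$ are genuinely composites of the marked $d_i,s_i$, so that the $R,S$-localization applies. For this I would decompose $c_r$ into codegeneracies and $a_r$ into the corresponding cofaces.
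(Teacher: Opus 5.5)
Your overall plan is the paper's: invert the counit $\rho M(X)\to X$ of \cref{prop:Lcoreflect}, then invert the maps $Y\to I^{(k)}_n$ for $Y\in L(n,n)^{(k)}_{\id}$, in both cases by checking adjointability of naturality squares via \cref{rem:univadjointable} and \cref{BCeasy}. However, your reduction step is wrong. $I^{(k)}_n$ is \emph{not} terminal in $Q(n,n)^{(k)}_{\id}$. Take $X=([1]\xleftarrow{s_0}[2]\xrightarrow{s_1}[1])\in Q(1,1)^{(1)}_{\id}$; this is a valid object because $s_1^{\ra}=d_1$ and $s_0d_1=\id$. A map $X\to I^{(1)}_1$ would need its middle component to equal both $s_0$ and $s_1$, so none exists. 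The principle ``right adjoints preserve terminal objects'' applies to $M$, not to the left adjoint $\rho$. Moreover, $\rho M(X)\to X$ and $M(X)\to I^{(k)}_n$ do not compose to $t_X$: they only form a cospan. Knowing that every object is joined to $I^{(k)}_n$ by inverted maps does not by itself invert every morphism. The correct reduction uses naturality of the counit. For $u\colon X\to X'$ one has $u\circ\epsilon_X=\epsilon_{X'}\circ\rho M(u)$. Then $M(u)$ is inverted by two-out-of-three, because $I^{(k)}_n$ is terminal in $L(n,n)^{(k)}_{\id}$ (\cref{prop:Qcontract}).

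Your uses of \cref{BCeasy} are also misdirected. That lemma only controls the mate taken along the (co)reflective maps. \cref{rem:univadjointable}, however, needs the horizontal mates along the right-pointing edges $\ell_s$, which are arbitrary maps of $\cL$.

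Consider first the step $Y\to I^{(k)}_n$. For the reflections $c_r\dashv a_r$, \cref{rem:BCeasy} controls only the vertical mate $\ell_ra_{r-1}\Rightarrow a_r$. That mate is invertible only if $\ell_ra_{r-1}=a_r$, and this fails in general. For $Y=([0]\xleftarrow{\id}[0]\xrightarrow{d_1}[1]\xleftarrow{d_0}[0])\in L(0,0)^{(2)}_{\id}$ we have $c_2=s_0$, $a_2=d_0$ and $a_1=\id$, so the mate is $d_1\leq d_0$, which is non-invertible already in $\ddelta$. So ``all mates are invertible'' is false, and the horizontal mate you actually need is not reached. The paper instead uses the fully faithful \emph{left} adjoints $b_r\dashv c_r$ coming from the $S$-data. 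It observes $b_r=\ell_rb_{r-1}$, gets vertical adjointability from \cref{rem:BCeasy}, and then conjugates to obtain the horizontal mate. Equivalently, you can apply \cref{BCeasy} to the conjugate square $\ell_r^{\ra}a_r=a_{r-1}$, whose verticals have reflective left adjoints $c_r$; there the criterion is exactly your identity $c_{r-1}\varphi_r=c_r$.

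For the counit, the paper factors $\ell_s=\delta_s\pi_s$ so that the horizontal maps are (co)reflective, and checks that $\pi_s^{\ra}$ and $\delta_s^{\ra}$ restrict to the $M$'s. Your coreflective-inclusion route can be made to work, since $0\in M(X_r)$ makes $M(X_r)\hookrightarrow X_r$ coreflective. But it needs the same conjugation step, and it uses \cref{lem:restrict}(c), that $\ell_s^{\ra}$ preserves the $M$'s, not just (d).
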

\begin{proof}
	We will first show that the map $M(X)\to X$ from \cref{prop:Lcoreflect} becomes invertible. Let $s$ be even. We consider the commutative square of the form
	\[
		\begin{tikzcd}
			M(X_{s-1}) \arrow[r] \arrow[d] &  M(X_{s}) \arrow[d] \\
			X_{s-1} \arrow[r,"\ell_s"] & X_{s}
		\end{tikzcd}
	\] 
	and claim that this square is sent to a horizontally adjointable square by any functor $F \colon \Delta \to \cX$ extending to $\ddelta_{\diamond}$. Note that by the discussion in \cref{rem:univadjointable} this will imply that $M(X)\to X$ maps to an invertible morphism.

	Let us factor $\ell_s\colon X_{s-1}\xrightarrow{\pi_s}Y \xrightarrow{\delta_s} X_{s}$ as a surjection followed by an injection where both maps preserve the bottom element. We define $c_{Y}=c_{s-1} \circ \pi_{s}^{\ra}$ and $a_{Y}=\delta_s^{\ra}\circ a_s$ and consider $M(Y)=\{y \in Y \mid y \leq a_{Y}c_{Y}(y)\}$. We claim we have a commutative diagram
	\[
		\begin{tikzcd}
			M(X_{s-1}) \arrow[d] \arrow[r] & M(Y) \arrow[d] \arrow[r] & M(X_s) \arrow[d] \\
			X_{s-1} \arrow[r,"\pi_s"] & Y \arrow[r,"\delta_s"] & X_s
		\end{tikzcd}
	\]
	To see this we consider $x \in M(X_{s-1})$ and compute
	\[
		\begin{aligned}
			\pi_s a_{s-1}c_{s-1}(x)
			&= \pi_s \pi_s^{\ra} \delta_s^{\ra}a_s c_{s-1}(x) \\
			&= \delta_s^{\ra}a_s c_{s-1}(x) \\
			&\leq \delta_s^{\ra}a_s c_{s-1}\pi_s^{\ra}\pi_s(x) \\
			&= a_Yc_Y\pi_s(x),
		\end{aligned}
	\]
	which shows that $\pi_s(x)\in M(Y)$. For the remaining square we pick $y \in M(Y)$ and compute
	\[
		\delta_s a_Y c_Y(y)=\delta_s \delta_s^{\ra}a_s c_{s-1}\pi_s^{\ra}(y) \leq a_s c_{s-1}\pi_s^{\ra}(y)=a_s c_{s-1}\pi_s^{\ra}\delta_s^{\ra}\delta_s(y)=a_s c_s \delta_s(y)
	\]
	showing that $\delta_s(y) \leq a_s c_s \delta_s(y)$ and thus $\delta_s(y) \in M(X_s)$. We will now show that $\pi_s^{\ra}$ and $\delta_s^{\ra}$ also factor thus showing that both squares are horizontally adjointable. For this we verify that
	\[
		a_{s-1}c_{s-1}\pi_s^{\ra}(y)=\pi_s^{\ra} a_Y c_Y(y), \enspace  a_Y c_Y \delta_s^{\ra}(x)=\delta_s^{\ra}a_s c_s(x),
	\]
	which shows that the right adjoints factor. It follows from \cref{BCeasy,rem:BCeasy} that any functor extending to $\ddelta_{\diamond}$ must send each of the squares above to an adjointable square. This implies the analogous claim for the composite square.

	We may now invoke \cref{prop:Qcontract} to reduce the statement to showing that the terminal map $Y \to I_n^{(k)}$ becomes invertible where $Y \in L(n,n)_{\id}^{(k)}$. Again, this amounts to verifying that the diagram below
	\[
		\begin{tikzcd}
			Y_{r-1} \arrow[r,"\ell_{r}"] \arrow[d,"c_{r-1}"] & Y_{r} \arrow[d,"c_{r}"] \\
			{[n]} \arrow[r,"\id_{[n]}"] & {[n]}
		\end{tikzcd}
	\] 
	is universally horizontally adjointable for functors extending to $\ddelta_\diamond$ whenever $r$ is even. First let us observe that any functor extending to $\ddelta_{\diamond}$ must send the $c_s$'s to right adjoints and consequently we fix the notation $b_{r}\dashv c_r$. Moreover, we note that since $c_r=c_{r-1}\ell^{\ra}_r$ it follows that $b_{r}=\ell_r b_{r-1}$. By \cref{rem:BCeasy} our diagram is thus universally vertically adjointable. Taking this adjointed square and passing to right adjoints of all morphisms yields another commutative square which we identify with the horizontal mate transformation of the functor above. We conclude that the original square is thus universally horizontally adjointable.
\end{proof}
\medskip
\begin{proof}[Proof of \cref{thm:2simplex}]
 We claim it suffices to verify the following:
 \begin{itemize}
	\item[$(\star)$] The functor $Q(n,n)_{\id} \to Q(n,n)$ descends to a fully faithful functor 
	\[
		|Q(n,n)_{\id}| \to \ddelta_{\diamond}(n,n).
	\]
 \end{itemize}
 Indeed, by combining \cref{prop:Lcoreflect} and \cref{prop:Qcontract} we find that $|Q(n,n)_{\id}| \simeq \ast$. The claim implies that $\Map_{\ddelta_{\diamond}(n,n)}(\id_{[n]},\id_{[n]})\simeq \ast$ and the conclusion of the theorem thus follows from \cref{lem:keyreduction2simplex}.

Write $\cT_n \subset Q(n,n)$ for the class of morphisms that become invertible in $\ddelta_{\diamond}(n,n)=\BFree^{(1,0)}_{R,S}(\Delta_{/n})\times_\Delta \{n\}$ so that $Q(n,n)[\cT_n^{-1}]=\ddelta_{\diamond}(n,n)$.

	Let $p\colon Q(n,n)\to\ddelta(n,n)$ denote the canonical functor. If a morphism belongs to $\cT_n$, then its image under $p$ is invertible and hence, since $\ddelta(n,n)$ is a poset, an identity. Now fix $\alpha_0,\alpha_1\in Q(n,n)_{\id}$. By the hammock description of localization (cf. \cref{cor:dwyerkanhammock}), the mapping space $\Map_{\ddelta_{\diamond}(n,n)}(\alpha_0,\alpha_1)$ is computed by zig-zags
	\[
		\alpha_1 \xleftarrow{} \beta_0 \xrightarrow{} \beta_1 \xleftarrow{} \cdots \xrightarrow{}\alpha_0,
	\]
	whose right-pointing morphisms belong to $\cT_n$. Applying $p$ gives a chain
	\[
		\id_{[n]}\geq p(\beta_0)=p(\beta_1)\geq\cdots\geq\id_{[n]},
	\]
	so every $p(\beta_i)$ is equal to $\id_{[n]}$. Thus the entire zig-zag lies in $Q(n,n)_{\id}$. Conversely, by \cref{lem:deltaisos}, every morphism in $Q(n,n)_{\id}$ belongs to $\cT_n$. The inclusion therefore identifies the categories of zig-zags computing the two mapping spaces, and the claim follows.
\end{proof}

\begin{definition}
	Let $\ddelta_+$ be the sub-$(\infty,2)$-category of $\Cat_{(\infty,1)}$ on the finite linearly ordered sets together with the empty category. We refer to $\ddelta_+$ as the \emph{extended simplex} 2-category and denote its underlying 1-category as $\Delta_+$.
\end{definition}

\begin{corollary}\label{cor:extendeddelta}
	Let $R,S$ be the families of sections and retractions from \cref{def:representingdelta}, regarded in $\Delta_+$.
	Then we have an equivalence of $(\infty,2)$-categories $\Delta_{+}\soft{\cL}_{R,S} \simeq \ddelta_+$.
\end{corollary}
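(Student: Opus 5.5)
We follow the proof of \cref{thm:2simplex} step by step, replacing $\Delta$ by $\Delta_+$ and $\ddelta$ by $\ddelta_+$. The universal property of $\Delta_+\soft{\cL}_{R,S}$ from \cref{thm:ffaddingsections} gives a canonical functor $\varphi_+\colon \Delta_+\soft{\cL}_{R,S}\to\ddelta_+$. The reason is that in $\ddelta_+$ the maps $d_{i+1}\dashv s_i\dashv d_i$ still form adjunction strings, with the simplicial identities as unit and counit. As in \cref{rem:redremark}, the composite $\Delta_+\to\ddelta_+$ is an equivalence on underlying $(\infty,1)$-categories. \cref{lem:esssurj1morph} carries over verbatim, using \cref{prop:essurjcobase} and the presentation by iterated pushouts along $\Ret\to\Adjffr$ and $\Ret\to\Adjffl$. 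So $\varphi_+$ is essentially surjective on objects and $1$-morphisms, and it remains to show that $\varphi_+$ is fully faithful on mapping categories.

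The key observation is that the new object $[-1]=\emptyset$ is initial in $\Delta_+$ and has no nontrivial maps into it. The classes $R,S$ only involve nonempty ordinals, so $\cL$ consists of the maps of \cref{rem:Lintrinsinc} (with $f(0)=0$) together with identities; in particular no map out of $\emptyset$ lies in $\cL$ except $\id_\emptyset$. We then compute the mapping categories using \cref{thm:fibmodelRS} and the zig-zag model of \cref{thm:mappingcatform}. A zig-zag ending at $x$ whose right-pointing arrows lie in $\cL$ stays inside nonempty ordinals once it is nonempty, and a zig-zag through $\emptyset$ must stay at $\emptyset$. Hence:
\begin{itemize}
\item for $x,y$ nonempty, the mapping category is that of $\Delta\soft{\cL}_{R,S}$, which is $\ddelta(x,y)$ by \cref{thm:2simplex};
\item maps into $\emptyset$ from a nonempty ordinal form the empty category;
\item $\Delta_+\soft{\cL}_{R,S}(\emptyset,y)$ is computed by zig-zags from $\emptyset$, which can be contracted onto the unique map $\emptyset\to y$.
\end{itemize}

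Equivalently, and more robustly, we can argue that $\Delta_+\soft{\cL}_{R,S}$ is the pushout of $\Delta\soft{\cL}_{R,S}\leftarrow\Delta\to\Delta_+$. This holds because the construction is an iterated pushout along $\Ret\to\Adjffr$ and $\Ret\to\Adjffl$ attached only to the nonempty part. The target $\ddelta_+$ is the analogous gluing, namely the cone $\ddelta^{\triangleleft}$: $\ddelta_+$ is obtained from $\ddelta$ by adjoining an initial object, since $\Fun(\emptyset,[m])$ is a point. We would then identify the pushout $\Delta_+\coprod_{\Delta}\ddelta$ with the cone on $\ddelta$. For this we use that $\Delta_+=\Delta^{\triangleleft}$ and that forming the (strong) left cone commutes with pushouts, as it is a left adjoint on $\Cat_{(\infty,2)}$ to the appropriate slice functor. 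Combining this with \cref{thm:2simplex} gives the equivalence.

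The main obstacle is making the cone argument rigorous at the $(\infty,2)$ level. We must check that the join with $[0]$ preserves the relevant pushouts, and that the mapping categories $\Delta_+\soft{\cL}_{R,S}(\emptyset,[m])$ are contractible rather than merely connected. The most direct route is the zig-zag computation above. Every object of $\cC^{\infty}(\emptyset,[m])$ begins with $\emptyset$ followed by left-pointing maps, and these must be identities since nothing maps to $\emptyset$. The category of such zig-zags therefore has an initial object, and its localization is contractible by the argument of \cref{prop:adjffcofinal}. That localization computes the mapping category by \cref{thm:mappingcatform} and the analogous statement of \cref{thm:fibmodelRS}.
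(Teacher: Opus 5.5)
Your reduction is sound and matches the paper's. It covers essential surjectivity, agreement of the mapping categories between nonempty ordinals with those of $\ddelta$ (via the zig-zag model and \cref{thm:2simplex}), and emptiness of maps into $\emptyset$. Everything therefore hinges on $\Delta_+\soft{\cL}_{R,S}(\emptyset,[m])\simeq[0]$, and neither of your arguments proves it.

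In the direct argument the arrows are backwards. In $\cC^{(n)}(\emptyset,[m])$ the left-pointing edges are maps of $\Delta_+$ directed towards $[m]$. So the one leaving $\emptyset$ is a map $\emptyset\to y_{n-1}$ \emph{out of} $\emptyset$, and ``nothing maps to $\emptyset$'' does not constrain it. That observation only constrains zig-zags ending at $\emptyset$, which is your second bullet. The intermediate ordinals are unconstrained: for example, $[0]\xleftarrow{\id}[0]\xrightarrow{d_1}[1]\leftarrow\emptyset$ represents $d_1^{\ra}\circ(\emptyset\to[1])$. No initial object is apparent, since a zig-zag of identities out of $\emptyset$ cannot even reach $[m]$. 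Moreover, \cref{thm:mappingcatform} describes $\Delta_+\soft{\cL}$, not $\Delta_+\soft{\cL}_{R,S}$. Identifying $d_1^{\ra}\circ(\emptyset\to[1])$ with $\emptyset\to[0]$ goes through the comparison $c_g\colon d_1^{\ra}\Rightarrow s_0$ of \cref{cons:comparison}, which is not one of the $\cQ$-generators.

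The cone argument is missing the key idea. That cones commute with pushouts gives $(\cA\coprod_{\cB}\cC)^{\triangleleft}\simeq\cA^{\triangleleft}\coprod_{\cB^{\triangleleft}}\cC^{\triangleleft}$. What you need is $\Delta^{\triangleleft}\coprod_{\Delta}\ddelta\simeq\ddelta^{\triangleleft}$, which is glued along $\Delta$, not along $\Delta^{\triangleleft}$. For a general $\cB\to\cC$ the comparison $\cB^{\triangleleft}\coprod_{\cB}\cC\to\cC^{\triangleleft}$ is not an equivalence. For instance, for the discrete category on $\{0,1\}$ mapping to $[0]$ one gets the walking parallel pair rather than $[1]$. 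That this comparison is an equivalence is essentially the statement that $\cB\to\cC$ is strongly $(0,1)$-cofinal.

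This is exactly what the paper supplies:
\begin{itemize}
\item $\Delta\to\ddelta$ is an iterated pushout of $\Ret\to\Adjffr$ and $\Ret\to\Adjffl$, by \cref{thm:2simplex} and \cref{cor:adfjffuniv}.
\item These maps are strongly $(0,1)$-cofinal by \cref{prop:adjffcofinal}.
\item This class is closed under cobase change by \cref{prop:cofcobase}.
\end{itemize}
Hence $\Delta\to\ddelta$ is strongly cofinal, and so is its cobase change $\Delta_+\to\Delta_+\soft{\cL}_{R,S}$ along $\Delta\to\Delta_+$, which is the pushout you correctly identified. Therefore the initial object $\emptyset$ of $\Delta_+$ remains initial. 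In your language, the check you never make is $\Ret^{\triangleleft}\coprod_{\Ret}\Adjffr\simeq(\Adjffr)^{\triangleleft}$ and its dual.
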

\begin{proof}
	Write $\Delta_{+}\soft{\cL}_{R,S}\coloneq\ddelta_{+,\diamond}$ and observe that the canonical map $\ddelta_{+,\diamond} \to \ddelta_{+}$ is essentially surjective. Invoking \cref{thm:2simplex} and comparing the description of the mapping $(\infty,1)$-categories in $\ddelta$ and $\ddelta_{+,\diamond}$ arising from \cref{thm:mappingcatform}, we find that the induced functor $\ddelta \to \ddelta_{+,\diamond}$ is fully faithful. In particular for mapping $(\infty,1)$-categories not involving the initial object $\emptyset$, we see that $\ddelta_{+,\diamond}([m],[n]) \to \ddelta_{+}([m],[n])$ is an equivalence. Since $\ddelta_{+}([m],\emptyset)=\emptyset$, it will be enough to show that $\emptyset \in \ddelta_{+,\diamond}$ is an initial object in the sense that $\ddelta_{+,\diamond}(\emptyset, [n])\simeq [0]$.
	
	Comparing universal properties one obtains a pushout diagram
	\[
		\begin{tikzcd}
			\Delta \arrow[r] \arrow[d] & \ddelta \arrow[d] \\
			\Delta_+ \arrow[r] & \ddelta_{+,\diamond}
			\arrow[ul, phantom, very near start, "\ulcorner"]
		\end{tikzcd}
	\]
	 and since strong $(0,1)$-cofinal functors (\cref{def:strongcofinal}) are stable under cobase change by \cref{prop:cofcobase}, it will be enough to show that the map $\Delta \to \ddelta$ is strongly $(0,1)$-cofinal. By \cref{thm:2simplex} this map is obtained as an iterated pushout along maps $\Ret \to \Adjffr$, $\Ret \to \Adjffl$ so we further reduce the claim to \cref{prop:adjffcofinal}.
\end{proof}

\bibliographystyle{amsalpha}
\bibliography{references}

\end{document}